\documentclass{article}



\usepackage{iclr2026_conference}
\usepackage{times}

\iclrfinalcopy



\usepackage{dirtytalk}
\usepackage{cancel}
\usepackage{lineno}
\usepackage[utf8]{inputenc} 
\usepackage[T1]{fontenc}    

\usepackage{thm-restate}

\usepackage[english]{babel}

\usepackage[hypertexnames=false]{hyperref}       
\usepackage{url}            
\usepackage{booktabs}       
\usepackage{amsfonts}       
\usepackage{nicefrac}       
\usepackage{microtype}      
\usepackage{derivative}
\usepackage{mathrsfs,abraces}
\usepackage{etoolbox}
\usepackage{xfrac}
\usepackage{float}
\usepackage{aligned-overset}
\usepackage{makecell, cellspace, colortbl, multirow}
\usepackage{adjustbox}
\usepackage{subcaption}

\addtolength{\extrarowheight}{\belowrulesep}
\aboverulesep=0pt
\belowrulesep=0pt

\usepackage[linesnumbered, ruled, vlined, onelanguage]{algorithm2e}%
\SetCommentSty{commentalgo}%
\SetKwProg{Init}{init}{}{}%
\SetKwFor{For}{For}{do}{EndFor}
\SetKwFor{While}{While}{do}{EndWhile}
\SetKw{Return}{Return}

\newcommand{\addeq}{\addtocounter{equation}{1}}%
\newcommand\numberthis{\addeq\tag{\theequation}}

\makeatletter
\DeclareRobustCommand\widecheck[1]{{\mathpalette\@widecheck{#1}}}
\def\@widecheck#1#2{%
    \setbox\z@\hbox{\m@th$#1#2$}%
    \setbox\tw@\hbox{\m@th$#1%
       \widehat{%
          \vrule\@width\z@\@height\ht\z@
          \vrule\@height\z@\@width\wd\z@}$}%
    \dp\tw@-\ht\z@
    \@tempdima\ht\z@ \advance\@tempdima2\ht\tw@ \divide\@tempdima\thr@@
    \setbox\tw@\hbox{%
       \raise\@tempdima\hbox{\scalebox{1}[-1]{\lower\@tempdima\box
\tw@}}}%
    {\ooalign{\box\tw@ \cr \box\z@}}}
\makeatother

\newcommand{\oversetref}[2]{\overset{{\scriptscriptstyle\mathrm{#1}\!~\text{#2}}}}
\newcommand{\oversetlab}[1]{\overset{{\scriptscriptstyle\text{#1}}}}

\newcounter{relctr} 
\newcounter{relgroup} 

\everydisplay\expandafter{\the\everydisplay\setcounter{relctr}{0}}


\AtBeginDocument{\let\originallabel\label} 


\makeatletter
\newcommand\oversetrel[1]{%
    \ifnum\value{relctr}=0\relax%
      \stepcounter{relgroup}%
    \fi%
    \refstepcounter{relctr}%
    \phantomsection%
    \originallabel{rel:#1@\therelgroup}%
    \overset{\scriptscriptstyle\text{(\alph{relctr})}}%
}

\newcommand\relref[1]{%
    \@ifundefined{r@rel:#1@\arabic{relgroup}}{%
        \eqref{}%
    }{%
        \eqref{rel:#1@\arabic{relgroup}}
    }%
}
\makeatother

\hypersetup{
  colorlinks   = true, 
  urlcolor     = blue, 
  linkcolor    = {red!75!black}, 
  citecolor   = {blue!50!black} 
}

\makeatletter
\providecommand\theHALG@line{\thealgorithm.\arabic{ALG@line}}
\makeatother

\newcommand{\alglinelabel}{%
  \label
}

\title{\centering Local SGD and Federated Averaging Through the Lens of Time Complexity \par}


\author{Adrien Fradin \\ KAUST, Thuwal, Saudi Arabia \\ École polytechniqu, Paris, France \\
\And
Peter Richt\'{a}rik \\ KAUST, Thuwal, Saudi Arabia \\
\And
Alexander Tyurin \\ Opta, Moscow, Russia \\
}

%

\usepackage{tcolorbox}

\newtcolorbox{theorembox}{
  colback=gray!20,
  colframe=gray!20,
  boxrule=0.8pt,
  before skip=7pt,
  after skip=7pt,
  boxsep=-1mm,
}

\newtcolorbox{assumptionbox}[1][]{myassumption, title={Assumption #1}}

\DeclareSymbolFont{extraup}{U}{zavm}{m}{n}
\DeclareMathSymbol{\varheart}{\mathalpha}{extraup}{86}
\DeclareMathSymbol{\vardiamond}{\mathalpha}{extraup}{87}

\usepackage{graphicx}
\usepackage{apptools}
\usepackage[flushleft]{threeparttable}
\usepackage{array,booktabs,makecell}
\usepackage{multirow}
\usepackage{nicefrac}
\usepackage{amsthm}
\usepackage{amsmath,amsfonts,bm,amssymb,mathtools}
\usepackage{wrapfig}
\usepackage{caption}
\usepackage{siunitx}
\usepackage{thm-restate}
\usepackage{nccmath}
\usepackage{empheq}
\usepackage{bbm}
\usepackage{suffix}
\usepackage{tabularx}

\usepackage{algorithmic}

\usepackage[noabbrev]{cleveref}

\usepackage{color}
\usepackage{colortbl}
\definecolor{bgcolor}{rgb}{0.76,0.88,0.50}
\definecolor{bgcolor0}{rgb}{0.93,0.99,1}
\definecolor{bgcolor1}{rgb}{0.8,1,1}
\definecolor{bgcolor2}{rgb}{0.8,1,0.8}
\definecolor{bgcolor3}{rgb}{0.50,0.90,0.50}
\usepackage{tcolorbox}
\usepackage{pifont}

\definecolor{mydarkgreen}{RGB}{39,130,67}
\definecolor{mydarkorange}{RGB}{236,147,14}
\definecolor{mydarkred}{RGB}{192,47,25}
\definecolor{ruby}{RGB}{155,17,30}
\definecolor{chili}{RGB}{191,0,0}
\definecolor{sangria}{RGB}{146,0,10}
\definecolor{burgundy}{RGB}{128,0,32} 
\definecolor{darkred}{RGB}{132,0,0} 
\definecolor{cherry}{RGB}{192,0,0} 

\definecolor{blue}{RGB}{0,0,255}

\usepackage[textsize=tiny]{todonotes}

\usepackage{xspace}

\usepackage[scaled=0.86]{helvet}
\newcommand{\algname}[1]{{\sf #1}}
\newcommand{\algnamebf}[1]{{\sf \textbf{#1}}}

\newcommand{\norm}[1]{\left\| #1 \right\|}
\newcommand{\sqnorm}[1]{\left\| #1 \right\|^2}
\newcommand{\inp}[2]{\left\langle#1,#2\right\rangle} 
\newcommand{\abs}[1]{\left| #1 \right|}

\newcommand{\R}{\mathbb{R}} 
\newcommand{\N}{\mathbb{N}} 
\newcommand{\E}[1]{\mathbb{E}\left[#1\right]}

\newcommand{\Exp}[1]{{\mathbb{E}}\left[#1\right]}
\newcommand{\ExpSub}[2]{{\mathbb{E}}_{#1}\left[#2\right]}
\newcommand{\ExpCond}[2]{{\mathbb{E}}\left[\left.#1\,\right\vert\,#2\right]}

\newcommand{\Proba}[1]{\mathbb{P}\left(#1\right)} 
\newcommand{\ProbCond}[2]{\mathbb{P}\left(#1\middle\vert#2\right)}



\newcommand{\cO}{\mathcal{O}}


\theoremstyle{plain}
\newtheorem{theorem}{Theorem}[section]

\newtheorem{lemma}{Lemma}[section]

\newtheorem{observation}{Observation}[section]
\newtheorem{assumption}{Assumption}[section]

\theoremstyle{definition}
\newtheorem{definition}{Definition}[section]

\theoremstyle{remark}
\newtheorem{remark}{Remark}[section]

\crefname{assumption}{assumption}{assumptions}
\Crefname{assumption}{Assumption}{Assumptions}
\creflabelformat{assumption}{#2#1#3}

\crefname{condition}{condition}{conditions}
\creflabelformat{condition}{#2#1#3}

\crefname{observation}{observation}{observations}
\creflabelformat{observation}{#2#1#3}

\newcommand{\eqdef}{:=}
\WithSuffix\newcommand\eqdef*{:\!&=}

\makeatletter
\newcommand{\vast}{\bBigg@{4}}

\def\<{\left\langle}
\def\>{\right\rangle}
\def\({\left(}
\def\){\right)}

\usepackage{thmtools}
\usepackage{thm-restate}

\theoremstyle{theorem}

\newenvironment{restate-theorem}[1]
{\innercustomthm}
{\endinnercustomthm}

\newenvironment{restate-lemma}[1]
{\innercustomlemma}
{\endinnercustomlemma}

\newenvironment{restate-proposition}[1]
{\innercustomproposition}
{\endinnercustomproposition}

\newcommand*{\sketchproofname}{Sketch of Proof}

\usepackage{longtable}

\newcommand{\Int}[2]{\left\{ #1, \ldots, #2 \right\}}

\newcommand*{\lbd}{\lambda}
\newcommand*{\eps}{\varepsilon}


\newcommand*{\Ceil}[1]{\left\lceil #1 \right\rceil}



%
%







\renewcommand*{\O}{\mathop{\mathrm{O}}}



%

\newcommand*{\ens}[1]{\left\{#1\right\}}%
%

\renewcommand*{\N}{\mathbb{N}}
\renewcommand*{\R}{\mathbb{R}}





\newcommand{\normop}[1]{{\left\vert\kern-0.25ex\left\vert\kern-0.25ex\left\vert #1 \right\vert\kern-0.25ex\right\vert\kern-0.25ex\right\vert}}









\newcommand{\ps}[2]{\left\langle #1, #2 \right\rangle}


\renewcommand*{\abs}[1]{\left\lvert #1 \right\rvert}

\renewcommand*{\lim}{\mathop{\mathrm{lim}}\limits}

\renewcommand*{\O}{\mathop{\mathrm{O}}\limits}


\newcommand*{\argmin}{\mathop{\mathrm{arg\,min}}}



\allowdisplaybreaks

\newsavebox{\test} 
\begin{document}

\maketitle

\begin{abstract}
We revisit the classical \algname{Local SGD} and \algname{Federated Averaging} (\algname{FedAvg}) methods for distributed optimization and federated learning. While prior work has primarily focused on \emph{iteration complexity}, we analyze these methods through the lens of \emph{time complexity}, taking into account both computation and communication costs. Our analysis reveals that, despite its favorable \emph{iteration complexity}, the \emph{time complexity} of canonical \algname{Local SGD} is provably worse than that of \algname{Minibatch SGD} and \algname{Hero SGD} (locally executed \algname{SGD}). We introduce a corrected variant, \algname{Dual Local SGD}, and further improve it by increasing the local step sizes, leading to a new method called \algname{Decaying Local SGD}. Our analysis shows that these modifications, together with \algname{Hero SGD}, are optimal in the nonconvex setting (up to logarithmic factors), closing the time complexity gap. Finally, we use these insights to improve the theory of a number of other asynchronous and local methods.
\end{abstract}

\section{Introduction}
\label{sec:introduction}
We re-examine the classical \algname{Local SGD} and \algname{Federated Averaging} (\algname{FedAvg}) approaches that solve the distributed optimization problem \citep{mcmahan2017communication,stich2019local}:
\begin{align}
\label{eq:main_task_heterog}
\textstyle \min \limits_{x \in \R^d} \Big\{f(x) \eqdef \frac{1}{n} \sum\limits_{i=1}^n \ExpSub{\xi_i}{f_i(x;\xi_i)}\Big\},
\end{align}
where $f_i\,:\,\R^d \times \mathbb{S}_{\xi_i} \rightarrow \R^d$ and $\xi_i$ are random variables with distributions $\mathcal{D}_i.$ Here, $n$ is the number of workers collaboratively solving the problem, where each worker $i$ can only sample stochastic gradients $f_i(x;\xi_i)$ of its local loss function $f_i(x).$ We begin our work by considering the \emph{homogeneous} setting, where all clients share the same distribution $\mathcal{D}_i = \mathcal{D}$ and satisfy $f_i = f$ for all $i \in [n] \eqdef \{1, \dots, n\}$.
We discuss the \emph{heterogeneous} setting in Section~\ref{sec:heterogeneous}. Such problems arise in the training of modern machine learning models, large language models, and in federated learning \citep{touvron2023llama,konevcny2016federated}.

Unlike most previous works that focus on \emph{iteration complexity}, i.e., the number of communication rounds needed so as to reach an $\eps$--stationary point, we analyze methods from the perspective of \emph{time complexity} \citep{tyurin2023,tyurin2024optimalgraph,tyurin2024tighttimecomplexitiesparallel}. In particular, we consider the following assumption.
\begin{theorembox}
\begin{assumption}[Computation and Communication Time]
\leavevmode
\begin{itemize}
  \item Computing a single stochastic gradient takes at most $h$ seconds.
  \item Communicating vectors (from $\R^d$) among the workers, e.g., via a server or an \algname{AllReduce} operation, requires at most $\tau$ seconds.
\end{itemize}
\label{ass:time}
\end{assumption}
\end{theorembox}
Our main goal is to investigate \algname{Local SGD} and the other aforementioned methods under this realistic assumption, to compare them, and to offer a new perspective. Looking ahead, this leads to new and unexpected insights about \algname{Local SGD} and other local methods.

\subsection{Previous work}
At the beginning, we investigate the \algname{Local SGD}, \algname{Minibatch SGD}, and \algname{Hero SGD}\footnote{A locally executed \algname{SGD} on a single worker without communication.} methods described in Algorithms~\ref{alg:local_sgd}, \ref{alg:minibatch}, and \ref{alg:hero_sgd}.
\begin{figure}[t]
\begin{minipage}[t]{0.5\textwidth}
\begin{algorithm}[H]
\caption{\algname{Local SGD} (canonical version)}
\label{alg:local_sgd}
\begin{algorithmic}[1]
\REQUIRE initial point $x^0$, local step size $\eta_{\ell}$, \#~communication rounds $R$, number of local steps $K$
\FOR{$t = 0, 1, \ldots, R - 1$}
    \FOR{worker $i \in \{1,\ldots,n\}$ \textbf{in parallel}}
        \STATE $z^t_{i,0} = x^t$
        \FOR{$j = 0, \ldots, K - 1$}
            \STATE $z^{t}_{i,j + 1} = z^{t}_{i,j} - \eta_{\ell} \nabla f(z^{t}_{i,j}; \xi^{t}_{i,j}),$ $\xi^{t}_{i,j} \sim \mathcal{D}$
        \ENDFOR
    \ENDFOR
    \STATE $x^{t+1} = \frac{1}{n} \sum_{i=1}^{n} z^{t}_{i,K}$ \alglinelabel{line:agg} \\
    $\equiv x^{t} - \frac{\eta_{\ell}}{n} \sum_{i=1}^n \sum_{j=0}^{K-1} \nabla f(z^{t}_{i,j}; \xi^{t}_{i,j})$
\ENDFOR
\vspace{0.52cm}
\end{algorithmic}
\end{algorithm}
\end{minipage}
\hfill
\begin{minipage}[t]{0.5\textwidth}
\begin{algorithm}[H]
\caption{\algname{Minibatch SGD}}
\label{alg:minibatch}
\begin{algorithmic}[1]
\REQUIRE initial point $x^0$, global step size $\eta_g,$ \#~communication rounds $R$, batch size $K$
\FOR{$t = 0, 1, \ldots, R - 1$}
    \FOR{worker $i \in \{1,\ldots,n\}$ \textbf{in parallel}}
        \FOR{$j = 0, \ldots, K - 1$}
            \STATE Calculate\footnotemark $\nabla f(x^{t}; \xi^{t}_{i,j}),$ $\xi^{t}_{i,j} \sim \mathcal{D}$
        \ENDFOR
    \ENDFOR
    \STATE $x^{t+1} = x^{t} - \eta_g \sum_{i=1}^n \sum_{j=0}^{K - 1} \nabla f(x^{t}; \xi^{t}_{i,j})$
\ENDFOR
\end{algorithmic}
\end{algorithm}
\vspace{-0.18cm}
\begin{algorithm}[H]
\caption{\algname{Hero SGD} (on a single worker)}
\label{alg:hero_sgd}
\begin{algorithmic}[1]
\REQUIRE \# rounds $R,$ initial point $x^0$, step size $\eta$
\FOR{$t = 0, 1, \ldots, R-1$}
    \STATE $x^{t+1} = x^{t} - \eta \nabla f(x^{t}; \xi^{t}), \xi^{t} \sim \mathcal{D}$
\ENDFOR
\end{algorithmic}
\end{algorithm}
\end{minipage}
\end{figure}
\footnotetext{In the heterogeneous setting, we calculate $\nabla f_i(\cdot;\cdot).$}
These are among the most explored and well-studied distributed methods. In \algname{Local SGD}, each worker performs $K$ local \algname{SGD} steps, followed by periodic synchronization steps via a server or the \algname{AllReduce} operation. In contrast, each worker in \algname{Minibatch SGD} computes $K$ stochastic gradients at the same point. The idea behind both methods is to reduce overall communication by choosing a large $K \gg 1$, thus making the costly communication time $\tau$ less significant. The \algname{Hero SGD} method is the standard \algname{SGD} algorithm \citep{lan2020first}, executed locally on a single worker without communication. We consider the following standard assumptions:
\begin{assumption}
  \label{ass:lipschitz_constant}
$f$ is differentiable and $L$--smooth, i.e., $\norm{\nabla f(x) - \nabla f(y)} \leq L \norm{x - y}$ for all $x, y \in \R^d.$ We define $\Delta \eqdef f(x^0) - \inf_{x \in \R^d} f(x),$ where $x^0$ is a starting point of numerical methods. 
\end{assumption}
\begin{assumption}
  \label{ass:stochastic_variance_bounded}
  The stochastic gradients satisfy ${\rm \mathbb{E}}_{\xi}[\nabla f(x;\xi)] = \nabla f(x)$ (unbiasedness) and ${\rm \mathbb{E}}_{\xi}[\|\nabla f(x;\xi) - \nabla f(x)\|^2] \leq \sigma^2$ (bounded variance) for all $x \in \R^d,$ where $\sigma^2 \geq 0.$
\end{assumption}
The goal in the nonconvex world is to find an $\varepsilon$--stationary point: a (possibly) random point $\bar{x} \in \R^d$ such that ${\rm \mathbb{E}}[\|\nabla f(\bar{x})\|^2] \leq \varepsilon.$ In the convex setting, we consider the assumption below and want to find an $\varepsilon$--solution, a point $\bar{x}$ such that $\Exp{f(\bar{x})} - f(x^*) \leq \varepsilon.$
\begin{assumption}
  \label{ass:convex}
  $f$ is \emph{convex} and attains its minimum at least at a point $x^* \in \R^d.$ We define $B \eqdef \norm{x^0 - x^*}$ in the convex setting, where $x^*$ is the closest minimum to $x^0.$
\end{assumption}
The state-of-the-art \emph{iteration complexity} analysis of \algname{Local SGD} (Algorithm~\ref{alg:local_sgd}) for \emph{convex} problems was obtained by \citet{khaled2020tighter,woodworth2020local}, which is optimal \emph{for this method} due to the result by \citet{pmlr-v151-glasgow22a}. In the \emph{nonconvex} setting, under Assumptions~\ref{ass:lipschitz_constant} and \ref{ass:stochastic_variance_bounded}, \citet{koloskova2020local} provide the current state-of-the-art \emph{iteration complexity} to find an $\varepsilon$--stationary point. Considering $\rho$--weak convexity, \citet{luo2025revisiting} improved the rate. The iteration complexity of \algname{Minibatch SGD} (Algorithm~\ref{alg:minibatch}) can be easily inferred from the classical analysis of \algname{SGD}, since \algname{Minibatch SGD} is essentially \algname{SGD} with a batch size of $n K$. The iteration complexities of the discussed results are presented in Table~\ref{table:suboptimality_combined}.

There are many other works that consider local steps. \citet{mishchenko2022proxskip} focus on convex optimization; \citet{patel2022optimalcommunication} consider a different setting from Assumption~\ref{ass:stochastic_variance_bounded} and require the mean-squared smoothness property to use variance reduction techniques \citep{SPIDER,cutkosky2019momentum}; \citet{karimireddy2021breaking,malinovsky2023serversidestepsizes} focus on the finite-sum setting; and \citet{jhunjhunwala2023fedexp,li2024power,anyszka2024tighter} analyze the problem using the proximal operator; \citet{crawshaw2025local} consider logistic regression exclusively; and \citet{tyurin2025birchsgdtreegraph} develop a general framework for local and asynchronous optimization. 

\algname{Local SGD} \citep{koloskova2020local,luo2025revisiting} and \algname{SCAFFOLD} \citep{pmlr-v119-karimireddy20a} are considered the theoretical state-of-the-art methods in our general setting because they possess a desirable theoretical property: their iteration complexities scale with the number of local iterations $K$ (see Table~\ref{table:suboptimality_combined}). In the convex setting, comparing the rates $\frac{L B^2}{R} + \frac{\sigma B}{\sqrt{n K R}}$ and $\frac{L B^2}{K R} + \frac{\sigma B}{\sqrt{n K R}} + \min\Big\{\frac{L^{\frac{1}{3}} \sigma^{\frac{2}{3}} B^{\frac{4}{3}}}{K^{\frac{1}{3}} R^{\frac{2}{3}}}, \frac{\sigma B}{\sqrt{K R}}\Big\}$ for \algname{Minibatch SGD} and \algname{Local SGD}, respectively, it is clear to the community that the latter rate is significantly better when $K$ is large, and there is no doubt that \algname{Local SGD} performs much better in this regime, and that local steps with periodic communication provably help \citep{woodworth2020local,luo2025revisiting}. However, once we start comparing algorithms under Assumption~\ref{ass:time}, we observe that this is not the case, and \algname{Local SGD} (Algorithm~\ref{alg:local_sgd}) is provably \emph{worse} than \algname{Minibatch SGD} and \algname{Hero SGD}.

\begin{table*}[t]
  \caption{Known iteration complexities of \algname{Minibatch SGD} and \algname{Local SGD} in the convex and nonconvex settings. We assume Assumptions~\ref{ass:lipschitz_constant} and \ref{ass:stochastic_variance_bounded} in the nonconvex setting, and Assumptions~\ref{ass:lipschitz_constant}, \ref{ass:stochastic_variance_bounded}, and \ref{ass:convex} in the convex setting. Abbr.: $L=$ smoothness constant; $x^0=$ starting point; $\Delta = f(x^0) - \inf_{x \in \R^d} f(x)$; $\sigma^2 =$ variance of stochastic gradients; $n =$  \# of workers; $K=$ \# of local steps between synchronizations; $R =$ \# of communication rounds; $B = \norm{x^0 - x^*}.$ \textbf{One of the main contributions of this paper is to explain that this comparison is misleading, and that a better one is given in Table~\ref{table:complexities}.}}
  \label{table:suboptimality_combined}
  \centering
  \scriptsize
  \begin{lrbox}{\test} 
  \begin{adjustbox}{width=\textwidth}
    \begin{tabular}{cccc} 
      \multicolumn{2}{c}{\small\textbf{Convex Setting}} & \multicolumn{2}{c}{\small\textbf{Nonconvex Setting}} \\[1ex]
      \addlinespace[-\aboverulesep]
      \cmidrule[\heavyrulewidth](r){1-2}\cmidrule[\heavyrulewidth](l){3-4} 
      \addlinespace[2pt]
      \bf Algorithm & \bf Iteration Complexity & \bf Algorithm & \bf Iteration Complexity \\
      \addlinespace[2pt]
      \cmidrule(r){1-2}\cmidrule(l){3-4}
      \addlinespace[4pt]
        \makecell{\algname{Hero SGD} \\ (no communications)} 
        & 
        $\textstyle
        \frac{L B^2}{R} 
        + \frac{\sigma B}{\sqrt{K R}}
        $
        &
        \makecell{\algname{Hero SGD} \\ (no communications)} 
        & 
        $\textstyle
        \frac{L\Delta}{R} 
        + \sqrt{\frac{L\sigma^2 \Delta}{K R}}
        $
        \\
        \addlinespace[4pt]
        \cmidrule(r){1-2}\cmidrule(l){3-4}
        \addlinespace[4pt]
        \makecell{\algname{Minibatch SGD}} 
        & 
        $\textstyle
        \frac{L B^2}{R} 
        + \frac{\sigma B}{\sqrt{n K R}}
        $
        &
        \makecell{\algname{Minibatch SGD}} 
        & 
        $\textstyle
        \frac{L\Delta}{R} 
        + \sqrt{\frac{L\sigma^2 \Delta}{n K R}}
        $
        \\
        \addlinespace[4pt]
        \cmidrule(r){1-2}\cmidrule(l){3-4}
        \addlinespace[4pt]
        \makecell{\algname{Local SGD} (Alg.~\ref{alg:local_sgd}) \\ \citep{khaled2020tighter}} 
        & 
        $\textstyle
        \frac{L B^2}{K R}
        + \frac{\sigma B}{\sqrt{n K R}}
        + \frac{L^{\frac{1}{3}} \sigma^{\frac{2}{3}} B^{\frac{4}{3}}}{K^{\frac{1}{3}} R^{\frac{2}{3}}}
        $ 
        &
        \makecell{\algname{Local SGD} (Alg.~\ref{alg:local_sgd}) \\ \citep{koloskova2020local}} 
        & 
        $\textstyle
        \frac{L\Delta}{R}
        + \sqrt{\frac{L\sigma^2 \Delta}{n K R}}
        + \frac{\left(L \sigma \Delta \right)^{\frac{2}{3}}}{K^{\frac{1}{3}} R^{\frac{2}{3}}}
        $
        \\
        \addlinespace[4pt]
        \cmidrule(r){1-2}\cmidrule(l){3-4}
        \addlinespace[4pt]
        \cmidrule(r){1-2}
        \addlinespace[4pt]
        \makecell{Lower bound for \algname{Local SGD} \\ \citep{pmlr-v151-glasgow22a}} 
        & 
        $\textstyle
        \frac{L B^2}{K R}
        + \frac{\sigma B}{\sqrt{n K R}}
        + \min\left\{\frac{L^{\frac{1}{3}} \sigma^{\frac{2}{3}} B^{\frac{4}{3}}}{K^{\frac{1}{3}} R^{\frac{2}{3}}}, \frac{\sigma B}{\sqrt{K R}}\right\}
        $
        &
        \makecell{\algname{Local SGD} (Alg.~\ref{alg:local_sgd}) \\ \citep{luo2025revisiting} \\ (with $\rho$--weak convexity)} 
        & 
        $\textstyle
        \left(\frac{L}{K} + \rho\right)\frac{\Delta}{R}
        + \sqrt{\frac{L\sigma^2 \Delta}{n K R}}
        + \frac{\left(L \sigma \Delta \right)^{\frac{2}{3}}}{K^{\frac{1}{3}} R^{\frac{2}{3}}}  
        $
        \\
        \addlinespace[4pt]
        \cmidrule[\heavyrulewidth](r){1-2}\cmidrule[\heavyrulewidth](l){3-4}
        \addlinespace[-\belowrulesep] 
    \end{tabular}
  \end{adjustbox}
  \end{lrbox}
  \begin{threeparttable}
    \usebox{\test}
    \begin{tablenotes}
        \centering
        \item {\small While the iteration complexities seem to suggest that \algname{Local SGD} is better than \algname{Hero SGD}/\algname{Minibatch SGD} when $K$ is large, in Sec.~\ref{sec:contr}, \ref{sec:time_comp}, we prove that this is not the case, and \textbf{\algnamebf{Local SGD} is never better, but might be worse! See Table~\ref{table:complexities}.}}
    \end{tablenotes}
  \end{threeparttable}
\end{table*}

\subsection{Contributions}
\label{sec:contr}
$\spadesuit$ \textbf{A Fresh Perspective on \algnamebf{Local SGD}.}
We start our work by proving Theorems~\ref{thm:lower_bound_local_sgd} and \ref{thm:time_mini_hero}. While the iteration complexities in Table~\ref{table:suboptimality_combined} indicate the superiority of \algname{Local SGD}, the time complexities tell us the complete opposite. In particular, the lower bound \eqref{eq:lower_bound_local_sgd} for the time complexity of \algname{Local SGD} can be \emph{significantly worse} than the time complexity \eqref{eq:time_mini_hero} of \algname{Minibatch SGD}/\algname{Hero SGD}. 

One can show that $\eqref{eq:lower_bound_local_sgd} \geq \eqref{eq:time_mini_hero}$ in all regimes (consider two cases: $\nicefrac{h \sigma^2}{\varepsilon} \geq \tau L$ and $\nicefrac{h \sigma^2}{\varepsilon} < \tau L,$ and substitute them into the first term of \eqref{eq:lower_bound_local_sgd}). However, for instance, in the realistic regime where $\nicefrac{h \sigma^2 B^2}{\varepsilon^2} \geq \sqrt{\nicefrac{\tau h L \sigma^2 B^{4}}{\varepsilon^{3}}} \geq \nicefrac{h \sigma^2 B^2}{n \varepsilon^2}$ and $\nicefrac{h \sigma^2 B^2}{\varepsilon^2} \geq \nicefrac{h L B^2}{\varepsilon}$
(i.e., $\varepsilon$ is small and $n$ is large), we have $\eqref{eq:lower_bound_local_sgd} \simeq \sqrt{\nicefrac{\tau h L \sigma^2 B^{4}}{\varepsilon^{3}}} + \nicefrac{h L B^2}{\varepsilon},$ and the term $\sqrt{\nicefrac{\tau h L \sigma^2 B^{4}}{\varepsilon^{3}}}$ can become arbitrarily larger than the first term $\nicefrac{\tau L B^{2}}{\varepsilon} + h \left(\nicefrac{L B^2}{\varepsilon} + \nicefrac{\sigma^2 B^2}{n \varepsilon^2}\right)$ in the $\min$ of \eqref{eq:time_mini_hero} when $\varepsilon$ is small and $n$ is large.


A similar comparison is done for \emph{nonconvex} functions in Section~\ref{sec:nonconvex_time}, where there is a gap between \algname{Local SGD} and \algname{Minibatch SGD}/\algname{Hero SGD} even under $\rho$--weak convexity, and in the \emph{heterogeneous setting} even under $\rho$--weak convexity, as well as the first and second-order similarity assumptions. See Section~\ref{sec:heterogeneous}.

$\clubsuit$ \textbf{Improving the Canonical \algnamebf{Local SGD} Method.} 
We began investigating the gap more closely to identify possible reasons and solutions to bridge it. It turns out that the problem arises from an incorrect aggregation scheme in Line~\ref{line:agg} of Algorithm~\ref{alg:local_sgd}. Surprisingly, the correct update is $x^{t+1} = x^{t} - \frac{\eta_{\ell}}{{\color{mydarkgreen} \sqrt{n}}} \sum_{i=1}^n \sum_{j=0}^{K-1} \nabla f(z^{t}_{i,j}; \xi^{t}_{i,j}),$ where we scale by $\sqrt{n}$ instead of $n.$ We derive this update through the analysis of \algname{Local SGD} with two step sizes, \algname{Dual Local SGD}. While our work is not the first to consider \algname{Dual Local SGD} in the literature, to the best of our knowledge, this is the first work to show that the canonical version of \algname{Local SGD} (Algorithm~\ref{alg:local_sgd}) is suboptimal, and that a modification via \algname{Dual Local SGD}, when combined with \algname{Hero SGD}, leads to the optimal \emph{time complexity} \eqref{eq:new_local_and_hero} (up to logarithmic factors).

$\vardiamond$ \textbf{A New \algnamebf{Local SGD} Method with Larger Local Step Sizes.} 
We noticed that \algname{Dual Local SGD} can be improved and went further by increasing the local step sizes. We provide a new \algname{Local SGD} method, called \algname{Decaying Local SGD}. Instead of the local step size rule $\eta_{\ell} = \sqrt{n} \eta_{g}$ in \algname{Dual Local SGD}, where $\eta_{g}$ is a global step size, we propose to use the step size rule $\eta_{j} = \sqrt{\nicefrac{b}{(j + 1) (\log K + 1)}} \times \eta_g,$ where $j$ is an index of the local step size iteration and $b$ is a parameter. This step size is never worse than $\eta_{\ell} = \sqrt{n} \eta_{g}$ (up to the logarithmic factor) and, in fact, can be arbitrarily larger.

$\varheart$ \textbf{Extension to Other Asynchronous and Local Approaches.}
Using the insights from our theory of \algname{Local SGD} methods, we extend them to other asynchronous and local methods, and improve the theory of \citet{tyurin2025birchsgdtreegraph}: in their framework, they use the step size rule $\eta_{\ell} = \eta_{g}$ in local updates, while we show that it is possible to take $\eta_{j} = \sqrt{\nicefrac{b}{(j + 1) (\log K + 1)}} \times \eta_g$ instead, where $j$ is the ``tree distance'' in their context. See details in Sections~\ref{sec:birch} and \ref{sec:birch_sgd_full}.
\section{Time Complexity Analysis of Existing Methods}
\label{sec:time_comp}
\subsection{Convex setup}
We start with the convex setting, where a lower bound for the iteration complexity of \algname{Local SGD} has already been established \citep{pmlr-v151-glasgow22a} (Table~\ref{table:suboptimality_combined}). Using the lower bound on the iteration complexity, we can prove the following lower bound on the time complexity:
\begin{restatable}[Lower bound for \algname{Local SGD}]{theorem}{TIMECOMPLEXITYLOCAL}
  \label{thm:lower_bound_local_sgd}
  Under Assumptions~\ref{ass:time}, \ref{ass:lipschitz_constant}, \ref{ass:stochastic_variance_bounded}, and \ref{ass:convex}, the time complexity of \algname{Local SGD} (Algorithm~\ref{alg:local_sgd}) to find an $\varepsilon$--solution \textbf{is not better than}
  \begin{align}
    \label{eq:lower_bound_local_sgd}
    \textstyle \min\left\{\sqrt{\tau h \left(\frac{L \sigma^2 B^{4}}{\varepsilon^{3}}\right)} + h \left(\frac{L B^2}{\varepsilon} + \frac{\sigma^2 B^2}{n \varepsilon^2}\right), h \left(\frac{L B^2}{\varepsilon} + \frac{\sigma^2 B^2}{\varepsilon^2}\right)\right\}
  \end{align}
  for any choice of the input parameters, up to constant factors.
\end{restatable}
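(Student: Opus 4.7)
The plan is to reduce the claim to the iteration complexity lower bound of \citet{pmlr-v151-glasgow22a}, which, as recorded in Table~\ref{table:suboptimality_combined}, forces any $(K,R)$ yielding an $\varepsilon$--solution to obey, up to absolute constants,
\begin{align*}
\text{(A)}\ KR \gtrsim \tfrac{LB^2}{\varepsilon}, \qquad \text{(B)}\ KR \gtrsim \tfrac{\sigma^2 B^2}{n\varepsilon^2}, \qquad \text{(C)}\ \text{either } KR^2 \gtrsim \tfrac{L\sigma^2 B^4}{\varepsilon^3}\ \text{ or }\ KR \gtrsim \tfrac{\sigma^2 B^2}{\varepsilon^2}.
\end{align*}
Under Assumption~\ref{ass:time}, one round of \algname{Local SGD} takes $Kh + \tau$ seconds ($K$ sequential stochastic gradients at $h$ seconds each, followed by one synchronization at $\tau$ seconds), so the total wall-clock time is $\mathrm{Time}(K,R) = RKh + R\tau$. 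It then remains to minimize this expression over all $(K,R)$ satisfying (A)--(C).

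Constraints (A) and (B) together already give $RKh \gtrsim h\bigl(\tfrac{LB^2}{\varepsilon} + \tfrac{\sigma^2 B^2}{n\varepsilon^2}\bigr)$, a ``computation floor'' that applies in every regime. I would then split on the disjunction in (C). In the first branch, $KR^2 \gtrsim L\sigma^2 B^4/\varepsilon^3$, the key trick is to multiply rather than add the two time contributions: $(R\tau)\cdot(RKh) = R^2K\tau h \gtrsim \tau h\cdot L\sigma^2 B^4/\varepsilon^3$, so AM--GM yields $R\tau + RKh \gtrsim \sqrt{\tau h\cdot L\sigma^2 B^4/\varepsilon^3}$. Since $\mathrm{Time}$ dominates both this quantity and the computation floor, their sum lower-bounds $\mathrm{Time}$ up to constants, producing the first argument of the outer $\min$ in \eqref{eq:lower_bound_local_sgd}. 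In the second branch, $KR \gtrsim \sigma^2 B^2/\varepsilon^2$, combining with (A) directly yields $RKh \gtrsim h(LB^2/\varepsilon + \sigma^2 B^2/\varepsilon^2)$, which matches the second argument of the outer $\min$.

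Because any execution of \algname{Local SGD} that reaches an $\varepsilon$--solution must fall into at least one of the two branches, its time is no smaller than the minimum of the two branch bounds, giving precisely \eqref{eq:lower_bound_local_sgd}.

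The only conceptual step is recognizing the AM--GM trick applied to the cross-product $R^2 K \tau h$, which converts the mixed constraint $KR^2 \gtrsim \ldots$ (the two unknowns appear in incompatible powers) into a clean additive lower bound on $RKh + R\tau$. Everything else is bookkeeping on top of the Glasgow et al.\ lower bound, so I do not anticipate a serious technical obstacle; the main care needed is ensuring that all three constraints are handled uniformly in $(K,R)$ so that the conclusion is a universal lower bound rather than just an infimum at some particular parameter choice.
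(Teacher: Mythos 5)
Your proposal is correct and follows essentially the same route as the paper: both start from the Glasgow et al.\ iteration lower bound, translate it into a lower bound on $R$ given $K$ (equivalently, your constraints (A)--(C)), write the wall-clock time as $R\tau + RKh$, and extract the $\sqrt{\tau h\, L\sigma^2 B^4/\varepsilon^3}$ term by the identical AM--GM step (the paper applies it to $\tau L^{1/2}\sigma B^2/(K^{1/2}\varepsilon^{3/2})$ and $h K^{1/2}L^{1/2}\sigma B^2/\varepsilon^{3/2}$, which after substituting the lower bound on $R$ is exactly your $(R\tau)\cdot(RKh)$). The only difference is presentational: you keep $R$ symbolic and argue from the constraints, whereas the paper first substitutes $R\ge R_{\mathrm{lb}}(K)$ and then manipulates the resulting expression in $K$; the mathematical content is the same.
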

For \algname{Minibatch SGD} and \algname{Hero SGD}, we can prove the theorem below, where \eqref{eq:time_mini_hero} represents the best time complexity achieved by either method.
\begin{restatable}[Upper bound for \algname{Minibatch SGD/Hero SGD}]{theorem}{TIMECOMPLEXITYMINIBATCH}
  \label{thm:time_mini_hero}
  Under Assumptions~\ref{ass:time}, \ref{ass:lipschitz_constant}, \ref{ass:stochastic_variance_bounded}, and \ref{ass:convex}, the time complexity of \algname{Minibatch SGD} and \algname{Hero SGD} (Algorithms~\ref{alg:minibatch} and \ref{alg:hero_sgd}) to find an $\varepsilon$--solution \textbf{is no worse than}
  \begin{align}
    \label{eq:time_mini_hero}
    \textstyle \min\left\{\tau \frac{L B^{2}}{\varepsilon} + h \left(\frac{L B^2}{\varepsilon} + \frac{\sigma^2 B^2}{n \varepsilon^2}\right), h \left(\frac{L B^2}{\varepsilon} + \frac{\sigma^2 B^2}{\varepsilon^2}\right)\right\}
  \end{align}
  with a proper choice of the input parameters, up to constant factors.
\end{restatable}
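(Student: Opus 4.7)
The plan is to establish the two terms of the minimum separately by analyzing \algname{Minibatch SGD} and \algname{Hero SGD} in turn, then taking the better of the two.

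\textbf{First term (\algname{Minibatch SGD}).} The key observation is that one round of \algname{Minibatch SGD} is exactly one step of vanilla \algname{SGD} at the iterate $x^t$ with an aggregated stochastic gradient $\frac{1}{nK}\sum_{i,j}\nabla f(x^t;\xi^t_{i,j})$, which is unbiased with variance at most $\sigma^2/(nK)$. Hence the standard convex \algname{SGD} analysis yields an iteration complexity of order $\frac{LB^2}{R} + \frac{\sigma B}{\sqrt{nKR}} \leq \varepsilon$, i.e.\ it suffices to take $R \gtrsim \frac{LB^2}{\varepsilon}$ together with $nKR \gtrsim \frac{\sigma^2 B^2}{\varepsilon^2}$. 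Under Assumption~\ref{ass:time}, each round consists of $K$ parallel gradient computations (time $\leq Kh$) followed by one synchronization (time $\leq \tau$), so the total wall-clock time is at most $R(\tau + Kh)$. I would then optimize the choices: fix $R = \lceil \frac{LB^2}{\varepsilon}\rceil$ and $K = \max\{1,\lceil\frac{\sigma^2}{nL\varepsilon}\rceil\}$, so that both iteration conditions hold. Substituting gives wall-clock time bounded by $\tau\frac{LB^2}{\varepsilon} + h\frac{LB^2}{\varepsilon} + h\frac{\sigma^2 B^2}{n\varepsilon^2}$, which is the first term in the minimum of \eqref{eq:time_mini_hero}.

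\textbf{Second term (\algname{Hero SGD}).} Since \algname{Hero SGD} is plain \algname{SGD} executed on a single worker with batch size one, no communication is ever performed, and each iteration takes at most $h$ seconds. Applying the textbook convex \algname{SGD} rate $\frac{LB^2}{R} + \frac{\sigma B}{\sqrt{R}}\leq \varepsilon$ shows that $R = \lceil\frac{LB^2}{\varepsilon} + \frac{\sigma^2 B^2}{\varepsilon^2}\rceil$ iterations suffice, for a total time of $Rh = h\left(\frac{LB^2}{\varepsilon}+\frac{\sigma^2 B^2}{\varepsilon^2}\right)$. This yields the second term in \eqref{eq:time_mini_hero}.

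\textbf{Combining.} Since one is free to choose whichever of the two algorithms (and parameter settings) delivers the smaller wall-clock time, the overall achievable time complexity is bounded by the minimum of the two expressions above, which is exactly \eqref{eq:time_mini_hero}. I do not expect any significant obstacle here: everything reduces to the classical convex \algname{SGD} convergence theorem applied to an appropriately batched gradient estimator, combined with a straightforward accounting of computation and communication costs under Assumption~\ref{ass:time}. The only mildly delicate step is the parameter tuning in the \algname{Minibatch SGD} case, where one has to ensure that both of the two iteration-complexity constraints are satisfied simultaneously while not overpaying for either communication ($R\tau$) or computation ($RKh$); this is handled by the case split baked into the $\max\{1,\sigma^2/(nL\varepsilon)\}$ choice of $K$.
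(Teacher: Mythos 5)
Your proof is correct and follows essentially the same route as the paper's: \algname{Hero SGD} is just vanilla \algname{SGD} at cost $h$ per step, \algname{Minibatch SGD} is \algname{SGD} with batch size $nK$ at cost $\tau + Kh$ per round, and choosing $K = \max\{1,\lceil\sigma^2/(nL\varepsilon)\rceil\}$ balances the two terms to yield the first entry of the minimum — exactly the parameter choice the paper uses. The only cosmetic difference is that the paper writes the round count as $R \simeq \frac{LB^2}{\varepsilon} + \frac{\sigma^2 B^2}{Kn\varepsilon^2}$ and then substitutes $K$, whereas you fix $R$ and $K$ up front; the content is the same.
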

Both time complexities \eqref{eq:lower_bound_local_sgd} and \eqref{eq:time_mini_hero} depend on computation time $h$ and communication time $\tau$: the larger the values of $\tau$ and $h$, the more time it takes to converge to an $\varepsilon$-solution. However, the time complexity of \algname{Minibatch SGD}/\algname{Hero SGD} is never worse than that of \algname{Local SGD}. Moreover, the time complexity of \algname{Local SGD} can be arbitrarily larger. See the discussion in Section~\ref{sec:contr}. 
\begin{table*}
    \caption{Time complexities to get an $\varepsilon$-solution or an $\varepsilon$-stationary point in the homogeneous regime under Assumption~\ref{ass:time}. We use the same notations as in Table~\ref{table:suboptimality_combined}, plus $\tau=$ communication time; $h=$ computation time.}
    \label{table:complexities}
    \centering 
    \scriptsize  
    \begin{minipage}[t]{\textwidth}
    \begin{adjustbox}{width=\columnwidth,center}  
    \begin{tabular}[t]{ccc}
    \multicolumn{1}{c}{\small\textbf{}} & \multicolumn{1}{c}{\small\textbf{Convex Setting}} & \multicolumn{1}{c}{\small\textbf{Nonconvex Setting}} \\[1ex]
    \cmidrule[\heavyrulewidth](lr){1-1} \cmidrule[\heavyrulewidth](lr){2-2} \cmidrule[\heavyrulewidth](lr){3-3}
     \bf Algorithm & \bf Time Complexity & \bf Time Complexity \\
      \cmidrule(lr){1-1} \cmidrule(lr){2-2} \cmidrule(lr){3-3}
       \addlinespace[1pt]
       \makecell{\algname{Local SGD} (Thm.~\ref{eq:lower_bound_local_sgd} and \ref{thm:lower_bound_local_sgd_nonconvex})} 
       & \makecell{$\textstyle \min\Big\{{\color{red}\sqrt{\tau h \left(\frac{L \sigma^2 B^{4}}{\varepsilon^{3}}\right)}} + h \left(\frac{L B^2}{\varepsilon} + \frac{\sigma^2 B^2}{n \varepsilon^2}\right),$ \\
       $h \left(\frac{L B^2}{\varepsilon} + \frac{\sigma^2 B^2}{\varepsilon^2}\right)\Big\}$}
       & $\textstyle \footnotemark\geq {\color{red}\sqrt{\tau h \left(\frac{L^2 \sigma^2 \Delta^2}{\varepsilon^{3}}\right)}} + h \left(\frac{L \Delta}{\varepsilon} + \frac{L\sigma^2 \Delta}{n \varepsilon^2}\right)$ \\
       \addlinespace[1pt]
      \cmidrule(lr){1-1} \cmidrule(lr){2-2} \cmidrule(lr){3-3}
       \addlinespace[1pt]
       \makecell{\algname{Hero SGD} \citep{lan2020first}} &  
       $h \left(\frac{L B^2}{\varepsilon} + \frac{\sigma^2 B^2}{\varepsilon^2}\right)$
       & \makecell{$h \left(\frac{L \Delta}{\varepsilon} + \frac{L \sigma^2 \Delta}{\varepsilon^2}\right)$} \\
       \addlinespace[1pt]
      \cmidrule(lr){1-1} \cmidrule(lr){2-2} \cmidrule(lr){3-3}
       \addlinespace[1pt]
       \makecell{\algname{Minibatch SGD}, \algname{Dual Local SGD}, \\ \algname{Decaying Local SGD}, \\ or \algname{Decaying Local SGD} (async version) \\ 
       (Thm.~\ref{thm:time_mini_hero_nonconvex}, \ref{thm:communication-and-computation-complexity}, \ref{thm:adaptive_local_sgd}, \ref{thm:local_sgd_comnunication}, \ref{thm:communication-and-computation-complexity-convex}, \ref{thm:adaptive_local_sgd-convex})}
       & $\tau \frac{L B^2}{\varepsilon} + h \left(\frac{L B^2}{\varepsilon} + \frac{\sigma^2 B^2}{n \varepsilon^2}\right)$
       & $\textstyle \tau \frac{L \Delta}{\varepsilon} + h \left(\frac{L \Delta}{\varepsilon} + \frac{L \sigma^2 \Delta}{n \varepsilon^2}\right)$ \\
       \addlinespace[1pt]
      \cmidrule(lr){1-1} \cmidrule(lr){2-2} \cmidrule(lr){3-3}
       \addlinespace[1pt]
       \makecell{\algname{Accelerated Minibatch SGD} // \\ \algname{Accelerated Hero SGD} \\ \citep{lan2020first,tyurin2024optimalgraph}} 
       & \makecell{$\min\Big\{\tau \frac{\sqrt{L} B}{\sqrt{\varepsilon}} + h \left(\frac{\sqrt{L} B}{\sqrt{\varepsilon}} + \frac{\sigma^2 B^2}{n \varepsilon^{2}}\right), $ \\
       $h \left(\frac{\sqrt{L} B}{\sqrt{\varepsilon}} + \frac{\sigma^2 B^2}{\varepsilon^{2}}\right)\Big\}$}
       & --- \\
       \addlinespace[1pt]
       \cmidrule(lr){1-1} \cmidrule(lr){2-2} \cmidrule(lr){3-3}
       \cmidrule(lr){1-1}  \cmidrule(lr){2-2} \cmidrule(lr){3-3}
       \addlinespace[1pt]
       \makecell{Lower Bound \\ (matches the best of \\ two previous lines in \\ the nonconvex setting)} 
       & ---
       & \makecell{$\textstyle \tilde{\Omega} \Big(\min\Big\{\tau \frac{L \Delta}{\varepsilon} + h \left(\frac{L \Delta}{\varepsilon} + \frac{L \sigma^2 \Delta}{n \varepsilon^2}\right),$ \\ $h \left(\frac{L \Delta}{\varepsilon} + \frac{L \sigma^2 \Delta}{\varepsilon^2}\right)\Big\}\Big)$ \\ \citep{tyurin2024optimalgraph}}  \\
       \addlinespace[1pt]
      \cmidrule[\heavyrulewidth](lr){1-1} \cmidrule[\heavyrulewidth](lr){2-2} \cmidrule[\heavyrulewidth](lr){3-3}
      \end{tabular}
      \end{adjustbox}
      \end{minipage}
 \end{table*}
\subsection{Nonconvex setup}
\label{sec:nonconvex_time}
Compared to the convex setting, where \citet{pmlr-v151-glasgow22a} established a lower bound for the iteration complexity of \algname{Local SGD}, to the best of our knowledge, there is no lower bound available in the nonconvex setting. Therefore, to analyze the time complexity in the nonconvex case, we rely on the state-of-the-art convergence rates provided by \citet{koloskova2020local,luo2025revisiting}.
\begin{restatable}[Lower bound for \algname{Local SGD}]{theorem}{TIMECOMPLEXITYLOCALNONCONVEX}
\label{thm:lower_bound_local_sgd_nonconvex}
Under Assumptions~\ref{ass:time}, \ref{ass:lipschitz_constant}, and \ref{ass:stochastic_variance_bounded}, (and $\rho$--weak convexity\footnote{Considering it does not help to improve the time complexity of \algname{Minibatch SGD}/\algname{Hero SGD}.}), the time complexity of \algname{Local SGD} (Algorithm~\ref{alg:local_sgd}) to find an $\varepsilon$–stationary point is \textbf{not better than}
\footnotetext{Not better than the following complexity.}
\begin{align}
\textstyle \sqrt{\tau h \left(\frac{L^2 \sigma^2 \Delta^2}{\varepsilon^{3}}\right)} + h \left(\frac{L \Delta}{\varepsilon} + \frac{L\sigma^2 \Delta}{n \varepsilon^2}\right),
\end{align}
up to constant factors, using the analysis by \citet{koloskova2020local,luo2025revisiting}.
\end{restatable}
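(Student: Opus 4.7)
The plan is to convert the state-of-the-art iteration complexity upper bound of \citet{koloskova2020local} (or \citet{luo2025revisiting} under $\rho$--weak convexity) into a wall-clock lower bound by optimizing the time function $T(R,K) = R(Kh + \tau) = RKh + R\tau$ subject to the constraint that the upper bound be at most $\varepsilon$. First, I would invoke Koloskova's rate from Table~\ref{table:suboptimality_combined}: to reach an $\varepsilon$--stationary point via Algorithm~\ref{alg:local_sgd}, the parameters must satisfy
\[
\frac{L\Delta}{R} + \sqrt{\frac{L\sigma^{2}\Delta}{nKR}} + \frac{(L\sigma\Delta)^{2/3}}{K^{1/3}R^{2/3}} \;\lesssim\; \varepsilon.
\]
Since each summand is nonnegative, this is equivalent up to an absolute constant to the three simultaneous inequalities
\[
\textrm{(C1)}\; KR \gtrsim \tfrac{L\Delta}{\varepsilon},\qquad
\textrm{(C2)}\; KR \gtrsim \tfrac{L\sigma^{2}\Delta}{n\varepsilon^{2}},\qquad
\textrm{(C3)}\; KR^{2} \gtrsim \tfrac{L^{2}\sigma^{2}\Delta^{2}}{\varepsilon^{3}}.
\]

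Constraints (C1) and (C2) immediately imply $RKh \gtrsim h\bigl(L\Delta/\varepsilon + L\sigma^{2}\Delta/(n\varepsilon^{2})\bigr)$, which accounts for the pure $h$--term in the claimed expression. For the mixed $\sqrt{\tau h\,\cdots}$ term, I would rewrite (C3) as $RK \gtrsim L^{2}\sigma^{2}\Delta^{2}/(R\varepsilon^{3})$, so that
\[
T \;\geq\; RKh + R\tau \;\gtrsim\; \frac{hL^{2}\sigma^{2}\Delta^{2}}{R\varepsilon^{3}} + R\tau,
\]
and applying AM--GM to the right-hand side as a function of $R$ yields $T \gtrsim \sqrt{\tau h\,L^{2}\sigma^{2}\Delta^{2}/\varepsilon^{3}}$. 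Summing the two pieces (absorbing the factor of two into the implicit constant) produces the claimed expression. The extension to $\rho$--weak convexity via Luo et al.\ follows the same skeleton: their first summand $(L/K + \rho)\Delta/R$ still forces $KR \gtrsim L\Delta/\varepsilon$ and only adds an auxiliary requirement $R \gtrsim \rho\Delta/\varepsilon$ that can be discarded without affecting the stated lower bound, while (C2) and (C3) are unchanged.

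The main obstacle I expect is not in the arithmetic, but in making sure the derivation does not accidentally inject an undesired $\tau L\Delta/\varepsilon$ term, which is conspicuously absent from the claimed expression and which distinguishes the lower bound for \algname{Local SGD} from the upper bound for \algname{Minibatch SGD} in Theorem~\ref{thm:time_mini_hero}. The clean resolution is to apply AM--GM between $R\tau$ and the lower bound $RKh \gtrsim hL^{2}\sigma^{2}\Delta^{2}/(R\varepsilon^{3})$ extracted from (C3), rather than separately coupling $R\tau$ with (C1); this yields the tighter cross-term $\sqrt{\tau h\,L^{2}\sigma^{2}\Delta^{2}/\varepsilon^{3}}$ which dominates precisely in the noise-limited regime (C3) is binding for. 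Relying on Luo's rate in the $\rho$--weakly convex case avoids a spurious $\tau L\Delta/\varepsilon$ contribution for small $\rho$, which is why the theorem invokes both references simultaneously.
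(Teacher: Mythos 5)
Your proof is correct and follows essentially the same route as the paper. The paper expresses the minimal $R$ for a given $K$ from the Luo et al.\ rate (hence the $\nicefrac{L\Delta}{\varepsilon K}$ first term, avoiding the spurious $\tau \nicefrac{L\Delta}{\varepsilon}$ contribution just as you anticipate), multiplies by $(\tau + hK)$, drops the $\tau$-weighted first two terms, and applies AM--GM between $\tau\,\nicefrac{L\sigma\Delta}{K^{1/2}\varepsilon^{3/2}}$ and $h\,\nicefrac{K^{1/2}L\sigma\Delta}{\varepsilon^{3/2}}$; you instead abstract the three constraints and apply AM--GM between $\tau R$ and $\nicefrac{hL^2\sigma^2\Delta^2}{R\varepsilon^{3}}$, which is the same inequality reparametrized from $K$ to $R$ and yields the identical product under the square root. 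One minor wording nit: Koloskova's first term $\nicefrac{L\Delta}{R}\lesssim\varepsilon$ forces $R\gtrsim \nicefrac{L\Delta}{\varepsilon}$, which is strictly stronger than your (C1); it only becomes an equivalence under Luo's $\nicefrac{L\Delta}{KR}$ term, so the rate formula is not ``equivalent'' to (C1)--(C3) but merely implies them -- which is all you need for a lower bound, and your final paragraph shows you are aware of this.
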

\begin{restatable}[Upper bound for \algname{Minibatch SGD/Hero SGD}]{theorem}{TIMECOMPLEXITYMINIBATCHNONCONVEX}
  \label{thm:time_mini_hero_nonconvex}
  Under Assumptions~\ref{ass:time}, \ref{ass:lipschitz_constant}, and \ref{ass:stochastic_variance_bounded}, the time complexity of \algname{Minibatch SGD} and \algname{Hero SGD} (Algorithms~\ref{alg:minibatch} and \ref{alg:hero_sgd}) to find an $\varepsilon$--stationary point \textbf{is no worse than}
  \begin{align}
    \label{eq:time_mini_hero_nonconvex}
    \textstyle \min\left\{\tau \frac{L \Delta}{\varepsilon} + h \left(\frac{L \Delta}{\varepsilon} + \frac{L \sigma^2 \Delta}{n \varepsilon^2}\right), h \left(\frac{L \Delta}{\varepsilon} + \frac{L \sigma^2 \Delta}{\varepsilon^2}\right)\right\}
  \end{align}
  up to constant factors, where we take $\eta \simeq \min\{\nicefrac{1}{L}, \nicefrac{\varepsilon n}{L \sigma^2}\}$ in \algname{Hero SGD}, and $K = \max\left\{\left\lceil\nicefrac{\sigma^2}{\varepsilon n}\right\rceil, 1\right\}$ and $\eta_g \simeq \min\left\{\nicefrac{\varepsilon}{L \sigma^2}, \nicefrac{1}{n L}\right\}.$ Moreover, it is optimal up to logarithmic factors due to a result of \citet{tyurin2024optimalgraph}.
\end{restatable}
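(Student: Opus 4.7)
The plan is to analyze each algorithm separately via classical nonconvex SGD arguments, translate the iteration complexity into wall-clock time via Assumption~\ref{ass:time}, and finally take the minimum of the two bounds.

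First, I would handle \algname{Hero SGD}. Since it performs one stochastic gradient step per iteration on a single worker under Assumptions~\ref{ass:lipschitz_constant} and \ref{ass:stochastic_variance_bounded}, the textbook descent-lemma argument (e.g., \citet{lan2020first}) gives
\begin{align*}
\textstyle \frac{1}{R}\sum_{t=0}^{R-1} \mathbb{E}\|\nabla f(x^t)\|^2 \;\leq\; \frac{2\Delta}{\eta R} + \eta L \sigma^2
\end{align*}
for any $\eta \leq 1/L$. Choosing $\eta \simeq \min\{1/L,\varepsilon/(L\sigma^2)\}$ yields $R = \mathcal{O}(L\Delta/\varepsilon + L\sigma^2\Delta/\varepsilon^2)$. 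Each iteration costs $h$ seconds (one gradient, no communication), which gives the second term inside the min in \eqref{eq:time_mini_hero_nonconvex}.

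Next I would analyze \algname{Minibatch SGD}. Because all $nK$ stochastic gradients at round $t$ are evaluated at the same iterate $x^t$, it is equivalent to \algname{SGD} with an unbiased gradient estimator of variance $\sigma^2/(nK)$, so the same descent lemma yields, with $\eta_g$ rescaled to absorb the $nK$ factor,
\begin{align*}
\textstyle \frac{1}{R}\sum_{t=0}^{R-1} \mathbb{E}\|\nabla f(x^t)\|^2 \;\leq\; \frac{2\Delta}{\eta_g n K R} + \frac{\eta_g n K L \sigma^2}{nK} \;=\; \frac{2\Delta}{\eta_g n K R} + \eta_g n K L \cdot \frac{\sigma^2}{nK},
\end{align*}
so $R = \mathcal{O}\bigl(L\Delta/\varepsilon + L\sigma^2\Delta/(nK\varepsilon^2)\bigr)$ with an appropriate $\eta_g$. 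Under Assumption~\ref{ass:time}, one round costs $\tau + hK$ seconds (the $n$ workers compute $K$ gradients in parallel, followed by one synchronization). Multiplying,
\begin{align*}
\textstyle T_{\text{mb}}(K) \;=\; (\tau + hK)\cdot\mathcal{O}\!\left(\frac{L\Delta}{\varepsilon} + \frac{L\sigma^2\Delta}{nK\varepsilon^2}\right)
\;\simeq\; \tau\frac{L\Delta}{\varepsilon} + hK\frac{L\Delta}{\varepsilon} + \tau\frac{L\sigma^2\Delta}{nK\varepsilon^2} + h\frac{L\sigma^2\Delta}{n\varepsilon^2}.
\end{align*}
The two $K$-dependent terms are the ones to balance. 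Substituting $K = \max\{\lceil \sigma^2/(n\varepsilon)\rceil,1\}$ makes the middle term at most $h L\sigma^2 \Delta/(n\varepsilon^2)$ (already present) and the $\tau/K$ term at most $\tau L\Delta/\varepsilon$ (also already present), giving the first bracket of \eqref{eq:time_mini_hero_nonconvex}.

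Finally, since we are free to run whichever of the two methods gives the smaller time, we take the minimum, yielding \eqref{eq:time_mini_hero_nonconvex}. Optimality up to logarithmic factors follows immediately by invoking the matching lower bound of \citet{tyurin2024optimalgraph} for the considered oracle and communication model. I do not foresee a serious obstacle: the main subtlety is only bookkeeping — verifying that the chosen $K$ really does drive both $K$-dependent summands under the corresponding $K$-free terms, and handling the corner case $\sigma^2 \leq n\varepsilon$ where $K=1$ and \algname{Minibatch SGD} degenerates to parallel \algname{SGD}, in which case the $\tau/K$ term is trivially absorbed.
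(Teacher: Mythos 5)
Your proposal is correct and follows essentially the same route as the paper's proof: treat \algname{Hero SGD} as classical single-machine \algname{SGD} costing $h$ per iteration, treat \algname{Minibatch SGD} as \algname{SGD} with batch size $nK$ costing $\tau + hK$ per round, tune $K = \max\{\lceil\sigma^2/(n\varepsilon)\rceil, 1\}$ to absorb the $K$-dependent terms, take the minimum, and invoke the lower bound of \citet{tyurin2024optimalgraph} for optimality. The paper's version is just terser (it largely refers back to the analogous convex argument in Theorem~\ref{thm:time_mini_hero}), while you spelled out the balancing of the $hK\cdot L\Delta/\varepsilon$ and $\tau\cdot L\sigma^2\Delta/(nK\varepsilon^2)$ terms explicitly, which is exactly the bookkeeping the paper leaves implicit.
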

The gap between Theorems~\ref{thm:lower_bound_local_sgd_nonconvex} and \ref{thm:time_mini_hero_nonconvex} is similar to the gap between Theorems~\ref{thm:lower_bound_local_sgd} and \ref{thm:time_mini_hero} with $\nicefrac{L \Delta}{\varepsilon}$ and $\nicefrac{L \sigma^2 \Delta}{\varepsilon}$ instead of $\nicefrac{L B^{2}}{\varepsilon^2}$ and $\nicefrac{\sigma^2 B^2}{\varepsilon^2}.$

\textbf{In total, our results illustrate that \algnamebf{Local SGD} (Algorithm~\ref{alg:local_sgd}) is not only non-better, but also provably strictly worse than \algnamebf{Minibatch SGD} and \algnamebf{Hero SGD} (Algorithms~\ref{alg:minibatch} and \ref{alg:hero_sgd})!}

\subsection{Heterogeneous setting}
\label{sec:heterogeneous}
In the heterogeneous setting, we require the assumption below:
\begin{assumption}
  \label{ass:stochastic_variance_bounded_heter}
  The stochastic gradients satisfy ${\rm \mathbb{E}}_{\xi}[\nabla f_i(x;\xi)] = \nabla f_i(x)$ (unbiasedness) and ${\rm \mathbb{E}}_{\xi}[\|\nabla f_i(x;\xi) - \nabla f_i(x)\|^2] \leq \sigma^2$ for all $x \in \R^d,$ where $\sigma^2 \geq 0$ (bounded variance). For all $i \in [n],$ worker $i$ can only calculate $\nabla f_i(\cdot;\cdot).$
\end{assumption}
Similarly to Theorems~\ref{thm:lower_bound_local_sgd_nonconvex} and \ref{thm:time_mini_hero_nonconvex}, which work in the homogeneous regime, we can prove the following theorems in the heterogeneous setting.
\begin{restatable}[Lower bound for \algname{Local SGD}]{theorem}{TIMECOMPLEXITYLOCALNONCONVEXHETER}
\label{thm:lower_bound_local_sgd_nonconvex_hter}
Assume that all functions $f_i$ satisfy Assumptions~\ref{ass:time}. Under Assumptions~\ref{ass:lipschitz_constant} and \ref{ass:stochastic_variance_bounded_heter}, (and $\rho$–-weak convexity, the first and the second-order similarity), the time complexity of \algname{Local SGD} and \algname{SCAFFOLD} to find an $\varepsilon$–stationary point is \textbf{not better than}
\begin{align}
\label{eq:VZpgrMpgUOVZ}
\textstyle \sqrt{\tau h \left(\frac{L^2 \sigma^2 \Delta^2}{\varepsilon^{3}}\right)} + h \left(\frac{L \Delta}{\varepsilon} + \frac{L\sigma^2 \Delta}{n \varepsilon^2}\right),
\end{align}
up to constant factors, using the analysis by \citet{koloskova2020local,luo2025revisiting} (best known in terms of scaling with the number of local steps $K$).
\end{restatable}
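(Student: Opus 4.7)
The plan is to mirror the homogeneous nonconvex argument of Theorem~\ref{thm:lower_bound_local_sgd_nonconvex} and transfer it to the heterogeneous setup. Since the $n$ workers compute their $K$ local stochastic gradients in parallel, Assumption~\ref{ass:time} forces every communication round of \algname{Local SGD} or \algname{SCAFFOLD} to cost at most $\tau + Kh$ seconds, so after $R$ rounds the total wall-clock time is
\begin{align*}
T \;=\; R(\tau + Kh) \;=\; \tau R \;+\; h K R.
\end{align*}
Thus any lower bound on the admissible pairs $(K, R)$ translates directly into a lower bound on $T$.

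Next, I will invoke the best known iteration complexities in the heterogeneous setting: the rate of \citet{koloskova2020local} for plain nonconvex \algname{Local SGD}, the sharper bound of \citet{luo2025revisiting} under $\rho$-weak convexity, and the analogous \algname{SCAFFOLD} rates under first/second-order similarity. In each of these cases the guarantee has the canonical shape
\begin{align*}
\ExpSub{}{\sqnorm{\nabla f(\bar{x})}} \;\lesssim\; \frac{L\Delta}{R} \;+\; \sqrt{\frac{L\sigma^2\Delta}{nKR}} \;+\; \frac{(L\sigma\Delta)^{2/3}}{K^{1/3} R^{2/3}}.
\end{align*}
Forcing each of the three summands to be at most $\varepsilon$ yields three separate constraints on $(K, R)$: $R \gtrsim L\Delta/\varepsilon$, $KR \gtrsim L\sigma^2\Delta/(n\varepsilon^2)$, and, most importantly, $KR^{2} \gtrsim L^{2}\sigma^{2}\Delta^{2}/\varepsilon^{3}$. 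These are the only properties of the iteration complexities I will use.

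I will then lower bound $T$ term by term. The first constraint gives $T \geq hR \gtrsim hL\Delta/\varepsilon$, and the second gives $T \geq hKR \gtrsim hL\sigma^{2}\Delta/(n\varepsilon^{2})$. For the interesting cross term, I apply AM-GM to $\tau R$ and $hKR$:
\begin{align*}
\tau R + hKR \;\geq\; 2\sqrt{\tau h \cdot KR^{2}} \;\gtrsim\; \sqrt{\frac{\tau h L^{2} \sigma^{2} \Delta^{2}}{\varepsilon^{3}}}.
\end{align*}
Summing the three bounds (using $\max \geq $ average) yields the claimed inequality \eqref{eq:VZpgrMpgUOVZ}, uniformly in the user's choice of $K$, and therefore a lower bound on the time achievable by \algname{Local SGD} or \algname{SCAFFOLD} through these analyses.

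The main obstacle is purely bookkeeping: I must verify, case by case, that each of the listed heterogeneous analyses (\algname{Local SGD} under \citet{koloskova2020local}, under \citet{luo2025revisiting}, and \algname{SCAFFOLD} under first and second-order similarity) really does produce a rate dominated by the $(L\sigma\Delta)^{2/3}/(K^{1/3} R^{2/3})$ drift term with the same exponents of $K$ and $R$; any hidden improvement in the $K$-scaling would weaken the constraint $KR^{2} \gtrsim L^{2}\sigma^{2}\Delta^{2}/\varepsilon^{3}$ and thus the AM-GM step. Once this is confirmed -- and the phrase ``best known in terms of scaling with the number of local steps $K$'' in the statement is precisely the claim that no such improvement is available -- the remaining arithmetic (the AM-GM step and collecting the three lower bounds) is short and identical to that of Theorem~\ref{thm:lower_bound_local_sgd_nonconvex}.
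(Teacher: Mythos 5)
Your proposal is correct and takes essentially the same route as the paper: both invoke the Koloskova/Luo-style iteration complexity (with the $K^{1/3}R^{2/3}$ drift term), translate it into wall-clock time $T = \tau R + hKR$ via Assumption~\ref{ass:time}, and apply AM-GM to isolate the $\sqrt{\tau h L^2 \sigma^2 \Delta^2 / \varepsilon^3}$ term. The paper substitutes $R(K)$ into $T$ and applies AM-GM to the third summand's $\tau$- and $hK$-weighted pieces; you extract the constraints $R \gtrsim L\Delta/\varepsilon$, $KR \gtrsim L\sigma^2\Delta/(n\varepsilon^2)$, $KR^2 \gtrsim L^2\sigma^2\Delta^2/\varepsilon^3$ and then apply AM-GM to the generic pair $(\tau R, hKR)$ --- a slightly cleaner packaging that arrives at the identical bound.
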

\begin{restatable}[Upper bound for \algname{Minibatch SGD}]{theorem}{TIMECOMPLEXITYMINIBATCHNONCONVEXHETER}
  \label{thm:time_mini_hero_nonconvex_heter}
  Under Assumptions~\ref{ass:time}, \ref{ass:lipschitz_constant}, and \ref{ass:stochastic_variance_bounded_heter}, the time complexity of \algname{Minibatch SGD}  (Algorithms~\ref{alg:minibatch}) to find an $\varepsilon$--stationary point \textbf{is no worse than}
  \begin{align}
    \label{eq:time_mini_hero_nonconvex_heter}
    \textstyle \tau \frac{L \Delta}{\varepsilon} + h \left(\frac{L \Delta}{\varepsilon} + \frac{L \sigma^2 \Delta}{n \varepsilon^2}\right)
  \end{align}
  up to constant factors, where we take $K = \max\left\{\left\lceil\nicefrac{\sigma^2}{\varepsilon n}\right\rceil, 1\right\}$ and $\eta_g \simeq \min\left\{\nicefrac{\varepsilon}{L \sigma^2}, \nicefrac{1}{n L}\right\}.$ Moreover, it is optimal up to constant factors due to a result of \citet{tyurin2024optimalgraph}.
\end{restatable}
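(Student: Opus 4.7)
\textbf{Proof plan for Theorem~\ref{thm:time_mini_hero_nonconvex_heter}.}
The approach is the standard ``treat Minibatch SGD as SGD with a larger effective batch'' reduction, followed by an explicit optimization over the local batch size $K$ to minimize total \emph{time} (not iterations). The update in Algorithm~\ref{alg:minibatch}, rewritten for the heterogeneous regime, is $x^{t+1} = x^{t} - \widetilde{\eta}\, g^{t}$ with $\widetilde{\eta} := \eta_{g} n K$ and $g^{t} := \frac{1}{nK}\sum_{i=1}^{n}\sum_{j=0}^{K-1}\nabla f_{i}(x^{t};\xi^{t}_{i,j})$. Under Assumption~\ref{ass:stochastic_variance_bounded_heter}, the stochastic gradients across workers and local indices are independent and unbiased, so $\mathbb{E}[g^{t}]=\nabla f(x^{t})$ and $\mathbb{E}\|g^{t}-\nabla f(x^{t})\|^{2} \le \sigma^{2}/(nK)$.

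Next I would invoke the textbook nonconvex SGD descent lemma (see, e.g., \citet{lan2020first}): under Assumption~\ref{ass:lipschitz_constant} and $\widetilde{\eta}\le 1/L$, iterating $R$ steps yields $\mathbb{E}\|\nabla f(\bar{x})\|^{2} \le \frac{2\Delta}{\widetilde{\eta} R} + \widetilde{\eta} L \cdot \frac{\sigma^{2}}{nK}$ for a random iterate $\bar{x}$. Picking $K = \max\{\lceil \sigma^{2}/(\varepsilon n)\rceil, 1\}$ guarantees $\sigma^{2}/(nK) \le \varepsilon$, so setting $\widetilde{\eta}=1/L$ (equivalently $\eta_{g} \simeq \min\{\varepsilon/(L\sigma^{2}),\, 1/(nL)\}$, which matches the two cases $K>1$ and $K=1$) gives an $\varepsilon$–stationary point after $R = \mathcal{O}(L\Delta/\varepsilon)$ communication rounds.

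Now the time accounting: under Assumption~\ref{ass:time}, each round costs $\tau$ seconds of communication and at most $hK$ seconds of (parallel) per-worker computation, so the total time is
\begin{align*}
T \;=\; R\,(\tau + h K) \;=\; \mathcal{O}\!\left(\tau\,\tfrac{L\Delta}{\varepsilon} + h\,\tfrac{L\Delta}{\varepsilon} + h K \cdot \tfrac{L\Delta}{\varepsilon}\right).
\end{align*}
Substituting $K = \max\{\lceil \sigma^{2}/(\varepsilon n)\rceil,1\}$ gives $h K \cdot L\Delta/\varepsilon = \mathcal{O}(h L\Delta/\varepsilon + h L\sigma^{2}\Delta/(n\varepsilon^{2}))$, which is exactly the bound \eqref{eq:time_mini_hero_nonconvex_heter}. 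The optimality-up-to-constants claim then follows by direct invocation of the lower bound of \citet{tyurin2024optimalgraph}, which I would cite without re-derivation.

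The bulk of the work is genuinely routine, since heterogeneity causes no difficulty at all for Minibatch SGD: both workers and local samples are i.i.d. sources of independent unbiasedness, so the variance of $g^{t}$ scales as $1/(nK)$ exactly as in the homogeneous case. The only mildly delicate point, and the one I would flag as the main sanity check rather than a genuine obstacle, is handling the two regimes $\sigma^{2}/(n\varepsilon) \ge 1$ and $\sigma^{2}/(n\varepsilon) < 1$ with a single step-size prescription and verifying that the ceiling in $K$ does not inflate the computation term beyond the stated bound—both issues are absorbed cleanly by the $\max\{\cdot,1\}$ and the minimum inside $\eta_{g}$.
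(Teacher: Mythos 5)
Your proposal is correct and takes essentially the same route as the paper: treat the minibatch update as standard nonconvex \algname{SGD} with effective batch size $nK$ (heterogeneity changes nothing because the per-worker gradients remain unbiased for $\nabla f_i$ and average to $\nabla f$), get $R = \cO(L\Delta/\varepsilon)$ rounds, charge $\tau + hK$ seconds per round, and substitute the stated $K$. The paper simply states the batch-$nK$ \algname{SGD} iteration complexity directly rather than re-deriving the descent lemma, but the underlying argument and the time-accounting step are identical to yours.
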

Even under additional assumptions, the current state-of-the-art methods, \algname{Local SGD} and \algname{SCAFFOLD}, are worse than the optimal \algname{Minibatch SGD} method in the regime when $\varepsilon$ is small and $n$ is large\footnote{For instance, up to $\varepsilon,$ $\eqref{eq:VZpgrMpgUOVZ} \simeq \nicefrac{1}{\varepsilon^{3/2}}$ and $\eqref{eq:time_mini_hero_nonconvex_heter} \simeq \nicefrac{1}{\varepsilon}$ when $n$ is large enough}. Moreover, under Assumptions~\ref{ass:time}, \ref{ass:lipschitz_constant}, and \ref{ass:stochastic_variance_bounded_heter}, they cannot be better due to the optimality of \algname{Minibatch SGD}. A natural question is whether we can design a more practical method with local steps in the nonconvex stochastic heterogeneous setting that at least matches \eqref{eq:time_mini_hero_nonconvex_heter} under Assumptions~\ref{ass:time}, \ref{ass:lipschitz_constant}, and \ref{ass:stochastic_variance_bounded_heter}, or under slightly stronger assumptions. There is such a method, namely \algname{SCAFFOLD} by \citet{pmlr-v119-karimireddy20a}, which yields the iteration complexity $\nicefrac{{ L_{\max}} \Delta}{\varepsilon} + \nicefrac{{ L_{\max}} \sigma^2 \Delta}{n K \varepsilon^2}$ and the time complexity $\tau \nicefrac{{ L_{\max}} \Delta}{\varepsilon} + h \left(\nicefrac{{ L_{\max}} \Delta}{\varepsilon} + \nicefrac{{ L_{\max}} \sigma^2 \Delta}{n \varepsilon^2}\right)$, where $L_{\max}$ is the largest smoothness constant among the functions ${f_i}.$ Thus, \algname{SCAFFOLD} by \citet{pmlr-v119-karimireddy20a} almost matches \algname{Minibatch SGD}, but the time complexity of \algname{SCAFFOLD} can still be $\nicefrac{L_{\max}}{L}$ times larger.

\subsection{Accelerated convex optimization}
\label{sec:convex}
The same question arises with accelerated methods in convex optimization in both homogeneous and heterogeneous setups. Under Assumptions~\ref{ass:time}, \ref{ass:convex}, \ref{ass:lipschitz_constant}, and \ref{ass:stochastic_variance_bounded}, the state-of-the-art time complexity is $\Theta\left(\min\{\tau \nicefrac{\sqrt{L} B}{\sqrt{\varepsilon}} + h (\nicefrac{\sqrt{L} B}{\sqrt{\varepsilon}} + \nicefrac{\sigma^2 B^2}{n \varepsilon^{2}}), h (\nicefrac{\sqrt{L} B}{\sqrt{\varepsilon}} + \nicefrac{\sigma^2 B^2}{\varepsilon^{2}})\}\right)$ and is achieved by the best of \algname{Accelerated Minibatch SGD} and \algname{Accelerated Hero SGD} \citep{lan2020first,tyurin2024optimalgraph}. In the heterogeneous setting, under Assumptions~\ref{ass:time}, \ref{ass:lipschitz_constant}, \ref{ass:stochastic_variance_bounded_heter}, and \ref{ass:convex}, the state-of-the-art time complexity $\Theta\left(\min\{\tau \nicefrac{\sqrt{L} B}{\sqrt{\varepsilon}} + h (\nicefrac{\sqrt{L} B}{\sqrt{\varepsilon}} + \nicefrac{\sigma^2 B^2}{n \varepsilon^{2}})\}\right)$ is achieved by \algname{Accelerated Minibatch SGD} \citep{lan2020first,tyurin2024optimalgraph}. There have been several attempts to accelerate algorithms with local steps, taking into account both computational and communication complexities, including \citep{mishchenko2022proxskip,malinovsky2022variance}. While the communication complexities are accelerated in these approaches, the variance $\sigma^2$ does not decrease with the number of local steps, and the resulting computational complexity remains non-accelerated. To the best of our knowledge, no existing method achieves the time complexity of \algname{Accelerated Minibatch SGD} while using local steps.
\section{Improved Versions of \algname{Local SGD}}
We now return back to nonconvex homogeneous optimization. Notice that \eqref{eq:time_mini_hero_nonconvex} is optimal up to logarithmic factors; thus, there is no hope of designing an alternative \algname{Local SGD} method that achieves a better time complexity in the nonconvex setting. Nevertheless, since the canonical version of \algname{Local SGD} is suboptimal and does not match the optimal time complexity \eqref{eq:time_mini_hero_nonconvex}, we began investigating the possibility of modifying \algname{Local SGD} to make it optimal as well, in order to obtain a complete picture.
\begin{algorithm}[t]
\caption{\algname{Dual Local SGD} (\algname{Local SGD} with two step-sizes: global and local)}
\label{alg:local_two_step_sizes}
\begin{algorithmic}[1]
\REQUIRE initial point $x^0$, global step size $\eta_g,$ local step size $\eta_{\ell},$ communication rounds $R$, number of local steps $K$
\FOR{$t = 0, 1, \ldots, R - 1$}
    \STATE $z^t_{i,0} = x^t$
    \FOR{worker $i \in \{1,\ldots,n\}$ \textbf{in parallel}}
        \FOR{$j = 0, \ldots, K - 1$}
            \STATE $z^{t}_{i,j + 1} = z^{t}_{i,j} - \eta_{\ell} \nabla f(z^{t}_{i,j}; \xi^{t}_{i,j}),$ $\xi^{t}_{i,j} \sim \mathcal{D}$
        \ENDFOR
    \ENDFOR
    \STATE {\color{mydarkgreen} $x^{t+1} = x^{t} - \eta_{g} \sum\limits_{i=1}^n \sum\limits_{j=0}^{K-1} \nabla f(z^{t}_{i,j}; \xi^{t}_{i,j})$}
\ENDFOR
\end{algorithmic}
\end{algorithm}

Our starting point is \algname{Minibatch SGD} (Algorithm~\ref{alg:minibatch}), as it is optimal up to logarithmic factors in the nonconvex setting when $\nicefrac{h \sigma^2}{\varepsilon} \geq \tau L.$ A straightforward observation is that \algname{Minibatch SGD} can be viewed as a \algname{Local SGD} method in which the local step sizes are set to zero. Therefore, instead of \algname{Minibatch SGD} (Algorithm~\ref{alg:minibatch}), we consider \algname{Dual Local SGD} (Algorithm~\ref{alg:local_two_step_sizes}), \algname{Local SGD} with two step sizes, which reduces to \algname{Minibatch SGD} when $\eta_{\ell} = 0$ and $\eta_{g} = \min\left\{\nicefrac{\varepsilon}{L \sigma^2}, \nicefrac{1}{n L}\right\}.$ Meanwhile, \algname{Dual Local SGD} (Algorithm~\ref{alg:local_two_step_sizes}) reduces to the \emph{suboptimal} \algname{Local SGD} (Algorithm~\ref{alg:local_sgd}) when $\eta_{g} = \nicefrac{\eta_{\ell}}{n}.$

We now analyze Algorithm~\ref{alg:local_two_step_sizes} and present our new time complexity guarantees. We will show that it is possible to match the time complexity of \algname{Minibatch SGD} by using the global step size $\eta_{g} = \min\left\{\nicefrac{\varepsilon}{L \sigma^2}, \nicefrac{1}{n L}\right\},$ the same as in \algname{Minibatch SGD}, but with a non-zero local step size $\eta_{\ell}.$

\begin{theorem}[Upper bound for \algname{Dual Local SGD}]\label{thm:communication-and-computation-complexity}
  Under Assumptions~\ref{ass:lipschitz_constant} and \ref{ass:stochastic_variance_bounded},
  \algname{Dual Local SGD} (Algorithm~\ref{alg:local_two_step_sizes}) with $\eta_g = \min\ens{\frac{\eps}{8 L \sigma^2}, \frac{1}{4 n L}}$ and $\eta_{\ell} \leq \sqrt{n} \eta_g$ finds an $\varepsilon$--stationary point after at most $\textstyle R = \Ceil{\nicefrac{32 L \Delta}{\varepsilon}}$
  communication rounds with $K = \max\left\{\left\lceil\nicefrac{\sigma^2}{\varepsilon n}\right\rceil, 1\right\}$. Additionally, under Assumption~\ref{ass:time}, it requires at most 
  \begin{align}
    \label{eq:new_local_compl}
    \textstyle \tau \frac{64 L \Delta}{\varepsilon} + 64 h \left(\frac{L \Delta}{\varepsilon} + \frac{L \sigma^2 \Delta}{n \varepsilon^2}\right)
  \end{align}
  seconds to find an $\varepsilon$--stationary point.
  Moreover, when combined with \algname{Hero SGD} (Algorithms~\ref{alg:hero_sgd}), the time complexity is no worse than
  \begin{align}
    \label{eq:new_local_and_hero}
    \textstyle \min\left\{\tau \frac{L \Delta}{\varepsilon} + h \left(\frac{L \Delta}{\varepsilon} + \frac{L \sigma^2 \Delta}{n \varepsilon^2}\right), h \left(\frac{L \Delta}{\varepsilon} + \frac{L \sigma^2 \Delta}{\varepsilon^2}\right)\right\}
  \end{align}
  up to constant factors, where we take $\eta \simeq \min\{\nicefrac{1}{L}, \nicefrac{\varepsilon n}{L \sigma^2}\}$ in \algname{Hero SGD}, and \eqref{eq:new_local_and_hero} is also optimal up to logarithmic factors due to the result by \citet{tyurin2024optimalgraph}.
\end{theorem}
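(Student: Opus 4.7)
The plan is to derive a per-round descent inequality for $f$ along the iterates $\{x^t\}$ of Algorithm~\ref{alg:local_two_step_sizes}, sum over $R$ rounds, and calibrate the free parameters. Applying $L$-smoothness to $x^{t+1}-x^t = -\eta_g\sum_{i,j}\nabla f(z_{i,j}^t;\xi_{i,j}^t)$ and taking expectation, I would rewrite the cross term using the identity $2\langle a, b\rangle = \|a\|^2 + \|b\|^2 - \|a-b\|^2$ term-by-term on $\langle \nabla f(x^t), \nabla f(z_{i,j}^t)\rangle$, yielding a favorable $\tfrac{\eta_g n K}{2}\|\nabla f(x^t)\|^2$ descent term, a negative $\sum_{i,j}\|\nabla f(z_{i,j}^t)\|^2$ sink, and an $L^2$-weighted drift $\sum_{i,j}\|x^t-z_{i,j}^t\|^2$ by Lipschitzness. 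Next, martingale orthogonality of $\nabla f(z_{i,j}^t;\xi_{i,j}^t) - \nabla f(z_{i,j}^t)$ across all $(i,j)$ bounds the quadratic noise term by $\mathbb{E}\|\sum_{i,j}\nabla f(z_{i,j}^t)\|^2 + nK\sigma^2$, and the further split $\|\sum_{i,j} \nabla f(z_{i,j}^t)\|^2 \leq 2(nK)^2\|\nabla f(x^t)\|^2 + 2nKL^2\sum_{i,j}\|z_{i,j}^t-x^t\|^2$, together with the step-size condition $L\eta_g nK \leq 1/4$ (routine to verify for both regimes inside the $\min$ defining $\eta_g$), absorbs this piece into the main descent and drift terms.

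The heart of the proof is the drift bound. Unrolling $z_{i,j}^t - x^t = -\eta_\ell\sum_{k<j}\nabla f(z_{i,k}^t;\xi_{i,k}^t)$, applying martingale orthogonality a second time, Jensen, and $\|\nabla f(z_{i,k}^t)\|^2 \leq 2\|\nabla f(x^t)\|^2 + 2L^2\|z_{i,k}^t-x^t\|^2$, I obtain a self-consistent inequality of the form
\begin{equation*}
\textstyle\sum_{i,j}\mathbb{E}\|z_{i,j}^t - x^t\|^2 \;\leq\; \eta_\ell^2 n K^2\sigma^2 + 2\eta_\ell^2 n K^3 \|\nabla f(x^t)\|^2 + 2(L\eta_\ell K)^2 \sum_{i,j}\mathbb{E}\|z_{i,j}^t-x^t\|^2,
\end{equation*}
solvable as soon as $(L\eta_\ell K)^2 < 1/2$. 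Precisely here the rule $\eta_\ell \leq \sqrt{n}\,\eta_g$ enters: combined with $L\eta_g n K \leq 1/4$, it gives $(L\eta_\ell K)^2 \leq n (L\eta_g K)^2 = (L\eta_g nK)^2/n \leq 1/(16 n)$. After multiplying the solved drift bound by the descent-lemma prefactor $L^2 \eta_g$, the factor $n$ in $nK^3$ cancels the $1/n$ from $(L\eta_\ell K)^2$, so only an $O(\eta_g K)\|\nabla f(x^t)\|^2$ correction and an $O(\eta_g \sigma^2)$ noise term survive, both of strictly lower order than the $\Theta(\eta_g n K)\|\nabla f(x^t)\|^2$ descent.

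Collecting, I obtain $\mathbb{E}[f(x^{t+1})] - f(x^t) \leq -\Theta(\eta_g n K)\mathbb{E}\|\nabla f(x^t)\|^2 + O(\eta_g\sigma^2) + O(L\eta_g^2 n K \sigma^2)$. Telescoping and dividing by $R\eta_g nK$ gives $\tfrac{1}{R}\sum_t \mathbb{E}\|\nabla f(x^t)\|^2 \lesssim \Delta/(R\eta_g nK) + \sigma^2/(nK) + L\eta_g \sigma^2$. The prescribed $K = \max\{\lceil \sigma^2/(\varepsilon n)\rceil, 1\}$ and $\eta_g = \min\{\varepsilon/(8L\sigma^2), 1/(4nL)\}$ make $\eta_g nK = \Theta(1/L)$ in both regimes, so the three terms become $O(L\Delta/R)$, $O(\varepsilon)$, $O(\varepsilon)$, and $R = \lceil 32 L\Delta/\varepsilon\rceil$ suffices after a constant-tracking pass. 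Since each round costs at most $\tau + hK$ seconds by Assumption~\ref{ass:time}, multiplication yields exactly \eqref{eq:new_local_compl}. The combined bound \eqref{eq:new_local_and_hero} follows by running \algname{Dual Local SGD} and \algname{Hero SGD} in parallel and returning the first $\varepsilon$-stationary output: \algname{Hero SGD} with $\eta \simeq \min\{1/L,\varepsilon n/(L\sigma^2)\}$ delivers $h(L\Delta/\varepsilon + L\sigma^2\Delta/\varepsilon^2)$ by the classical analysis, and optimality up to logarithmic factors is inherited from the lower bound of \citet{tyurin2024optimalgraph}.

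The main obstacle is the two-level drift control: the recursion closes only when $L\eta_\ell K$ is bounded away from one, and separately the $n$-scaling must be balanced so that the drift's gradient contribution comes out $O(\eta_g K)\|\nabla f(x^t)\|^2$ rather than $O(\eta_g nK)\|\nabla f(x^t)\|^2$. The choice $\eta_\ell \leq \sqrt{n}\,\eta_g$ is exactly tight for both constraints—the $\sqrt n$ factor is what causes the $n$ from $nK^3$ to cancel the $1/n$ from $(L\eta_\ell K)^2$ and yields the optimal time complexity, whereas the canonical rule $\eta_g = \eta_\ell/n$ wastes this factor and underlies the suboptimality established in Theorem~\ref{thm:lower_bound_local_sgd_nonconvex}.
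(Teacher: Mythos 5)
Your proposal reaches the same end result, and the front end (the $L$-smoothness expansion, the term-by-term rewriting of the inner product via $2\langle a,b\rangle = \|a\|^2 + \|b\|^2 - \|a-b\|^2$, and the martingale-orthogonality treatment of the noise) matches the paper's descent lemma (Lemma~\ref{appdx:convergence-analysis-friendly-nonxonvex-descent-lemma}). Where you genuinely diverge is in how the drift is controlled. The paper's residual lemma (Lemma~\ref{appdx:convergence-analysis-friendly-nonxonvex-residual-estimation}) bounds $\E{\|z_{i,j}^t - x^t\|^2}$ directly in terms of $\sum_{\ell}\E{\|\nabla f(z_{i,\ell}^t)\|^2}$, and then the final unrolling (Lemma~\ref{appdx:convergence-analysis-friendly-nonxonvex-unrolling-descent-lemma}) absorbs that sum into the \emph{negative} $\|\nabla f(z_{i,j}^t)\|^2$ sink that the descent lemma already produces, so the whole mess collapses to a non-positive bracket $\tfrac{1}{2} - \eta_g nLK - \eta_\ell^2 L^2 K^2$ that is simply dropped. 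No recursion ever needs to be solved. You instead push the split $\|\nabla f(z_{i,k}^t)\|^2 \le 2\|\nabla f(x^t)\|^2 + 2L^2\|z_{i,k}^t - x^t\|^2$ into the drift bound, obtaining a self-consistent inequality in $\sum_{i,j}\E\|z_{i,j}^t - x^t\|^2$ that you close once $(L\eta_\ell K)^2 \le 1/(16n)$, and then discard the negative sink entirely. Both routes land in the same place and both require $\eta_\ell \lesssim \sqrt{n}\,\eta_g$ for exactly the $1/n$ vs.\ $n$ cancellation you point out. The paper's sink-absorption is somewhat cleaner (no recursion, and the extra $\Theta(\eta_g K)\|\nabla f(x^t)\|^2$ correction term you carry simply never appears), whereas your reformulation makes it visually explicit how the drift feeds back into $\|\nabla f(x^t)\|^2$. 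One minor advantage of the paper's form is that the same residual lemma transfers verbatim to the decaying-step-size variant of Theorem~\ref{thm:adaptive_local_sgd} and to the convex case, since it never commits to a gradient-at-$x^t$ decomposition.
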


\subsection{Discussion and comparison with previous work}
\label{sec:discussion}
An interesting observation regarding the choice of parameters is that we can take $\eta_{\ell} = \sqrt{n} \times \eta_g,$ which is equivalent to $\eta_g = \nicefrac{\eta_{\ell}}{\sqrt{n}}.$ Substituting this choice into \algname{Dual Local SGD} (Algorithm~\ref{alg:local_two_step_sizes}) yields precisely the canonical \algname{Local SGD} method (Algorithm~\ref{alg:local_sgd}), but with an important modification: we should run the update $x^{t+1} = x^{t} - \nicefrac{\eta_{\ell}}{{\color{mydarkgreen} \sqrt{n}}} \sum_{i=1}^n \sum_{j=0}^{K-1} \nabla f(z^{t}_{i,j}; \xi^{t}_{i,j})$ instead of Line~\ref{line:agg} of Algorithm~\ref{alg:local_sgd}. Remarkably, the right scaling is $\sqrt{n}$ instead of $n.$

We should admit that this is not the first time a modification of the \algname{Local SGD} method has achieved the time complexity $\tau \nicefrac{L \Delta}{\varepsilon} + h \left(\nicefrac{L \Delta}{\varepsilon} + \nicefrac{L \sigma^2 \Delta}{n \varepsilon^2}\right)$ and obtained the optimal time complexity in the regime where $\nicefrac{h \sigma^2}{\varepsilon} \geq \tau L$ (up to logarithmic factors). A recent work by \citet{tyurin2025birchsgdtreegraph}, which analyzes different asynchronous and parallel methods, also proposes an alternative version of \algname{Local SGD} with one step size and matching complexity. However, while their global update is the same, their local updates are smaller by a factor of $\sqrt{n}$. Thus, our new theory captures a more practical version of \algname{Local SGD}. In Section~\ref{sec:better_local_step_sizes}, we prove that the local step size can be increased even further.

This is not the first work to explore \algname{Local SGD} with two step sizes. Many previous papers have studied this direction, including \citep{DBLP:journals/corr/abs-2007-00878,pmlr-v119-karimireddy20a,yang2021achieving,yang2022anarchic,malinovsky2023serversidestepsizes,jhunjhunwala2023fedexp,yang2021achievinglinearspeeduppartial}. \citet{yang2021achieving} and \citet{pmlr-v119-karimireddy20a} analyze methods similar to Algorithm~\ref{alg:local_two_step_sizes}. However, \citet{yang2021achieving} choose a different pair of step sizes that do not necessarily lead to the complexity \eqref{eq:new_local_compl}. Moreover, \citet{pmlr-v119-karimireddy20a} did not prove that their algorithm achieves the optimal complexity.
Lastly, the work by \citet{pmlr-v151-glasgow22a}, which established the tight lower bound for Algorithm~\ref{alg:local_sgd}, left open the question of whether \algname{Dual Local SGD} can provably improve upon Algorithm~\ref{alg:local_sgd}. Our work provides an affirmative answer to this question. 

To the best of our knowledge, ours is the first work to show that the canonical version of \algname{Local SGD} (Algorithm~\ref{alg:local_sgd}) is suboptimal, and that a modification via \algname{Dual Local SGD}, when combined with \algname{Hero SGD}, leads to optimal performance \eqref{eq:new_local_and_hero} (up to logarithmic factors). Moreover, in Section~\ref{sec:convex_dual}, we provide an analysis of (non-accelerated) \algname{Dual Local SGD} for completeness.


\newpage
\section{Towards Even Larger and Adaptive Local Step Sizes}
\label{sec:better_local_step_sizes}
\begin{algorithm}[t]
\caption{\algname{Decaying Local SGD} (\algname{Local SGD} with a global step and decaying local steps)}
\label{alg:local_adaptive}
\begin{algorithmic}[1]
\REQUIRE initial point $x^0$, step size $\eta_g,$ parameter $b$, rounds $R$, number of local steps $K$
\FOR{$t = 0, 1, \ldots, R - 1$}
    \STATE $z^t_{i,0} = x^t$
    \FOR{worker $i \in \{1,\ldots,n\}$ \textbf{in parallel}}
        \FOR{$j = 0, \ldots, K - 1$}
            \STATE $z^{t}_{i,j + 1} = z^{t}_{i,j} - \eta_{j} \nabla f(z^{t}_{i,j}; \xi^{t}_{i,j}),$ $\xi^{t}_{i,j} \sim \mathcal{D},$ where {\color{mydarkgreen} $\eta_{j} = \sqrt{\frac{b}{(j + 1) (\log K + 1)}} \times \eta_g$}
        \ENDFOR
    \ENDFOR
    \STATE $x^{t+1} = x^{t} - \eta_{g} \sum\limits_{i=1}^n \sum\limits_{j=0}^{K-1} \nabla f(z^{t}_{i,j}; \xi^{t}_{i,j})$
\ENDFOR
\end{algorithmic}
\end{algorithm}
While \algname{Dual Local SGD} and \algname{Hero SGD} indeed achieve the optimal time complexity (up to logarithmic factors), comparing Theorem~\ref{thm:time_mini_hero_nonconvex} and 
Theorem~\ref{thm:communication-and-computation-complexity}
shows that the same complexity can also be achieved with \algname{Minibatch SGD}, a method that does not perform any local steps. Nevertheless, it has often been observed that \algname{Local SGD} outperforms \algname{Minibatch SGD} because, intuitively, it explores the optimization landscape more effectively through its local steps \citep{mcmahan2016federated}. Notice that \algname{Minibatch SGD} runs local steps with $\eta_{\ell} = 0$, and \algname{Dual Local SGD} with $\eta_{\ell} = \sqrt{n} \eta_{g}$ in Theorem~\ref{thm:communication-and-computation-complexity}. Can we increase $\eta_{\ell}$ further? In order to answer the question, we consider \algname{Decaying Local SGD} (Algorithm~\ref{alg:local_adaptive}) and provide the following theorem.
\begin{theorem}[Upper bound for \algname{Decaying Local SGD}]\label{thm:adaptive_local_sgd}
  Under Assumptions~\ref{ass:lipschitz_constant} and \ref{ass:stochastic_variance_bounded},
  \algname{Decaying Local SGD} (Algorithm~\ref{alg:local_adaptive}) with $\eta_g = \min\{\frac{\eps}{8 L \sigma^2}, \frac{1}{4 n L}\}$ and $b = \max\{\frac{\sigma^2}{\varepsilon}, n\}$ finds an $\varepsilon$--stationary point after at most $\textstyle R = \Ceil{\nicefrac{32 L \Delta}{\varepsilon}}$
  communication rounds with $K = \max\left\{\left\lceil\nicefrac{\sigma^2}{\varepsilon n}\right\rceil, 1\right\}$. Additionally, under Assumption~\ref{ass:time}, it requires at most 
    $\textstyle \tau \frac{64 L \Delta}{\varepsilon} + 64 h \left(\frac{L \Delta}{\varepsilon} + \frac{L \sigma^2 \Delta}{n \varepsilon^2}\right)$
  seconds to find an $\varepsilon$--stationary point.
\end{theorem}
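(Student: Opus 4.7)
The plan is to mirror the proof of Theorem~\ref{thm:communication-and-computation-complexity} for \algname{Dual Local SGD}, exploiting the fact that \algname{Decaying Local SGD} uses \emph{exactly the same global update} $x^{t+1} = x^t - \eta_g \sum_{i,j} \nabla f(z^t_{i,j}; \xi^t_{i,j})$ with the same global step size $\eta_g = \min\{\varepsilon/(8L\sigma^2), 1/(4nL)\}$. The only novelty is that the inner iterates $z^t_{i,j}$ are generated by non-uniform, decaying step sizes $\eta_j = \sqrt{b/((j+1)(\log K + 1))}\,\eta_g$. Consequently, the only place in the Dual Local SGD analysis that changes is the bound on the \emph{client drift} $\Exp{\|z^t_{i,j} - x^t\|^2}$; the rest of the descent argument, the variance decomposition of the global step, and the telescoping all go through verbatim.

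First, I would apply $L$-smoothness to obtain the one-step descent inequality $\Exp{f(x^{t+1})} \leq \Exp{f(x^t)} - \tfrac{\eta_g}{2}(\cdot) + \tfrac{L\eta_g^2}{2}\Exp{\|\sum_{i,j}\nabla f(z^t_{i,j};\xi^t_{i,j})\|^2}$, and split the square into a variance contribution (bounded by $nK\sigma^2\eta_g^2 L/2$ using independence and Assumption~\ref{ass:stochastic_variance_bounded}) and a bias contribution involving $\Exp{\|\nabla f(z^t_{i,j}) - \nabla f(x^t)\|^2} \leq L^2 \Exp{\|z^t_{i,j} - x^t\|^2}$. The residual bias term is absorbed by $\tfrac{\eta_g nK}{4}\Exp{\|\nabla f(x^t)\|^2}$ exactly as in Theorem~\ref{thm:communication-and-computation-complexity}, provided that $\sum_{i,j} L^2 \Exp{\|z^t_{i,j} - x^t\|^2}$ is controlled by a suitable multiple of $nK\Exp{\|\nabla f(x^t)\|^2} + nK\sigma^2$.

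The main work, and the step I expect to be the principal obstacle, is the client-drift recursion with non-uniform step sizes. Unrolling gives $z^t_{i,j} - x^t = -\sum_{k=0}^{j-1}\eta_k \nabla f(z^t_{i,k};\xi^t_{i,k})$, so a standard Young/Jensen argument together with Assumption~\ref{ass:stochastic_variance_bounded} yields a recursion of the form $\Exp{\|z^t_{i,j} - x^t\|^2} \lesssim \bigl(\sum_{k=0}^{j-1}\eta_k^2\bigr)\sigma^2 + \bigl(\sum_{k=0}^{j-1}\eta_k^2\bigr)L^2\sum_{k=0}^{j-1}\Exp{\|z^t_{i,k}-x^t\|^2} + \bigl(\sum_{k<j}\eta_k\bigr)^2\Exp{\|\nabla f(x^t)\|^2}$. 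The decisive identity is the harmonic bound
\begin{equation*}
\sum_{j=0}^{K-1}\eta_j^2 \;=\; \frac{b\,\eta_g^2}{\log K + 1}\sum_{j=0}^{K-1}\frac{1}{j+1} \;\leq\; b\,\eta_g^2,
\end{equation*}
which is exactly why the normalization $\log K + 1$ is placed in the denominator of $\eta_j$. Summing over $j$ then gives $\sum_{j=0}^{K-1}\Exp{\|z^t_{i,j}-x^t\|^2} \lesssim Kb\eta_g^2\sigma^2 + Kb\eta_g^2\Exp{\|\nabla f(x^t)\|^2}$ after solving the (discrete Gr\"onwall-type) recursion, using that $b\eta_g^2 L^2 K \ll 1$ for the chosen $\eta_g, b, K$.

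Finally, observe that $Kb = \max\{\sigma^2/\varepsilon, n\}\cdot\max\{\lceil\sigma^2/(\varepsilon n)\rceil,1\}$ is, up to constants, the same quantity as $nK$ times the squared bound $\eta_\ell^2 \leq n\eta_g^2$ that controls the drift in \algname{Dual Local SGD}, namely $nK \cdot n\eta_g^2$. Hence the drift contribution has the same magnitude as in Theorem~\ref{thm:communication-and-computation-complexity}, and plugging it back into the descent inequality and telescoping over $t=0,\dots,R-1$ yields $\min_t \Exp{\|\nabla f(x^t)\|^2} \leq \varepsilon$ for $R = \lceil 32 L\Delta/\varepsilon\rceil$ communication rounds. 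Multiplying $R$ by the per-round cost $\tau + Kh$ and substituting $K = \max\{\lceil\sigma^2/(\varepsilon n)\rceil, 1\}$ gives the advertised time complexity $\tau\,64L\Delta/\varepsilon + 64h(L\Delta/\varepsilon + L\sigma^2\Delta/(n\varepsilon^2))$.
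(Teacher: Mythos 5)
Your plan correctly identifies the decisive ingredient — the harmonic bound $\sum_{j=0}^{K-1}\eta_j^2 = \frac{b\,\eta_g^2}{\log K+1}\sum_{j=1}^{K}\frac{1}{j} \leq b\,\eta_g^2$, which is precisely the paper's key step and the reason the $\log K + 1$ normalizer appears — and you correctly observe that $Kb \simeq nK^2$, so the overall drift budget matches that of \algname{Dual Local SGD}. However, the route you take through the drift term is genuinely different from the paper's. The paper does \emph{not} bound $\nabla f(z^t_{i,\ell})$ by $\nabla f(x^t)$ plus drift and then close a Gr\"onwall recursion; instead, Lemma~\ref{appdx:convergence-analysis-friendly-nonxonvex-residual-estimation} keeps the gradients $\sum_\ell\Exp{\|\nabla f(z^t_{i,\ell})\|^2}$ intact, and these are absorbed directly into the already-negative coefficient $-\eta_g\left(\frac{1}{2}-\eta_g nLK\right)\sum_{i,j}\Exp{\|\nabla f(z^t_{i,j})\|^2}$ from the descent lemma, with the resulting nonnegativity check $\frac{1}{2}-\eta_g nLK - \left(\sum_\ell\eta_\ell^2\right) L^2 K \geq 0$; the same mechanism underlies Theorem~\ref{thm:communication-and-computation-complexity}, so your parenthetical that that proof absorbs the drift into $\Exp{\|\nabla f(x^t)\|^2}$ is a misdescription. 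Your alternative via a Gr\"onwall recursion would also close, but note the claimed intermediate bound $\sum_{j<K}\Exp{\|z^t_{i,j}-x^t\|^2}\lesssim Kb\,\eta_g^2\,\Exp{\|\nabla f(x^t)\|^2}$ is an overstatement: the term driving the recursion is $\bigl(\sum_{k<j}\eta_k\bigr)\nabla f(x^t)$ with $\bigl(\sum_{k<j}\eta_k\bigr)^2\lesssim\frac{b\,\eta_g^2\,j}{\log K+1}$, so summing over $j$ gives $\lesssim\frac{K^2 b\,\eta_g^2}{\log K+1}$, an extra factor of $K/(\log K+1)$. This does not sink your argument — the paper's parameter choice enforces $\eta_g^2 L^2 K b \leq \frac{1}{16 n}$, which has more than enough slack to absorb the extra factor once divided by $\log K + 1 \geq 1$ — but the constant-factor bookkeeping in a complete write-up would have to use the corrected coefficient. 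The paper's decomposition buys a cleaner argument with no recursion to solve; your route is a valid alternative with one extra step.
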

Theorem~\ref{thm:adaptive_local_sgd} guarantees the same time complexity as Theorem~\ref{thm:communication-and-computation-complexity}. However, up to a logarithmic factor, our choice of local step sizes is larger. Indeed, instead of $\eta_{\ell} = \sqrt{n}  \eta_{g},$ we take
$\textstyle \eta_{j} = \sqrt{\frac{b}{(j + 1) (\log K + 1)}}  \eta_g \geq \sqrt{\frac{n}{(\log K + 1)}} \eta_g = \tilde{\Theta}(\sqrt{n} \eta_g),$
where $j$ is the index of the local iteration and $j < K \simeq \max\left\{\nicefrac{\sigma^2}{\varepsilon n}, 1\right\} = \nicefrac{b}{n}.$ Up to a factor of $\log K,$ the new step size rule is never worse than $\sqrt{n}  \eta_{g}.$ However, especially in the first local iterations, our new step size rule can be significantly larger by a factor of $\tilde{\Theta}(\sqrt{\nicefrac{b}{(j + 1) n}}).$ If $j \approx 1$ and $\varepsilon$ is small, then the increase is $\tilde{\Theta}(\sqrt{\nicefrac{\sigma^2}{n \varepsilon}}).$ The factor $\log K$ is a minor price for the adaptivity. We obtain a similar result in the convex setting; see Section~\ref{sec:convex_dual}.


\section{Extension to Other Asynchronous and Local Methods}
\label{sec:birch}
\algname{Minibatch SGD} and \algname{Local SGD} are not the only methods for accelerating distributed optimization. Many other techniques exist, including \algname{Asynchronous SGD} \citep{recht2011hogwild,maranjyan2025ringmaster}, as well as various combinations of these approaches. Building on our progress, we now aim to extend our new insights from Section~\ref{sec:better_local_step_sizes} to other distributed methods. It turns out that all these methods can be analyzed using a unified analysis and technique, and our new step size rules can be incorporated not only into \algname{Local SGD} but also into other methods.

Here we will be brief, and we delegate details to Section~\ref{sec:birch_sgd_full}. The idea is to represent any method with a computation tree \citep{tyurin2025birchsgdtreegraph}. Initially, the tree is $G = (V, E)$ with $V = \{x^0\}$ and $E = \emptyset$. Then, every method can be represented by the following procedure: take two points $w_{\textnormal{base}}$ and $w_{\textnormal{grad}}$ from $V$ (in the first step, the only choice is $x^0$), choose a step size $\eta,$ find a new point $w_{\textnormal{new}} = w_{\textnormal{base}} - \eta \nabla f(w_{\textnormal{grad}}; \xi),$ add it to $V$, and add the weighted directed edge $(w_{\textnormal{base}}, w_{\textnormal{new}}, \eta \nabla f(w_{\textnormal{grad}}; \xi))$ to $E$, then start the procedure again. For instance, \algname{Decaying Local SGD} (Algorithm~\ref{alg:local_adaptive}) can represented by Figure~\ref{fig:main_tree}.
\begin{figure}[t]
  \centering
  \includegraphics[page=1,width=0.6\textwidth]{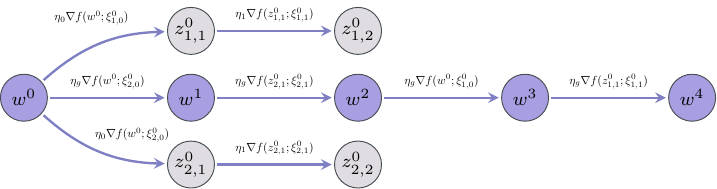}
  \caption{An example of the first round in \algname{Decaying Local SGD} with $n = 2$ and $K = 2.$ In this tree, $x^0 = w^0$ and $x^1 = w^4,$ where $x^0, x^1$ are defined in Alg.~\ref{alg:local_adaptive}. Every edge has a weight, which represents a stochastic gradient and the step size used to obtain a new point.}
  \label{fig:main_tree}
\end{figure}
The work by \citet{tyurin2025birchsgdtreegraph} provides a general framework for analyzing virtually any local and asynchronous methods via computation graphs. However, we noticed that their theory can be improved in the aspect discussed in Section~\ref{sec:discussion}: the local step sizes used in their framework can be significantly increased, making the methods considered by the framework more practical\footnote{This observation was, in fact, the starting point of this project.}. In their version, $\eta_{\ell} = \eta_{g}$, but $\eta_{\ell}$ can be increased, as we explain in Section~\ref{sec:better_local_step_sizes}. We can also take $\eta_{j} = \sqrt{\nicefrac{b}{(j + 1) (\log K + 1)}} \times \eta_g$, with the only difference that $j$ is the tree distance between the main branch ($w^0 \rightarrow \dots \rightarrow w^4$ in Fig.\ref{fig:main_tree}) and the point where the stochastic gradient was calculated (e.g., $j = 0$ for $w^0$; $j = 1$ for $z^0_{1,1}$ and $z^0_{2,1}$; and $j = 2$ for $z^0_{1,2}$ and $z^0_{2,2}$). Moreover, this choice of step size is adaptive to the length of the local branch, which is important for asynchronous methods when we do not know \emph{a priori} the length of the local branch. See details in Section~\ref{sec:birch_sgd_full}.

\begin{figure}[h]
\centering
\begin{subfigure}{0.48\columnwidth}
  \centering
  \includegraphics[width=0.9\columnwidth]{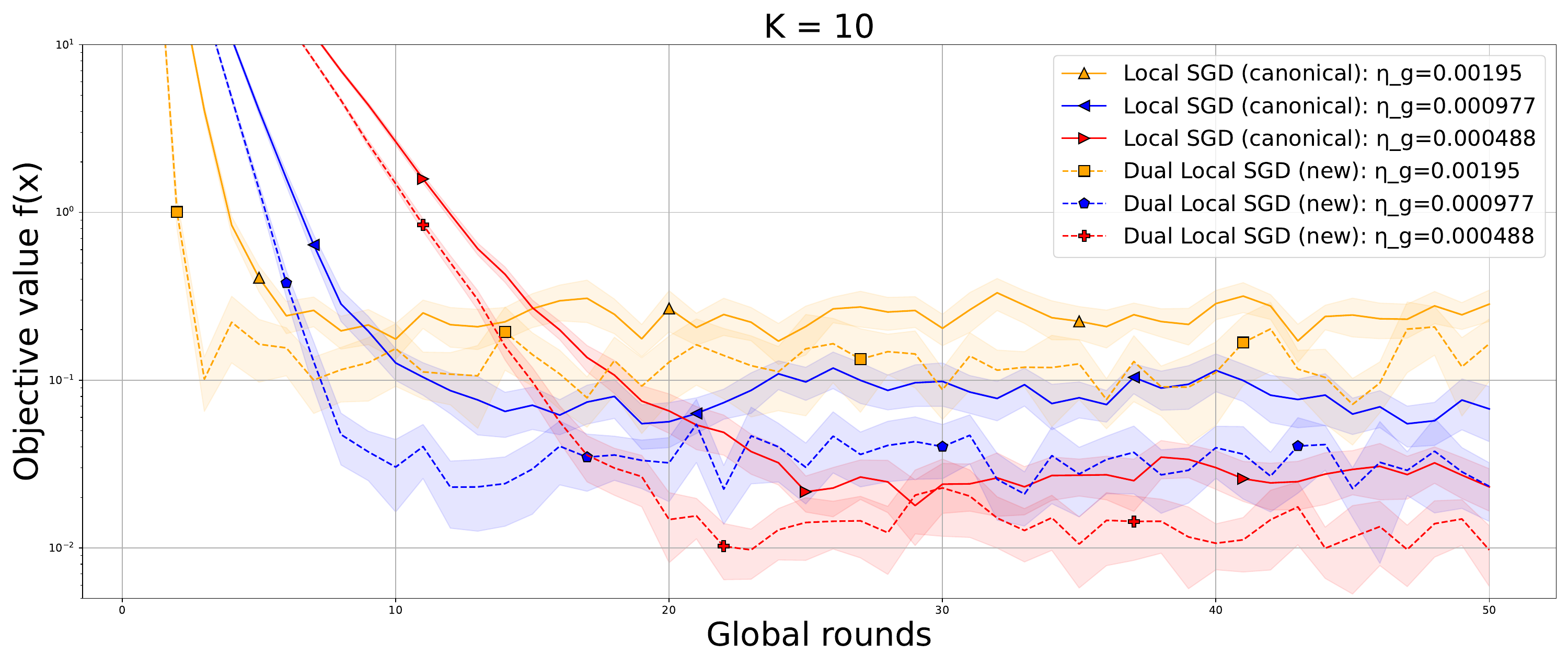}
\end{subfigure}
\begin{subfigure}{0.48\columnwidth}
  \centering
  \includegraphics[width=0.9\columnwidth]{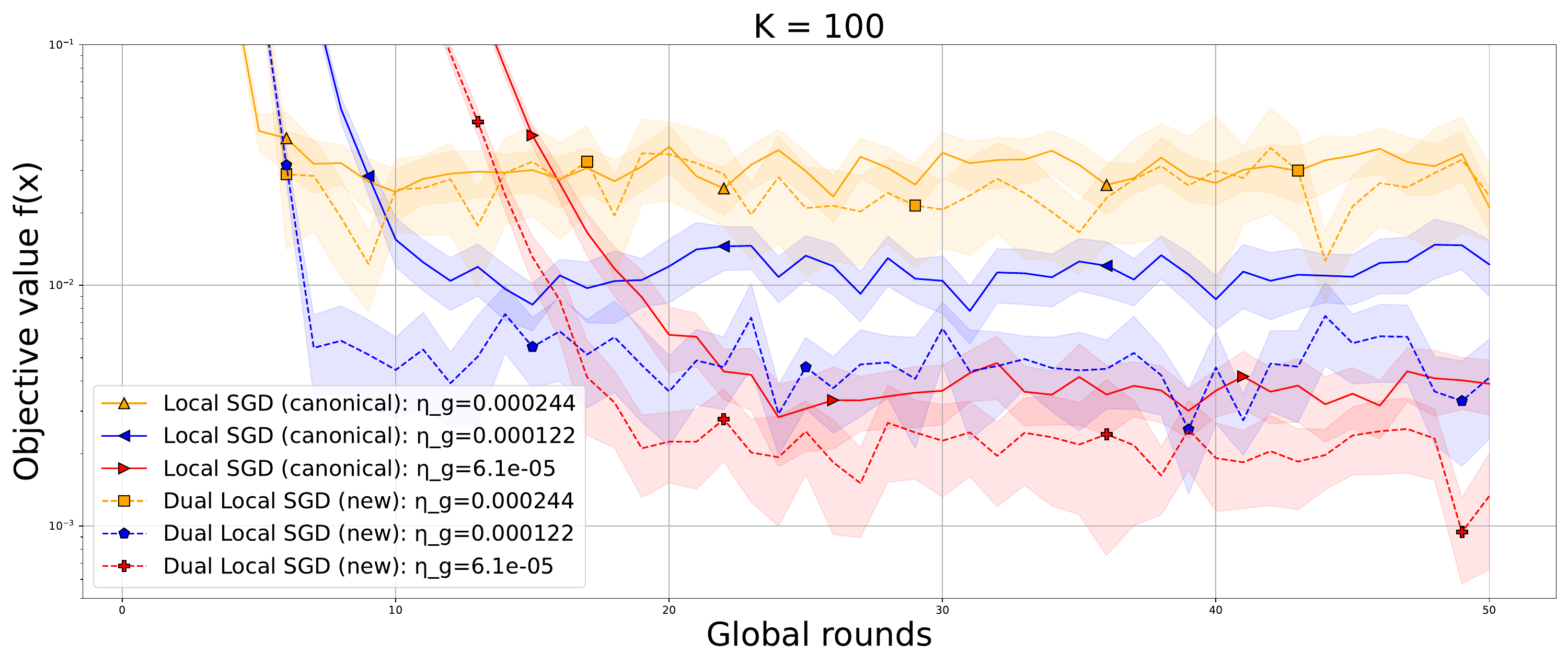}
\end{subfigure}
\caption{Experiments on the toy adversarial problem from \citep{pmlr-v151-glasgow22a}.}
\label{fig:toy_example}
\end{figure}

\section{Numerical Experiments}
Before we present our numerical experiments, we want to stress that the main goal of this paper is to explain that the previous \emph{theoretical} comparison in Table~\ref{table:suboptimality_combined} might be misleading, and a better one is presented in Table~\ref{table:complexities}, where we prove the \emph{theoretical} suboptimality of the canonical \algname{Local SGD} method. At the same time, the canonical \algname{Local SGD} method (\algname{FedAvg}) remains one of the most widely evaluated and tested algorithms in distributed and federated learning, and there is no doubt that it is a strong method for \emph{practical} optimization tasks. Our experiments confirm this when comparing it to \algname{Dual} and \algname{Decaying Local SGD}; nevertheless, we also find that \algname{Dual} and \algname{Decaying Local SGD} can achieve superior performance.

\textbf{Toy example}. Our first experiment focuses on the special function $f \,:\, \R \to \R$ defined as $f(x) = \nicefrac{x^2}{2}$ if $x \geq 0,$ and $f(x) = \nicefrac{x^2}{4}$ if $x < 0,$ with $\nabla f(x;\xi) = \nabla f(x) + \xi,$ where $\xi \sim \mathcal{N}(0, \sigma).$ This function is an adversarial problem for Algorithm~\ref{alg:local_sgd} \citep{pmlr-v151-glasgow22a}. Taking the starting point $x^0 = -30,$ noise level $\sigma = 10,$ and number of workers $n = 100,$ we compare Algorithm~\ref{alg:local_sgd} with our results in Theorems~\ref{thm:communication-and-computation-complexity} and \ref{thm:communication-and-computation-complexity-convex}. To obtain a fair comparison, we tune $\eta_g \in \{2^{-i} \mid i \in \{1, \dots, 16\}\}$ in Algorithm~\ref{alg:local_two_step_sizes} and set $\eta_{\ell} = n \times \eta_g$ to recover Algorithm~\ref{alg:local_sgd}, and $\eta_{\ell} = \sqrt{n} \times \eta_g$ to obtain the results from Theorems~\ref{thm:communication-and-computation-complexity} and \ref{thm:communication-and-computation-complexity-convex}. To ensure robustness, we run each experiment 30 times and plot $90\%$ confidence intervals. We also verify the methods with different numbers of local steps: $K = 10$ and $K = 100.$ In Figure~\ref{fig:toy_example}, we plot the convergence rates of the algorithms for different values of $\eta_g.$ The smaller the $\eta_g,$ the lower the plot converges, which is theoretically expected since $\eta_g$ controls the size of the neighborhood in which the algorithm oscillates. However, for a fixed $\eta_g,$ \algname{Dual Local SGD} with our local step size choice converges to the corresponding neighborhood faster than \algname{Local SGD} with its local step size rule.
\begin{figure}[h]
  \centering
  \begin{subfigure}{0.48\columnwidth}
    \centering
    \includegraphics[width=\columnwidth]{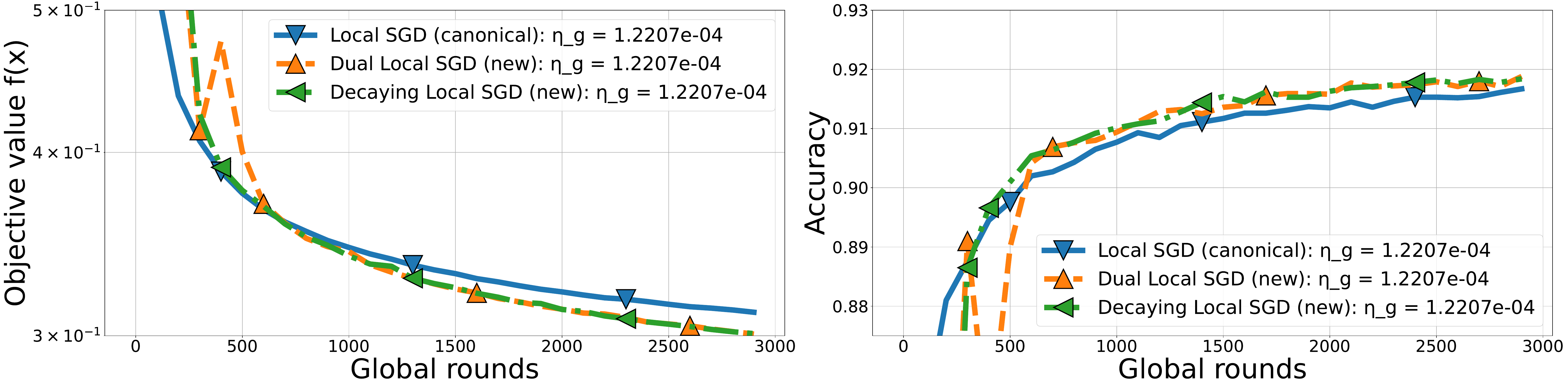}
    \caption{\emph{MNIST} with logistic regression; $n = 1000$}
    \label{fig:e_1}
  \end{subfigure}
  \begin{subfigure}{0.48\columnwidth}
    \centering
    \includegraphics[width=\columnwidth]{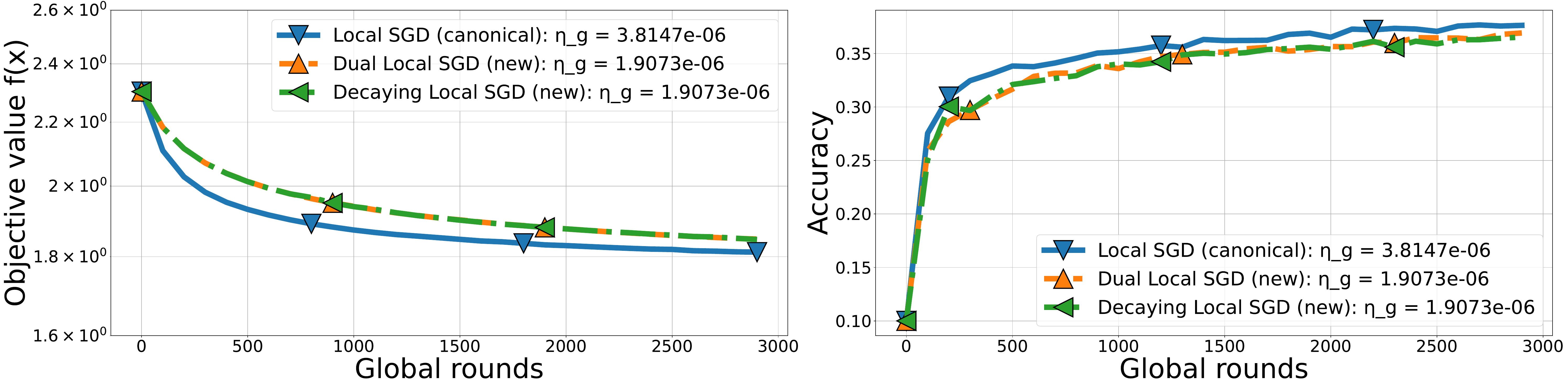}
    \caption{\emph{CIFAR-10} with logistic regression; $n = 1000$}
    \label{fig:e_2}
  \end{subfigure}
  \begin{subfigure}{0.48\columnwidth}
    \centering
    \includegraphics[width=\columnwidth]{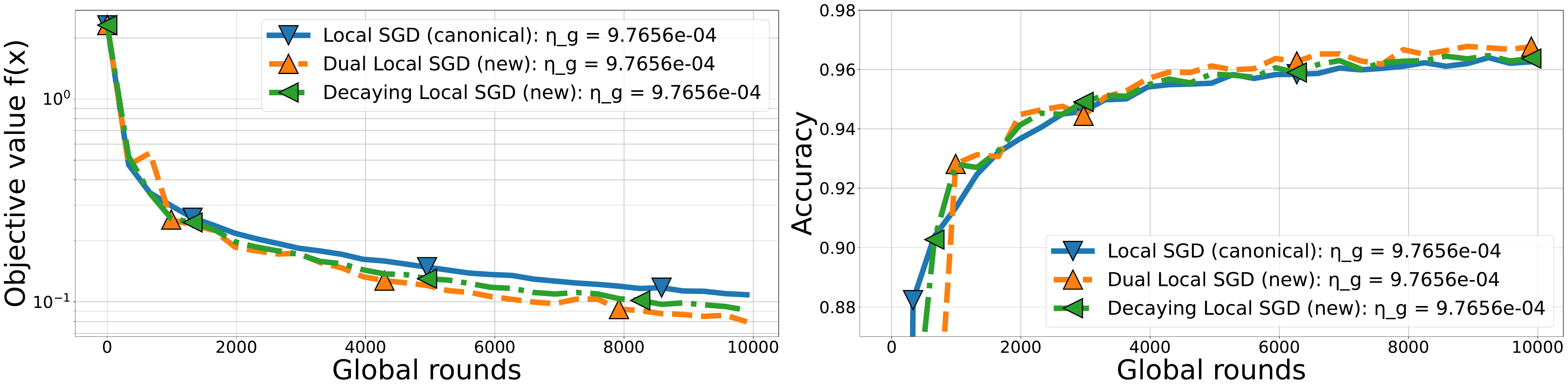}
    \caption{\emph{MNIST} with NN and CE loss; $n = 100$}
    \label{fig:e_3}
  \end{subfigure}
  \begin{subfigure}{0.48\columnwidth}
    \centering
    \includegraphics[width=\columnwidth]{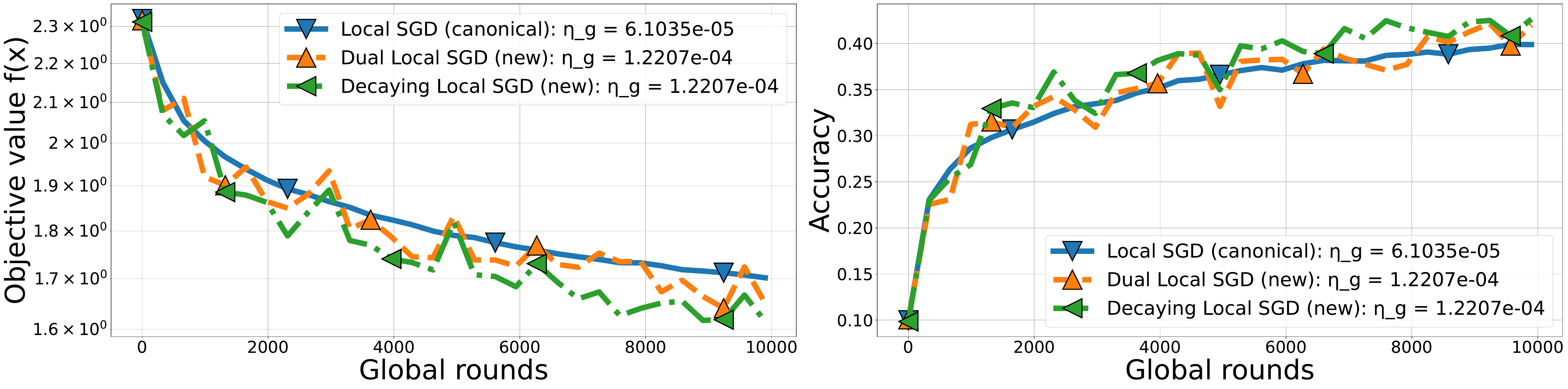}
    \caption{\emph{CIFAR-10} with NN and CE loss; $n = 100$}
    \label{fig:e_4}
  \end{subfigure}
  \caption{Experiments on practical machine learning problems.}
  \label{fig:ml}
  \end{figure}

\textbf{Practical machine learning problems}. We compare the methods on image recognition tasks using \emph{MNIST} \citep{lecun2010mnist} and \emph{CIFAR-10} \citep{krizhevsky2009learning}. Following the same setup as in the previous experiment, we take $n = 1000$ workers, fix $K = 10,$ tune $\eta_g \in \{2^{i} \mid i \in \{-20, \dots, 1\}\}$ to ensure a fair comparison, and plot the best corresponding curve. We consider the homogeneous setup, where each worker has access to the same dataset locally, and when it computes a stochastic gradient, it samples one data point uniformly from the dataset. In Figures~\ref{fig:e_1} and \ref{fig:e_2}, we consider the standard logistic regression problem and observe that the canonical \algname{Local SGD} method, \algname{Dual Local SGD}, and \algname{Decaying Local SGD} achieve comparable performance. On \emph{MNIST}, \algname{Dual} and \algname{Decaying Local SGD} achieve higher performance, whereas on \emph{CIFAR-10} Algorithm~\ref{alg:local_sgd} performs slightly better. We observe that \algname{Dual} and \algname{Decaying Local SGD} perform better on both datasets for the problem with a two-layer neural network (NN), Linear$(\text{input\_dim},32)\rightarrow\text{ReLU}\rightarrow\text{Linear}(32,\text{num\_classes})$, and the cross-entropy (CE) loss (see Figures~\ref{fig:e_3} and \ref{fig:e_4}).
\algname{Dual} and \algname{Decaying Local SGD} enjoy stronger theoretical guarantees, making them more robust to adversarial functions, as we can see in the adversarial example and in practical machine learning problems. Nevertheless, on practical ``average'' problems, the performance of all algorithms is very similar, and, consistent with numerous previous experiments, the canonical \algname{Local SGD} performs well.

\section{Conclusion}
In this work, we show that the canonical \algname{Local SGD} method is suboptimal and propose new methods that close the gap to the lower bound in the nonconvex setting. We extend our insights to other local and asynchronous methods. While our work shows that the new versions of \algname{Local SGD} are optimal (up to logarithmic factors), it does not establish that they are strongly better (which can not be done due to the lower bounds). Our findings reopen the question of whether local steps can improve the \emph{time complexity} of \algname{Minibatch SGD}/\algname{Hero SGD}.

\bibliographystyle{apalike}
\bibliography{bib.bib}

\begin{thebibliography}{}

\bibitem[Anyszka et~al., 2024]{anyszka2024tighter}
Anyszka, W., Gruntkowska, K., Tyurin, A., and Richt{\'a}rik, P. (2024).
\newblock Tighter performance theory of {FedExProx}.
\newblock {\em arXiv preprint arXiv:2410.15368}.

\bibitem[Arjevani et~al., 2023]{arjevani2022lower}
Arjevani, Y., Carmon, Y., Duchi, J.~C., Foster, D.~J., Srebro, N., and Woodworth, B. (2023).
\newblock Lower bounds for non-convex stochastic optimization.
\newblock {\em Mathematical Programming}, 199(1):165--214.

\bibitem[Charles and Kone{\v{c}}n{\'y}, 2020]{DBLP:journals/corr/abs-2007-00878}
Charles, Z. and Kone{\v{c}}n{\'y}, J. (2020).
\newblock On the outsized importance of learning rates in local update methods.
\newblock {\em CoRR}, abs/2007.00878.

\bibitem[Crawshaw et~al., 2025]{crawshaw2025local}
Crawshaw, M., Woodworth, B., and Liu, M. (2025).
\newblock Local steps speed up {Local} {GD} for heterogeneous distributed logistic regression.
\newblock In {\em The Thirteenth International Conference on Learning Representations}.

\bibitem[Cutkosky and Orabona, 2019]{cutkosky2019momentum}
Cutkosky, A. and Orabona, F. (2019).
\newblock Momentum-based variance reduction in non-convex {SGD}.
\newblock {\em Advances in Neural Information Processing Systems}.

\bibitem[Fang et~al., 2018]{SPIDER}
Fang, C., Li, C.~J., Lin, Z., and Zhang, T. (2018).
\newblock {SPIDER}: Near-optimal non-convex optimization via stochastic path integrated differential estimator.
\newblock In {\em Advances in Neural Information Processing Systems}.

\bibitem[Glasgow et~al., 2022]{pmlr-v151-glasgow22a}
Glasgow, M.~R., Yuan, H., and Ma, T. (2022).
\newblock Sharp bounds for federated averaging ({Local SGD}) and continuous perspective.
\newblock In Camps-Valls, G., Ruiz, F. J.~R., and Valera, I., editors, {\em Proceedings of The 25th International Conference on Artificial Intelligence and Statistics}, volume 151 of {\em Proceedings of Machine Learning Research}, pages 9050--9090. PMLR.

\bibitem[Huang et~al., 2023]{yang2021achievinglinearspeeduppartial}
Huang, M., Zhang, D., and Ji, K. (2023).
\newblock Achieving linear speedup in non-iid federated bilevel learning.
\newblock In {\em Proceedings of the 40th International Conference on Machine Learning}.

\bibitem[Jhunjhunwala et~al., 2023]{jhunjhunwala2023fedexp}
Jhunjhunwala, D., Wang, S., and Joshi, G. (2023).
\newblock {FedExP}: Speeding up federated averaging via extrapolation.
\newblock In {\em The Eleventh International Conference on Learning Representations}.

\bibitem[Karimireddy et~al., 2021]{karimireddy2021breaking}
Karimireddy, S.~P., Jaggi, M., Kale, S., Mohri, M., Reddi, S., Stich, S.~U., and Suresh, A.~T. (2021).
\newblock Breaking the centralized barrier for cross-device federated learning.
\newblock {\em Advances in Neural Information Processing Systems}, 34:28663--28676.

\bibitem[Karimireddy et~al., 2020]{pmlr-v119-karimireddy20a}
Karimireddy, S.~P., Kale, S., Mohri, M., Reddi, S., Stich, S., and Suresh, A.~T. (2020).
\newblock {SCAFFOLD}: Stochastic controlled averaging for federated learning.
\newblock In III, H.~D. and Singh, A., editors, {\em Proceedings of the 37th International Conference on Machine Learning}, volume 119 of {\em Proceedings of Machine Learning Research}, pages 5132--5143. PMLR.

\bibitem[Khaled et~al., 2020]{khaled2020tighter}
Khaled, A., Mishchenko, K., and Richt{\'a}rik, P. (2020).
\newblock Tighter theory for {Local} {SGD} on identical and heterogeneous data.
\newblock In {\em International Conference on Artificial Intelligence and Statistics}, pages 4519--4529. PMLR.

\bibitem[Koloskova et~al., 2020]{koloskova2020local}
Koloskova, A., Loizou, N., Boreiri, S., Jaggi, M., and Stich, S.~U. (2020).
\newblock A unified theory of decentralized {SGD} with changing topology and local updates.
\newblock In {\em Proceedings of the 37th International Conference on Machine Learning}.

\bibitem[Kone{\v{c}}n{\'y} et~al., 2016]{konevcny2016federated}
Kone{\v{c}}n{\'y}, J., McMahan, H.~B., Yu, F.~X., Richt{\'a}rik, P., Suresh, A.~T., and Bacon, D. (2016).
\newblock Federated learning: Strategies for improving communication efficiency.
\newblock {\em arXiv preprint arXiv:1610.05492}.

\bibitem[Krizhevsky et~al., 2009]{krizhevsky2009learning}
Krizhevsky, A., Hinton, G., et~al. (2009).
\newblock Learning multiple layers of features from tiny images.
\newblock Technical report, University of Toronto, Toronto.

\bibitem[Lan, 2020]{lan2020first}
Lan, G. (2020).
\newblock {\em First-order and stochastic optimization methods for machine learning}.
\newblock Springer.

\bibitem[LeCun et~al., 2010]{lecun2010mnist}
LeCun, Y., Cortes, C., and Burges, C. (2010).
\newblock {MNIST} handwritten digit database.
\newblock {\em ATT Labs [Online]. Available: http://yann.lecun.com/exdb/mnist}, 2.

\bibitem[Li et~al., 2024]{li2024power}
Li, H., Acharya, K., and Richt{\'a}rik, P. (2024).
\newblock The power of extrapolation in federated learning.
\newblock {\em Advances in Neural Information Processing Systems}.

\bibitem[Luo et~al., 2025]{luo2025revisiting}
Luo, R., Stich, S.~U., Horv{\'a}th, S., and Tak{\'a}{\v{c}}, M. (2025).
\newblock Revisiting {LocalSGD} and {SCAFFOLD}: Improved rates and missing analysis.
\newblock In {\em International Conference on Artificial Intelligence and Statistics}.

\bibitem[Malinovsky et~al., 2023]{malinovsky2023serversidestepsizes}
Malinovsky, G., Mishchenko, K., and Richt\'{a}rik, P. (2023).
\newblock Server-side stepsizes and sampling without replacement provably help in federated optimization.
\newblock In {\em Proceedings of the 4th International Workshop on Distributed Machine Learning}, DistributedML '23, pages 85--104.

\bibitem[Malinovsky et~al., 2022]{malinovsky2022variance}
Malinovsky, G., Yi, K., and Richt{\'a}rik, P. (2022).
\newblock Variance reduced {ProxSkip}: Algorithm, theory and application to federated learning.
\newblock {\em Advances in Neural Information Processing Systems}, 35:15176--15189.

\bibitem[Maranjyan et~al., 2025]{maranjyan2025ringmaster}
Maranjyan, A., Tyurin, A., and Richt{\'a}rik, P. (2025).
\newblock Ringmaster {ASGD}: The first asynchronous {SGD} with optimal time complexity.
\newblock In {\em International Conference on Machine Learning}.

\bibitem[McMahan et~al., 2017a]{mcmahan2017communication}
McMahan, B., Moore, E., Ramage, D., Hampson, S., and y~Arcas, B.~A. (2017a).
\newblock Communication-efficient learning of deep networks from decentralized data.
\newblock In {\em International Conference on Artificial Intelligence and Statistics}, pages 1273--1282. PMLR.

\bibitem[McMahan et~al., 2017b]{mcmahan2016federated}
McMahan, H.~B., Moore, E., Ramage, D., and y~Arcas, B.~A. (2017b).
\newblock Federated learning of deep networks using model averaging.
\newblock In {\em International Conference on Artificial Intelligence and Statistics}.

\bibitem[Mishchenko et~al., 2022]{mishchenko2022proxskip}
Mishchenko, K., Malinovsky, G., Stich, S., and Richt{\'a}rik, P. (2022).
\newblock Proxskip: Yes! {Local} gradient steps provably lead to communication acceleration! {Finally}!
\newblock In {\em International Conference on Machine Learning}, pages 15750--15769. PMLR.

\bibitem[Nesterov, 2018]{nesterov2018lectures}
Nesterov, Y. (2018).
\newblock {\em Lectures on convex optimization}, volume 137.
\newblock Springer.

\bibitem[Patel et~al., 2022]{patel2022optimalcommunication}
Patel, K.~K., Wang, L., Woodworth, B., Bullins, B., and Srebro, N. (2022).
\newblock Towards optimal communication complexity in distributed non-convex optimization.
\newblock In {\em Proceedings of the 36th International Conference on Neural Information Processing Systems}.

\bibitem[Recht et~al., 2011]{recht2011hogwild}
Recht, B., Re, C., Wright, S., and Niu, F. (2011).
\newblock {Hogwild}!: A lock-free approach to parallelizing stochastic gradient descent.
\newblock {\em Advances in Neural Information Processing Systems}, 24.

\bibitem[Stich, 2019]{stich2019local}
Stich, S.~U. (2019).
\newblock Local {SGD} converges fast and communicates little.
\newblock In {\em 7th International Conference on Learning Representations}.

\bibitem[Touvron et~al., 2023]{touvron2023llama}
Touvron, H., Lavril, T., Izacard, G., Martinet, X., Lachaux, M.-A., Lacroix, T., Rozi{\`e}re, B., Goyal, N., Hambro, E., Azhar, F., et~al. (2023).
\newblock Llama: Open and efficient foundation language models.
\newblock {\em arXiv preprint arXiv:2302.13971}.

\bibitem[Tyurin, 2025]{tyurin2024tighttimecomplexitiesparallel}
Tyurin, A. (2025).
\newblock Tight time complexities in parallel stochastic optimization with arbitrary computation dynamics.
\newblock In {\em The Thirteenth International Conference on Learning Representations}.

\bibitem[Tyurin and Richt\'{a}rik, 2023]{tyurin2023}
Tyurin, A. and Richt\'{a}rik, P. (2023).
\newblock Optimal time complexities of parallel stochastic optimization methods under a fixed computation model.
\newblock In {\em Advances in Neural Information Processing Systems}, volume~36, pages 16515--16577.

\bibitem[Tyurin and Richt{\'a}rik, 2024]{tyurin2024optimalgraph}
Tyurin, A. and Richt{\'a}rik, P. (2024).
\newblock On the optimal time complexities in decentralized stochastic asynchronous optimization.
\newblock {\em Advances in Neural Information Processing Systems}, 37.

\bibitem[Tyurin and Sivtsov, 2025]{tyurin2025birchsgdtreegraph}
Tyurin, A. and Sivtsov, D. (2025).
\newblock {Birch SGD}: A tree graph framework for local and asynchronous {SGD} methods.
\newblock {\em arXiv preprint 2505.09218}.

\bibitem[Woodworth et~al., 2020]{woodworth2020local}
Woodworth, B., Patel, K.~K., Stich, S., Dai, Z., Bullins, B., Mcmahan, B., Shamir, O., and Srebro, N. (2020).
\newblock Is local {SGD} better than minibatch {SGD}?
\newblock In {\em International Conference on Machine Learning}, pages 10334--10343. PMLR.

\bibitem[Yang et~al., 2021]{yang2021achieving}
Yang, H., Fang, M., and Liu, J. (2021).
\newblock Achieving linear speedup with partial worker participation in non-iid federated learning.
\newblock {\em International Conference on Learning Representations (ICLR)}.

\bibitem[Yang et~al., 2022]{yang2022anarchic}
Yang, H., Zhang, X., Khanduri, P., and Liu, J. (2022).
\newblock Anarchic federated learning.
\newblock In {\em International Conference on Machine Learning}, pages 25331--25363. PMLR.

\end{thebibliography}
\appendix
\clearpage

\setcounter{tocdepth}{2}
\tableofcontents

\newpage
\section{Notation}\label{appdx:sec-notation}
\begin{center}
\begin{table}[H]
\centering
  \begin{tabular}{cl}
    \toprule
    \textbf{Asymptotic} & \textbf{Meaning} \\
    \midrule
    $g = \O(f)$ & There exists $C > 0$ such that $g \leq C f$ for input parameters \\[3px]
    $g = \Omega(f)$ & There exists $c > 0$ such that $g \geq c f$ for all input parameters \\[3px]
    $g = \Theta(f)$ & When both $g = \O(f)$ and $g = \Omega(f)$ \\[3px]
    $\tilde{\Theta}, \tilde{\Omega}, \tilde{\Theta}$ & The same as $\Theta, \Omega, \Theta,$ but up to (hidden) logarithmic factors \\[3px]
    $g \simeq f$ & When $g = f$ up to a positive universal constant \\
    \midrule
    \textbf{Sets and intervals} & \textbf{Meaning} \\
    \midrule
    $\N_0$, $\N$ & The set of non-negative (left) and positive (right) integers \\[3px]
    $[a..b]$ ($a, b \in \N_0$) & The set $[a, b] = \{ a, a + 1, \ldots, b - 1, b\}$ \\[3px]
    $[n]$ ($n \in \N$) & The set $[n] = [1, n] = \{1, 2, \ldots, n \}$ \\
    \midrule
    \textbf{} & \textbf{Meaning} \\
    \midrule
    $n$ & The number of distinct workers \\
    $R$ & The number of communication rounds \\
    $K$ & The number of local steps performed by each worker \\
    \midrule
    \textbf{Symbol} & \textbf{Meaning} \\
    \midrule
    $\norm{\cdot}, \inp{\cdot}{\cdot}$ & The standard Euclidean norm and dot product \\[3px]
    $\Proba{\cdot}$, $\ProbCond{\cdot}{\cdot}$ & Probability and conditional probability symbols \\[3px]
    $\E{\cdot}$, $\ExpCond{\cdot}{\cdot}$ & Expectation and conditional expectation symbols \\[3px]
    \bottomrule
\end{tabular}
\end{table}
\end{center}

\section{Time Complexities of \algname{Local SGD} and \algname{Minibatch SGD}}

\TIMECOMPLEXITYLOCAL*

\begin{proof}
  The lower bound iteration complexity of \algname{Local SGD} to find an $\varepsilon$--solution \citep{pmlr-v151-glasgow22a} is
  \begin{align*}
    \min \left\{\frac{L B^2}{K \varepsilon} + \frac{\sigma^2 B^2}{n K \varepsilon^2} + \frac{L^{\frac{1}{2}} \sigma B^{2}}{K^{\frac{1}{2}} \varepsilon^{\frac{3}{2}}}, \frac{L B^2}{K \varepsilon} + \frac{\sigma^2 B^2}{K \varepsilon^2}\right\}
  \end{align*}
  . Under Assumption~\ref{ass:time}, the time complexity to find an $\varepsilon$--solution is
  \begin{align*}
  T_{\textnormal{L}} 
  &\eqdef \tau \min \left\{\frac{L B^2}{K \varepsilon} + \frac{\sigma^2 B^2}{n K \varepsilon^2} + \frac{L^{\frac{1}{2}} \sigma B^{2}}{K^{\frac{1}{2}} \varepsilon^{\frac{3}{2}}}, \frac{L B^2}{K \varepsilon} + \frac{\sigma^2 B^2}{K \varepsilon^2}\right\} \\
  &\quad + h K  \min \left\{\frac{L B^2}{K \varepsilon} + \frac{\sigma^2 B^2}{n K \varepsilon^2} + \frac{L^{\frac{1}{2}} \sigma B^{2}}{K^{\frac{1}{2}} \varepsilon^{\frac{3}{2}}}, \frac{L B^2}{K \varepsilon} + \frac{\sigma^2 B^2}{K \varepsilon^2}\right\},
  \end{align*}
  up to constant factors, where the first bracket is the communication complexity (one communication takes $\tau$ seconds), and the second bracket is the computational complexity ($K$ computations of stochastic gradients take $h \times K$ seconds in one iteration of each worker). Thus,
  \begin{eqnarray}
  \begin{aligned}
    \label{eq:GhrHxNASjiH}
    T_{\textnormal{L}} 
    = \min\Bigg\{&\tau \left(\frac{L B^2}{K \varepsilon} + \frac{\sigma^2 B^2}{n K \varepsilon^2} + \frac{L^{\frac{1}{2}} \sigma B^{2}}{K^{\frac{1}{2}} \varepsilon^{\frac{3}{2}}}\right) + h \left(\frac{L B^2}{\varepsilon} + \frac{\sigma^2 B^2}{n \varepsilon^2} + \frac{K^{\frac{1}{2}} L^{\frac{1}{2}} \sigma B^{2}}{\varepsilon^{\frac{3}{2}}}\right), \\
    &\tau \left(\frac{L B^2}{K \varepsilon} + \frac{\sigma^2 B^2}{K \varepsilon^2}\right) + h \left(\frac{L B^2}{\varepsilon} + \frac{\sigma^2 B^2}{\varepsilon^2}\right)\Bigg\}
  \end{aligned}
  \end{eqnarray}
  For all $K \geq 0,$ 
  \begin{align*}
    T_{\textnormal{L}} 
    &\geq \min\left\{\tau \left(\frac{L^{\frac{1}{2}} \sigma B^{2}}{K^{\frac{1}{2}} \varepsilon^{\frac{3}{2}}}\right) + h \left(\frac{L B^2}{\varepsilon} + \frac{\sigma^2 B^2}{n \varepsilon^2} + \frac{K^{\frac{1}{2}} L^{\frac{1}{2}} \sigma B^{2}}{\varepsilon^{\frac{3}{2}}}\right), h \left(\frac{L B^2}{\varepsilon} + \frac{\sigma^2 B^2}{\varepsilon^2}\right)\right\} \\
    &\geq \min\left\{2 \sqrt{\tau h \left(\frac{L \sigma^2 B^{4}}{\varepsilon^{3}}\right)} + h \left(\frac{L B^2}{\varepsilon} + \frac{\sigma^2 B^2}{n \varepsilon^2}\right), h \left(\frac{L B^2}{\varepsilon} + \frac{\sigma^2 B^2}{\varepsilon^2}\right)\right\},
  \end{align*}
  where we ignore non-negative terms and use the AM-GM inequality.
\end{proof}

\TIMECOMPLEXITYMINIBATCH*

\begin{proof}
  The second term in $\min$ comes from the classical analysis of \algname{SGD} \citep{lan2020first} and the fact that it takes $h$ seconds to calculate one stochastic gradient. The first term comes from \algname{Minibatch SGD}, which needs at most
\begin{align*}
&\tau \left(\frac{L B^2}{\varepsilon} + \frac{\sigma^2 B^2}{K n \varepsilon^2}\right) + h K \left(\frac{L B^2}{\varepsilon} + \frac{\sigma^2 B^2}{K n \varepsilon^2}\right) \\
&=\tau \left(\frac{L B^2}{\varepsilon} + \frac{\sigma^2 B^2}{K n \varepsilon^2}\right) + h \left(\frac{K L B^2}{\varepsilon} + \frac{\sigma^2 B^2}{n \varepsilon^2}\right)
\end{align*}
seconds. Taking $K = \max\left\{\left\lceil\frac{\sigma^2}{L \varepsilon n}\right\rceil, 1\right\}$, we obtain the first term in $\min.$
\end{proof}

\TIMECOMPLEXITYLOCALNONCONVEX*

\begin{proof}
  The proof is similar to the proof of Theorem~\ref{thm:lower_bound_local_sgd}. Using the result by \citet{koloskova2020local,luo2025revisiting} (see Table~\ref{table:suboptimality_combined}), we get the time complexity to find an $\varepsilon$--stationary point at least equal to 
  \begin{align*}
    &\tau \left(\frac{L \Delta}{\varepsilon K} + \frac{L\sigma^2 \Delta}{n K \varepsilon^2} + \frac{L \sigma \Delta }{K^{\frac{1}{2}} \varepsilon^{\frac{3}{2}}}\right) + h K \left(\frac{L \Delta}{\varepsilon K} + \frac{L\sigma^2 \Delta}{n K \varepsilon^2} + \frac{L \sigma \Delta }{K^{\frac{1}{2}} \varepsilon^{\frac{3}{2}}}\right) \\
    &\geq \tau \left(\frac{L \sigma \Delta }{K^{\frac{1}{2}} \varepsilon^{\frac{3}{2}}}\right) + h \left(\frac{L \Delta}{\varepsilon} + \frac{L\sigma^2 \Delta}{n \varepsilon^2} + \frac{K^{\frac{1}{2}}  L \sigma \Delta }{\varepsilon^{\frac{3}{2}}}\right)
  \end{align*}
  (where we ignored the term with $\rho$), which can be lower bounded by
  \begin{align*}
    2 \sqrt{\tau h \frac{L^2 \sigma^2 \Delta^2}{\varepsilon^{3}}} + h \left(\frac{L \Delta}{\varepsilon} + \frac{L\sigma^2 \Delta}{n \varepsilon^2}\right)
  \end{align*}
  for all $K > 0.$
\end{proof}

\TIMECOMPLEXITYMINIBATCHNONCONVEX*

\begin{proof}
  Similarly to Theorem~\ref{thm:time_mini_hero}, the second term in the $\min$ expression comes from the iteration complexity of the classical \algname{SGD} method \citep{lan2020first}. The first term represents the time complexity of \algname{Minibatch SGD} with $K = \max\left\{\left\lceil\frac{\sigma^2}{\varepsilon n}\right\rceil, 1\right\}.$

  \citet{tyurin2024optimalgraph} considered an arbitrary computational and communication setup in the nonconvex setting. We can reuse their Theorem 1 with $\tau_{i \to j} \equiv \tau$ and $h_i \equiv h$ to get the lower bound 
  \begin{align*}
    \Omega\left(\frac{1}{1 + \log(n + 1)} \times \min\left\{\tau \frac{L \Delta}{\varepsilon} + h \left(\frac{L \Delta}{\varepsilon} + \frac{L \sigma^2 \Delta}{n \varepsilon^2}\right), h \left(\frac{L \Delta}{\varepsilon} + \frac{L \sigma^2 \Delta}{\varepsilon^2}\right)\right\}\right),
  \end{align*}
  which matches \eqref{eq:time_mini_hero_nonconvex} up to the logarithmic factor.
\end{proof}

\TIMECOMPLEXITYLOCALNONCONVEXHETER*
\begin{proof}
  Ignoring some non-negative terms related to weak convexity and the first and second heterogeneity assumptions, in the iteration complexities of \algname{Local SGD} and \algname{SCAFFOLD} by \citet{koloskova2020local,luo2025revisiting}, the iteration complexity of these methods are greater than or equal to
  \begin{align*}
  \frac{L \Delta}{\varepsilon K} + \frac{L \sigma^2 \Delta}{n K \varepsilon^2} + \frac{L \sigma \Delta }{K^{\frac{1}{2}} \varepsilon^{\frac{3}{2}}}
  \end{align*}
  to find an $\varepsilon$--stationary point. Thus, the time complexity is not better than
  \begin{align*}
    \tau \left(\frac{L \Delta}{\varepsilon K} + \frac{L \sigma^2 \Delta}{n K \varepsilon^2} + \frac{L \sigma \Delta }{K^{\frac{1}{2}} \varepsilon^{\frac{3}{2}}}\right) + h K \left(\frac{L \Delta}{\varepsilon K} + \frac{L \sigma^2 \Delta}{n K \varepsilon^2} + \frac{L \sigma \Delta }{K^{\frac{1}{2}} \varepsilon^{\frac{3}{2}}}\right).
  \end{align*}
  From this point we reuse the proof of Theorem~\ref{thm:lower_bound_local_sgd_nonconvex}.
\end{proof}

\TIMECOMPLEXITYMINIBATCHNONCONVEXHETER*
\begin{proof}
  This proof almost repeats the proof of Theorem~\ref{thm:time_mini_hero} in the homogeneous regime. The only difference is that \algname{Hero SGD} cannot converge in general due to the heterogeneity. The term comes from the analysis of \algname{Minibatch SGD}, which needs at most
\begin{align*}
&\tau \left(\frac{L \Delta}{\varepsilon} + \frac{\sigma^2 L \Delta}{K n \varepsilon^2}\right) + h K \left(\frac{L \Delta}{\varepsilon} + \frac{\sigma^2 L \Delta}{K n \varepsilon^2}\right) \
&=\tau \left(\frac{L \Delta}{\varepsilon} + \frac{\sigma^2 L \Delta}{K n \varepsilon^2}\right) + h \left(\frac{K L \Delta}{\varepsilon} + \frac{\sigma^2 L \Delta}{n \varepsilon^2}\right)
\end{align*}
seconds because $\frac{L \Delta}{\varepsilon} + \frac{\sigma^2 L \Delta}{K n \varepsilon^2}$ is the iteration complexity of \algname{SGD} with batch size $n K$ (even with heterogeneous gradients), $\tau$ is the time required to synchronize, and $h K$ is the time needed to compute $K$ stochastic gradients on each worker. Taking $K = \max\left\{\left\lceil\frac{\sigma^2}{\varepsilon n}\right\rceil, 1\right\}$, we obtain the result. The optimality follows from Theorem 7 in \citep{tyurin2024optimalgraph} with $\tau_{i \to j} = \tau$ and $h_i = h.$
\end{proof}

\section{Convergence Analysis of \algname{Dual Local SGD} and \algname{Decaying Local SGD}}
\label{appdx:convergence-analysis-friendly}

In this section, we provide complete proofs for both \algname{Dual Local SGD} (\Cref{alg:local_two_step_sizes}) and \algname{Decaying Local SGD} methods (\Cref{alg:local_adaptive}). These proofs align more closely with the philosophy of previous works analysing \algname{Local SGD} with two step sizes~\citep{DBLP:journals/corr/abs-2007-00878,woodworth2020local,pmlr-v119-karimireddy20a,khaled2020tighter,yang2021achievinglinearspeeduppartial,jhunjhunwala2023fedexp,malinovsky2023serversidestepsizes}. We do this for the purpose of highlighting the main differences between the \textit{standard} approach to study \algname{Local SGD} and our novel approach which leverages the recent framework of~\cite{tyurin2025birchsgdtreegraph} and allows one to derive convergence results and time complexities for a large family of asynchronous and local methods as discussed in Sections~\ref{sec:birch} and \ref{sec:birch_sgd_full} while recovering the same results with much more simple proofs.

For the sake of providing a general convergence analysis which encompasses both \algname{Dual Local SGD} and \algname{Decaying Local SGD} as a special case, we denote by $\eta_{i, j}^t$ the local step size used by worker $i \in [n]$ for the $j^{\textnormal{th}}$ local step during the $t^{\textnormal{th}}$ communication round. This step size appears in line $5$ and $6$ of~\Cref{alg:local_two_step_sizes} and~\Cref{alg:local_adaptive} respectively.

Before expanding on the convergence analysis, let us formalize the crucial following observation. The observation follows by the design of both \algname{Dual Local SGD} and \algname{Decaying Local SGD}.
\begin{observation}\label{obs:statistical-independence}
  For all $t \ge 0$ $i \in [n]$ and $j \in \Int{0}{K - 1}$, conditionally on $z_{i, j}^t$, the random point $\xi_{i, j}^t$ is statistically independent from all the past iterates and randomness\footnote{To avoid unnecessarily complicated notation to characterize these \textit{past iterates and randomness} we simply mention that it represents all previous iterates and random point computed up to the time where $z_{i, j}^t$ is computed.}.
\end{observation}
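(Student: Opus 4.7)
The observation is essentially a formalisation of the fact that $\xi^t_{i,j}$ is drawn afresh from $\mathcal{D}$ at its step, so the main task is just to set up a filtration cleanly. I would let $\mathcal{F}^t_{i,j}$ denote the $\sigma$-algebra generated by every sample $\xi^{t'}_{i',j'}$ drawn strictly before $\xi^t_{i,j}$ in the natural (round, worker, local-step) ordering — that is, all randomness consumed by the algorithm up to, but not including, the step at which $\xi^t_{i,j}$ is needed. A one-line induction on this ordering, using line~5 of \Cref{alg:local_two_step_sizes} and line~6 of \Cref{alg:local_adaptive} — both of which express every new iterate as a deterministic function of an earlier iterate and an earlier $\xi$ — then shows that every prior iterate is $\mathcal{F}^t_{i,j}$-measurable. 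In particular $z^t_{i,j}$ itself is $\mathcal{F}^t_{i,j}$-measurable.

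By construction of both algorithms, $\xi^t_{i,j}$ is an i.i.d.\ sample from $\mathcal{D}$ drawn independently of everything that precedes it; equivalently, $\xi^t_{i,j}$ is independent of $\mathcal{F}^t_{i,j}$. Since all ``past iterates and randomness'' referred to in the statement are $\mathcal{F}^t_{i,j}$-measurable, and $z^t_{i,j}$ is $\mathcal{F}^t_{i,j}$-measurable as well, the conditional-independence claim reduces to the standard measure-theoretic fact that a random variable $X$ independent of a $\sigma$-algebra $\mathcal{G}$ is also conditionally independent of $\mathcal{G}$ given any $\mathcal{G}$-measurable $Y$ (apply it to $X=\xi^t_{i,j}$, $\mathcal{G}=\mathcal{F}^t_{i,j}$, $Y=z^t_{i,j}$).

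There is no real obstacle: the observation is closer to bookkeeping than to a theorem. The only point that deserves care is verifying inductively that the dependence structure of the algorithms really does match the chosen ordering of the $\xi^t_{i,j}$'s — which is immediate for the synchronous \algname{Dual Local SGD} and \algname{Decaying Local SGD}, where the per-worker chains are independent across $i$, but which is exactly the subtle point one has to revisit in the asynchronous/tree extensions of \Cref{sec:birch_sgd_full}. Making the filtration explicit is what subsequently licenses the identities $\mathbb{E}[\nabla f(z^t_{i,j};\xi^t_{i,j}) \mid z^t_{i,j}] = \nabla f(z^t_{i,j})$ and the variance bound from Assumption~\ref{ass:stochastic_variance_bounded} that are invoked throughout the convergence proofs.
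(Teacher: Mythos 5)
The paper does not actually prove this observation; it is asserted with the sentence ``The observation follows by the design of both \algname{Dual Local SGD} and \algname{Decaying Local SGD}'' and then used directly in the subsequent lemmas. Your filtration argument supplies exactly the formalisation the paper skips, and it is correct: letting $\mathcal{F}_{i,j}^t$ be the $\sigma$-algebra generated by all samples drawn before $\xi_{i,j}^t$ in some fixed enumeration, $z_{i,j}^t$ is $\mathcal{F}_{i,j}^t$-measurable (an elementary induction, since each local iterate is a deterministic function of its predecessor and an earlier $\xi$, and each $x^t$ of the round-$(t-1)$ iterates), $\xi_{i,j}^t$ is unconditionally independent of $\mathcal{F}_{i,j}^t$ because the $\xi$'s are i.i.d., and the measure-theoretic fact you cite (independence of a $\sigma$-algebra implies conditional independence given any $\sigma$-algebra-measurable random variable) then gives the claim. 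The one nuance you rightly flag is that the lexicographic $(t,i,j)$ ordering is not a wall-clock ordering of the parallel execution; this is harmless here because $z_{i,j}^t$ depends only on worker $i$'s own local chain together with the pre-round-$t$ randomness, so any enumeration in which those particular samples precede $\xi_{i,j}^t$ produces a valid filtration, and the extra samples from other workers that the lexicographic order happens to include do not break independence since all $\xi$'s are i.i.d. In short, your proof is correct and goes further than the paper, which treats the observation as self-evident bookkeeping.
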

Additionally, it is important to mention that for every communication round $t \ge 0$, conditionally on $x^t$, the iterates and random points $\{(z_{i, j}^t)\}_{j \in \Int{0}{K - 1}}$ and $\{(z_{i', j}^t)\}_{j \in \Int{0}{K - 1}}$ for distinct $i, i' \in [n]$ are statistically independent since the clients work independently from each other.

\subsection{Nonconvex setup}

\subsubsection{Preliminary lemmas}

We state below the descent lemma. This lemma helps to bound the decrease in function value after one communication round. We will unroll it in a subsequent lemma later and then establish the time complexity of both \algname{Dual Local SGD} and \algname{Decaying Local SGD}.
\begin{lemma}[A Descent Lemma; Proof in~\Cref{appdx-proof:convergence-analysis-friendly-nonxonvex-descent-lemma}]\label{appdx:convergence-analysis-friendly-nonxonvex-descent-lemma}
  Under~\Cref{ass:lipschitz_constant,ass:stochastic_variance_bounded} the sequences of iterates $\{x^t\}_{t \ge 0}$ and $\{z_{i, j}^t\}$ in~\Cref{alg:local_two_step_sizes,alg:local_adaptive} satisfy for any integer $t \ge 0$
  \begin{alignat*}{2}
    \E{f\left( x^{t + 1} \right)} & \le \E{f\left( x^t \right)} && - \frac{\eta_g n K}{2} \E{\sqnorm{\nabla f\left( x^t \right)}} \\
      &&&- \eta_g \left( \frac{1}{2} - \eta_g n L K \right) \sum_{i = 1}^n \sum_{j = 0}^{K - 1} \E{\sqnorm{\nabla f\left( z_{i, j}^t \right)}} \\
      &&&+ \frac{\eta_g L^2}{2} \sum_{i = 1}^n \sum_{j = 0}^{K - 1} \E{\sqnorm{z_{i, j}^t - x^t}} \\
      &&&+ \eta_g^2 \sigma^2 n L K,
  \end{alignat*}
\end{lemma}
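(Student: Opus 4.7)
The plan is to start from the $L$-smoothness descent inequality, substitute the aggregate update, take conditional expectation given $x^t$, and process the linear and quadratic contributions separately using \Cref{ass:stochastic_variance_bounded} and \Cref{obs:statistical-independence}. Concretely, \Cref{ass:lipschitz_constant} applied with $x^{t+1} - x^t = -\eta_g \sum_{i=1}^n \sum_{j=0}^{K-1} \nabla f(z_{i,j}^t; \xi_{i,j}^t)$ yields
$$f(x^{t+1}) \le f(x^t) - \eta_g \inp{\nabla f(x^t)}{\textstyle\sum_{i,j}\nabla f(z_{i,j}^t;\xi_{i,j}^t)} + \frac{L\eta_g^2}{2}\sqnorm{\textstyle\sum_{i,j}\nabla f(z_{i,j}^t;\xi_{i,j}^t)},$$
and the remainder of the argument is to take conditional expectation and match the four terms of the target bound.

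For the linear term, taking conditional expectation given $x^t$ and invoking the tower rule together with \Cref{obs:statistical-independence} lets me replace each $\nabla f(z_{i,j}^t;\xi_{i,j}^t)$ by its conditional mean $\nabla f(z_{i,j}^t)$. Then, for each index pair $(i,j)$, the polarization identity $-2\inp{a}{b} = -\sqnorm{a} - \sqnorm{b} + \sqnorm{a-b}$ with $a = \nabla f(x^t)$ and $b = \nabla f(z_{i,j}^t)$, combined with the $L$-smoothness estimate $\sqnorm{\nabla f(x^t) - \nabla f(z_{i,j}^t)} \le L^2 \sqnorm{z_{i,j}^t - x^t}$, produces after summation over $(i,j)$ the first three terms of the announced bound: $-\tfrac{\eta_g nK}{2}\sqnorm{\nabla f(x^t)}$, $-\tfrac{\eta_g}{2}\sum_{i,j}\sqnorm{\nabla f(z_{i,j}^t)}$, and $+\tfrac{\eta_g L^2}{2}\sum_{i,j}\sqnorm{z_{i,j}^t - x^t}$.

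For the quadratic term, I would decompose $\nabla f(z_{i,j}^t;\xi_{i,j}^t) = \nabla f(z_{i,j}^t) + g_{i,j}^t$ with conditionally zero-mean noise $g_{i,j}^t$, and apply $\sqnorm{a+b}\le 2\sqnorm{a}+2\sqnorm{b}$ to split the square into a ``signal'' piece and a ``noise'' piece. The signal piece is handled by Cauchy--Schwarz, $\sqnorm{\sum_{i,j}\nabla f(z_{i,j}^t)} \le nK\sum_{i,j}\sqnorm{\nabla f(z_{i,j}^t)}$, so after multiplication by $L\eta_g^2$ it contributes $+\eta_g^2 nLK\sum_{i,j}\E{\sqnorm{\nabla f(z_{i,j}^t)}}$, which merges with the $-\tfrac{\eta_g}{2}$ from the linear step to produce the bracketed factor $-\eta_g\bigl(\tfrac12 - \eta_g nLK\bigr)$. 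The noise piece reduces to the bound
$$\E{\sqnorm{\textstyle\sum_{i,j} g_{i,j}^t} \mid x^t} \le nK\sigma^2,$$
which multiplied by $L\eta_g^2$ yields exactly the $+\eta_g^2 \sigma^2 nLK$ residual; a final tower step over $x^t$ delivers the announced descent inequality.

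The main obstacle is establishing this last noise bound. I would expand the square into diagonal and off-diagonal contributions: the diagonal terms $\E{\sqnorm{g_{i,j}^t} \mid x^t}$ are each bounded by $\sigma^2$ via \Cref{ass:stochastic_variance_bounded} applied conditionally on $z_{i,j}^t$ and the tower rule, while the off-diagonal terms $\E{\inp{g_{i,j}^t}{g_{i',j'}^t} \mid x^t}$ all vanish---for $i\ne i'$ by conditional independence of workers given $x^t$ (immediate from the parallel structure of \Cref{alg:local_two_step_sizes,alg:local_adaptive}) together with $\E{g_{i,j}^t \mid x^t} = 0$, and for $i = i'$ with, say, $j' < j$, by \Cref{obs:statistical-independence}: $g_{i,j}^t$ has zero conditional mean with respect to the within-worker filtration up to the sampling of $\xi_{i,j}^t$, a filtration in which $g_{i,j'}^t$ is measurable. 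A subtle point worth flagging is that one must \emph{not} expand $\sqnorm{x^{t+1}-x^t}$ directly into the signal$+$noise$+$cross decomposition, because the naive cross term $\E{\inp{\sum_{i,j}\nabla f(z_{i,j}^t)}{\sum_{i,j} g_{i,j}^t}}$ does not vanish: for $j > j'$ the gradient $\nabla f(z_{i,j}^t)$ depends on the earlier noise $g_{i,j'}^t$ through the local trajectory, and the corresponding forward correlation is typically of order $\eta_\ell \sigma^2$ and nonzero. The Young-type split $\|a+b\|^2 \le 2\|a\|^2 + 2\|b\|^2$ precisely circumvents this subtlety at the acceptable price of a factor two absorbed into the final constants.
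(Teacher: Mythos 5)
Your proposal is correct and follows essentially the same route as the paper's proof: smoothness descent, conditional expectation via the tower rule and Observation~\ref{obs:statistical-independence}, the polarization identity combined with $L$-smoothness for the inner-product term, and the Young split followed by Jensen and the variance decomposition (independence across workers, martingale orthogonality within a worker) for the quadratic term. Your closing observation---that the signal--noise cross term genuinely fails to vanish when $j > j'$ within a worker, so the factor-of-two Young split is necessary rather than a mere convenience---is a useful clarification of a point the paper leaves implicit.
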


\begin{lemma}[Residual Estimation; Proof in~\Cref{appdx-proof:convergence-analysis-friendly-nonxonvex-residual-estimation}]\label{appdx:convergence-analysis-friendly-nonxonvex-residual-estimation}
  Under~\Cref{ass:stochastic_variance_bounded} the sequences of iterates $\{x^t\}_{t \ge 0}$ and $\{z_{i, j}^t\}_{t \ge 0}$ in~\Cref{alg:local_two_step_sizes,alg:local_adaptive} satisfy for any integers $t \ge 0$, $i \in [n]$ and $j \in \Int{0}{K - 1}$
    \begin{alignat*}{2}
      \E{\sqnorm{z_{i, j}^t - x^t}} & \le 2 \left( \sum_{\ell = 0}^{j - 1} \left( \eta_{i, \ell}^t \right)^2 \right) \sum_{\ell = 0}^{j - 1} \E{\sqnorm{\nabla f \left( z_{i, \ell}^t \right)}} + 2 \sigma^2 \sum_{\ell = 0}^{j - 1} \left( \eta_{i, \ell}^t \right)^2.
    \end{alignat*}
\end{lemma}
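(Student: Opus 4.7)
The plan is to unroll the local update once and for all, then split each stochastic gradient into its mean and its zero-mean noise and handle the two parts separately via Cauchy--Schwarz and conditional independence. Concretely, since $z_{i,0}^t = x^t$ and $z_{i,j+1}^t = z_{i,j}^t - \eta_{i,j}^t \nabla f(z_{i,j}^t; \xi_{i,j}^t)$ in both Algorithm~\ref{alg:local_two_step_sizes} and Algorithm~\ref{alg:local_adaptive}, telescoping immediately yields
\[
z_{i,j}^t - x^t \;=\; -\sum_{\ell=0}^{j-1} \eta_{i,\ell}^t\, \nabla f\!\left(z_{i,\ell}^t; \xi_{i,\ell}^t\right).
\]
Writing $e_{i,\ell}^t \eqdef \nabla f(z_{i,\ell}^t; \xi_{i,\ell}^t) - \nabla f(z_{i,\ell}^t)$, I would apply $\|a+b\|^2 \le 2\|a\|^2 + 2\|b\|^2$ to decouple a deterministic signal term from a zero-mean noise term, exactly matching the factor $2$ in the target bound.

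For the signal term, I would invoke Cauchy--Schwarz twice: first on each inner product $\langle \nabla f(z_{i,\ell}^t), \nabla f(z_{i,m}^t)\rangle \le \|\nabla f(z_{i,\ell}^t)\|\,\|\nabla f(z_{i,m}^t)\|$, and then on scalars to obtain
\[
\Bigl\| \sum_{\ell=0}^{j-1} \eta_{i,\ell}^t\, \nabla f(z_{i,\ell}^t) \Bigr\|^2 \le \Bigl(\sum_{\ell=0}^{j-1} (\eta_{i,\ell}^t)^2\Bigr)\Bigl(\sum_{\ell=0}^{j-1} \|\nabla f(z_{i,\ell}^t)\|^2\Bigr).
\]
Taking expectations then produces the first term of the stated inequality.

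The real (and only) obstacle is the noise term. I would expand $\|\sum_\ell \eta_{i,\ell}^t e_{i,\ell}^t\|^2$ into diagonal and cross contributions. The diagonal part is bounded directly by the variance assumption, contributing $\sigma^2 \sum_\ell (\eta_{i,\ell}^t)^2$. For each cross term with $\ell < m$, I would condition on the history up to and including $z_{i,m}^t$ and apply Observation~\ref{obs:statistical-independence} together with the unbiasedness half of Assumption~\ref{ass:stochastic_variance_bounded}: given this history, $e_{i,\ell}^t$ is measurable while $\E{e_{i,m}^t \mid \text{history}} = 0$, so the tower property annihilates $\E{\langle e_{i,\ell}^t, e_{i,m}^t\rangle}$. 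The crucial structural point that makes this clean is that in both algorithms $\eta_{i,\ell}^t$ is a deterministic function of $\ell$ (not of past iterates), so it factors out of all conditional expectations without difficulty.

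Combining the two bounds and taking total expectation delivers
\[
\E{\sqnorm{z_{i,j}^t - x^t}} \le 2\Bigl(\sum_{\ell=0}^{j-1}(\eta_{i,\ell}^t)^2\Bigr)\sum_{\ell=0}^{j-1}\E{\sqnorm{\nabla f(z_{i,\ell}^t)}} + 2\sigma^2 \sum_{\ell=0}^{j-1}(\eta_{i,\ell}^t)^2,
\]
as claimed. Apart from the conditional-independence argument for the cross terms, every step is a mechanical application of Cauchy--Schwarz and linearity of expectation.
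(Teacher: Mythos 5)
Your proof follows exactly the same route as the paper's: unroll the local update, split each stochastic gradient into signal plus zero-mean noise via $\|a+b\|^2 \le 2\|a\|^2 + 2\|b\|^2$ (the paper invokes Young's inequality with $\alpha = 1$), bound the signal term with the Cauchy--Schwarz argument (the paper's Lemma~\ref{lem:jensen-form-2}), and kill the noise cross terms by conditioning and unbiasedness while bounding the diagonal by $\sigma^2$. The argument is correct and matches the paper's proof step for step.
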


We are now ready to unroll the descent lemma.
\begin{lemma}[Unrolling the Descent Lemma; Proof in~\Cref{appdx-proof:convergence-analysis-friendly-nonxonvex-unrolling-descent-lemma}]\label{appdx:convergence-analysis-friendly-nonxonvex-unrolling-descent-lemma}
  Under~\Cref{ass:lipschitz_constant,ass:stochastic_variance_bounded} the sequences of iterates $\{x^t\}_{t \ge 0}$ and $\{z_{i, j}^t\}_{t \ge 0}$ in~\Cref{alg:local_two_step_sizes,alg:local_adaptive} satisfy for any integer $R \ge 1$
    \begin{alignat*}{2}
      \frac{1}{R} \sum_{t = 0}^{R - 1} \E{\sqnorm{\nabla f\left( x^t \right)}} & \le \frac{2 \Delta}{\eta_g n K R} + 2 \eta_g \sigma^2 L + \frac{2 \sigma^2 L^2}{n R} \sum_{t = 0}^{R - 1} \sum_{i = 1}^n\sum_{\ell = 0}^{K - 1} \left( \eta_{i, \ell}^t \right)^2 \\
        &\qquad- \frac{2}{n K R} \sum_{t = 0}^{R - 1} \sum_{i = 1}^n \left( \frac{1}{2} - \eta_g n L K - \left( \sum_{\ell = 0}^{K - 1} \left( \eta_{i, \ell}^t \right)^2 \right) L^2 K \right) \sum_{j = 0}^{K - 1} \E{\sqnorm{\nabla f\left( z_{i, j}^t \right)}}.
    \end{alignat*}
\end{lemma}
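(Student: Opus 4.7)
The plan is to obtain the stated inequality by combining the two preceding lemmas into a single one-round estimate, then telescoping over $t=0,\dots,R-1$ and rearranging.

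First, I would substitute the residual bound from Lemma~\ref{appdx:convergence-analysis-friendly-nonxonvex-residual-estimation} directly into the residual term $\frac{\eta_g L^2}{2}\sum_{i,j}\E{\sqnorm{z_{i,j}^t-x^t}}$ appearing in the descent lemma. To keep the outer coefficients free of $j$, I would first use the trivial monotonicity bound
\[
\sum_{\ell=0}^{j-1}(\eta_{i,\ell}^t)^2 \le \sum_{\ell=0}^{K-1}(\eta_{i,\ell}^t)^2,
\]
which allows the $(\eta_{i,\ell}^t)^2$--factor to be pulled outside the $j$--sum, leaving a pure double sum $\sum_{j=0}^{K-1}\sum_{\ell=0}^{j-1}\E{\sqnorm{\nabla f(z_{i,\ell}^t)}}$ (and similarly for the $\sigma^2$ contribution).

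Second, I would swap the order of summation in $\sum_{j=0}^{K-1}\sum_{\ell=0}^{j-1}$ to obtain $\sum_{\ell=0}^{K-2}(K-1-\ell)(\cdot) \le K\sum_{\ell=0}^{K-1}(\cdot)$. Applied to the noise part this yields a contribution of the form $\eta_g L^2 K\sigma^2\sum_{i,\ell}(\eta_{i,\ell}^t)^2$; applied to the gradient part it produces $\eta_g L^2 K\sum_i(\sum_{\ell}(\eta_{i,\ell}^t)^2)\sum_j\E{\sqnorm{\nabla f(z_{i,j}^t)}}$. This last term is then absorbed into the existing $-\eta_g(1/2-\eta_g nLK)\sum_{i,j}\E{\sqnorm{\nabla f(z_{i,j}^t)}}$ by grouping per worker $i$, producing exactly the worker-dependent coefficient $\left(\tfrac{1}{2}-\eta_g n L K - L^2 K\sum_{\ell}(\eta_{i,\ell}^t)^2\right)$ appearing in the target display.

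Third, I would telescope over $t=0,\dots,R-1$. Using $\E{f(x^R)}\ge \inf f$ so that $f(x^0)-\E{f(x^R)}\le \Delta$, the sum of $\E{f(x^{t+1})}-\E{f(x^t)}$ is lower bounded by $-\Delta$. Moving the two non-positive terms (the $\tfrac{\eta_g nK}{2}\E{\sqnorm{\nabla f(x^t)}}$ term and the worker-dependent one) to the left-hand side, and keeping the noise terms $\eta_g L^2 K\sigma^2\sum_{i,\ell}(\eta_{i,\ell}^t)^2$ and $\eta_g^2\sigma^2 nLKR$ on the right, gives a clean bound. Dividing both sides by $\tfrac{\eta_g nKR}{2}$ converts these three noise contributions into the three terms $\tfrac{2\Delta}{\eta_g nKR}$, $2\eta_g\sigma^2 L$, and $\tfrac{2\sigma^2 L^2}{nR}\sum_{t,i,\ell}(\eta_{i,\ell}^t)^2$, and rearranges the worker-dependent sum exactly into the form stated.

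The step I expect to be most delicate is the bookkeeping in the double-sum swap and the subsequent regrouping: one must make sure the per-worker coefficient $\sum_{\ell}(\eta_{i,\ell}^t)^2$ is pulled through both outside the $j$--sum (via the monotone bound) and matched against the worker-indexed summation $\sum_i$ that already exists from the descent lemma, so that the final coefficient $\left(\tfrac{1}{2}-\eta_g n L K - L^2 K\sum_{\ell}(\eta_{i,\ell}^t)^2\right)$ lands inside the correct $\sum_i$ without losing or duplicating factors of $n$ or $K$. Everything else is routine rearrangement and a single telescoping step.
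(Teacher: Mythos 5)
Your proposal is correct and follows essentially the same route as the paper's proof: substitute the residual estimate from Lemma~\ref{appdx:convergence-analysis-friendly-nonxonvex-residual-estimation} into the descent lemma, use $\sum_{\ell=0}^{j-1}(\eta_{i,\ell}^t)^2\le\sum_{\ell=0}^{K-1}(\eta_{i,\ell}^t)^2$ to detach the step-size factor, bound $\sum_{j=0}^{K-1}\sum_{\ell=0}^{j-1}(\cdot)\le K\sum_{\ell=0}^{K-1}(\cdot)$ (the paper states this directly rather than via an explicit swap, but it is the same estimate), fold the resulting gradient term into the per-worker coefficient, telescope over $t$, and divide by $\tfrac{\eta_g nKR}{2}$. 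The only minor imprecision is calling the worker-dependent term ``non-positive'' --- its sign depends on the parameters --- but since the lemma is a pure rearrangement this does not affect the argument.
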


\subsubsection{Main results}

\begin{restate-theorem}{\ref{thm:communication-and-computation-complexity}}
  Under Assumptions~\ref{ass:lipschitz_constant} and \ref{ass:stochastic_variance_bounded},
  \algname{Dual Local SGD} (Algorithm~\ref{alg:local_two_step_sizes}) with $\eta_g = \min\ens{\frac{\eps}{8 L \sigma^2}, \frac{1}{4 n L}}$ and $\eta_{\ell} \leq \sqrt{n} \eta_g$ finds an $\varepsilon$--stationary point after at most $\textstyle R = \Ceil{\nicefrac{32 L \Delta}{\varepsilon}}$
  communication rounds with $K = \max\left\{\left\lceil\nicefrac{\sigma^2}{\varepsilon n}\right\rceil, 1\right\}$. Additionally, under Assumption~\ref{ass:time}, it requires at most 
  \begin{align}
    \label{eq:new_local_compl-restate-3}
    \textstyle \tau \frac{64 L \Delta}{\varepsilon} + 64 h \left(\frac{L \Delta}{\varepsilon} + \frac{L \sigma^2 \Delta}{n \varepsilon^2}\right)
  \end{align}
  seconds to find an $\varepsilon$--stationary point.
\end{restate-theorem}

\begin{proof}
  In~\Cref{alg:local_two_step_sizes} the local stepsize is constant so it does not depend on the communication round $t$, neither on the worker index $i \in [n]$ nor on the number of local steps $\ell \in \Int{0}{K - 1}$ thus the inequality from~\Cref{appdx:convergence-analysis-friendly-nonxonvex-unrolling-descent-lemma} can be simplified to
    \begin{alignat*}{2}
      \frac{1}{R} \sum_{t = 0}^{R - 1} \E{\sqnorm{\nabla f\left( x^t \right)}} & \le \frac{2 \Delta}{\eta_g n K R} + 2 \eta_g \sigma^2 L + 2 \eta_{\ell}^2 \sigma^2 K L^2 \\
        &\qquad- \frac{2}{n K R} \sum_{t = 0}^{R - 1} \sum_{i = 1}^n \left( \frac{1}{2} - \eta_g n L K - \eta_{\ell}^2 L^2 K^2 \right) \sum_{j = 0}^{K - 1} \E{\sqnorm{\nabla f\left( z_{i, j}^t \right)}},
    \end{alignat*}
  for any integer $R \ge 1$. Then, let us check the non-negativity of $\frac{1}{2} - \eta_g n L K - \eta_{\ell}^2 L^2 K^2$ using the conditions $\eta_g = \min\ens{\frac{\eps}{8 L \sigma^2}, \frac{1}{4 n L}}$, $\eta_{\ell} \le \sqrt{n} \eta_g$ and $K = \max\left\{\left\lceil\nicefrac{\sigma^2}{\varepsilon n}\right\rceil, 1\right\}$. We distinguish two cases: if $\nicefrac{\sigma^2}{n \eps} \le 1$ then $K = 1$ and
    \begin{alignat*}{2}
      \eta_g n L K & = \eta_g n L \le \frac{1}{4}, \numberthis\label{973d9328-b3ab-4d85-8cd9-df071cece0cc}
    \end{alignat*}
  since $\eta_g \le \frac{1}{4 n L}$. Otherwise, if $\nicefrac{\sigma^2}{n \eps} > 1$ then $K = \left\lceil\nicefrac{\sigma^2}{\varepsilon n}\right\rceil$ and using $\eta_g \le \frac{\eps}{8 L \sigma^2}$
    \begin{alignat*}{2}
      \eta_g n L K = \eta_g n L \Ceil{\frac{\sigma^2}{n \eps}} \le \frac{n \eps}{8 \sigma^2} \left( 1 + \frac{\sigma^2}{n \eps} \right) \le \frac{n \eps}{8 \sigma^2} + \frac{1}{8} \le \frac{1}{4}. \numberthis\label{d6791419-120f-4ecf-b0f3-11ca5c7e56b0}
    \end{alignat*}
  Moreover, since $\eta_{\ell} \le \sqrt{n} \eta_g$ we have 
    \[ \eta_{\ell}^2 K^2 L^2 \le \eta_g^2 n L^2 K^2 \le \left( \eta_g n L K \right)^2 \oversetlab{\eqref{973d9328-b3ab-4d85-8cd9-df071cece0cc} + \eqref{d6791419-120f-4ecf-b0f3-11ca5c7e56b0}}{\le} \frac{1}{16}, \numberthis\label{00ce9d7b-83cb-4bf9-b229-14114e64cc2f} \] 
  because $n \ge 1$. Combining the upper bounds~\eqref{973d9328-b3ab-4d85-8cd9-df071cece0cc},~\eqref{d6791419-120f-4ecf-b0f3-11ca5c7e56b0} and~\eqref{00ce9d7b-83cb-4bf9-b229-14114e64cc2f} we obtain
    \[ \frac{1}{2} - \eta_g n L K - \eta_{\ell}^2 L^2 K^2 \ge 0, \]
  hence, 
    \begin{alignat*}{2}
      \frac{1}{R} \sum_{t = 0}^{R - 1} \E{\sqnorm{\nabla f\left( x^t \right)}} & \le \frac{2 \Delta}{\eta_g n K R} + 2 \eta_g \sigma^2 L + 2 \eta_{\ell}^2 \sigma^2 K L^2 \\
      & \le \frac{2 \Delta}{\eta_g n K R} + 2 \eta_g \sigma^2 L \left( 1 + \eta_g n L K \right) \\
    \oversetlab{\eqref{973d9328-b3ab-4d85-8cd9-df071cece0cc} + \eqref{d6791419-120f-4ecf-b0f3-11ca5c7e56b0}}&{\le} \frac{2 \Delta}{\eta_g n K R} + 2 \eta_g \sigma^2 L \left( 1 + \frac{1}{4} \right) \\
    & = \frac{2 \Delta}{\eta_g n K R} + \frac{5}{2} \eta_g \sigma^2 L \\
    \oversetrel{rel:c737d90b-ce83-4ec5-bb62-2525a74a3226}&{\le} \frac{2 \Delta}{\eta_g n K R} + \frac{\eps}{2},
    \end{alignat*}
  where in~\relref{rel:c737d90b-ce83-4ec5-bb62-2525a74a3226} we use $\eta_g \le \frac{\eps}{8 L \sigma^2}$. Then, it remains to choose $R \ge 1$ such that $\frac{2 \Delta}{\eta_g n K R} \le \frac{\eps}{2}$, that is,
    \begin{alignat*}{2}
      R \ge \frac{4 \Delta L}{\eta_g n L K \eps} = \frac{4 \Delta L}{\eps} \max\ens{\frac{8 L \sigma^2}{\eps n L K}, \frac{4 n L}{n L K}} = \frac{4 \Delta L}{\eps} \max\ens{\frac{8 \sigma^2}{\eps n K}, \frac{4}{K}}.
    \end{alignat*}
  Now, observe that $K \ge 1$ thus $\frac{4}{K} \le 4$. Moreover, if $\nicefrac{\sigma^2}{n \eps} \le 1$ then
    \[ \frac{8 \sigma^2}{\eps n K} \le \frac{8}{K} \le 8, \numberthis\label{ea7c6d0d-293c-4c21-9b47-953855319e09} \]
  while, if $\nicefrac{\sigma^2}{n \eps} > 1$ we have $K = \Ceil{\nicefrac{\sigma^2}{n \eps}} \ge \nicefrac{\sigma^2}{n \eps}$ so
    \[ \frac{8 \sigma^2}{n \eps K} \le 8. \numberthis\label{d979f95f-5f05-4339-9fcb-05d8c6302e8c} \]
  Combining the upper bounds~\eqref{ea7c6d0d-293c-4c21-9b47-953855319e09} and~\eqref{d979f95f-5f05-4339-9fcb-05d8c6302e8c} we have $\max\ens{\frac{8 \sigma^2}{\eps n K}, \frac{4}{K}} \le 8$ hence, it is enough to take
    \[ R \ge \max\ens{1, \frac{32 L \Delta}{\eps}}, \]
  so as to guarantee \algname{Dual Local SGD} to find an $\varepsilon$--stationary point.

  To derive the time complexity~\eqref{eq:new_local_compl-restate-3}, we know that there are $R$ communication rounds and in any of these rounds each worker performs $K$ local steps thus under~\Cref{ass:time} it requires at most
    \[ \tau R + h K R \le \tau R + h R \left( 1 + \frac{\sigma^2}{n \eps} \right) \le \textstyle \tau \frac{64 L \Delta}{\varepsilon} + 64 h \left(\frac{L \Delta}{\varepsilon} + \frac{L \sigma^2 \Delta}{n \varepsilon^2}\right) \]
  seconds for \algname{Dual Local SGD} to find an $\eps$--stationary point, as long as $\eps \lesssim L \Delta$\footnote{From~\Cref{lem:upper-bound-gradient-norm-by-function-value} we know that $\sqnorm{\nabla f\left( x^0 \right)} \le 2 L \left( f\left( x^0 \right) - f^{\inf} \right) = 2 L \Delta$ hence if $\eps \ge 2 L \Delta$ it means $x^0$ is already an $\eps$--stationary point and we can simply return $x^0$. On the other hand, if $\eps < 2 L \Delta$ then $\frac{32 L \Delta}{\eps} \ge 16 > 1$ so it is enough to take $\textstyle R = \Ceil{\nicefrac{32 L \Delta}{\varepsilon}}$. Additionally, we have
    \[ R = \Ceil{\frac{32 L \Delta}{\eps}} \le \frac{32 L \Delta}{\eps} + 1 \le \frac{64 L \Delta}{\eps}. \]}.
\end{proof}

\begin{restate-theorem}{\ref{thm:adaptive_local_sgd}}
  Under Assumptions~\ref{ass:lipschitz_constant} and \ref{ass:stochastic_variance_bounded},
  \algname{Decaying Local SGD} (Algorithm~\ref{alg:local_adaptive}) with $\eta_g = \min\ens{\frac{\eps}{8 L \sigma^2}, \frac{1}{4 n L}}$ and $b = \max\left\{\frac{\sigma^2}{\varepsilon}, n\right\}$ finds an $\varepsilon$--stationary point after at most $\textstyle R = \Ceil{\nicefrac{32 L \Delta}{\varepsilon}}$
  communication rounds with $K = \max\left\{\left\lceil\nicefrac{\sigma^2}{\varepsilon n}\right\rceil, 1\right\}$. Additionally, under Assumption~\ref{ass:time}, it requires at most 
  \begin{align*}
    \textstyle \tau \frac{64 L \Delta}{\varepsilon} + 64 h \left(\frac{L \Delta}{\varepsilon} + \frac{L \sigma^2 \Delta}{n \varepsilon^2}\right)
  \end{align*}
  seconds to find an $\varepsilon$--stationary point.
\end{restate-theorem}

\begin{proof}
  In~\Cref{alg:local_adaptive} the local stepsize depends on $j \in \Int{0}{K - 1}$ and it does not depend on the communication round $t$ nor on the worker index $i \in [n]$ hence in~\Cref{appdx:convergence-analysis-friendly-nonxonvex-unrolling-descent-lemma} we have the simplifications
  \begin{alignat*}{2}
    \frac{1}{n R}  \sum_{t = 0}^{R - 1} \sum_{i = 1}^n\sum_{\ell = 0}^{K - 1} \left( \eta_{i, \ell}^t \right)^2 = \sum_{\ell = 0}^{K - 1} \eta_{\ell}^2,
  \end{alignat*}
  and
  \begin{alignat*}{2}
    \sum_{\ell = 0}^{K - 1} \eta_{\ell}^2 & = \eta_g^2 \sum_{\ell = 0}^{K - 1} \frac{b}{(\ell + 1) \left( \log K + 1 \right)} = \frac{\eta_g^2 b}{\log K + 1} \sum_{j = 1}^K \frac{1}{j} \oversetrel{rel:668aa0e8-e0c6-4764-8609-c1dcc7a8304f}{\le} \frac{\eta_g ^2 b \left( \log K + 1 \right)}{\log K + 1} = \eta_g^2 b, \numberthis\label{59941582-4d7f-4ea1-87f6-3651990b58d2}
  \end{alignat*}
  where~\relref{rel:668aa0e8-e0c6-4764-8609-c1dcc7a8304f} follows from the well-known inequality $1 + \frac{1}{2} + \cdots + \frac{1}{n} \le 1 + \log(n)$ which holds for all integer $n \ge 1$. Thus, using~\Cref{appdx:convergence-analysis-friendly-nonxonvex-unrolling-descent-lemma} and~\eqref{59941582-4d7f-4ea1-87f6-3651990b58d2} we obtain
    \begin{alignat*}{2}
      \frac{1}{R} \sum_{t = 0}^{R - 1} \E{\sqnorm{\nabla f\left( x^t \right)}} & \le \frac{2 \Delta}{\eta_g n K R} + 2 \eta_g \sigma^2 L + 2 \eta_g^2 \sigma^2 L^2 b \\
        &\qquad- \frac{2}{n K R} \sum_{t = 0}^{R - 1} \sum_{i = 1}^n \left( \frac{1}{2} - \eta_g n L K - \eta_g^2 L^2 K b \right) \sum_{j = 0}^{K - 1} \E{\sqnorm{\nabla f\left( z_{i, j}^t \right)}},
    \end{alignat*}
  for any integer $R \ge 1$. Then, let us check the non-negativity of $\frac{1}{2} - \eta_g n L K - \eta_g^2 L^2 K b$ using the conditions $\eta_g = \min\ens{\frac{\eps}{8 L \sigma^2}, \frac{1}{4 n L}}$, $K = \max\left\{\left\lceil\nicefrac{\sigma^2}{\varepsilon n}\right\rceil, 1\right\}$ and $b = \max\ens{\nicefrac{\sigma^2}{\eps}, n}$. We distinguish two cases: if $\nicefrac{\sigma^2}{n \eps} \le 1$ then $K = 1$ and $b = n$ so
    \begin{alignat*}{2}
      \eta_g n L K & = \eta_g n L \le \frac{1}{4}, \numberthis\label{973d9328-b3ab-4d85-8cd9-df071cece0cc-2}
    \end{alignat*}
  and
    \begin{alignat*}{2}
      \eta_g^2 L^2 K b = \eta_g^2 n L^2 \le \frac{1}{16 n} \le \frac{1}{4},
    \end{alignat*}
  since $n \ge 1$ and $\eta_g \le \frac{1}{4 n L}$. Otherwise, if $\nicefrac{\sigma^2}{n \eps} > 1$ then $K = \left\lceil\nicefrac{\sigma^2}{\varepsilon n}\right\rceil$ and $b = \frac{\sigma^2}{\eps}$ and using $\eta_g \le \frac{\eps}{8 L \sigma^2}$ we have
    \begin{alignat*}{2}
      \eta_g n L K = \eta_g n L \Ceil{\frac{\sigma^2}{n \eps}} \le \frac{n \eps}{8 \sigma^2} \left( 1 + \frac{\sigma^2}{n \eps} \right) \le \frac{n \eps}{8 \sigma^2} + \frac{1}{8} \le \frac{1}{4}, \numberthis\label{d6791419-120f-4ecf-b0f3-11ca5c7e56b0-2}
    \end{alignat*}
  and
    \[ \eta_g^2 L^2 K b = \eta_g^2 n L^2 \Ceil{\frac{\sigma^2}{n \eps}} \frac{\sigma^2}{n \eps} \le \frac{1}{n}  \left( \eta_g n L \Ceil{\frac{\sigma^2}{n \eps}} \right)^2 \oversetlab{\eqref{d6791419-120f-4ecf-b0f3-11ca5c7e56b0-2}}{\le} \frac{1}{16 n} \le \frac{1}{4}, \numberthis\label{00ce9d7b-83cb-4bf9-b229-14114e64cc2f-2} \] 
  because $n \ge 1$. Combining the upper bounds~\eqref{973d9328-b3ab-4d85-8cd9-df071cece0cc-2},~\eqref{d6791419-120f-4ecf-b0f3-11ca5c7e56b0-2} and~\eqref{00ce9d7b-83cb-4bf9-b229-14114e64cc2f-2} we obtain
    \[ \frac{1}{2} - \eta_g n L K - \eta_g^2 L^2 K b \ge 0. \]
  Moreover, we have $2 \eta_g \sigma^2 L \le \frac{\eps}{4}$ and by definition of $b$
    \begin{alignat*}{2}
      2 \eta_g^2 \sigma^2 L^2 b & = 2 \eta_g \sigma^2 L \max\ens{\frac{\eta_g \sigma^2 L}{\eps}, \eta_g n L} \\
      & \le \frac{\eps}{4} \max\ens{\frac{\eta_g \sigma^2 L}{\eps}, \eta_g n L} \\
      & \le \frac{\eps}{4} \max\ens{\frac{1}{8}, \frac{1}{4}} \\
      & \le \frac{\eps}{4},
    \end{alignat*}
  thus $2 \eta_g \sigma^2 L + 2 \eta_g^2 \sigma^2 L^2 b \le \frac{\eps}{2}$. Now, notice that the global step size $\eta_g$ and the number of local steps $K$ are the same in~\Cref{thm:communication-and-computation-complexity} and in~\Cref{thm:adaptive_local_sgd} thus as done in the proof of~\Cref{thm:communication-and-computation-complexity}, it is enough to take 
    \[ R \ge \max\ens{1, \frac{32 L \Delta}{\eps}}, \]
  in order to ensure \algname{Decaying Local SGD} finds an $\eps$--stationary point. Moreover, under~\Cref{ass:time} \algname{Decaying Local SGD} requires at most
    \[ \textstyle \tau \frac{64 L \Delta}{\varepsilon} + 64 h \left(\frac{L \Delta}{\varepsilon} + \frac{L \sigma^2 \Delta}{n \varepsilon^2}\right) \]
  second to find such an $\eps$--stationary point. This achieves the proof of the result.

\end{proof}

\subsection{Convex setup}

\subsubsection{Preliminary lemmas}

\begin{lemma}[A First Descent Lemma; Proof in~\Cref{appdx-proof:convergence-analysis-friendly-convex-descent-lemma-v1}]\label{appdx:convergence-analysis-friendly-convex-descent-lemma-v1}
    Under~\Cref{ass:lipschitz_constant,ass:stochastic_variance_bounded,ass:convex} the sequences of iterates $\{x^t\}_{t \ge 0}$ and $\{z_{i, j}^t\}$ in~\Cref{alg:local_two_step_sizes,alg:local_adaptive} satisfy for any integer $t \ge 0$
    \begin{alignat*}{2}
      \E{\sqnorm{x^{t + 1} - x^*}} & \le \E{\sqnorm{x^t - x^*}} + 2 \eta_g L \sum_{i = 1}^n \sum_{j = 0}^{K - 1} \E{\sqnorm{z_{i, j}^t - x^*}} \\
        &\qquad- 2 \eta_g \left( \frac{3}{4} - \eta_g n L K \right) \sum_{i = 1}^n \sum_{j = 0}^{K - 1} \E{\ps{\nabla f \left( z_{i, j}^t \right)}{z_{i, j}^t - x^*}} + 2 \eta_g^2 \sigma^2 n K.
    \end{alignat*}
\end{lemma}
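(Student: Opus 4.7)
The plan is to mirror the structure of the nonconvex descent lemma (Lemma~A.1) but starting from $\sqnorm{x^{t+1} - x^*}$ instead of $f(x^{t+1})$, using convexity of $f$ together with the optimality condition $\nabla f(x^*) = 0$ to produce a progress term involving $\langle \nabla f(z_{i,j}^t), z_{i,j}^t - x^*\rangle$.

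First, I would expand the squared distance using the common update rule $x^{t+1} = x^t - \eta_g \sum_{i,j} g_{i,j}$ (where $g_{i,j} \eqdef \nabla f(z_{i,j}^t; \xi_{i,j}^t)$) to obtain
\begin{align*}
\sqnorm{x^{t+1} - x^*} = \sqnorm{x^t - x^*} - 2\eta_g \left\langle x^t - x^*,\, \textstyle\sum_{i,j} g_{i,j}\right\rangle + \eta_g^2 \sqnorm{\textstyle\sum_{i,j} g_{i,j}}.
\end{align*}
I would then take expectations and invoke Observation~A.1 together with the tower property to replace $g_{i,j}$ by $\nabla f(z_{i,j}^t)$ inside the linear term.

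Second, I would decompose the linear term via $x^t - x^* = (x^t - z_{i,j}^t) + (z_{i,j}^t - x^*)$, which exposes the desired progress quantity $\langle \nabla f(z_{i,j}^t), z_{i,j}^t - x^*\rangle$. The leftover inner product $2\eta_g \langle z_{i,j}^t - x^t, \nabla f(z_{i,j}^t)\rangle$ is handled by Young's inequality, writing $\nabla f(z_{i,j}^t) = \nabla f(z_{i,j}^t) - \nabla f(x^*)$, and invoking the smoothness estimate $\sqnorm{\nabla f(z)} \le L^2 \sqnorm{z - x^*}$ to produce the $+ 2\eta_g L \sum \E{\sqnorm{z_{i,j}^t - x^*}}$ residual; a complementary application with the co-coercivity bound $\sqnorm{\nabla f(z)} \le L \langle \nabla f(z), z - x^*\rangle$ contributes an extra $+\tfrac{\eta_g}{2} \sum \E{\langle \nabla f(z_{i,j}^t), z_{i,j}^t - x^*\rangle}$ which, combined with the direct $-2\eta_g$ from the initial split, yields the announced coefficient $-2\eta_g \cdot \tfrac{3}{4}$.

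Third, I would bound $\eta_g^2 \E{\sqnorm{\sum g_{i,j}}}$ by the bias--variance split $g_{i,j} = \nabla f(z_{i,j}^t) + \delta_{i,j}$. The variance part collapses to $\sum \E{\sqnorm{\delta_{i,j}}} \le nK \sigma^2$, because cross terms $\E{\langle \delta_{i,j}, \delta_{i',j'}\rangle}$ vanish for $(i,j) \ne (i',j')$: across workers by the worker-level independence built into the algorithms, and within a single worker by conditioning on the later of the two steps so that the later noise integrates to zero (a direct consequence of Observation~A.1). The bias part is bounded by Jensen's inequality and the combined smoothness--convexity estimate $\sqnorm{\nabla f(z)} \le 2L \langle \nabla f(z), z - x^*\rangle$, which together with the split $\sqnorm{a+b}\le 2\sqnorm{a}+2\sqnorm{b}$ produces the $+ 2\eta_g^2 n L K$ contribution inside the parenthesis $(\tfrac{3}{4} - \eta_g n L K)$, plus the noise factor $2\eta_g^2 \sigma^2 n K$.

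The chief difficulty is calibrating the Young parameters in the linear-term step so that the three target coefficients $\tfrac{3}{4}$, $2\eta_g L$, and $\eta_g n L K$ land simultaneously without leaving behind a spurious $\sqnorm{z_{i,j}^t - x^t}$ residual. A secondary bookkeeping task is handling the cross terms $\E{\langle \nabla f(z_{i,j}^t), \delta_{i,j'}\rangle}$ with $j > j'$ on the same worker, which are the only ones that do not vanish trivially; these can be absorbed into the leading $n K L \sum \langle \nabla f(z_{i,j}^t), z_{i,j}^t - x^*\rangle$ piece using $\sqnorm{a+b}\le 2\sqnorm{a}+2\sqnorm{b}$ worker-by-worker, followed by worker-level independence, at the cost of only mild constant adjustments.
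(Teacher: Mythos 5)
The overall skeleton of your proposal follows the paper's own proof: expand $\sqnorm{x^{t+1}-x^*}$ using the update rule, split $\ps{x^t - x^*}{\nabla f(z_{i,j}^t)}$ into $\ps{x^t - z_{i,j}^t}{\cdot}+\ps{z_{i,j}^t - x^*}{\cdot}$, handle the first piece with Young, and control the variance term through a bias--variance decomposition of $\sum_{i,j}\nabla f(z_{i,j}^t;\xi_{i,j}^t)$ followed by the co-coercivity bound $\sqnorm{\nabla f(z)}\le L\ps{\nabla f(z)}{z-x^*}$. The bookkeeping that assembles $-2\eta_g + \tfrac{\eta_g}{2} + 2\eta_g^2 nLK = -2\eta_g(\tfrac34 - \eta_g nLK)$ is also correctly identified.

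However, there is a genuine gap in how you claim to produce the residual. Once you apply Young's inequality to $2\eta_g\abs{\ps{z_{i,j}^t - x^t}{\nabla f(z_{i,j}^t)}}$, one of the two resulting terms is necessarily $\sqnorm{z_{i,j}^t - x^t}$ (the displacement factor is $z_{i,j}^t - x^t$, and Young squares each factor separately). Invoking $\sqnorm{\nabla f(z)}\le L^2\sqnorm{z-x^*}$ only rewrites the \emph{other} Young factor, the gradient-norm piece; it does nothing to the $\sqnorm{z_{i,j}^t - x^t}$ piece. So your route cannot land on a bound whose only squared-distance residual is $\sqnorm{z_{i,j}^t - x^*}$: the term you should actually obtain is $2\eta_g L\,\sqnorm{z_{i,j}^t - x^t}$. (And indeed the paper's own proof produces exactly $2\eta_g L\,\sqnorm{x^t - z_{i,j}^t}$ in the Young step; the $\sqnorm{z_{i,j}^t - x^*}$ appearing in the lemma statement and in the display immediately after is a typo, as confirmed by the downstream Residual Estimation lemma, which bounds $\E{\sqnorm{z_{i,j}^t - x^t}}$ and is what the unrolling lemma actually substitutes in.) You should aim for $\sqnorm{z_{i,j}^t - x^t}$, not try to force $\sqnorm{z_{i,j}^t - x^*}$, and you should drop the smoothness estimate $\sqnorm{\nabla f(z)}\le L^2\sqnorm{z-x^*}$: the paper uses only the co-coercivity bound $\sqnorm{\nabla f(z)}\le L\ps{\nabla f(z)}{z-x^*}$, once in the Young step (to convert $\tfrac{1}{2L}\sqnorm{\nabla f(z)}$ into $\tfrac12\ps{\nabla f(z)}{z-x^*}$) and once on the bias term, and this is exactly what makes the three coefficients $\tfrac34$, $2\eta_g L$, $\eta_g nLK$ land without a spare squared-distance residual.

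One smaller remark: your ``secondary bookkeeping task'' about cross terms $\E{\ps{\nabla f(z_{i,j}^t)}{\delta_{i,j'}}}$ is moot once you perform the $\sqnorm{a+b}\le 2\sqnorm a + 2\sqnorm b$ split, since the bias and noise sums are then estimated separately and no such mixed cross terms ever arise. The only cross terms that require an argument are $\E{\ps{\delta_{i,j}}{\delta_{i,j'}}}$ for $j\neq j'$ on the same worker, and these vanish by conditioning on the later index (the content of the paper's Observation on statistical independence), exactly as you say.
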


\begin{lemma}[A Descent Lemma on $\{f( z_{i, j}^t )\}$; Proof in~\Cref{appdx-proof:convergence-analysis-friendly-convex-descent-lemma-v2}]\label{appdx:convergence-analysis-friendly-convex-descent-lemma-v2}
    Under~\Cref{ass:lipschitz_constant,ass:stochastic_variance_bounded,ass:convex} the sequences of iterates $\{x^t\}_{t \ge 0}$ and $\{z_{i, j}^t\}$ in~\Cref{alg:local_two_step_sizes,alg:local_adaptive} satisfy for any integer $t \ge 0$
    \begin{alignat*}{2}
      &\frac{\eta_g}{2} \sum_{i = 1}^n \sum_{j = 0}^{K - 1} \left( \E{f\left( z_{i, j}^t \right)} - f^{\inf} \right) \\
      &\qquad \le \E{\sqnorm{x^t - x^*}} - \E{\sqnorm{x^{t + 1} - x^*}} + 2 \eta_g L \sum_{i = 1}^n \sum_{j = 0}^{K - 1} \E{\sqnorm{z_{i, j}^t - x^*}} \\
        &\qquad\qquad- 2 \eta_g \left( \frac{1}{2} - \eta_g n L K \right) \sum_{i = 1}^n \sum_{j = 0}^{K - 1} \E{\ps{\nabla f \left( z_{i, j}^t \right)}{z_{i, j}^t - x^*}} + 2 \eta_g^2 \sigma^2 n K.
    \end{alignat*}
\end{lemma}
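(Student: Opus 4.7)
The plan is to derive Lemma~\ref{appdx:convergence-analysis-friendly-convex-descent-lemma-v2} as a short corollary of Lemma~\ref{appdx:convergence-analysis-friendly-convex-descent-lemma-v1}, using convexity of $f$ to convert a fraction of the inner-product term on the right-hand side into the suboptimality gap $f(z^t_{i,j}) - f^{\inf}$ that appears on the new left-hand side. Comparing the two statements, the coefficient multiplying $\sum_{i,j} \E{\ps{\nabla f(z^t_{i,j})}{z^t_{i,j} - x^*}}$ decreases from $\tfrac{3}{4} - \eta_g n L K$ to $\tfrac{1}{2} - \eta_g n L K$, releasing exactly a $\tfrac{1}{4}$-share, while a brand new term $\tfrac{\eta_g}{2} \sum_{i,j}(\E{f(z^t_{i,j})} - f^{\inf})$ appears on the left. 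This numerology already dictates which coefficient split to perform.

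First, I would start from the conclusion of Lemma~\ref{appdx:convergence-analysis-friendly-convex-descent-lemma-v1}, move $\E{\sqnorm{x^{t+1} - x^*}}$ to the left-hand side, and split the coefficient as $\tfrac{3}{4} - \eta_g n L K = (\tfrac{1}{2} - \eta_g n L K) + \tfrac{1}{4}$. The detached portion is
\[ -\tfrac{\eta_g}{2} \sum_{i=1}^n \sum_{j=0}^{K-1} \E{\ps{\nabla f(z^t_{i,j})}{z^t_{i,j} - x^*}}. \]
Then, by convexity of $f$ (Assumption~\ref{ass:convex}) applied pointwise at $z^t_{i,j}$ against the minimizer $x^*$, we have $\ps{\nabla f(z^t_{i,j})}{z^t_{i,j} - x^*} \geq f(z^t_{i,j}) - f(x^*) = f(z^t_{i,j}) - f^{\inf} \geq 0$ almost surely. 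Taking expectations and multiplying by the negative scalar $-\tfrac{\eta_g}{2}$ reverses the inequality, so summing over $i \in [n]$ and $j \in \Int{0}{K-1}$ and substituting this upper bound into the split version of Lemma~\ref{appdx:convergence-analysis-friendly-convex-descent-lemma-v1}, then transferring $\tfrac{\eta_g}{2} \sum_{i,j}(\E{f(z^t_{i,j})} - f^{\inf})$ to the left, yields the stated inequality exactly.

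I expect no real obstacle: the result is a one-line manipulation once Lemma~\ref{appdx:convergence-analysis-friendly-convex-descent-lemma-v1} is in hand. The only care required is in tracking signs, because convexity is invoked on a summand that already carries a negative coefficient, and in choosing the split so that the preserved portion $\tfrac{1}{2} - \eta_g n L K$ matches exactly what will be needed when this lemma is unrolled over $t = 0, \ldots, R-1$ and summed against the residual bounds in~\Cref{appdx:convergence-analysis-friendly-nonxonvex-residual-estimation} (analogously to the role played by the same coefficient in the nonconvex descent Lemma~\ref{appdx:convergence-analysis-friendly-nonxonvex-descent-lemma}).
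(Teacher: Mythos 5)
Your proposal is correct and follows essentially the same route as the paper: split the coefficient $\tfrac{3}{4} - \eta_g n L K$ into $(\tfrac{1}{2} - \eta_g n L K) + \tfrac{1}{4}$, use convexity at $z_{i,j}^t$ against $x^*$ to lower bound the detached $\tfrac{1}{4}$-portion of the inner product by $f(z_{i,j}^t) - f^{\inf}$, and rearrange. The paper phrases the convexity step via its Lemma~\ref{lem:non-nagativity-bregman-divergence} (non-negativity of the Bregman divergence), which is exactly the pointwise first-order convexity inequality you invoke.
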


\begin{lemma}[A Descent Lemma on $\{f( x^t )\}$; Proof in~\Cref{appdx-proof:convergence-analysis-friendly-convex-descent-lemma-v3}]\label{appdx:convergence-analysis-friendly-convex-descent-lemma-v3}
    Under~\Cref{ass:lipschitz_constant,ass:stochastic_variance_bounded,ass:convex} the sequences of iterates $\{x^t\}_{t \ge 0}$ and $\{z_{i, j}^t\}$ in~\Cref{alg:local_two_step_sizes,alg:local_adaptive} satisfy for any integer $t \ge 0$
    \begin{alignat*}{2}
      &\frac{\eta_g}{4} \left( \E{f\left( x^t \right)} - f^{\inf} \right) \\
      &\qquad \le \frac{1}{n K} \left( \E{\sqnorm{x^t - x^*}} - \E{\sqnorm{x^{t + 1} - x^*}} \right) + \frac{5 \eta_g L}{2 n K} \sum_{i = 1}^n \sum_{j = 0}^{K - 1} \E{\sqnorm{z_{i, j}^t - x^*}} \\
        &\qquad\qquad- 2 \eta_g \left( \frac{1}{2} - \eta_g n L K \right) \frac{1}{n K} \sum_{i = 1}^n \sum_{j = 0}^{K - 1} \E{\ps{\nabla f \left( z_{i, j}^t \right)}{z_{i, j}^t - x^*}} + 2 \eta_g^2 \sigma^2.
    \end{alignat*}
\end{lemma}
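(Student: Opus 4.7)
The plan is to deduce Lemma~\ref{appdx:convergence-analysis-friendly-convex-descent-lemma-v3} from Lemma~\ref{appdx:convergence-analysis-friendly-convex-descent-lemma-v2} by swapping the left-hand side from the averaged suboptimality $\frac{\eta_g}{2}\sum_{i,j}\E{f(z_{i,j}^t) - f^{\inf}}$ to $\frac{\eta_g}{4}\E{f(x^t) - f^{\inf}}$ at the price of a single extra term in $\sum_{i,j}\E{\sqnorm{z_{i,j}^t - x^*}}$. Dividing the inequality of Lemma~\ref{appdx:convergence-analysis-friendly-convex-descent-lemma-v2} through by $nK$ already produces every piece of the targeted right-hand side, except that the coefficient of $\sum_{i,j}\E{\sqnorm{z_{i,j}^t - x^*}}$ appears as $\tfrac{2\eta_g L}{nK}$ rather than $\tfrac{5\eta_g L}{2nK}$; the missing $\tfrac{\eta_g L}{2nK}$ must be supplied by the error introduced in the LHS swap.

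The first main step is to establish the pointwise smoothness--convexity estimate
\[ \tfrac{1}{2}\bigl(f(x^t) - f^{\inf}\bigr) \le \bigl(f(z) - f^{\inf}\bigr) + \tfrac{L}{2}\,\sqnorm{x^t - z}, \qquad z \in \R^d. \]
This follows by combining the $L$-smooth upper bound $f(x^t) \le f(z) + \inp{\nabla f(z)}{x^t - z} + \tfrac{L}{2}\sqnorm{x^t - z}$, Young's inequality $\inp{\nabla f(z)}{x^t - z} \le \tfrac{1}{2L}\sqnorm{\nabla f(z)} + \tfrac{L}{2}\sqnorm{x^t - z}$, and the classical convex--smooth consequence $\sqnorm{\nabla f(z)} \le 2L\bigl(f(z) - f^{\inf}\bigr)$, which is valid under Assumptions~\ref{ass:lipschitz_constant} and~\ref{ass:convex}. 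Specialising to $z = z_{i,j}^t$, taking expectation, averaging over $(i, j)$, and chaining with the rescaled Lemma~\ref{appdx:convergence-analysis-friendly-convex-descent-lemma-v2} yields
\[ \tfrac{\eta_g}{4}\E{f(x^t) - f^{\inf}} \le \bigl(\text{scaled RHS of Lemma~\ref{appdx:convergence-analysis-friendly-convex-descent-lemma-v2}}\bigr) + \tfrac{\eta_g L}{4nK}\sum_{i,j}\E{\sqnorm{x^t - z_{i,j}^t}}. \]

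The second main step is to reabsorb the residual $\tfrac{\eta_g L}{4nK}\sum_{i,j}\E{\sqnorm{x^t - z_{i,j}^t}}$ into the coefficient of $\sum_{i,j}\E{\sqnorm{z_{i,j}^t - x^*}}$. Applying $\sqnorm{x^t - z} \le 2\sqnorm{x^t - x^*} + 2\sqnorm{z - x^*}$ and exploiting the identity $z_{i,0}^t = x^t$ — so that $\sum_{i}\sqnorm{z_{i,0}^t - x^*} = n\sqnorm{x^t - x^*}$ is already a subsum of $\sum_{i,j}\sqnorm{z_{i,j}^t - x^*}$ — one can fold the spurious $\sqnorm{x^t - x^*}$ contribution back into the target sum and reach the coefficient $\tfrac{5\eta_g L}{2nK}$.

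The main obstacle lies precisely in the constant bookkeeping of this last step: a careless use of $\sqnorm{x^t - z} \le 2\sqnorm{x^t - x^*} + 2\sqnorm{z - x^*}$ leaves an unabsorbed $\tfrac{\eta_g L}{2}\E{\sqnorm{x^t - x^*}}$ that inflates the target coefficient by a factor growing linearly in $K$. Sharpening the bound — by exploiting $z_{i,0}^t = x^t$ aggressively and, if needed, invoking Lemma~\ref{appdx:convergence-analysis-friendly-nonxonvex-residual-estimation} to bound $\sqnorm{x^t - z_{i,j}^t}$ for $j \ge 1$ by $O(\eta_{\ell}^2)$ quantities that further reduce, via $\sqnorm{\nabla f(z)} \le 2L(f(z) - f^{\inf})$ and $f(z) - f^{\inf} \le \tfrac{L}{2}\sqnorm{z - x^*}$, to $\sqnorm{z - x^*}$ with a small multiplicative factor — is what recovers exactly the $K$-independent $\tfrac{5}{2}$ constant claimed in the statement.
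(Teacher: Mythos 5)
Your overall strategy matches the paper's: divide Lemma~\ref{appdx:convergence-analysis-friendly-convex-descent-lemma-v2} by $nK$, swap the left-hand side via a smoothness--convexity estimate, and collect the residual in $\sum_{i,j}\E{\sqnorm{x^t - z_{i,j}^t}}$. (The paper instantiates this with Lemma~\ref{lem:upper-bound-function-gap-by-norm-and-function-value}, which carries $L\sqnorm{x-y}$ rather than your sharper $\tfrac{L}{2}\sqnorm{x-y}$; that is where the constant $\tfrac{5}{2}$ comes from, and your version is a valid tightening.) The genuine gap is entirely in your second step. The residual you obtain is in $\sqnorm{x^t - z_{i,j}^t}$ while the sum you must augment is, per the stated lemma, in $\sqnorm{z_{i,j}^t - x^*}$, and the absorption you propose cannot succeed: after $\sqnorm{x^t - z_{i,j}^t}\le 2\sqnorm{x^t-x^*}+2\sqnorm{z_{i,j}^t-x^*}$ you are left with a $\Theta(\eta_g L)\,\sqnorm{x^t-x^*}$ that carries no $\tfrac{1}{nK}$ prefactor and is not dominated by any term of Lemma~\ref{appdx:convergence-analysis-friendly-convex-descent-lemma-v2}. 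Exploiting $z_{i,0}^t=x^t$ cancels only one of the $nK$ copies, and a detour through Lemma~\ref{appdx:convergence-analysis-friendly-nonxonvex-residual-estimation} produces inner-product terms, not $\sqnorm{z_{i,j}^t-x^*}$ terms. More fundamentally, $\sum_{i,j}\sqnorm{x^t - z_{i,j}^t}\le 2\sum_{i,j}\sqnorm{z_{i,j}^t - x^*}$ is simply false (take $K$ large with the local iterates landing near $x^*$ but far from $x^t$).

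The reason you got wedged is that the statement you are proving toward, and Lemmas~\ref{appdx:convergence-analysis-friendly-convex-descent-lemma-v1} and~\ref{appdx:convergence-analysis-friendly-convex-descent-lemma-v2} that feed it, appear to carry a typo: the sums $\sum_{i,j}\E{\sqnorm{z_{i,j}^t - x^*}}$ should read $\sum_{i,j}\E{\sqnorm{z_{i,j}^t - x^t}}$. Inequality~\eqref{51fec12c-eeb8-478e-b9a3-b61c8f4e0836} inside the proof of Lemma~\ref{appdx:convergence-analysis-friendly-convex-descent-lemma-v1} produces $2\eta_g L\sqnorm{x^t - z_{i,j}^t}$, but the very next display silently turns $x^t$ into $x^*$; and the unrolling Lemma~\ref{appdx:convergence-analysis-friendly-convex-unrolling-descent-lemma} then controls this sum by Lemma~\ref{appdx:convergence-analysis-friendly-convex-residual-estimation}, whose left-hand side is $\E{\sqnorm{z_{i,j}^t - x^t}}$, confirming $x^t$ is what is meant. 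With the norm read as $\sqnorm{z_{i,j}^t - x^t}$, your residual is already of the right type; no absorption is needed, and the coefficient $\tfrac{5}{2}\eta_g L$ (or $\tfrac{9}{4}\eta_g L$ with your sharper estimate) follows immediately by adding $2\eta_g L$ and the residual constant. Your step~1 then completes the proof on its own, along exactly the lines the paper uses.
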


\begin{lemma}[Residual Estimation; Proof in~\Cref{appdx-proof:convergence-analysis-friendly-convex-residual-estimation}]\label{appdx:convergence-analysis-friendly-convex-residual-estimation}
  Under~\Cref{ass:stochastic_variance_bounded,ass:convex} the sequences of iterates $\{x^t\}_{t \ge 0}$ and $\{z_{i, j}^t\}_{t \ge 0}$ in~\Cref{alg:local_two_step_sizes,alg:local_adaptive} satisfy for any integers $t \ge 0$, $i \in [n]$ and $j \in \Int{0}{K - 1}$
    \begin{alignat*}{2}
      \E{\sqnorm{z_{i, j}^t - x^t}} & \le 2 L \left( \sum_{\ell = 0}^{j - 1} \left( \eta_{i, \ell}^t \right)^2 \right) \sum_{\ell = 0}^{j - 1} \E{\ps{\nabla f \left( z_{i, \ell}^t \right)}{z_{i, \ell}^t - x^*}} + 2 \sigma^2 \sum_{\ell = 0}^{j - 1} \left( \eta_{i, \ell}^t \right)^2.
    \end{alignat*}
\end{lemma}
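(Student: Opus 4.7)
The plan is to mirror the proof of the nonconvex counterpart (Lemma \ref{appdx:convergence-analysis-friendly-nonxonvex-residual-estimation}) essentially step by step, and then convert the squared gradient norms that appear on its right-hand side into inner products with the minimizer $x^*$ by invoking the co-coercivity of the gradient of a convex $L$-smooth function. So I would first unroll the local update rule used in both Algorithms~\ref{alg:local_two_step_sizes} and~\ref{alg:local_adaptive}: starting from $z_{i,0}^t = x^t$ and iterating $z_{i,\ell+1}^t = z_{i,\ell}^t - \eta_{i,\ell}^t \nabla f(z_{i,\ell}^t;\xi_{i,\ell}^t)$, a telescoping sum yields
\[
z_{i,j}^t - x^t \;=\; -\sum_{\ell=0}^{j-1} \eta_{i,\ell}^t\, \nabla f(z_{i,\ell}^t;\xi_{i,\ell}^t).
\]
I would then decompose each stochastic gradient as $\nabla f(z_{i,\ell}^t;\xi_{i,\ell}^t) = \nabla f(z_{i,\ell}^t) + \zeta_{i,\ell}^t$, with $\zeta_{i,\ell}^t$ zero-mean and of variance at most $\sigma^2$ by Assumption~\ref{ass:stochastic_variance_bounded}, and apply $\sqnorm{a+b} \le 2\sqnorm{a} + 2\sqnorm{b}$ to separate the deterministic drift from the stochastic noise.

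On the deterministic part I would apply Cauchy--Schwarz in the form $\sqnorm{\sum_\ell \eta_\ell u_\ell} \le (\sum_\ell \eta_\ell^2)(\sum_\ell \sqnorm{u_\ell})$, which naturally produces the factor $\sum_{\ell=0}^{j-1} (\eta_{i,\ell}^t)^2$ demanded by the statement. On the noise part I would take expectation and use Observation~\ref{obs:statistical-independence}: conditioning on the past of the larger index and invoking the tower property shows that every cross term $\E{\ps{\zeta_{i,\ell}^t}{\zeta_{i,m}^t}}$ with $\ell \neq m$ vanishes, leaving only the diagonal squares, each bounded by $\sigma^2$. Summing produces exactly the additive term $2\sigma^2 \sum_{\ell=0}^{j-1} (\eta_{i,\ell}^t)^2$ in the claim.

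The only genuinely new ingredient, compared with the nonconvex lemma, is the promotion of the factor $2$ in front of $\sqnorm{\nabla f(z_{i,\ell}^t)}$ into the factor $2L$ in front of $\ps{\nabla f(z_{i,\ell}^t)}{z_{i,\ell}^t - x^*}$. For this I would invoke the standard identity $\sqnorm{\nabla f(z)} \le L\, \ps{\nabla f(z)}{z - x^*}$, valid for any convex $L$-smooth $f$ attaining its minimum at $x^*$ (a direct consequence of the $\tfrac{1}{L}$-cocoercivity of $\nabla f$ applied to $(z, x^*)$, together with $\nabla f(x^*) = 0$, which is legitimate under Assumption~\ref{ass:convex}). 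Plugging this bound in term by term and taking expectations gives the claimed inequality. I do not foresee a real obstacle: the only subtlety is arranging the sequential conditioning carefully so that the stochastic cross terms actually vanish, which is handled exactly as in the nonconvex proof and relies solely on Observation~\ref{obs:statistical-independence}.
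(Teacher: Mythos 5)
Your proposal is correct and follows essentially the same route as the paper: the paper simply invokes the already-established nonconvex residual estimation (Lemma~\ref{appdx:convergence-analysis-friendly-nonxonvex-residual-estimation}) as a black box rather than re-deriving it inline, and then applies the same conversion $\sqnorm{\nabla f(z)} \le L\,\ps{\nabla f(z)}{z-x^*}$ (its Lemma~\ref{lem:upper-bound-gradient-norm-by-inner-product}, proved via co-coercivity and $\nabla f(x^*)=0$, just as you describe). One small remark worth noting: this conversion genuinely requires $L$-smoothness (Assumption~\ref{ass:lipschitz_constant}) in addition to the two assumptions listed in the lemma statement, which both you and the paper's proof tacitly use.
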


We are now ready to unroll the descent lemma, we state formally the bound we obtain in the following lemma.
\begin{lemma}[Unrolling the Descent~\Cref{appdx:convergence-analysis-friendly-convex-descent-lemma-v2}; Proof in~\Cref{appdx-proof:convergence-analysis-friendly-convex-unrolling-descent-lemma}]\label{appdx:convergence-analysis-friendly-convex-unrolling-descent-lemma}
  Under~\Cref{ass:lipschitz_constant,ass:stochastic_variance_bounded,ass:convex} the sequences of iterates $\{x^t\}_{t \ge 0}$ and $\{z_{i, j}^t\}_{t \ge 0}$ in~\Cref{alg:local_two_step_sizes,alg:local_adaptive} satisfy for any integer $R \ge 1$
    \begin{alignat*}{2}
      &\frac{1}{R} \sum_{t = 0}^{R - 1} \left( \E{f\left( x^t \right)} - f^{\inf} \right) \\
      &\qquad\quad \le \frac{4 B^2}{\eta_g n K R} + 8 \eta_g \sigma^2 + \frac{20 \sigma^2 L}{n R} \sum_{t = 0}^{R - 1} \sum_{i = 1}^n\sum_{\ell = 0}^{K - 1} \left( \eta_{i, \ell}^t \right)^2 \\
        &\qquad\quad\qquad- \frac{8}{n K R} \sum_{t = 0}^{R - 1} \sum_{i = 1}^n \left( \frac{1}{2} - \eta_g n L K - \frac{5}{2} \left( \sum_{\ell = 0}^{K - 1} \left( \eta_{i, \ell}^t \right)^2 \right) L^2 K \right) \sum_{j = 0}^{K - 1} \E{\ps{\nabla f\left( z_{i, j}^t \right)}{z_{i, j}^t - x^*}}.
    \end{alignat*}
\end{lemma}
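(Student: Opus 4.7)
The plan is to mirror, in the convex setting, the unrolling strategy used in the non-convex case (\Cref{appdx:convergence-analysis-friendly-nonxonvex-unrolling-descent-lemma}): combine the per-round descent estimate of \Cref{appdx:convergence-analysis-friendly-convex-descent-lemma-v3} with the per-worker residual bound of \Cref{appdx:convergence-analysis-friendly-convex-residual-estimation}, then telescope across $t = 0, \dots, R - 1$. I would multiply \Cref{appdx:convergence-analysis-friendly-convex-descent-lemma-v3} through by $4 / \eta_g$, so that the left-hand side becomes $\E{f(x^t)} - f^{\inf}$, then sum over $t$ and divide by $R$. The distance-to-optimum terms telescope and, after dropping the non-positive $-\E{\sqnorm{x^R - x^*}}$ boundary term, contribute $\frac{4}{\eta_g n K R} \E{\sqnorm{x^0 - x^*}} \le \frac{4 B^2}{\eta_g n K R}$, while the per-round constant $2 \eta_g^2 \sigma^2$ aggregates into the $8 \eta_g \sigma^2$ term of the claim.

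The core step is to trade the remaining quantity $\frac{10 L}{n K} \sum_{i, j} \E{\sqnorm{z_{i, j}^t - x^*}}$, coming from the scaled residual term in \Cref{appdx:convergence-analysis-friendly-convex-descent-lemma-v3}, for the noise and inner-product contributions appearing in the claim. I would split $\sqnorm{z_{i, j}^t - x^*}$ by a Young-type inequality $\sqnorm{a + b} \le (1 + \alpha) \sqnorm{a} + (1 + 1 / \alpha) \sqnorm{b}$ applied to $a = z_{i, j}^t - x^t$ and $b = x^t - x^*$, with $\alpha$ tuned so that the coefficients that end up in the claim come out exactly. Setting $S_i^t \eqdef \sum_{\ell = 0}^{K - 1} (\eta_{i, \ell}^t)^2$ and relaxing the monotone prefactor $\sum_{\ell < j} (\eta_{i, \ell}^t)^2 \le S_i^t$, \Cref{appdx:convergence-analysis-friendly-convex-residual-estimation} summed over $j$ yields
\begin{align*}
  \sum_{j = 0}^{K - 1} \E{\sqnorm{z_{i, j}^t - x^t}} \le 2 L K S_i^t \sum_{j = 0}^{K - 1} \E{\ps{\nabla f(z_{i, j}^t)}{z_{i, j}^t - x^*}} + 2 K \sigma^2 S_i^t.
\end{align*}
Substituting this bound back into the local-drift half of the Young split generates both the $\frac{20 \sigma^2 L}{n R} \sum_{t, i, \ell} (\eta_{i, \ell}^t)^2$ noise term and the $-\frac{5}{2} S_i^t L^2 K$ correction inside the coefficient of $\sum_j \E{\ps{\nabla f(z_{i, j}^t)}{z_{i, j}^t - x^*}}$ appearing in the claim.

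The main obstacle is controlling the global-distance tail $(1 + 1 / \alpha) \sqnorm{x^t - x^*}$ produced by the Young split, which does not telescope on its own. I would absorb it back into the telescope by reindexing $\frac{1}{n K R} \sum_t (\sqnorm{x^t - x^*} - \sqnorm{x^{t + 1} - x^*})$ so that a common coefficient of $\sqnorm{x^t - x^*}$ appears for every $t$, and then invoking the admissibility constraint $\frac{1}{2} - \eta_g n L K - \frac{5}{2} S_i^t L^2 K \ge 0$ already present in the claim to keep this combined coefficient below $\frac{1}{n K}$---the same step-size smallness that makes the $\ps{\nabla f(z)}{z - x^*}$ block genuinely dissipative also tames the residual $\sqnorm{x^t - x^*}$, so the telescope collapses to a single $\frac{4 B^2}{\eta_g n K R}$ contribution and the endpoint $-\frac{1}{n K} \sqnorm{x^R - x^*}$ is discarded. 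Collecting the four pieces $\frac{4 B^2}{\eta_g n K R}$, $8 \eta_g \sigma^2$, $\frac{20 \sigma^2 L}{n R} \sum_{t, i, \ell} (\eta_{i, \ell}^t)^2$, and the augmented negative block $-\frac{8}{n K R}\bigl(\frac{1}{2} - \eta_g n L K - \frac{5}{2} S_i^t L^2 K\bigr)$ then reproduces the claimed bound.
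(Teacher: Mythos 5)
Your setup is right: multiplying \Cref{appdx:convergence-analysis-friendly-convex-descent-lemma-v3} by $4/\eta_g$, summing over $t$, telescoping, and bounding $\sqnorm{x^0 - x^*}\le B^2$ produces exactly the $\frac{4B^2}{\eta_g n K R}$ and $8\eta_g\sigma^2$ pieces of the claim. But the way you handle the drift term has a genuine gap, and it stems from a side issue in the paper's lemma statement.

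The statement of~\Cref{appdx:convergence-analysis-friendly-convex-descent-lemma-v3} (and of~\Cref{appdx:convergence-analysis-friendly-convex-descent-lemma-v1}) writes the drift term as $\E{\sqnorm{z_{i,j}^t - x^*}}$, but that is a typo: the proof of~\Cref{appdx:convergence-analysis-friendly-convex-descent-lemma-v1} clearly produces $2\eta_g L\sqnorm{x^t - z_{i,j}^t}$ from Young's inequality with $\alpha = 1/(2L)$ (see the step labelled~\eqref{51fec12c-eeb8-478e-b9a3-b61c8f4e0836}), so the correct term is $\E{\sqnorm{z_{i,j}^t - x^t}}$. Once you read it that way, the Residual Estimation~\Cref{appdx:convergence-analysis-friendly-convex-residual-estimation} applies directly: using $\sum_{\ell<j}(\eta_{i,\ell}^t)^2 \le S_i^t$ and the non-negativity of $\ps{\nabla f(z_{i,\ell}^t)}{z_{i,\ell}^t - x^*}$ to extend inner sums to $K$, the term $\frac{10L}{nK}\sum_{i,j}\E{\sqnorm{z_{i,j}^t - x^t}}$ is bounded by $\frac{20\sigma^2 L}{n}\sum_i S_i^t + \frac{20 L^2 K}{nK}\sum_i S_i^t \sum_j\E{\ps{\nabla f(z_{i,j}^t)}{z_{i,j}^t - x^*}}$, and the second piece folds into the coefficient $\frac{1}{2} - \eta_g n L K$ to produce the factor $\frac{1}{2} - \eta_g n L K - \frac{5}{2}S_i^t L^2 K$. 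No Young split is needed, no extra $\sqnorm{x^t - x^*}$ term arises, and the claimed bound follows by summing over $t$.

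Because you took the statement at face value, you introduced the extra split $\sqnorm{z_{i,j}^t - x^*} \le (1 + \alpha)\sqnorm{z_{i,j}^t - x^t} + (1 + 1/\alpha)\sqnorm{x^t - x^*}$, and this creates a \emph{positive} per-round contribution proportional to $\sqnorm{x^t - x^*}$ that does not telescope and must be controlled by some independent argument (e.g.\ a recursion proving $\E{\sqnorm{x^t - x^*}}$ stays bounded). Your proposed fix --- ``reindex the telescope'' and ``invoke the admissibility constraint $\frac{1}{2} - \eta_g n L K - \frac{5}{2}S_i^t L^2 K \ge 0$ to keep the combined coefficient below $\frac{1}{nK}$'' --- does not work: keeping the coefficient below $\frac{1}{nK}$ is not enough (you need it non-positive to dominate the telescope), and the admissibility constraint controls the sign of the $\ps{\nabla f(z)}{z - x^*}$ block, not the coefficient of $\sqnorm{x^t - x^*}$. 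Drop the Young split, use the corrected $\sqnorm{z_{i,j}^t - x^t}$, apply~\Cref{appdx:convergence-analysis-friendly-convex-residual-estimation} directly, and the claim follows cleanly.
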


\subsubsection{Main results}
\label{sec:convex_dual}
\begin{theorem}[Upper bound for \algname{Dual Local SGD}]\label{thm:communication-and-computation-complexity-convex}
  Under~\Cref{ass:lipschitz_constant,ass:stochastic_variance_bounded,ass:convex},
  \algname{Dual Local SGD} (Algorithm~\ref{alg:local_two_step_sizes}) with $\eta_g = \min\ens{\frac{\eps}{20 \sigma^2}, \frac{1}{10 n L}}$ and $\eta_{\ell} \leq \sqrt{n} \eta_g$ finds an $\varepsilon$--solution after at most $\textstyle R = \Ceil{\nicefrac{160 L B^2}{\varepsilon}}$
  communication rounds with $K = \max\left\{\left\lceil\nicefrac{\sigma^2}{\varepsilon n L}\right\rceil, 1\right\}$. Additionally, under Assumption~\ref{ass:time}, it requires at most 
  \begin{align}
    \label{eq:new_local_compl-restate-2}
    \textstyle \tau \frac{320 L B^2}{\varepsilon} + 320 h \left(\frac{L B^2}{\varepsilon} + \frac{\sigma^2 B^2}{n \varepsilon^2}\right)
  \end{align}
  seconds to find an $\varepsilon$--solution.
\end{theorem}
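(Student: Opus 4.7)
The plan is to mirror the nonconvex argument used to establish \Cref{thm:communication-and-computation-complexity}, substituting \Cref{appdx:convergence-analysis-friendly-convex-unrolling-descent-lemma} for \Cref{appdx:convergence-analysis-friendly-nonxonvex-unrolling-descent-lemma}. Since the local step size in \algname{Dual Local SGD} is constant, $\eta_{i,\ell}^t \equiv \eta_\ell$, the averaged inequality from \Cref{appdx:convergence-analysis-friendly-convex-unrolling-descent-lemma} simplifies to
\begin{align*}
  \frac{1}{R}\sum_{t=0}^{R-1}\!\left(\E{f(x^t)}-f^{\inf}\right) \le \frac{4B^2}{\eta_g n K R} + 8\eta_g \sigma^2 + 20 \eta_\ell^2 \sigma^2 L K - \frac{8}{nKR}\!\left(\tfrac{1}{2} - \eta_g n L K - \tfrac{5}{2}\eta_\ell^2 L^2 K^2\right)\!\Sigma,
\end{align*}
where $\Sigma \ge 0$ denotes the double sum over inner products $\E{\ps{\nabla f(z_{i,j}^t)}{z_{i,j}^t - x^*}}$, which is non-negative by convexity.

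The first substep is to verify that the bracketed coefficient is non-negative so that the $\Sigma$-term can be discarded. I would split into the two regimes $\sigma^2/(\eps n L)\le 1$ (so $K=1$) and $\sigma^2/(\eps n L)>1$ (so $K=\lceil \sigma^2/(\eps n L)\rceil$), analogously to inequalities (A.1)--(A.3) in the nonconvex proof. Using $\eta_g \le 1/(10nL)$ in the first regime and $\eta_g \le \eps/(20\sigma^2)$ in the second, one obtains $\eta_g n L K \le 1/10$ in both cases; combined with $\eta_\ell \le \sqrt{n}\eta_g$ this yields $\tfrac{5}{2}\eta_\ell^2 L^2 K^2 \le \tfrac{5}{2}(\eta_g nLK)^2/n \le 1/40$, so the coefficient exceeds $1/2 - 1/10 - 1/40 > 0$.

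The second substep is to bound the two remaining non-negative noise terms by $\eps/2$ in total. The choice $\eta_g \le \eps/(20\sigma^2)$ gives $8\eta_g\sigma^2 \le 2\eps/5$, and $20\eta_\ell^2\sigma^2 L K \le 20 \eta_g \sigma^2(\eta_g nLK) \le 2\eta_g\sigma^2 \le \eps/10$, which combine to $\le \eps/2$. It then suffices to force $\tfrac{4B^2}{\eta_g nKR}\le \eps/2$, i.e.\ $R \ge 8B^2/(\eta_g nK\eps)$. A short case analysis shows $1/(\eta_g nK) \le 20L$ (in the first regime $20\sigma^2/(\eps n)\le 20 L$; in the second $K \ge \sigma^2/(\eps n L)$ makes the $\eps/(20\sigma^2)$ branch dominant), giving the sufficient condition $R \ge 160 L B^2/\eps$ and hence $R=\lceil 160 LB^2/\eps\rceil$ after noting that we may assume $\eps \lesssim L B^2$ (otherwise $x^0$ is already an $\eps$-solution). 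Averaging the left-hand side and invoking convexity of $f$ produces an $\eps$-solution, e.g.\ by outputting a uniform-at-random iterate among $\{x^0,\dots,x^{R-1}\}$.

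Finally, under \Cref{ass:time} each of the $R$ rounds costs one communication of $\tau$ seconds and $K$ sequential gradient computations per worker for $hK$ seconds, yielding total time $\tau R + h K R$. Plugging $R \le 320 LB^2/\eps$ and $K \le 1 + \sigma^2/(\eps n L)$ directly gives the claimed \eqref{eq:new_local_compl-restate-2}. The main obstacle I anticipate is not analytic but bookkeeping: keeping the convex-setting constants consistent across the two regimes of $K$ while ensuring the coefficient in front of $\Sigma$ remains non-negative; the fact that \Cref{appdx:convergence-analysis-friendly-convex-unrolling-descent-lemma} carries a factor $5/2$ (rather than $1$ as in the nonconvex analogue) is what drives the slightly different constants $1/(10nL)$ and $\eps/(20\sigma^2)$ in the step-size rule.
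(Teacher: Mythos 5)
Your proposal is correct and follows essentially the same route as the paper's own proof: you invoke \Cref{appdx:convergence-analysis-friendly-convex-unrolling-descent-lemma}, specialize to a constant local step size, verify non-negativity of the bracketed coefficient via the same two-regime split on $\sigma^2/(\eps n L)$, bound the noise contribution by $\eps/2$, and read off $R\ge 160 LB^2/\eps$ and the time bound from $\tau R + hKR$. The only differences are cosmetic (you bound the two noise terms separately as $2\eps/5 + \eps/10$ where the paper factors them as $8\eta_g\sigma^2(1+\tfrac{5}{2}\eta_g nLK)\le 10\eta_g\sigma^2$, and you phrase the $R$ lower bound via $1/(\eta_g nK)\le 20L$ rather than via $\max\{20\sigma^2/(\eps nLK),\,10/K\}\le 20$), neither of which constitutes a distinct approach.
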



\begin{proof}
  In~\Cref{alg:local_two_step_sizes} the local stepsize is constant so it does not depend on the communication round $t$, neither on the worker index $i \in [n]$ nor on the number of local steps $\ell \in \Int{0}{K - 1}$ thus the inequality from~\Cref{appdx:convergence-analysis-friendly-convex-unrolling-descent-lemma} can be simplified to
    \begin{alignat*}{2}
      &\frac{1}{R} \sum_{t = 0}^{R - 1} \left( \E{f\left( x^t \right)} - f^{\inf} \right) \\
      &\qquad\quad \le \frac{4 B^2}{\eta_g n K R} + 8 \eta_g \sigma^2 + 20 \eta_{\ell}^2 \sigma^2 L K \\
        &\qquad\quad\qquad- \frac{8}{n K R} \sum_{t = 0}^{R - 1} \sum_{i = 1}^n \left( \frac{1}{2} - \eta_g n L K - \frac{5}{2} \eta_{\ell}^2 L^2 K^2 \right) \sum_{j = 0}^{K - 1} \E{\ps{\nabla f\left( z_{i, j}^t \right)}{z_{i, j}^t - x^*}},
    \end{alignat*}
  for any integer $R \ge 1$. Then, let us check the non-negativity of $\frac{1}{2} - \eta_g n L K - \frac{5}{2} \eta_{\ell}^2 L^2 K^2$ using the conditions $\eta_g = \min\ens{\frac{\eps}{20 L \sigma^2}, \frac{1}{10 n L}}$, $\eta_{\ell} \le \sqrt{n} \eta_g$ and $K = \max\left\{\left\lceil\nicefrac{\sigma^2}{\varepsilon n L}\right\rceil, 1\right\}$. We distinguish two cases: if $\nicefrac{\sigma^2}{n L \eps} \le 1$ then $K = 1$ and
    \begin{alignat*}{2}
      \eta_g n L K & = \eta_g n L \le \frac{1}{10}, \numberthis\label{973d9328-b3ab-4d85-8cd9-df071cece0cc-3}
    \end{alignat*}
  since $\eta_g \le \frac{1}{10 n L}$. Otherwise, if $\nicefrac{\sigma^2}{n L \eps} > 1$ then $K = \left\lceil\nicefrac{\sigma^2}{\varepsilon n L}\right\rceil$ and using $\eta_g \le \frac{\eps}{20 \sigma^2}$
    \begin{alignat*}{2}
      \eta_g n L K = \eta_g n L \Ceil{\frac{\sigma^2}{n L \eps}} \le \frac{n L \eps}{20 \sigma^2} \left( 1 + \frac{\sigma^2}{n L \eps} \right) \le \frac{n L \eps}{20 \sigma^2} + \frac{1}{20} \le \frac{1}{10}. \numberthis\label{d6791419-120f-4ecf-b0f3-11ca5c7e56b0-3}
    \end{alignat*}
  Moreover, since $\eta_{\ell} \le \sqrt{n} \eta_g$ we have 
    \[ \eta_{\ell}^2 K^2 L^2 \le \eta_g^2 n L^2 K^2 \le \left( \eta_g n L K \right)^2 \oversetlab{\eqref{973d9328-b3ab-4d85-8cd9-df071cece0cc-3} + \eqref{d6791419-120f-4ecf-b0f3-11ca5c7e56b0-3}}{\le} \frac{1}{100}, \numberthis\label{00ce9d7b-83cb-4bf9-b229-14114e64cc2f-3} \] 
  because $n \ge 1$. Combining the upper bounds~\eqref{973d9328-b3ab-4d85-8cd9-df071cece0cc-3},~\eqref{d6791419-120f-4ecf-b0f3-11ca5c7e56b0-3} and~\eqref{00ce9d7b-83cb-4bf9-b229-14114e64cc2f-3} we obtain
    \[ \frac{1}{2} - \eta_g n L K - \frac{5}{2} \eta_{\ell}^2 L^2 K^2 \ge 0, \]
  hence, 
    \begin{alignat*}{2}
      \frac{1}{R} \sum_{t = 0}^{R - 1} \left( \E{f\left( x^t \right)} - f^{\inf} \right) & \le \frac{4 B^2}{\eta_g n K R} + 8 \eta_g \sigma^2 + 20 \eta_{\ell}^2 \sigma^2 L K \\
      & \le \frac{4 B^2}{\eta_g n K R} + 8 \eta_g \sigma^2 \left( 1 + \frac{5}{2} \eta_g n L K \right) \\
      \oversetlab{\eqref{973d9328-b3ab-4d85-8cd9-df071cece0cc-3} + \eqref{d6791419-120f-4ecf-b0f3-11ca5c7e56b0-3}}&{\le} \frac{4 B^2}{\eta_g n K R} + 8 \eta_g \sigma^2 \left( 1 + \frac{1}{4} \right) \\
      & = \frac{4 B^2}{\eta_g n K R} + 10 \eta_g \sigma^2 L \\
      \oversetrel{rel:c737d90b-ce83-4ec5-bb62-2525a74a3226-3}&{\le} \frac{4 B^2}{\eta_g n K R} + \frac{\eps}{2},
    \end{alignat*}
  where in~\relref{rel:c737d90b-ce83-4ec5-bb62-2525a74a3226-3} we use $\eta_g \le \frac{\eps}{20 \sigma^2}$. Then, it remains to choose $R \ge 1$ such that $\frac{4 B^2}{\eta_g n K R} \le \frac{\eps}{2}$, that is,
    \begin{alignat*}{2}
      R \ge \frac{8 L B^2}{\eta_g n L K \eps} = \frac{8 L B^2}{\eps} \max\ens{\frac{20 \sigma^2}{\eps n L K}, \frac{10 n L}{n L K}} = \frac{8 L B^2}{\eps} \max\ens{\frac{20 \sigma^2}{\eps n L K}, \frac{10}{K}}.
    \end{alignat*}
  Now, observe that $K \ge 1$ thus $\frac{10}{K} \le 10$. Moreover, if $\nicefrac{\sigma^2}{n L \eps} \le 1$ then
    \[ \frac{20 \sigma^2}{\eps n L K} \le \frac{20}{K} \le 20, \numberthis\label{ea7c6d0d-293c-4c21-9b47-953855319e09-3} \]
  while, if $\nicefrac{\sigma^2}{n L \eps} > 1$ we have $K = \Ceil{\nicefrac{\sigma^2}{n L \eps}} \ge \nicefrac{\sigma^2}{n L \eps}$ so
    \[ \frac{20 \sigma^2}{n L \eps K} \le 20. \numberthis\label{d979f95f-5f05-4339-9fcb-05d8c6302e8c-3} \]
  Combining the upper bounds~\eqref{ea7c6d0d-293c-4c21-9b47-953855319e09-3} and~\eqref{d979f95f-5f05-4339-9fcb-05d8c6302e8c-3} we have $\max\ens{\frac{20 \sigma^2}{\eps n L K}, \frac{10}{K}} \le 20$ hence, it is enough to take
    \[ R \ge \max\ens{1, \frac{160 L \Delta}{\eps}}, \]
  so as to guarantee \algname{Dual Local SGD} to find an $\varepsilon$--stationary point.

  To derive the time complexity~\eqref{eq:new_local_compl-restate-2}, we know that there are $R$ communication rounds and in any of these rounds each worker performs $K$ local steps thus under~\Cref{ass:time} it requires at most
    \[ \tau R + h K R \le \tau R + h R \left( 1 + \frac{\sigma^2}{n \eps} \right) \le \textstyle \tau \frac{320 L B^2}{\varepsilon} + 320 h \left(\frac{L B^2}{\varepsilon} + \frac{\sigma^2 B^2}{n \varepsilon^2}\right) \]
  seconds for \algname{Dual Local SGD} to find an $\eps$--stationary point, as long as $\eps \lesssim L \Delta$\footnote{See the footnote at the end of the proof of~\Cref{thm:communication-and-computation-complexity}}.
\end{proof}

\begin{theorem}[Upper bound for \algname{Decaying Local SGD}]\label{thm:adaptive_local_sgd-convex}
  Under~\Cref{ass:lipschitz_constant,ass:stochastic_variance_bounded,ass:convex},
  \algname{Decaying Local SGD} (Algorithm~\ref{alg:local_adaptive}) with $\eta_g = \min\{\frac{\eps}{20 \sigma^2}, \frac{1}{10 n L}\}$ and $b = \max\{\frac{\sigma^2}{\varepsilon L}, n\}$ finds an $\varepsilon$--stationary point after at most $\textstyle R = \Ceil{\nicefrac{160 L B^2}{\varepsilon}}$
  communication rounds with $K = \max\left\{\left\lceil\nicefrac{\sigma^2}{\varepsilon n L}\right\rceil, 1\right\}$. Additionally, under Assumption~\ref{ass:time}, it requires at most 
  \begin{align*}
    \textstyle \tau \frac{320 L B^2}{\varepsilon} + 320 h \left(\frac{L B^2}{\varepsilon} + \frac{\sigma^2 B^2}{n \varepsilon^2}\right)
  \end{align*}
  seconds to find an $\varepsilon$--stationary point.
\end{theorem}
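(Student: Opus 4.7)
The plan is to parallel the proof of Theorem~\ref{thm:communication-and-computation-complexity-convex} (convex \algname{Dual Local SGD}), using the same trick that relates the nonconvex proof of Theorem~\ref{thm:adaptive_local_sgd} (\algname{Decaying Local SGD}) to the nonconvex proof of Theorem~\ref{thm:communication-and-computation-complexity} (\algname{Dual Local SGD}). I would start from the unrolled convex descent bound of Lemma~\ref{appdx:convergence-analysis-friendly-convex-unrolling-descent-lemma} and substitute the decaying rule $\eta_\ell = \sqrt{b/((\ell+1)(\log K + 1))}\,\eta_g$.

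The first step is to collapse the step-size sums. Since $\eta_\ell$ depends neither on $t$ nor on the worker $i$, the averaged double sum $\frac{1}{nR}\sum_{t,i,\ell}(\eta_{i,\ell}^{t})^2$ reduces to $\sum_{\ell=0}^{K-1}\eta_\ell^2$, and the harmonic-series bound $\sum_{j=1}^{K}1/j \le \log K + 1$ (already used in the proof of Theorem~\ref{thm:adaptive_local_sgd}) yields $\sum_{\ell=0}^{K-1}\eta_\ell^2 \le \eta_g^2 b$. Plugging this into Lemma~\ref{appdx:convergence-analysis-friendly-convex-unrolling-descent-lemma} leaves a bound of exactly the same shape as the one used to prove Theorem~\ref{thm:communication-and-computation-complexity-convex}, but with $\eta_\ell^2 K$ and $\eta_\ell^2 K^2$ replaced by $\eta_g^2 b$ and $\eta_g^2 K b$: a negative inner-product term with coefficient $\tfrac{1}{2} - \eta_g n L K - \tfrac{5}{2}\eta_g^2 L^2 K b$ and additive noise $8\eta_g\sigma^2 + 20\eta_g^2 \sigma^2 L b$.

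Next I would verify nonnegativity of the bracketed coefficient and absorption of the noise into $\eps/2$, using the same case split on whether $\sigma^2/(\eps n L)\le 1$ that appears in the proofs of Theorem~\ref{thm:adaptive_local_sgd} and Theorem~\ref{thm:communication-and-computation-complexity-convex}. In the small-noise case $K=1,\ b=n$, and $\eta_g \le 1/(10nL)$ controls both $\eta_g n L K$ and $\eta_g^2 L^2 K b = (\eta_g n L)^2$. In the large-noise case $K = \Ceil{\sigma^2/(\eps n L)}$ and $b = \sigma^2/(\eps L)$; writing $\eta_g^2 L^2 K b = (\eta_g n L K)\cdot(\eta_g \sigma^2/(n\eps))$ reduces the extra term to a product of two factors already controlled by $\eta_g \le \eps/(20\sigma^2)$ in the \algname{Dual Local SGD} argument. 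The choice $b = \max\{\sigma^2/(\eps L),\,n\}$ is calibrated precisely so that the noise $20\eta_g^2 \sigma^2 L b$ does not exceed $\cO(\eps)$ in either regime.

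Once the bracket is dropped, I would impose $R \ge 8B^2/(\eta_g n K \eps) = (8LB^2/\eps)\cdot 1/(\eta_g n L K)$ and reuse the bound $1/(\eta_g n L K) \le 20$ established (by exactly the same two-case analysis) inside the proof of Theorem~\ref{thm:communication-and-computation-complexity-convex}, yielding $R = \Ceil{160 L B^2/\eps}$. Multiplying by the per-round cost $\tau + hK$ together with $K \le 1 + \sigma^2/(\eps n L)$ recovers the stated time bound. I do not foresee a genuine obstacle; the only delicate point is tracking the numerical constants across the two regimes so that they coincide with those of Theorem~\ref{thm:communication-and-computation-complexity-convex}, which amounts to confirming that the enlarged local step sizes — the whole point of \algname{Decaying Local SGD} — do not inflate $20\eta_g^2 \sigma^2 L b$ past $\eps/2$, and this is exactly what the definition of $b$ enforces.
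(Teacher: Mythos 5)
Your proposal is correct and follows essentially the same route as the paper's proof: start from Lemma~\ref{appdx:convergence-analysis-friendly-convex-unrolling-descent-lemma}, collapse $\sum_\ell \eta_\ell^2 \le \eta_g^2 b$ via the harmonic-sum bound, then handle the coefficient $\tfrac12 - \eta_g n L K - \tfrac52 \eta_g^2 L^2 K b$ and the noise $8\eta_g\sigma^2 + 20\eta_g^2\sigma^2 L b$ by the same two-case split on $\sigma^2/(\eps n L)$ used in Theorems~\ref{thm:adaptive_local_sgd} and \ref{thm:communication-and-computation-complexity-convex}. Your factorization $\eta_g^2 L^2 K b = (\eta_g n L K)(\eta_g\sigma^2/(n\eps))$ in the large-noise regime is algebraically equivalent to the paper's $\tfrac1n(\eta_g n L K)^2$ bound (up to a constant), and the rest of the parameter choices and round-count derivation match.
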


\begin{proof}
  The same way as we did in the proof of~\Cref{thm:adaptive_local_sgd}, we have
  \begin{alignat*}{2}
    \frac{1}{n R}  \sum_{t = 0}^{R - 1} \sum_{i = 1}^n\sum_{\ell = 0}^{K - 1} \left( \eta_{i, \ell}^t \right)^2 = \sum_{\ell = 0}^{K - 1} \eta_{\ell}^2 \le \eta_g^2 b,
  \end{alignat*}
  thus, using~\Cref{appdx:convergence-analysis-friendly-convex-unrolling-descent-lemma} we obtain
    \begin{alignat*}{2}
      &\frac{1}{R} \sum_{t = 0}^{R - 1} \left( \E{f\left( x^t \right)} - f^{\inf} \right) \\
      &\qquad\quad \le \frac{4 B^2}{\eta_g n K R} + 8 \eta_g \sigma^2 + 20 \eta_{\ell}^2 \sigma^2 L b \\
        &\qquad\quad\qquad- \frac{8}{n K R} \sum_{t = 0}^{R - 1} \sum_{i = 1}^n \left( \frac{1}{2} - \eta_g n L K - \frac{5}{2} \eta_{\ell}^2 L^2 K b \right) \sum_{j = 0}^{K - 1} \E{\ps{\nabla f\left( z_{i, j}^t \right)}{z_{i, j}^t - x^*}},
    \end{alignat*}
  for any integer $R \ge 1$. Then, let us check the non-negativity of $\frac{1}{2} - \eta_g n L K - \frac{5}{2} \eta_g^2 L^2 K b$ using the conditions $\eta_g = \min\ens{\frac{\eps}{20 \sigma^2}, \frac{1}{10 n L}}$, $K = \max\left\{\left\lceil\nicefrac{\sigma^2}{\varepsilon n L}\right\rceil, 1\right\}$ and $b = \max\ens{\nicefrac{\sigma^2}{\eps L}, n}$. We distinguish two cases: if $\nicefrac{\sigma^2}{n L \eps} \le 1$ then $K = 1$ and $b = n$ so
    \begin{alignat*}{2}
      \eta_g n L K & = \eta_g n L \le \frac{1}{10}, \numberthis\label{973d9328-b3ab-4d85-8cd9-df071cece0cc-4}
    \end{alignat*}
  and
    \begin{alignat*}{2}
      \eta_g^2 L^2 K b = \eta_g^2 n L^2 \le \frac{1}{100 n} \le \frac{1}{10},
    \end{alignat*}
  since $n \ge 1$ and $\eta_g \le \frac{1}{10 n L}$. Otherwise, if $\nicefrac{\sigma^2}{n L \eps} > 1$ then $K = \left\lceil\nicefrac{\sigma^2}{\varepsilon n L}\right\rceil$ and $b = \frac{\sigma^2}{\eps L}$ and using $\eta_g \le \frac{\eps}{20 \sigma^2}$ we have
    \begin{alignat*}{2}
      \eta_g n L K = \eta_g n L \Ceil{\frac{\sigma^2}{n L \eps}} \le \frac{n L \eps}{20 \sigma^2} \left( 1 + \frac{\sigma^2}{n L \eps} \right) \le \frac{n L \eps}{20 \sigma^2} + \frac{1}{20} \le \frac{1}{10}, \numberthis\label{d6791419-120f-4ecf-b0f3-11ca5c7e56b0-4}
    \end{alignat*}
  and
    \[ \eta_g^2 L^2 K b = \eta_g^2 n L^2 \Ceil{\frac{\sigma^2}{n L \eps}} \frac{\sigma^2}{n L \eps} \le \frac{1}{n}  \left( \eta_g n L \Ceil{\frac{\sigma^2}{n L \eps}} \right)^2 \oversetlab{\eqref{d6791419-120f-4ecf-b0f3-11ca5c7e56b0-4}}{\le} \frac{1}{100 n} \le \frac{1}{10}, \numberthis\label{00ce9d7b-83cb-4bf9-b229-14114e64cc2f-4} \] 
  because $n \ge 1$. Combining the upper bounds~\eqref{973d9328-b3ab-4d85-8cd9-df071cece0cc-4},~\eqref{d6791419-120f-4ecf-b0f3-11ca5c7e56b0-4} and~\eqref{00ce9d7b-83cb-4bf9-b229-14114e64cc2f-4} we obtain
    \[ \frac{1}{2} - \eta_g n L K - \frac{5}{2} \eta_g^2 L^2 K b \ge 0. \]
  Moreover, we have
    \begin{alignat*}{2}
      8 \eta_g \sigma^2 + 20 \eta_g^2 \sigma^2 L b & = 8 \eta_g \sigma^2 \left( 1 + \frac{5}{2} \eta_g L b \right) \\
      \oversetrel{rel:93e128a2-4c9e-4cd6-9814-e651a4e7217b}&{\le} 8 \eta_g \sigma^2 \left( 1 + \frac{5}{2} \eta_g n L K \right) \\
      \oversetlab{\eqref{973d9328-b3ab-4d85-8cd9-df071cece0cc-4} + \eqref{d6791419-120f-4ecf-b0f3-11ca5c7e56b0-4}}&{\le} 8 \eta_g \left( 1 + \frac{1}{4} \right) \\
      & = 10 \eta_g \sigma^2 \\
      \oversetrel{rel:57b19bec-8efa-4cf1-8f30-ce6a5e20b0b2}&{\le} \frac{\eps}{2},
    \end{alignat*}
  where in~\relref{rel:93e128a2-4c9e-4cd6-9814-e651a4e7217b} we use the fact that $b \le n K$ and in~\relref{rel:57b19bec-8efa-4cf1-8f30-ce6a5e20b0b2} we use $\eta_g \le \frac{\eps}{20 \sigma^2}$. Thus $8 \eta_g \sigma^2 + 20 \eta_g^2 \sigma^2 L b \le \frac{\eps}{2}$. Now, notice that the global step size $\eta_g$ and the number of local steps $K$ are the same in~\Cref{thm:communication-and-computation-complexity} and in~\Cref{thm:adaptive_local_sgd} thus as done in the proof of~\Cref{thm:communication-and-computation-complexity-convex}, it is enough to take 
    \[ R \ge \max\ens{1, \frac{160 L \Delta}{\eps}}, \]
  in order to ensure \algname{Decaying Local SGD} finds an $\eps$--stationary point. Moreover, under~\Cref{ass:time} \algname{Decaying Local SGD} requires at most
    \[ \textstyle \tau \frac{320 L \Delta}{\varepsilon} + 320 h \left(\frac{L \Delta}{\varepsilon} + \frac{L \sigma^2 \Delta}{n \varepsilon^2}\right) \]
  second to find such an $\eps$--stationary point. This achieves the proof of the result.
\end{proof}

\section{Proofs for the Results in~\Cref{appdx:convergence-analysis-friendly}}

\subsection{Nonconvex setup}

\subsubsection{Proof of the descent lemma}
\label{appdx-proof:convergence-analysis-friendly-nonxonvex-descent-lemma}

\begin{restate-lemma}{\ref{appdx:convergence-analysis-friendly-nonxonvex-descent-lemma}}
  Under~\Cref{ass:lipschitz_constant,ass:stochastic_variance_bounded} the sequences of iterates $\{x^t\}_{t \ge 0}$ and $\{z_{i, j}^t\}$ in~\Cref{alg:local_two_step_sizes,alg:local_adaptive} satisfy for any integer $t \ge 0$
  \begin{alignat*}{2}
    \E{f\left( x^{t + 1} \right)} & \le \E{f\left( x^t \right)} && - \frac{\eta_g n K}{2} \E{\sqnorm{\nabla f\left( x^t \right)}} \\
      &&&- \eta_g \left( \frac{1}{2} - \eta_g n L K \right) \sum_{i = 1}^n \sum_{j = 0}^{K - 1} \E{\sqnorm{\nabla f\left( z_{i, j}^t \right)}} \\
      &&&+ \frac{\eta_g L^2}{2} \sum_{i = 1}^n \sum_{j = 0}^{K - 1} \E{\sqnorm{z_{i, j}^t - x^t}} \\
      &&&+ \eta_g^2 \sigma^2 n L K,
  \end{alignat*}
\end{restate-lemma}

\begin{proof}
  According to~\Cref{ass:lipschitz_constant} we know that the function $f$ is $L$--smooth~\citep{nesterov2018lectures} thus it holds
        \begin{alignat*}{2}
            f\left( x^{t + 1} \right) \oversetref{Ass.}{\ref{ass:lipschitz_constant}}&{\le} f \left( x^t \right) + \ps{\nabla f \left( x^t \right)}{x^{t + 1} - x^t} + \frac{L}{2} \sqnorm{ x^{t + 1} - x^t} \\
            \oversetrel{rel:f627ebc6-f0e2-47e8-b15e-27901af0af43}&{=}
              \begin{aligned}[t]
                f\left( x^t \right) & - \eta_g \ps{\nabla f \left( x^t \right)}{\sum_{i = 1}^n \sum_{j = 0}^{K - 1} \nabla f\left( z_{i, j}^t; \xi_{i, j}^t \right)} \\
                  &+ \frac{\eta_g^2 L}{2} \sqnorm{\sum_{i = 1}^n \sum_{j = 0}^{K - 1} \nabla f\left( z_{i, j}^t; \xi_{i, j}^t \right)},
              \end{aligned} \numberthis\label{ineq:712dcc13-d64d-424a-bcb2-b1505b6a17b2}
        \end{alignat*}
    where in~\relref{rel:f627ebc6-f0e2-47e8-b15e-27901af0af43} we use the relation
      \[  x^{t + 1} = x^t - \eta_g \sum_{i = 1}^n \sum_{j = 0}^{K - 1} \nabla f\left( z_{i, j}^t; \xi_{i, j}^t \right), \]
    from lines $8$ and $9$ of~\Cref{alg:local_two_step_sizes} and~\Cref{alg:local_adaptive} respectively. According to~\Cref{obs:statistical-independence}, we know that conditionally on $z_{i, j}^t$, the random point $\xi_{i, j}^t$ is independent from all the past iterates and randomness so in particular, it is independent from $x^t$ (unless $j = 0$ because $z_{i, 0}^t = x^t$). Hence by~\Cref{obs:statistical-independence} we have
        \[ \ExpCond{\nabla f \left( z_{i, j}^t; \xi_{i, j}^t \right)}{x^t, z_{i, j}^t} = \ExpCond{\nabla f \left( z_{i, j}^t; \xi_{i, j}^t \right)}{z_{i, j}^t} \oversetref{Ass.}{\ref{ass:stochastic_variance_bounded}}{=} \nabla f \left( z_{i, j}^t \right),\numberthis\label{eq:brich-sgd-descent-lemma-independance-xi} \]
    which still holds for $j = 0$. Now, taking expectation conditionally on $\left( x^t \right)$ in both sides of~\eqref{ineq:712dcc13-d64d-424a-bcb2-b1505b6a17b2} gives
        \begin{alignat*}{2}
          \ExpCond{ f\left( x^{t + 1} \right)}{x^t} \oversetlab{\eqref{ineq:712dcc13-d64d-424a-bcb2-b1505b6a17b2}}&{\le}
            \begin{aligned}[t]
              f\left( x^t \right) & - \eta_g \, \ExpCond{\ps{\nabla f \left( x^t \right)}{\sum_{i = 1}^n \sum_{j = 0}^{K - 1} \nabla f\left( z_{i, j}^t; \xi_{i, j}^t \right)}}{x^t} \\
                &+ \frac{\eta_g^2 L}{2} \ExpCond{\sqnorm{\sum_{i = 1}^n \sum_{j = 0}^{K - 1} \nabla f\left( z_{i, j}^t; \xi_{i, j}^t \right)}}{x^t},
            \end{aligned} \numberthis\label{98c25ebd-5ae3-4943-879e-49a475097714}
        \end{alignat*}
    and, for the inner product above we have
      \begin{alignat*}{2}
        &\ExpCond{\ps{\nabla f \left( x^t \right)}{\sum_{i = 1}^n \sum_{j = 0}^{K - 1} \nabla f\left( z_{i, j}^t; \xi_{i, j}^t \right)}}{x^t} \\
        &\qquad\qquad
          \begin{aligned}
            & = \sum_{i = 1}^n \sum_{j = 0}^{K - 1} \ExpCond{\ps{\nabla f\left( x^t \right)}{\nabla f\left( z_{i, j}^t; \xi_{i, j}^t \right)}}{x^t} \\
            \oversetrel{rel:bfa18fd8-9dd1-4e0c-bc6a-1a97ea747a63}&{=} \sum_{i = 1}^n \sum_{j = 0}^{K - 1} \ExpCond{\ExpCond{\ps{\nabla f\left( x^t \right)}{\nabla f\left( z_{i, j}^t; \xi_{i, j}^t \right)}}{x^t, z_{i, j}^t}}{x^t} \\
            \oversetrel{rel:160a7868-258d-435f-bb4f-b43e45a88339}&{=} \sum_{i = 1}^n \sum_{j = 0}^{K - 1} \ExpCond{\ps{\nabla f\left( x^t \right)}{\ExpCond{\nabla f\left( z_{i, j}^t; \xi_{i, j}^t \right)}{x^t, z_{i, j}^t}}}{x^t} \\
            \oversetlab{\eqref{ineq:712dcc13-d64d-424a-bcb2-b1505b6a17b2}}&{=} \sum_{i = 1}^n \sum_{j = 0}^{K - 1} \ExpCond{\ps{\nabla f\left( x^t \right)}{\nabla f\left( z_{i, j}^t \right)}}{x^t},
          \end{aligned}\numberthis\label{385b99b7-12c4-493e-82f1-aeab101eec1f}
      \end{alignat*}
    where in~\relref{rel:bfa18fd8-9dd1-4e0c-bc6a-1a97ea747a63} we use the tower property of expectation, in~\relref{rel:160a7868-258d-435f-bb4f-b43e45a88339} we put the conditional expectation $\ExpCond{\cdot}{x^t, z_{i, j}^t}$ inside the inner product since $\ExpCond{\nabla f\left( x^t \right)}{x^t, z_{i, j}^t} = \nabla f\left( x^t \right)$.
    
    Then, using identity~\eqref{eq:identity-1} and~\Cref{ass:lipschitz_constant} we obtain
      \begin{alignat*}{2}
        &\sum_{i = 1}^n \sum_{j = 0}^{K - 1} \ExpCond{\ps{\nabla f\left( x^t \right)}{\nabla f\left( z_{i, j}^t \right)}}{x^t} \\
          &\qquad 
            \begin{aligned}[t]
              \oversetlab{\eqref{eq:identity-1}}&{=} \sum_{i = 1}^n \sum_{j = 0}^{K - 1} \left( \frac{1}{2} \sqnorm{\nabla f\left( x^t \right)} + \frac{1}{2} \ExpCond{\sqnorm{\nabla f\left( z_{i, j}^t \right)}}{x^t} - \frac{1}{2} \ExpCond{\sqnorm{\nabla f\left( z_{i, j}^t \right) - \nabla f\left( x^t \right)}}{x^t} \right) \\
              \oversetref{Ass.}{\ref{ass:lipschitz_constant}}&{\ge} \sum_{i = 1}^n \sum_{j = 0}^{K - 1} \left( \frac{1}{2} \sqnorm{\nabla f\left( x^t \right)} + \frac{1}{2} \ExpCond{\sqnorm{\nabla f\left( z_{i, j}^t \right)}}{x^t} - \frac{L^2}{2} \ExpCond{\sqnorm{z_{i, j}^t - x^t}}{x^t} \right) \\
              & =
                \begin{aligned}[t]
                  \frac{n K}{2} \sqnorm{\nabla f\left( x^t \right)} & + \frac{1}{2} \sum_{i = 1}^n \sum_{j = 0}^{K - 1} \ExpCond{\sqnorm{\nabla f\left( z_{i, j}^t \right)}}{x^t} \\
                    &- \frac{L^2}{2} \sum_{i = 1}^n \sum_{j = 0}^{K - 1} \frac{L^2}{2} \ExpCond{\sqnorm{z_{i, j}^t - x^t}}{x^t}
                \end{aligned}
            \end{aligned} \numberthis\label{98c25ebd-5ae3-4943-879e-49a475097715}
      \end{alignat*}
    Next, we bound the squared norm (last term) in~\eqref{98c25ebd-5ae3-4943-879e-49a475097714} as
      \begin{alignat*}{2}
        &\ExpCond{\sqnorm{\sum_{i = 1}^n \sum_{j = 0}^{K - 1} \nabla f\left( z_{i, j}^t; \xi_{i, j}^t \right)}}{x^t} \\
        &\qquad
          \begin{aligned}
            & = \ExpCond{\sqnorm{\sum_{i = 1}^n \sum_{j = 0}^{K - 1} \left( \nabla f\left( z_{i, j}^t; \xi_{i, j}^t \right) - \nabla f\left( z_{i, j}^t \right) \right) + \sum_{i = 1}^n \sum_{j = 0}^{K - 1} \nabla f\left( z_{i, j}^t \right)}}{x^t} \\
            \oversetref{Lem.}{\ref{lem:youg-inequality}}&{\le} 2 \, \ExpCond{\sqnorm{\sum_{i = 1}^n \sum_{j = 0}^{K - 1} \left( \nabla f\left( z_{i, j}^t; \xi_{i, j}^t \right) - \nabla f\left( z_{i, j}^t \right) \right)}}{x^t} + 2 \, \ExpCond{\sqnorm{\sum_{i = 1}^n \sum_{j = 0}^{K - 1} \nabla f\left( z_{i, j}^t \right)}}{x^t} \\
            \oversetref{Lem.}{\ref{lem:jensen-inequality}}&{\le} 2 \, \ExpCond{\sqnorm{\sum_{i = 1}^n \sum_{j = 0}^{K - 1} \left( \nabla f\left( z_{i, j}^t; \xi_{i, j}^t \right) - \nabla f\left( z_{i, j}^t \right) \right)}}{x^t} + 2 n K \sum_{i = 1}^n \sum_{j = 0}^{K - 1} \ExpCond{\sqnorm{\nabla f\left( z_{i, j}^t \right)}}{x^t},
          \end{aligned} \numberthis\label{2235ca4f-f7c5-422d-9780-e43d36096bc6}
      \end{alignat*}
    and for the remaining variance term above, we have
      \begin{alignat*}{2}
        &2 \, \ExpCond{\sqnorm{\sum_{i = 1}^n \sum_{j = 0}^{K - 1} \left( \nabla f\left( z_{i, j}^t; \xi_{i, j}^t \right) - \nabla f\left( z_{i, j}^t \right) \right)}}{x^t} \\
        &\qquad\qquad
          \begin{aligned}[t]
            \oversetrel{rel:6d6ab9d9-ef7e-4e04-a4bd-499f7df95829}&{=} 2 \sum_{i = 1}^n \ExpCond{\sqnorm{\sum_{j = 0}^{K - 1} \left( \nabla f\left( z_{i, j}^t; \xi_{i, j}^t \right) - \nabla f \left( z_{i, j}^t \right) \right)}}{x^t},
          \end{aligned} \numberthis\label{ab481f4b-f9b8-43e5-b3ff-85ff8321ab62}
      \end{alignat*}
    where in~\relref{rel:6d6ab9d9-ef7e-4e04-a4bd-499f7df95829} we use the fact that, conditionally on $x^t$ the clients $1, 2, \ldots, n$ are working independently so notably this shows that conditionally on $x^t$ the random variables $\{X_i\}_{i \in [n]}$ where for any $i \in [n]$
      \[ X_i \eqdef \sum_{j = 0}^{K - 1} \left( \nabla f\left( z_{i, j}^t; \xi_{i, j}^t \right) - \nabla f \left( z_{i, j}^t \right) \right), \numberthis\label{d6164d22-27f6-4dc3-8c53-e8ff06531e0f} \]
    are mutually independent hence $\ExpCond{\ps{X_i}{X_j}}{x^t} = \ps{\ExpCond{X_i}{x^t}}{\ExpCond{X_j}{x^t}}$ and since they all have zero mean it follows
      \begin{alignat*}{2}
          \ExpCond{\sqnorm{\sum_{i = 1}^n X_i}}{x^t} & = \sum_{i = 1}^n \ExpCond{\sqnorm{X_i}}{x^t} + 2 \sum_{1 \le i < j \le n} \ExpCond{\ps{X_i}{X_j}}{x^t} \\
          & = \sum_{i = 1}^n \ExpCond{\sqnorm{X_i}}{x^t} + 2 \sum_{1 \le i < j \le n} \ps{\ExpCond{X_i}{x^t}}{\ExpCond{X_j}{x^t}} \\
          \oversetrel{rel:7f8e1b26-511d-44e6-9f69-be4455a3108f}&{=} \sum_{i = 1}^n \ExpCond{\sqnorm{X_i}}{x^t}, \numberthis\label{f05e8c4b-297f-4194-8541-57c3ade2f943}
      \end{alignat*}
    where in~\relref{rel:7f8e1b26-511d-44e6-9f69-be4455a3108f} we use the identity
      \begin{alignat*}{2}
        \ExpCond{X_i}{x^t} \oversetlab{\eqref{d6164d22-27f6-4dc3-8c53-e8ff06531e0f}}&{=} \sum_{j = 0}^{K - 1} \ExpCond{\nabla f\left( z_{i, j}^t; \xi_{i, j}^t \right) - \nabla f \left( z_{i, j}^t \right)}{x^t} \\
        \oversetref{Lem.}{\ref{lem:tower-property}}&{=} \sum_{j = 0}^{K - 1} \ExpCond{\ExpCond{\nabla f\left( z_{i, j}^t; \xi_{i, j}^t \right) - \nabla f \left( z_{i, j}^t \right)}{x^t, z_{i, j}^t}}{x^t} \\
        \oversetlab{\eqref{eq:brich-sgd-descent-lemma-independance-xi}}&{=} \sum_{j = 0}^{K - 1} \ExpCond{\nabla f\left( z_{i, j}^t \right) - \nabla f \left( z_{i, j}^t \right)}{x^t} \\
        & = 0,
      \end{alignat*}
    which holds for all $i \in [n]$. Continuing from~\eqref{ab481f4b-f9b8-43e5-b3ff-85ff8321ab62} and taking full expectation we obtain
      \begin{alignat*}{2}
        &2 \, \E{\sqnorm{\sum_{i = 1}^n \sum_{j = 0}^{K - 1} \left( \nabla f\left( z_{i, j}^t; \xi_{i, j}^t \right) - \nabla f\left( z_{i, j}^t \right) \right)}} \\
        &\qquad\qquad
          \begin{aligned}[b]
            \oversetlab{\eqref{ab481f4b-f9b8-43e5-b3ff-85ff8321ab62} + \eqref{f05e8c4b-297f-4194-8541-57c3ade2f943}}&{=} 2 \sum_{i = 1}^n \sum_{j = 0}^{K - 1} \E{\sqnorm{\nabla f\left( z_{i, j}^t; \xi_{i, j}^t \right) - \nabla f \left( z_{i, j}^t \right)}} \\
              &\qquad+ 2 \sum_{i = 1}^n 2 \sum_{0 \le j < \ell \le K - 1} \underbrace{\E{\ps{\nabla f\left( z_{i, j}^t; \xi_{i, j}^t \right) - \nabla f \left( z_{i, j}^t \right)}{\nabla f\left( z_{i, \ell}^t; \xi_{i, \ell}^t \right) - \nabla f \left( z_{i, \ell}^t \right)}}}_{\eqdef A_{j, \ell}},
          \end{aligned} \numberthis\label{d4893dad-1b39-4d19-9792-ae1f8d9721f1}
      \end{alignat*}
    and to simplify the expectation term $A_{j, \ell}$ we use~\Cref{obs:statistical-independence}. To do so, observe that $j < \ell$ hence conditionally on $z_{i, \ell}^t$, the random point $\xi_{i, \ell}^t$ is statistically independent from all past iterates and random points so in particular $\xi_{i, \ell}^t$ is independent from the pair $( z_{i, j}^t, \xi_{i, j}^t )$ hence, it follows that 
      \[ \ExpCond{\nabla f \left( z_{i, \ell}^t; \xi_{i, \ell}^t \right)}{z_{i, j}^t, \xi_{i, j}^t, z_{i, \ell}^t} = \ExpCond{\nabla f \left( z_{i, \ell}^t; \xi_{i, \ell}^t \right)}{z_{i, \ell}^t} \oversetref{Ass.}{\ref{ass:stochastic_variance_bounded}}{=} \nabla f \left( z_{i, \ell}^t \right), \numberthis\label{1c97baee-4499-4568-a3ed-1e7d83881d6b} \]
    and then, using the tower property of the expectation, we obtain
        \begin{alignat*}{2}
            A_{j, \ell} & = \E{\ps{\nabla f\left( z_{i, j}^t; \xi_{i, j}^t \right) - \nabla f \left( z_{i, j}^t \right)}{\nabla f\left( z_{i, \ell}^t; \xi_{i, \ell}^t \right) - \nabla f \left( z_{i, \ell}^t \right)}} \\
            \oversetref{Lem.}{\ref{lem:tower-property}}&{=} \E{\ExpCond{\ps{\nabla f\left( z_{i, j}^t; \xi_{i, j}^t \right) - \nabla f \left( z_{i, j}^t \right)}{\nabla f\left( z_{i, \ell}^t; \xi_{i, \ell}^t \right) - \nabla f \left( z_{i, \ell}^t \right)}}{z_{i, j}^t, \xi_{i, j}^t, z_{i, \ell}^t}} \\
            \oversetrel{eq:birch-sgd-residual-estimation-tower-property-1}&{=}\E{\ps{\nabla f\left( z_{i, j}^t; \xi_{i, j}^t \right) - \nabla f \left( z_{i, j}^t \right)}{\ExpCond{\nabla f\left( z_{i, \ell}^t; \xi_{i, \ell}^t \right) - \nabla f \left( z_{i, \ell}^t \right)}{z_{i, j}^t, \xi_{i, j}^t, z_{i, \ell}^t}}} \\
            \oversetlab{\eqref{1c97baee-4499-4568-a3ed-1e7d83881d6b}}&{=} \E{\ps{\nabla f\left( z_{i, j}^t; \xi_{i, j}^t \right) - \nabla f \left( z_{i, j}^t \right)}{\ExpCond{\nabla f \left( z_{i, \ell}^t; \xi_{i, \ell}^t \right)}{z_{i, \ell}^t} - \nabla f \left( z_{i, \ell}^t \right)}} \\
            \oversetref{Ass.}{\ref{ass:stochastic_variance_bounded}}&{=} 0,
        \end{alignat*}
    where in~\relref{eq:birch-sgd-residual-estimation-tower-property-1} we use the fact that 
      \[ \ExpCond{\nabla f \left( z_{i, j}^t; \xi_{i, j}^t \right) - \nabla f \left( z_{i, j}^t \right)}{z_{i, j}^t, \xi_{i, j}^t, z_{i, \ell}^t} = \nabla f \left( z_{i, j}^t; \xi_{i, j}^t \right) - \nabla f \left( z_{i, j}^t \right), \]
   which allows us to take the conditional expectation $\ExpCond{\cdot}{z_{i, j}^t, \xi_{i, j}^t, z_{i, \ell}^t}$ inside the inner product. That being said, we can write
      \begin{alignat*}{2}
        &2 \, \ExpCond{\sqnorm{\sum_{i = 1}^n \sum_{j = 0}^{K - 1} \left( \nabla f\left( z_{i, j}^t; \xi_{i, j}^t \right) - \nabla f\left( z_{i, j}^t \right) \right)}}{x^t} \\
          &\qquad\qquad = 2 \sum_{i = 1}^n \sum_{j = 0}^{K - 1} \ExpCond{\sqnorm{\nabla f\left( z_{i, j}^t; \xi_{i, j}^t \right) - \nabla f \left( z_{i, j}^t \right)}}{x^t}, \numberthis\label{065a730f-2231-47a0-8197-1ffc04b4059d}
      \end{alignat*}
    and taking full expectation in the above equality~\eqref{065a730f-2231-47a0-8197-1ffc04b4059d} yields, thanks to the tower property (\Cref{lem:tower-property}),
      \begin{alignat*}{2}
        &2 \, \E{\sqnorm{\sum_{i = 1}^n \sum_{j = 0}^{K - 1} \left( \nabla f\left( z_{i, j}^t; \xi_{i, j}^t \right) - \nabla f\left( z_{i, j}^t \right) \right)}} \\
        &\qquad\qquad
          \begin{aligned}[b]
            \oversetlab{\eqref{065a730f-2231-47a0-8197-1ffc04b4059d}}&{=} 2 \sum_{i = 1}^n \sum_{j = 0}^{K - 1} \E{\sqnorm{\nabla f\left( z_{i, j}^t; \xi_{i, j}^t \right) - \nabla f \left( z_{i, j}^t \right)}} \\
            \oversetref{Ass.}{\ref{ass:stochastic_variance_bounded}}&{\le} 2 \sum_{i = 1}^n \sum_{j = 0}^{K - 1} \sigma^2 \\
            & = 2 \sigma^2 n K.
          \end{aligned} \numberthis\label{294ca93a-ecca-44a8-a1ef-736d5270bb56}
      \end{alignat*}

    Finally combining~\eqref{98c25ebd-5ae3-4943-879e-49a475097715},~\eqref{2235ca4f-f7c5-422d-9780-e43d36096bc6} and~\eqref{294ca93a-ecca-44a8-a1ef-736d5270bb56} and taking full expectation in~\eqref{98c25ebd-5ae3-4943-879e-49a475097714} gives
      \begin{alignat*}{2}
        \E{ f\left( x^{t + 1} \right)} \oversetlab{\eqref{98c25ebd-5ae3-4943-879e-49a475097714}}&{\le}
          \begin{aligned}[t]
            \E{f\left( x^t \right)} & - \eta_g \, \E{\ps{\nabla f \left( x^t \right)}{\sum_{i = 1}^n \sum_{j = 0}^{K - 1} \nabla f\left( z_{i, j}^t; \xi_{i, j}^t \right)}} \\
              &+ \frac{\eta_g^2 L}{2} \E{\sqnorm{\sum_{i = 1}^n \sum_{j = 0}^{K - 1} \nabla f\left( z_{i, j}^t; \xi_{i, j}^t \right)}}
          \end{aligned} \\
        \oversetlab{\eqref{98c25ebd-5ae3-4943-879e-49a475097715} + \eqref{2235ca4f-f7c5-422d-9780-e43d36096bc6} + \eqref{294ca93a-ecca-44a8-a1ef-736d5270bb56}}&{\le} 
          \begin{aligned}[t]
            \E{f\left( x^t \right)} & - \frac{\eta_g n K}{2} \E{\sqnorm{\nabla f\left( x^t \right)}} - \frac{\eta_g}{2} \sum_{i = 1}^n \sum_{k = 0}^{K - 1} \E{\sqnorm{\nabla f\left( z_{i, j}^t \right)}} \\
              &+ \frac{\eta_g L^2}{2} \sum_{i = 1}^n \sum_{j = 0}^{K - 1} \E{\sqnorm{z_{i, j}^t - x^t}} \\
              &+ \eta_g^2 n L K \sum_{i = 1}^n \sum_{j = 0}^{K - 1} \E{\sqnorm{\nabla f \left( z_{i, j}^t \right)}} \\
              &+ \eta_g^2 \sigma^2 n L K,
          \end{aligned},
      \end{alignat*}
    and reshuffling the above expression leads to the inequality
      \begin{alignat*}{2}
        \E{ f\left( x^{t + 1} \right)} & \le
          \begin{aligned}[t]
            \E{f\left( x^t \right)} & - \frac{\eta_g n K}{2} \E{\sqnorm{\nabla f\left( x^t \right)}} - \eta_g \left( \frac{1}{2} - \eta_g n L K \right) \sum_{i = 1}^n \sum_{k = 0}^{K - 1} \E{\sqnorm{\nabla f\left( z_{i, j}^t \right)}} \\
              &+ \frac{\eta_g L^2}{2} \sum_{i = 1}^n \sum_{j = 0}^{K - 1} \E{\sqnorm{z_{i, j}^t - x^t}} \\
              &+ \eta_g^2 \sigma^2 n L K,
          \end{aligned}
        \end{alignat*}
    which proves the desired inequality.
\end{proof}

\subsubsection{Proof of the residual estimation}
\label{appdx-proof:convergence-analysis-friendly-nonxonvex-residual-estimation}

\begin{restate-lemma}{\ref{appdx:convergence-analysis-friendly-nonxonvex-residual-estimation}}
  Under~\Cref{ass:stochastic_variance_bounded} the sequences of iterates $\{x^t\}_{t \ge 0}$ and $\{z_{i, j}^t\}_{t \ge 0}$ in~\Cref{alg:local_two_step_sizes,alg:local_adaptive} satisfy for any integers $t \ge 0$, $i \in [n]$ and $j \in \Int{0}{K - 1}$
    \begin{alignat*}{2}
      \E{\sqnorm{z_{i, j}^t - x^t}} & \le 2 \left( \sum_{\ell = 0}^{j - 1} \left( \eta_{i, \ell}^t \right)^2 \right) \sum_{\ell = 0}^{j - 1} \E{\sqnorm{\nabla f \left( z_{i, \ell}^t \right)}} + 2 \sigma^2 \sum_{\ell = 0}^{j - 1} \left( \eta_{i, \ell}^t \right)^2.
    \end{alignat*}
\end{restate-lemma}

\begin{proof}
  Fix $t \ge 0$, $i \in [n]$ and $j \in \Int{0}{K - 1}$ then according to the update rule from the local steps we have
    \[ z_{i, \ell + 1}^t = z_{i, \ell}^t - \eta_{i, \ell}^t \nabla f\left( z_{i, \ell}^t; \xi_{i, \ell}^t \right), \]
  and if we unroll this equality from $j$ down to $0$ we obtain for any $\ell \in \Int{0}{K - 1}$
    \[ z_{i, j}^t = z_{i, 0}^t - \sum_{\ell = 0}^{j - 1} \eta_{i, \ell}^t \, \nabla f \left( z_{i, \ell}^t; \xi_{i, \ell}^t \right), \numberthis\label{599cc51e-dd5f-460f-a3fd-306ecda3b253}\]
  and using the fact that $z_{i, 0}^t = x^t$ (see line $2$ of~\Cref{alg:local_two_step_sizes,alg:local_adaptive}) we have
    \[ z_{i, j}^t - x^t \oversetlab{\eqref{599cc51e-dd5f-460f-a3fd-306ecda3b253}}{=} -\sum_{\ell = 0}^{j - 1} \eta_{i, \ell}^t \, \nabla f \left( z_{i, \ell}^t; \xi_{i, \ell}^t \right). \numberthis\label{bb3fed74-1ee8-48eb-9182-fb82da5e2d3d} \]
  
  Now, taking expectation of the squared norm in~\eqref{bb3fed74-1ee8-48eb-9182-fb82da5e2d3d} leads to
    \begin{alignat*}{2}
      \E{\sqnorm{z_{i, j}^t - x^t}} & = \E{\sqnorm{\sum_{\ell = 0}^{j - 1} \eta_{i, \ell}^t \nabla f\left( z_{i, \ell}^t; \xi_{i, \ell}^t \right)}} \\
      & = \E{\sqnorm{\sum_{\ell = 0}^{j - 1} \eta_{i, \ell}^t \left( \nabla f\left( z_{i, \ell}^t; \xi_{i, \ell}^t \right) - \nabla f \left( z_{i, \ell}^t \right) \right) + \sum_{\ell = 0}^{j - 1} \eta_{i, \ell}^t \nabla f \left( z_{i, \ell}^t \right)}} \\
      \oversetrel{rel:3168ae06-52d5-4a24-86ef-6607dd23cf21}&{\le} 2 \, \E{\sqnorm{\sum_{\ell = 0}^{j - 1} \eta_{i, \ell}^t \left( \nabla f\left( z_{i, \ell}^t; \xi_{i, \ell}^t \right) - \nabla f \left( z_{i, \ell}^t \right) \right)}} \\
        &\qquad\qquad+ 2 \, \E{\sqnorm{\sum_{\ell = 0}^{j - 1} \eta_{i, \ell}^t \nabla f \left( z_{i, \ell}^t \right)}}, \numberthis\label{559f42e7-3c16-46f7-8f54-00e475ccdfd6}
    \end{alignat*}
  where in~\relref{rel:3168ae06-52d5-4a24-86ef-6607dd23cf21} we use Young's inequality with $\alpha = 1$ (\Cref{lem:youg-inequality}). Now, we bound the variance term (first one) and the second term from~\eqref{559f42e7-3c16-46f7-8f54-00e475ccdfd6}. As for the variance term, similarly as we did in~\eqref{d4893dad-1b39-4d19-9792-ae1f8d9721f1} (but in our case here up to $j$ instead of $K$) for the proof of the descent lemma we have
    \begin{alignat*}{2}
      &2 \, \E{\sqnorm{\sum_{\ell = 0}^{j - 1} \eta_{i, \ell}^t \left( \nabla f\left( z_{i, \ell}^t; \xi_{i, \ell}^t \right) - \nabla f \left( z_{i, \ell}^t \right) \right)}} \\
      &\qquad\qquad
        \begin{aligned}[t]
          &= 2 \sum_{\ell = 0}^{j - 1} \E{\sqnorm{\eta_{i, \ell}^t \, \nabla f\left( z_{i, \ell}^t; \xi_{i, \ell}^t \right) - \nabla f \left( z_{i, \ell}^t \right)}} \\
            &\qquad 4 \sum_{0 \le \ell < k \le j - 1} \left( \eta_{i, \ell}^t \right)^2 \, \E{\ps{\nabla f\left( z_{i, \ell}^t; \xi_{i, \ell}^t \right) - \nabla f \left( z_{i, \ell}^t \right)}{\nabla f\left( z_{i, k}^t; \xi_{i, k}^t \right) - \nabla f \left( z_{i, k}^t \right)}} \\
          \oversetlab{\eqref{d4893dad-1b39-4d19-9792-ae1f8d9721f1}}&{=} 2 \sum_{\ell = 0}^{j - 1} \left( \eta_{i, \ell}^t \right)^2 \E{\sqnorm{\nabla f\left( z_{i, \ell}^t; \xi_{i, \ell}^t \right) - \nabla f \left( z_{i, \ell}^t \right)}} \\
          \oversetref{Ass.}{\ref{ass:stochastic_variance_bounded}}&{\le} 2 \sum_{\ell = 0}^{j - 1} \left( \eta_{i, \ell}^t \right)^2 \sigma^2 \\
          & = 2 \sigma^2 \sum_{\ell = 0}^{j - 1} \left( \eta_{i, \ell}^t \right)^2.
        \end{aligned} \numberthis\label{3ce56e05-368f-4ec1-914a-03d6b5c378e2}
    \end{alignat*}
  Concerning the second term in~\eqref{559f42e7-3c16-46f7-8f54-00e475ccdfd6} we use~\Cref{lem:jensen-form-2} which leads to
    \begin{alignat*}{2}
        2 \, \E{\sqnorm{\sum_{\ell = 0}^{j - 1} \eta_{i, \ell}^t \nabla f \left( z_{i, \ell}^t \right)}} \oversetref{Lem.}{\ref{lem:jensen-form-2}}{\le} 2 \left( \sum_{\ell = 0}^{j - 1} \left( \eta_{i, \ell}^t \right)^2 \right) \sum_{\ell = 0}^{j - 1} \E{\sqnorm{\nabla f\left( z_{i, \ell}^t \right)}}, \numberthis\label{26928cf3-a170-44cf-89b3-36606134dbf7}
    \end{alignat*}
  and, combining bounds~\eqref{3ce56e05-368f-4ec1-914a-03d6b5c378e2} and~\eqref{26928cf3-a170-44cf-89b3-36606134dbf7} gives
    \begin{alignat*}{2}
      \E{\sqnorm{z_{i, j}^t - x^t}} \oversetlab{\eqref{559f42e7-3c16-46f7-8f54-00e475ccdfd6}}&{\le} 2 \, \E{\sqnorm{\sum_{\ell = 0}^{j - 1} \eta_{i, \ell}^t \left( \nabla f\left( z_{i, \ell}^t; \xi_{i, \ell}^t \right) - \nabla f \left( z_{i, \ell}^t \right) \right)}} \\
        &\qquad\qquad+ 2 \, \E{\sqnorm{\sum_{\ell = 0}^{j - 1} \eta_{i, \ell}^t \nabla f \left( z_{i, \ell}^t \right)}} \\
      \oversetlab{\eqref{3ce56e05-368f-4ec1-914a-03d6b5c378e2} + \eqref{26928cf3-a170-44cf-89b3-36606134dbf7}}&{\le} 2 \left( \sum_{\ell = 0}^{j - 1} \left( \eta_{i, \ell}^t \right)^2 \right) \sum_{\ell = 0}^{j - 1} \E{\sqnorm{\nabla f\left( z_{i, \ell}^t \right)}} + 2 \sigma^2 \sum_{\ell = 0}^{j - 1} \left( \eta_{i, \ell}^t \right)^2,
    \end{alignat*}
  as desired: this achieves the proof of the lemma.
\end{proof}

\subsubsection{Proof of~\cref{appdx:convergence-analysis-friendly-nonxonvex-unrolling-descent-lemma}}
\label{appdx-proof:convergence-analysis-friendly-nonxonvex-unrolling-descent-lemma}

\begin{restate-lemma}{\ref{appdx:convergence-analysis-friendly-nonxonvex-unrolling-descent-lemma}}
  Under~\Cref{ass:lipschitz_constant,ass:stochastic_variance_bounded} the sequences of iterates $\{x^t\}_{t \ge 0}$ and $\{z_{i, j}^t\}_{t \ge 0}$ in~\Cref{alg:local_two_step_sizes,alg:local_adaptive} satisfy for any integer $R \ge 1$
    \begin{alignat*}{2}
      \frac{1}{R} \sum_{t = 0}^{R - 1} \E{\sqnorm{\nabla f\left( x^t \right)}} & \le \frac{2 \Delta}{\eta_g n K R} + 2 \eta_g \sigma^2 L + \frac{2 \sigma^2 L^2}{n R} \sum_{t = 0}^{R - 1} \sum_{i = 1}^n\sum_{\ell = 0}^{K - 1} \left( \eta_{i, \ell}^t \right)^2 \\
        &\qquad- \frac{2}{n K R} \sum_{t = 0}^{R - 1} \sum_{i = 1}^n \left( \frac{1}{2} - \eta_g n L K - \left( \sum_{\ell = 0}^{K - 1} \left( \eta_{i, \ell}^t \right)^2 \right) L^2 K \right) \sum_{j = 0}^{K - 1} \E{\sqnorm{\nabla f\left( z_{i, j}^t \right)}}.
    \end{alignat*}
\end{restate-lemma}

\begin{proof}
  First, for any integer $t \ge 0$ by~\Cref{appdx:convergence-analysis-friendly-nonxonvex-descent-lemma} we have
    \begin{alignat*}{2}
      \E{f\left( x^{t + 1} \right)} & \le \E{f\left( x^t \right)} && - \frac{\eta_g n K}{2} \E{\sqnorm{\nabla f\left( x^t \right)}} \\
        &&&- \eta_g \left( \frac{1}{2} - \eta_g n L K \right) \sum_{i = 1}^n \sum_{j = 0}^{K - 1} \E{\sqnorm{\nabla f\left( z_{i, j}^t \right)}} \\
        &&&+ \frac{\eta_g L^2}{2} \sum_{i = 1}^n \sum_{j = 0}^{K - 1} \E{\sqnorm{z_{i, j}^t - x^t}} \\
        &&&+ \eta_g^2 \sigma^2 n L K,
    \end{alignat*}
  which after rearranging the terms and multiplying both sides by $\frac{2}{\eta_g n K}$ gives
    \begin{alignat*}{2}
      \E{\sqnorm{\nabla f\left( x^t \right)}} & \le \frac{2}{\eta_g n K} \left( \E{f\left( x^t \right)} - \E{f\left( x^{t + 1} \right)} \right) \\
        &\qquad\qquad- 2 \left( \frac{1}{2} - \eta_g n L K \right) \frac{1}{n K} \sum_{i = 1}^n \sum_{j = 0}^{K - 1} \E{\sqnorm{\nabla f\left( z_{i, j}^t \right)}} \\
        &\qquad\qquad+ \frac{L^2}{n K} \sum_{i = 1}^n \sum_{j = 0}^{K - 1} \E{\sqnorm{z_{i, j}^t - x^t}} \\
        &\qquad\qquad+ 2 \eta_g \sigma^2 L,
    \end{alignat*}
  and using~\Cref{appdx:convergence-analysis-friendly-nonxonvex-residual-estimation} we obtain the inequality
    \begin{alignat*}{2}
      \E{\sqnorm{\nabla f\left( x^t \right)}} & \le \frac{2}{\eta_g n K} \left( \E{f\left( x^t \right)} - \E{f\left( x^{t + 1} \right)} \right) \\
        &\qquad\qquad- 2 \left( \frac{1}{2} - \eta_g n L K \right) \frac{1}{n K} \sum_{i = 1}^n \sum_{j = 0}^{K - 1} \E{\sqnorm{\nabla f\left( z_{i, j}^t \right)}} \\
        &\qquad\qquad+ \frac{2 L^2}{n K} \sum_{i = 1}^n \sum_{j = 0}^{K - 1} \left( \left( \sum_{\ell = 0}^{j - 1} \left( \eta_{i, \ell}^t \right)^2 \right) \sum_{\ell = 0}^{j - 1} \E{\sqnorm{\nabla f\left( z_{i, \ell}^t \right)}} + \sigma^2 \sum_{\ell = 0}^{j - 1} \left( \eta_{i, \ell}^t \right)^2 \right) \\
        &\qquad\qquad+ 2 \eta_g \sigma^2 L. \numberthis\label{2586293f-c72c-4a33-882f-6bb0615f087e}
    \end{alignat*}
  Now, we further upper bound the above expression, notably we have
    \begin{alignat*}{2}
      &\frac{2 L^2}{n K} \sum_{i = 1}^n \sum_{j = 0}^{K - 1} \left( \left( \sum_{\ell = 0}^{j - 1} \left( \eta_{i, \ell}^t \right)^2 \right) \sum_{\ell = 0}^{j - 1} \E{\sqnorm{\nabla f\left( z_{i, \ell}^t \right)}} + \sigma^2 \sum_{\ell = 0}^{j - 1} \left( \eta_{i, \ell}^t \right)^2 \right) \\
      &\qquad
        \begin{aligned}[t]
          \oversetrel{rel:b6d7e176-58e6-4b69-a145-d612ee4b84ef}&{\le} \frac{2 L^2}{n K} \sum_{i = 1}^n \left( \sum_{\ell = 0}^{K - 1} \left( \eta_{i, \ell}^t \right)^2 \right) \sum_{j = 0}^{K - 1} \sum_{\ell = 0}^{j - 1} \E{\sqnorm{\nabla f\left( z_{i, \ell}^t \right)}} + \frac{2 \sigma^2 L^2}{n} \sum_{i = 1}^n\sum_{\ell = 0}^{K - 1} \left( \eta_{i, \ell}^t \right)^2
        \end{aligned}
    \end{alignat*}
  where in~\relref{rel:b6d7e176-58e6-4b69-a145-d612ee4b84ef} we use the fact that $j \le K$ to upper bound the two sums over the local step sizes. Moreover, using again the fact that $j \le K$ to upper bound the sum over $\ell$ of the squared norm of the gradients at $z_{i, \ell}^t$ we obtain
    \begin{alignat*}{2}
      &\frac{2 L^2}{n K} \sum_{i = 1}^n \left( \sum_{\ell = 0}^{K - 1} \left( \eta_{i, \ell}^t \right)^2 \right) \sum_{j = 0}^{K - 1} \sum_{\ell = 0}^{j - 1} \E{\sqnorm{\nabla f\left( z_{i, \ell}^t \right)}} + \frac{2 \sigma^2 L^2}{n} \sum_{i = 1}^n\sum_{\ell = 0}^{K - 1} \left( \eta_{i, \ell}^t \right)^2 \\
      &\qquad
        \begin{aligned}[t]
          & \le \frac{2 L^2 K}{n K} \sum_{i = 1}^n \left( \sum_{\ell = 0}^{K - 1} \left( \eta_{i, \ell}^t \right)^2 \right) \sum_{\ell = 0}^{K - 1} \E{\sqnorm{\nabla f\left( z_{i, \ell}^t \right)}} + \frac{2 \sigma^2 L^2}{n} \sum_{i = 1}^n\sum_{\ell = 0}^{K - 1} \left( \eta_{i, \ell}^t \right)^2,
        \end{aligned}
    \end{alignat*}
  hence injecting this bound in~\eqref{2586293f-c72c-4a33-882f-6bb0615f087e} yields
    \begin{alignat*}{2}
      \E{\sqnorm{\nabla f\left( x^t \right)}} \oversetlab{\eqref{2586293f-c72c-4a33-882f-6bb0615f087e}}&{\le} \frac{2}{\eta_g n K} \left( \E{f\left( x^t \right)} - \E{f\left( x^{t + 1} \right)} \right) \\
        &\qquad\qquad- 2 \left( \frac{1}{2} - \eta_g n L K \right) \frac{1}{n K} \sum_{i = 1}^n \sum_{j = 0}^{K - 1} \E{\sqnorm{\nabla f\left( z_{i, j}^t \right)}} \\
        &\qquad\qquad+ \frac{2 L^2 K}{n K} \sum_{i = 1}^n \left( \sum_{\ell = 0}^{K - 1} \left( \eta_{i, \ell}^t \right)^2 \right) \sum_{\ell = 0}^{K - 1} \E{\sqnorm{\nabla f\left( z_{i, \ell}^t \right)}} \\
        &\qquad\qquad+ 2 \eta_g \sigma^2 L + \frac{2 \sigma^2 L^2}{n} \sum_{i = 1}^n\sum_{\ell = 0}^{K - 1} \left( \eta_{i, \ell}^t \right)^2 \\
      \oversetrel{rel:59146939-c370-40fb-a065-9b79e2a7ef3b}&{=} \frac{2}{\eta_g n K} \left( \E{f\left( x^t \right)} - \E{f\left( x^{t + 1} \right)} \right) + 2 \eta_g \sigma^2 L + \frac{2 \sigma^2 L^2}{n} \sum_{i = 1}^n\sum_{\ell = 0}^{K - 1} \left( \eta_{i, \ell}^t \right)^2 \\
        &\qquad\qquad- \frac{2}{n K} \sum_{i = 1}^n \left( \frac{1}{2} - \eta_g n L K - \left( \sum_{\ell = 0}^{K - 1} \left( \eta_{i, \ell}^t \right)^2 \right) L^2 K \right) \sum_{j = 0}^{K - 1} \E{\sqnorm{\nabla f\left( z_{i, j}^t \right)}}.
    \end{alignat*}
  where in~\relref{rel:59146939-c370-40fb-a065-9b79e2a7ef3b} we merged the two sums with the gradient terms. It remains to sum this inequality over $t \in \Int{0}{R - 1}$ for a fixed integer $R \ge 1$, this gives
    \begin{alignat*}{2}
      \frac{1}{R} \sum_{t = 0}^{R - 1} \E{\sqnorm{\nabla f\left( x^t \right)}} & \le \frac{2}{\eta_g n K R} \left( f\left( x^0 \right) - \E{f\left( x^R \right)} \right) + 2 \eta_g \sigma^2 L + \frac{2 \sigma^2 L^2}{n R} \sum_{t = 0}^{R - 1} \sum_{i = 1}^n\sum_{\ell = 0}^{K - 1} \left( \eta_{i, \ell}^t \right)^2 \\
        &\qquad- \frac{2}{n K R} \sum_{t = 0}^{R - 1} \sum_{i = 1}^n \left( \frac{1}{2} - \eta_g n L K - \left( \sum_{\ell = 0}^{K - 1} \left( \eta_{i, \ell}^t \right)^2 \right) L^2 K \right) \sum_{j = 0}^{K - 1} \E{\sqnorm{\nabla f\left( z_{i, j}^t \right)}} \\
      \oversetref{Ass.}{\ref{ass:lipschitz_constant}}&{\le} \frac{2 \Delta}{\eta_g n K R} + 2 \eta_g \sigma^2 L + \frac{2 \sigma^2 L^2}{n R} \sum_{t = 0}^{R - 1} \sum_{i = 1}^n\sum_{\ell = 0}^{K - 1} \left( \eta_{i, \ell}^t \right)^2 \\
        &\qquad- \frac{2}{n K R} \sum_{t = 0}^{R - 1} \sum_{i = 1}^n \left( \frac{1}{2} - \eta_g n L K - \left( \sum_{\ell = 0}^{K - 1} \left( \eta_{i, \ell}^t \right)^2 \right) L^2 K \right) \sum_{j = 0}^{K - 1} \E{\sqnorm{\nabla f\left( z_{i, j}^t \right)}},
    \end{alignat*}
  and this establishes the desired inequality.
\end{proof}

\subsection{Convex setup}

\subsubsection{Proof of the descent lemma}
\label{appdx-proof:convergence-analysis-friendly-convex-descent-lemma-v1}

\begin{restate-lemma}{\ref{appdx:convergence-analysis-friendly-convex-descent-lemma-v1}}
    Under~\Cref{ass:lipschitz_constant,ass:stochastic_variance_bounded,ass:convex} the sequences of iterates $\{x^t\}_{t \ge 0}$ and $\{z_{i, j}^t\}$ in~\Cref{alg:local_two_step_sizes,alg:local_adaptive} satisfy for any integer $t \ge 0$
    \begin{alignat*}{2}
      \E{\sqnorm{x^{t + 1} - x^*}} & \le \E{\sqnorm{x^t - x^*}} + 2 \eta_g L \sum_{i = 1}^n \sum_{j = 0}^{K - 1} \E{\sqnorm{z_{i, j}^t - x^*}} \\
        &\qquad- 2 \eta_g \left( \frac{3}{4} - \eta_g n L K \right) \sum_{i = 1}^n \sum_{j = 0}^{K - 1} \E{\ps{\nabla f \left( z_{i, j}^t \right)}{z_{i, j}^t - x^*}} + 2 \eta_g^2 \sigma^2 n K.
    \end{alignat*}
\end{restate-lemma}

\begin{proof}
  Let $k \ge 0$ and $x^* \in \argmin_{x \in \R^d} f(x)$ (which exists by~\Cref{ass:time}), then we have
    \begin{alignat*}{2}
      \sqnorm{x^{t + 1} - x^*} & = \sqnorm{ \left( x^{t + 1} - x^t \right) + \left( x^t - x^* \right)} \\
      & = \sqnorm{x^{t + 1} - x^t} + 2 \ps{x^{t + 1} - x^t}{x^t - x^*} + \sqnorm{x^t - x^*} \\
      \oversetrel{rel:f583f514-1f57-444d-9a2f-274ad2e59d3c}&{=} \sqnorm{x^{t + 1} - x^t} - 2 \eta_g \ps{\sum_{i = 1}^n \sum_{j = 0}^{K - 1} \nabla f\left( z_{i, j}^t; \xi_{i, j}^t \right)}{x^t - x^*} + \sqnorm{x^t - x^*} \\
      \oversetrel{rel:e4ac7442-4f33-45a5-8c1c-1da80887608e}&{=} \eta_g^2 \sqnorm{\sum_{i = 1}^n \sum_{j = 0}^{K - 1} \nabla f\left( z_{i, j}^t; \xi_{i, j}^t \right)} - 2 \eta_g \ps{\sum_{i = 1}^n \sum_{j = 0}^{K - 1} \nabla f\left( z_{i, j}^t; \xi_{i, j}^t \right)}{x^t - x^*} \\
        &\qquad+ \sqnorm{x^t - x^*}, \numberthis\label{e028ceb0-10f6-4f2d-8097-a29dfba6258c}
    \end{alignat*}
    in~\relref{rel:f583f514-1f57-444d-9a2f-274ad2e59d3c} and~\relref{rel:e4ac7442-4f33-45a5-8c1c-1da80887608e} we use the relation
      \[  x^{t + 1} = x^t - \eta_g \sum_{i = 1}^n \sum_{j = 0}^{K - 1} \nabla f\left( z_{i, j}^t; \xi_{i, j}^t \right), \]
    from lines $8$ and $9$ of~\Cref{alg:local_two_step_sizes} and~\Cref{alg:local_adaptive} respectively. As we did earlier in the proof of the descent lemma (see~\Cref{appdx:convergence-analysis-friendly-convex-descent-lemma-v1}), by~\Cref{obs:statistical-independence}, we know that conditionally on $z_{i, j}^t$, the random point $\xi_{i, j}^t$ is independent from all the past iterates and randomness so in particular, it is independent from $x^t$ (unless $j = 0$ because $z_{i, 0}^t = x^t$). Hence by~\Cref{obs:statistical-independence} we have
        \[ \ExpCond{\nabla f \left( z_{i, j}^t; \xi_{i, j}^t \right)}{x^t, z_{i, j}^t} = \ExpCond{\nabla f \left( z_{i, j}^t; \xi_{i, j}^t \right)}{z_{i, j}^t} \oversetref{Ass.}{\ref{ass:stochastic_variance_bounded}}{=} \nabla f \left( z_{i, j}^t \right),\numberthis\label{eq:brich-sgd-descent-lemma-independance-xi-2} \]
    which still holds for $j = 0$. Now, taking expectation conditionally on $\left( x^t \right)$ in both sides of~\eqref{e028ceb0-10f6-4f2d-8097-a29dfba6258c} gives
        \begin{alignat*}{2}
          \ExpCond{\sqnorm{x^{t + 1} - x^*}}{x^t} \oversetlab{\eqref{e028ceb0-10f6-4f2d-8097-a29dfba6258c}}&{\le}
            \begin{aligned}[t]
              \sqnorm{x^t - x^*} & - 2 \eta_g \, \ExpCond{\ps{\sum_{i = 1}^n \sum_{j = 0}^{K - 1} \nabla f\left( z_{i, j}^t; \xi_{i, j}^t \right)}{x^t - x^*}}{x^t} \\
                &+ \eta_g^2 \, \ExpCond{\sqnorm{\sum_{i = 1}^n \sum_{j = 0}^{K - 1} \nabla f\left( z_{i, j}^t; \xi_{i, j}^t \right)}}{x^t},
            \end{aligned} \numberthis\label{98c25ebd-5ae3-4943-879e-49a475097714-2}
        \end{alignat*}
    and, for the inner product above, the same way as we did in~\Cref{appdx:convergence-analysis-friendly-convex-descent-lemma-v1} (see~\eqref{385b99b7-12c4-493e-82f1-aeab101eec1f}) we obtain
      \begin{alignat*}{2}
        &\ExpCond{\ps{\sum_{i = 1}^n \sum_{j = 0}^{K - 1} \nabla f\left( z_{i, j}^t; \xi_{i, j}^t \right)}{x^t - x^*}}{x^t} = \sum_{i = 1}^n \sum_{j = 0}^{K - 1} \ExpCond{\ps{\nabla f\left( z_{i, j}^t \right)}{x^t - x^*}}{x^t}, \numberthis\label{ef15beba-a368-4ded-baaf-6835cbc6c048}
      \end{alignat*}
    while, for the variance term in~\eqref{98c25ebd-5ae3-4943-879e-49a475097714-2}, still following what we did in the proof of~\Cref{appdx:convergence-analysis-friendly-convex-descent-lemma-v1}, we have
      \begin{alignat*}{2}
        \ExpCond{\sqnorm{\sum_{i = 1}^n \sum_{j = 0}^{K - 1} \nabla f\left( z_{i, j}^t; \xi_{i, j}^t \right)}}{x^t} \le 2 n K \sum_{i = 1}^n \sum_{j = 0}^{K - 1} \ExpCond{\sqnorm{\nabla f\left( z_{i, j}^t \right)}}{x^t} + 2 \sigma^2 n K.\numberthis\label{ebe61dc6-0c10-4359-ac6c-488650153e66}
      \end{alignat*}
    Hence, combining both~\eqref{ef15beba-a368-4ded-baaf-6835cbc6c048} and~\eqref{ebe61dc6-0c10-4359-ac6c-488650153e66} and injecting in~\eqref{98c25ebd-5ae3-4943-879e-49a475097714-2} leads to
      \begin{alignat*}{2}
        &\ExpCond{\sqnorm{x^{t + 1} - x^*}}{x^t} \\
        &\qquad\begin{aligned}[b]
          \oversetlab{\eqref{98c25ebd-5ae3-4943-879e-49a475097714-2}}&{\le} \sqnorm{x^t - x^*} - 2 \eta_g \, \ExpCond{\ps{\sum_{i = 1}^n \sum_{j = 0}^{K - 1} \nabla f\left( z_{i, j}^t; \xi_{i, j}^t \right)}{x^t - x^*}}{x^t} \\
            &\qquad+ \eta_g^2 \, \ExpCond{\sqnorm{\sum_{i = 1}^n \sum_{j = 0}^{K - 1} \nabla f\left( z_{i, j}^t; \xi_{i, j}^t \right)}}{x^t} \\
          \oversetlab{\eqref{ef15beba-a368-4ded-baaf-6835cbc6c048} + \eqref{ebe61dc6-0c10-4359-ac6c-488650153e66}}&{\le} \sqnorm{x^t - x^*} - 2 \eta_g \sum_{i = 1}^n \sum_{j = 0}^{K - 1} \ExpCond{\ps{\nabla f\left( z_{i, j}^t \right)}{x^t - x^*}}{x^t} \\
            &\qquad+ 2 \eta_g^2 n K \sum_{i = 1}^n \sum_{j = 0}^{K - 1} \ExpCond{\sqnorm{\nabla f\left( z_{i, j}^t \right)}}{x^t} + 2 \eta_g^2 \sigma^2 n K.
        \end{aligned}\numberthis\label{520da47e-596f-4938-b6f6-30320e52764f}
      \end{alignat*}
    Next, for any $i \in [n]$ and any $j \in \Int{0}{K - 1}$, to upper bound the gradient term $\eta_g^2 \sqnorm{\nabla f \left( z_{i, j}^t \right)}$ we use~\Cref{lem:upper-bound-gradient-norm-by-inner-product} with $x = z_{i, j}^t$ and $y^{\star} = x^* \in \argmin_{x \in \R^d} f(x)$ which leads to
        \[ \eta_g^2 \sqnorm{\nabla f \left( z_{i, j}^t \right)} \le \eta_g^2 L \ps{\nabla f \left( z_{i, j}^t \right)}{z_{i, j}^t - x^*}. \numberthis\label{09037d85-1298-47ad-9fb2-39fb724b6972} \]
    Then, it remains to upper bound the inner products $- 2 \eta_g \ps{\nabla f \left( z_{i, j}^t \right)}{x^t - x^*}$ for any $i \in [n]$ and any $j \in \Int{0}{K - 1}$. To do so, we split it in two as follows
        \begin{alignat*}{2}
          \hspace{-0.5cm} - 2 \eta_g \ps{\nabla f \left( z_{i, j}^t \right)}{x^t - x^*} & = - 2 \eta_g \ps{\nabla f \left( z_{i, j}^t \right)}{x^t - z_{i, j}^t} - 2 \eta_g \ps{\nabla f \left( z_{i, j}^t \right)}{z_{i, j}^t - x^*}, \numberthis\label{91a0b750-8b8b-4225-bcee-b8da0fd7c5b4}
        \end{alignat*}
    and injecting the upper bound~\eqref{09037d85-1298-47ad-9fb2-39fb724b6972} and equality~\eqref{91a0b750-8b8b-4225-bcee-b8da0fd7c5b4} in~\eqref{520da47e-596f-4938-b6f6-30320e52764f} gives
        \begin{alignat*}{2}
          &\ExpCond{\sqnorm{x^{t + 1} - x^*}}{x^t} \\
          &\qquad\begin{aligned}[b]
              \oversetlab{\eqref{09037d85-1298-47ad-9fb2-39fb724b6972}+\eqref{91a0b750-8b8b-4225-bcee-b8da0fd7c5b4}}&{\le} \sqnorm{x^t - x^*} - 2 \eta_g \sum_{i = 1}^n \sum_{j = 0}^{K - 1} \ExpCond{\ps{\nabla f\left( z_{i, j}^t \right)}{x^t - z_{i, j}^t}}{x^t} \\
                &\qquad- 2 \eta_g \sum_{i = 1}^n \sum_{j = 0}^{K - 1} \ExpCond{\ps{\nabla f\left( z_{i, j}^t \right)}{z_{i, j}^t - x^*}}{x^t} \\
                &\qquad+ 2 \eta_g^2 n L K \sum_{i = 1}^n \sum_{j = 0}^{K - 1} \ExpCond{\ps{\nabla f \left( z_{i, j}^t \right)}{z_{i, j}^t - x^*}}{x^t} + 2 \eta_g^2 \sigma^2 n K \\
              & = \sqnorm{x^t - x^*} - 2 \eta_g \sum_{i = 1}^n \sum_{j = 0}^{K - 1} \ExpCond{\ps{\nabla f\left( z_{i, j}^t \right)}{x^t - z_{i, j}^t}}{x^t} \\
                &\qquad- 2 \eta_g \left( 1 - \eta_g n L K \right) \sum_{i = 1}^n \sum_{j = 0}^{K - 1} \ExpCond{\ps{\nabla f \left( z_{i, j}^t \right)}{z_{i, j}^t - x^*}}{x^t} + 2 \eta_g^2 \sigma^2 n K.
          \end{aligned}\numberthis\label{d3f5214b-6c3c-4a0d-8e54-1cda8d4be678}
        \end{alignat*}
      
        It remains to upper bound the inner product $- 2 \eta_g \ps{\nabla f \left( z_{i, j}^t \right)}{x^t - z_{i, j}^t}$ for any $i \in [n]$ and any $j \in \Int{0}{K - 1}$. To do so, we will use the Young's inequality (\Cref{lem:youg-inequality-inner-product}) with the scalar $\alpha = \sfrac{1}{2 L}$ and then, we will use~\Cref{lem:upper-bound-gradient-norm-by-inner-product} with, again, $x = z_{i, j}^t$ and $y^{\star} = x^* \in \argmin_{x \in \R^d} f(x)$. We have
        \begin{alignat*}{2}
          - 2 \eta_g \ps{\nabla f \left( z_{i, j}^t \right)}{x^t - z_{i, j}^t} & \le 2 \eta_g \abs{ \ps{\nabla f \left( z_{i, j}^t \right)}{x^t - z_{i, j}^t}} \\
          \oversetref{Lem.}{\ref{lem:youg-inequality-inner-product}}&{\le} \eta_g \left( \alpha \sqnorm{\nabla f \left( z_{i, j}^t \right)} + \frac{1}{\alpha} \sqnorm{x^t - z_{i, j}^t} \right) \\
          \oversetrel{rel:7bc1ff88-5434-40a8-9925-4ddcd72c0bbd}&{=} \eta_g \left( \frac{1}{2 L} \sqnorm{\nabla f \left( z_{i, j}^t \right)} + 2 L \sqnorm{x^t - z_{i, j}^t} \right) \\
          \oversetref{Lem.}{\ref{lem:upper-bound-gradient-norm-by-inner-product}}&{\le} \eta_g \left( \frac{1}{2} \ps{\nabla f \left( z_{i, j}^t \right)}{z_{i, j}^t - x^*} + 2 L \sqnorm{x^t - z_{i, j}^t} \right) \\
          & = \frac{\eta_g}{2} \ps{\nabla f \left( z_{i, j}^t \right)}{z_{i, j}^t - x^*} + 2 \eta_g L \sqnorm{x^t - z_{i, j}^t}, \numberthis\label{51fec12c-eeb8-478e-b9a3-b61c8f4e0836}
        \end{alignat*}
        where in~\relref{rel:7bc1ff88-5434-40a8-9925-4ddcd72c0bbd} we use $\alpha = \frac{1}{2L} > 0$. Thus, injecting the previous bounds in~\eqref{d3f5214b-6c3c-4a0d-8e54-1cda8d4be678} for all $i \in [n]$ and all $j \in \Int{0}{K - 1}$ we obtain
          \begin{alignat*}{2}
            &\ExpCond{\sqnorm{x^{t + 1} - x^*}}{x^t} \\
            &\qquad\quad\begin{aligned}[b]
              \oversetlab{\eqref{d3f5214b-6c3c-4a0d-8e54-1cda8d4be678}}&{\le} \sqnorm{x^t - x^*} - 2 \eta_g \sum_{i = 1}^n \sum_{j = 0}^{K - 1} \ExpCond{\ps{\nabla f\left( z_{i, j}^t \right)}{x^t - z_{i, j}^t}}{x^t} \\
                &\qquad- 2 \eta_g \left( 1 - \eta_g n L K \right) \sum_{i = 1}^n \sum_{j = 0}^{K - 1} \ExpCond{\ps{\nabla f \left( z_{i, j}^t \right)}{z_{i, j}^t - x^*}}{x^t} + 2 \eta_g^2 \sigma^2 n K \\
              \oversetlab{\eqref{51fec12c-eeb8-478e-b9a3-b61c8f4e0836}}&{\le} \sqnorm{x^t - x^*} + \frac{\eta_g}{2} \sum_{i = 1}^n \sum_{j = 0}^{K - 1} \ExpCond{\ps{\nabla f\left( z_{i, j}^t \right)}{z_{i, j}^t - x^*}}{x^t} \\
                &\qquad+ 2 \eta_g L \sum_{i = 1}^n \sum_{j = 0}^{K - 1} \ExpCond{\sqnorm{z_{i, j}^t - x^*}}{x^t} \\
                &\qquad- 2 \eta_g \left( 1 - \eta_g n L K \right) \sum_{i = 1}^n \sum_{j = 0}^{K - 1} \ExpCond{\ps{\nabla f \left( z_{i, j}^t \right)}{z_{i, j}^t - x^*}}{x^t} + 2 \eta_g^2 \sigma^2 n K,
            \end{aligned}
          \end{alignat*}
        and rearranging the above right-hand side leads to the inequality
        \begin{alignat*}{2}
          &\ExpCond{\sqnorm{x^{t + 1} - x^*}}{x^t} \\
          &\qquad\quad \le \sqnorm{x^t - x^*} + 2 \eta_g L \sum_{i = 1}^n \sum_{j = 0}^{K - 1} \ExpCond{\sqnorm{z_{i, j}^t - x^*}}{x^t} \\
            &\qquad\quad\qquad- 2 \eta_g \left( \frac{3}{4} - \eta_g n L K \right) \sum_{i = 1}^n \sum_{j = 0}^{K - 1} \ExpCond{\ps{\nabla f \left( z_{i, j}^t \right)}{z_{i, j}^t - x^*}}{x^t} + 2 \eta_g^2 \sigma^2 n K, \numberthis\label{fd1c8fc6-7d46-4092-9e7c-2f09a1126789}
        \end{alignat*}
      then, taking full expectation in both sides of~\eqref{fd1c8fc6-7d46-4092-9e7c-2f09a1126789} along with the tower property (\Cref{lem:tower-property}) leads to the desired inequality.
\end{proof}

\subsubsection{Proof of the descent lemma on $\{ f( z_{i, j}^t) \}$}
\label{appdx-proof:convergence-analysis-friendly-convex-descent-lemma-v2}

\begin{restate-lemma}{\ref{appdx:convergence-analysis-friendly-convex-descent-lemma-v2}}
    Under~\Cref{ass:lipschitz_constant,ass:stochastic_variance_bounded,ass:convex} the sequences of iterates $\{x^t\}_{t \ge 0}$ and $\{z_{i, j}^t\}$ in~\Cref{alg:local_two_step_sizes,alg:local_adaptive} satisfy for any integer $t \ge 0$
    \begin{alignat*}{2}
      &\frac{\eta_g}{2} \sum_{i = 1}^n \sum_{j = 0}^{K - 1} \left( \E{f\left( z_{i, j}^t \right)} - f^{\inf} \right) \\
      &\qquad \le \E{\sqnorm{x^t - x^*}} - \E{\sqnorm{x^{t + 1} - x^*}} + 2 \eta_g L \sum_{i = 1}^n \sum_{j = 0}^{K - 1} \E{\sqnorm{z_{i, j}^t - x^*}} \\
        &\qquad\qquad- 2 \eta_g \left( \frac{1}{2} - \eta_g n L K \right) \sum_{i = 1}^n \sum_{j = 0}^{K - 1} \E{\ps{\nabla f \left( z_{i, j}^t \right)}{z_{i, j}^t - x^*}} + 2 \eta_g^2 \sigma^2 n K.
    \end{alignat*}
\end{restate-lemma}

\begin{proof}
  Starting from our previous descent lemma (\Cref{appdx:convergence-analysis-friendly-convex-descent-lemma-v1}), for any integer $t \ge 0$
    \begin{alignat*}{2}
      \E{\sqnorm{x^{t + 1} - x^*}} & \le \E{\sqnorm{x^t - x^*}} + 2 \eta_g L \sum_{i = 1}^n \sum_{j = 0}^{K - 1} \E{\sqnorm{z_{i, j}^t - x^*}} \\
        &\qquad- 2 \eta_g \left( \frac{3}{4} - \eta_g n L K \right) \sum_{i = 1}^n \sum_{j = 0}^{K - 1} \E{\ps{\nabla f \left( z_{i, j}^t \right)}{z_{i, j}^t - x^*}} + 2 \eta_g^2 \sigma^2 n K, \numberthis\label{727542bb-c5b9-44dd-948a-b32aea453e96}
    \end{alignat*}
  and, using~\Cref{lem:non-nagativity-bregman-divergence}, inequality~\eqref{0e1b1f7c-afe7-4d45-bd9d-c4fa799c8031}, for $x = z_{i, j}^t$ and $x^{\star} = x^* \in \argmin_{x \in \R^d} f(x)$, with the fact that $\frac{\eta_g}{2} > 0$ we obtain
    \[ - \frac{\eta_g}{2} \ps{\nabla f \left( z_{i, j}^t \right)}{z_{i, j}^t - x^*} \le - \frac{\eta_g}{2} \left( f \left( z_{i, j}^t \right) - f^{\inf} \right), \numberthis\label{8124c538-9fe9-4854-8fbd-3cd621b4ee46} \]
  and, injecting the bound~\eqref{8124c538-9fe9-4854-8fbd-3cd621b4ee46} in~\eqref{727542bb-c5b9-44dd-948a-b32aea453e96} leads to 
    \begin{alignat*}{2}
      \E{\sqnorm{x^{t + 1} - x^*}} & \le \E{\sqnorm{x^t - x^*}} + 2 \eta_g L \sum_{i = 1}^n \sum_{j = 0}^{K - 1} \E{\sqnorm{z_{i, j}^t - x^*}} \\
        &\qquad- 2 \eta_g \left( \frac{1}{2} - \eta_g n L K \right) \sum_{i = 1}^n \sum_{j = 0}^{K - 1} \E{\ps{\nabla f \left( z_{i, j}^t \right)}{z_{i, j}^t - x^*}} \\
        &\qquad - \frac{\eta_g}{2} \sum_{i = 1}^n \sum_{j = 0}^{K - 1} \left( \E{f\left( z_{i, j}^t \right)} - f^{\inf} \right)+ 2 \eta_g^2 \sigma^2 n K,
    \end{alignat*}
  which, after reshuffling the above expression, gives the desired inequality.
\end{proof}

\subsubsection{Proof of the descent lemma on $\{ f(x^t) \}$}
\label{appdx-proof:convergence-analysis-friendly-convex-descent-lemma-v3}

\begin{restate-lemma}{\ref{appdx:convergence-analysis-friendly-convex-descent-lemma-v3}}
    Under~\Cref{ass:lipschitz_constant,ass:stochastic_variance_bounded,ass:convex} the sequences of iterates $\{x^t\}_{t \ge 0}$ and $\{z_{i, j}^t\}$ in~\Cref{alg:local_two_step_sizes,alg:local_adaptive} satisfy for any integer $t \ge 0$
    \begin{alignat*}{2}
      &\frac{\eta_g}{4} \left( \E{f\left( x^t \right)} - f^{\inf} \right) \\
      &\qquad \le \frac{1}{n K} \left( \E{\sqnorm{x^t - x^*}} - \E{\sqnorm{x^{t + 1} - x^*}} \right) + \frac{5 \eta_g L}{2 n K} \sum_{i = 1}^n \sum_{j = 0}^{K - 1} \E{\sqnorm{z_{i, j}^t - x^*}} \\
        &\qquad\qquad- 2 \eta_g \left( \frac{1}{2} - \eta_g n L K \right) \frac{1}{n K} \sum_{i = 1}^n \sum_{j = 0}^{K - 1} \E{\ps{\nabla f \left( z_{i, j}^t \right)}{z_{i, j}^t - x^*}} + 2 \eta_g^2 \sigma^2.
    \end{alignat*}
\end{restate-lemma}

\begin{proof}
  From the previous descent lemma (\Cref{appdx:convergence-analysis-friendly-convex-descent-lemma-v2}), we can rewrite the left-hand side as
    \begin{alignat*}{2}
      &\frac{\eta_g}{2} \sum_{i = 1}^n \sum_{j = 0}^{K - 1} \left( \E{f\left( z_{i, j}^t \right)} - f^{\inf} \right) \\
      &\qquad\quad = \frac{\eta_g n K}{2} \left( \E{f\left( x^t \right)} - f^{\inf} \right) + \frac{\eta_g}{2} \sum_{i = 1}^n \sum_{j = 0}^{K - 1} \left( \E{f\left( z_{i, j}^t \right) - f\left( x^t \right)}\right), \numberthis\label{b2e68a31-a25f-44a4-b0f5-6e0c415f76ec}
    \end{alignat*}
  and, for any $i \in [n]$ and any $j \in \Int{0}{K - 1}$, using~\Cref{lem:upper-bound-function-gap-by-norm-and-function-value} on the last term of~\eqref{b2e68a31-a25f-44a4-b0f5-6e0c415f76ec} for $x = x^t$ and $y = z_{i, j}^t$, with the fact that $\frac{\eta_g}{2} > 0$ we obtain
    \[ \frac{\eta_g}{2} \left( f \left( x^t \right) - f \left( z_{i, j}^t \right) \right) \le \frac{\eta_g}{4} \left( f \left( x^t \right) - f^{\inf} \right) + \frac{\eta_g L}{2} \sqnorm{x^t - z_{i, j}^t}, \numberthis\label{73d34daf-a1b8-43a4-b8b7-203f1a9146fc} \]
  hence
    \begin{alignat*}{2}
      &\frac{\eta_g}{2} \sum_{i = 1}^n \sum_{j = 0}^{K - 1} \left( \E{f\left( x^t \right) - f\left( z_{i, j}^t \right)}\right) \\
      &\qquad\qquad\oversetlab{\eqref{73d34daf-a1b8-43a4-b8b7-203f1a9146fc}}{\le} \frac{\eta_g n K}{4} \left( \E{f\left( x^t \right)} - f^{\inf} \right) + \frac{\eta_g L}{2} \sum_{i = 1}^n \sum_{j = 0}^{K - 1} \E{\sqnorm{x^t - z_{i, j}^t}}, \numberthis\label{8124c538-9fe9-4854-8fbd-3cd621b4ee46-2}
    \end{alignat*}
  and, injecting~\eqref{b2e68a31-a25f-44a4-b0f5-6e0c415f76ec} with the inequality~\eqref{8124c538-9fe9-4854-8fbd-3cd621b4ee46-2} in~\Cref{appdx:convergence-analysis-friendly-convex-descent-lemma-v2} gives
    \begin{alignat*}{2}
      &\frac{\eta_g n K}{2} \left( \E{f\left( x^t \right)} - f^{\inf} \right) \\
      &\qquad \le \E{\sqnorm{x^t - x^*}} - \E{\sqnorm{x^{t + 1} - x^*}} + \frac{5 \eta_g L}{2} \sum_{i = 1}^n \sum_{j = 0}^{K - 1} \E{\sqnorm{z_{i, j}^t - x^*}} \\
        &\qquad\qquad- 2 \eta_g \left( \frac{1}{2} - \eta_g n L K \right) \sum_{i = 1}^n \sum_{j = 0}^{K - 1} \E{\ps{\nabla f \left( z_{i, j}^t \right)}{z_{i, j}^t - x^*}} + 2 \eta_g^2 \sigma^2 n K \\
        &\qquad\qquad+ \frac{\eta_g n K}{4} \left( \E{f\left( x^t \right)} - f^{\inf} \right),
    \end{alignat*}
  which, after reshuffling the above expression and dividing both sides by $n K$, gives the desired inequality.
\end{proof}

\subsubsection{Proof of the resiual estimation}
\label{appdx-proof:convergence-analysis-friendly-convex-residual-estimation}

\begin{restate-lemma}{\ref{appdx:convergence-analysis-friendly-convex-residual-estimation}}
  Under~\Cref{ass:stochastic_variance_bounded,ass:convex} the sequences of iterates $\{x^t\}_{t \ge 0}$ and $\{z_{i, j}^t\}_{t \ge 0}$ in~\Cref{alg:local_two_step_sizes,alg:local_adaptive} satisfy for any integers $t \ge 0$, $i \in [n]$ and $j \in \Int{0}{K - 1}$
    \begin{alignat*}{2}
      \E{\sqnorm{z_{i, j}^t - x^t}} & \le 2 L \left( \sum_{\ell = 0}^{j - 1} \left( \eta_{i, \ell}^t \right)^2 \right) \sum_{\ell = 0}^{j - 1} \E{\ps{\nabla f \left( z_{i, \ell}^t \right)}{z_{i, \ell}^t - x^*}} + 2 \sigma^2 \sum_{\ell = 0}^{j - 1} \left( \eta_{i, \ell}^t \right)^2.
    \end{alignat*}
\end{restate-lemma}

\begin{proof}
  According to~\Cref{ass:stochastic_variance_bounded} we can apply our first residual estimation (\Cref{appdx:convergence-analysis-friendly-nonxonvex-residual-estimation}) hence, for any $t \ge 0$, any $i \in [n]$ and any $j \in \Int{0}{K - 1}$ we have
    \begin{alignat*}{2}
      \E{\sqnorm{z_{i, j}^t - x^t}} & \le 2 \left( \sum_{\ell = 0}^{j - 1} \left( \eta_{i, \ell}^t \right)^2 \right) \sum_{\ell = 0}^{j - 1} \E{\sqnorm{\nabla f \left( z_{i, \ell}^t \right)}} + 2 \sigma^2 \sum_{\ell = 0}^{j - 1} \left( \eta_{i, \ell}^t \right)^2. \numberthis\label{32888a56-4d17-4bb0-936c-3fb80583d006}
    \end{alignat*}
    Now, thanks to the convexity of $f$ (\Cref{ass:convex}) and~\Cref{ass:lipschitz_constant} we can use~\Cref{lem:upper-bound-gradient-norm-by-inner-product} on each squared norm $\|\nabla f( z_{i, \ell}^t )\|^2$ for $\ell \in \Int{0}{K - 1}$. Choosing $y^{\star} = x^* \in \argmin_{x \in \R^d} f(x)$ in~\Cref{lem:upper-bound-gradient-norm-by-inner-product} and $x$ as the corresponding argument appearing in all gradients from~\eqref{32888a56-4d17-4bb0-936c-3fb80583d006}, we obtain the new upper bound
      \begin{alignat*}{2}
        \E{\sqnorm{z_{i, j}^t - x^t}} & \le 2 L \left( \sum_{\ell = 0}^{j - 1} \left( \eta_{i, \ell}^t \right)^2 \right) \sum_{\ell = 0}^{j - 1} \E{\ps{\nabla f \left( z_{i, \ell}^t \right)}{z_{i, \ell}^t - x^*}} + 2 \sigma^2 \sum_{\ell = 0}^{j - 1} \left( \eta_{i, \ell}^t \right)^2,
      \end{alignat*}
    as stated.
\end{proof}

\subsubsection{Proof of~\Cref{appdx:convergence-analysis-friendly-convex-unrolling-descent-lemma}}
\label{appdx-proof:convergence-analysis-friendly-convex-unrolling-descent-lemma}

\begin{restate-lemma}{\ref{appdx:convergence-analysis-friendly-convex-unrolling-descent-lemma}}
  Under~\Cref{ass:lipschitz_constant,ass:stochastic_variance_bounded,ass:convex} the sequences of iterates $\{x^t\}_{t \ge 0}$ and $\{z_{i, j}^t\}_{t \ge 0}$ in~\Cref{alg:local_two_step_sizes,alg:local_adaptive} satisfy for any integer $R \ge 1$
    \begin{alignat*}{2}
      &\frac{1}{R} \sum_{t = 0}^{R - 1} \left( \E{f\left( x^t \right)} - f^{\inf} \right) \\
      &\qquad\quad \le \frac{4 B^2}{\eta_g n K R} + 8 \eta_g \sigma^2 + \frac{20 \sigma^2 L}{n R} \sum_{t = 0}^{R - 1} \sum_{i = 1}^n\sum_{\ell = 0}^{K - 1} \left( \eta_{i, \ell}^t \right)^2 \\
        &\qquad\quad\qquad- \frac{8}{n K R} \sum_{t = 0}^{R - 1} \sum_{i = 1}^n \left( \frac{1}{2} - \eta_g n L K - \frac{5}{2} \left( \sum_{\ell = 0}^{K - 1} \left( \eta_{i, \ell}^t \right)^2 \right) L^2 K \right) \sum_{j = 0}^{K - 1} \E{\ps{\nabla f\left( z_{i, j}^t \right)}{z_{i, j}^t - x^*}}.
    \end{alignat*}
\end{restate-lemma}

\begin{proof}
  First, for any integer $t \ge 0$ by~\Cref{appdx:convergence-analysis-friendly-convex-descent-lemma-v3} we have
    \begin{alignat*}{2}
      &\frac{\eta_g}{4} \left( \E{f\left( x^t \right)} - f^{\inf} \right) \\
      &\qquad \le \frac{1}{n K} \left( \E{\sqnorm{x^t - x^*}} - \E{\sqnorm{x^{t + 1} - x^*}} \right) + \frac{5 \eta_g L}{2 n K} \sum_{i = 1}^n \sum_{j = 0}^{K - 1} \E{\sqnorm{z_{i, j}^t - x^*}} \\
        &\qquad\qquad- 2 \eta_g \left( \frac{1}{2} - \eta_g n L K \right) \frac{1}{n K} \sum_{i = 1}^n \sum_{j = 0}^{K - 1} \E{\ps{\nabla f \left( z_{i, j}^t \right)}{z_{i, j}^t - x^*}} + 2 \eta_g^2 \sigma^2.
    \end{alignat*}
  which after rearranging the terms and multiplying both sides by $\frac{4}{\eta_g}$ gives
    \begin{alignat*}{2}
      &\E{f\left( x^t \right)} - f^{\inf} \\
      &\qquad \le \frac{4}{\eta_g n K} \left( \E{\sqnorm{x^t - x^*}} - \E{\sqnorm{x^{t + 1} - x^*}} \right) + \frac{10 L}{n K} \sum_{i = 1}^n \sum_{j = 0}^{K - 1} \E{\sqnorm{z_{i, j}^t - x^*}} \\
        &\qquad\qquad- 8 \left( \frac{1}{2} - \eta_g n L K \right) \frac{1}{n K} \sum_{i = 1}^n \sum_{j = 0}^{K - 1} \E{\ps{\nabla f \left( z_{i, j}^t \right)}{z_{i, j}^t - x^*}} + 8 \eta_g \sigma^2.
    \end{alignat*}
  and using~\Cref{appdx:convergence-analysis-friendly-convex-residual-estimation} we obtain the inequality
    \begin{alignat*}{2}
      &\E{f\left( x^t \right)} - f^{\inf} \\
      &\qquad \le \frac{4}{\eta_g n K} \left( \E{\sqnorm{x^t - x^*}} - \E{\sqnorm{x^{t + 1} - x^*}} \right) + 8 \eta_g \sigma^2 \\
        &\qquad\qquad- 8 \left( \frac{1}{2} - \eta_g n L K \right) \frac{1}{n K} \sum_{i = 1}^n \sum_{j = 0}^{K - 1} \E{\ps{\nabla f \left( z_{i, j}^t \right)}{z_{i, j}^t - x^*}} \\
        &\qquad\qquad+ \frac{20 L}{n K} \sum_{i = 1}^n \sum_{j = 0}^{K - 1} \left( L \left( \sum_{\ell = 0}^{j - 1} \left( \eta_{i, \ell}^t \right)^2 \right) \sum_{\ell = 0}^{j - 1} \E{\ps{\nabla f \left( z_{i, \ell}^t \right)}{z_{i, \ell}^t - x^*}} + \sigma^2 \sum_{\ell = 0}^{j - 1} \left( \eta_{i, \ell}^t \right)^2 \right). \numberthis\label{1192a1bc-706e-464b-a784-14e2bbfc3002}
    \end{alignat*}
  Now, we further upper bound the above expression, notably we have
    \begin{alignat*}{2}
      &\frac{20 L}{n K} \sum_{i = 1}^n \sum_{j = 0}^{K - 1} \left( L \left( \sum_{\ell = 0}^{j - 1} \left( \eta_{i, \ell}^t \right)^2 \right) \sum_{\ell = 0}^{j - 1} \E{\ps{\nabla f \left( z_{i, \ell}^t \right)}{z_{i, \ell}^t - x^*}} + \sigma^2 \sum_{\ell = 0}^{j - 1} \left( \eta_{i, \ell}^t \right)^2 \right) \\
      &\qquad
        \begin{aligned}[t]
          \oversetrel{rel:b6d7e176-58e6-4b69-a145-d612ee4b84ef}&{\le} \frac{20 L^2}{n K} \sum_{i = 1}^n \left( \sum_{\ell = 0}^{K - 1} \left( \eta_{i, \ell}^t \right)^2 \right) \sum_{j = 0}^{K - 1} \sum_{\ell = 0}^{j - 1} \E{\ps{\nabla f \left( z_{i, \ell}^t \right)}{z_{i, \ell}^t - x^*}} + \frac{20 \sigma^2 L}{n} \sum_{i = 1}^n \sum_{\ell = 0}^{K - 1} \left( \eta_{i, \ell}^t \right)^2
        \end{aligned}
    \end{alignat*}
  where in~\relref{rel:b6d7e176-58e6-4b69-a145-d612ee4b84ef} we use the fact that $j \le K$ to upper bound the two sums over the local step sizes. Using again $j \le K$ to upper bound the sum over $\ell$ of the non-negative\footnote{The non-negativity of the inner product $\ps{\nabla f(x)}{x - x^*}$ for any $x^* \in \argmin_{x \in \R^d} f(x)$ follows from~\Cref{lem:upper-bound-gradient-norm-by-inner-product}.} inner products we obtain
    \begin{alignat*}{2}
      &\frac{20 L^2}{n K} \sum_{i = 1}^n \left( \sum_{\ell = 0}^{K - 1} \left( \eta_{i, \ell}^t \right)^2 \right) \sum_{j = 0}^{K - 1} \sum_{\ell = 0}^{j - 1} \E{\ps{\nabla f \left( z_{i, \ell}^t \right)}{z_{i, \ell}^t - x^*}} + \frac{20 \sigma^2 L}{n} \sum_{i = 1}^n \sum_{\ell = 0}^{K - 1} \left( \eta_{i, \ell}^t \right)^2 \\
      &\qquad
        \begin{aligned}[t]
          & \le \frac{20 L^2 K}{n K} \sum_{i = 1}^n \left( \sum_{\ell = 0}^{K - 1} \left( \eta_{i, \ell}^t \right)^2 \right) \sum_{\ell = 0}^{K - 1} \E{\ps{\nabla f \left( z_{i, \ell}^t \right)}{z_{i, \ell}^t - x^*}} + \frac{20 \sigma^2 L}{n} \sum_{i = 1}^n \sum_{\ell = 0}^{K - 1} \left( \eta_{i, \ell}^t \right)^2,
        \end{aligned}
    \end{alignat*}
  hence injecting this bound in~\eqref{1192a1bc-706e-464b-a784-14e2bbfc3002} yields
    \begin{alignat*}{2}
      &\E{f\left( x^t \right)} - f^{\inf} \\
      &\qquad\quad\begin{aligned}[b]
        \oversetlab{\eqref{1192a1bc-706e-464b-a784-14e2bbfc3002}}&{\le} \frac{4}{\eta_g n K} \left( \E{\sqnorm{x^t - x^*}} - \E{\sqnorm{x^{t + 1} - x^*}} \right) + 8 \eta_g \sigma^2 \\
          &\qquad\qquad- 8 \left( \frac{1}{2} - \eta_g n L K \right) \frac{1}{n K} \sum_{i = 1}^n \sum_{j = 0}^{K - 1} \E{\ps{\nabla f \left( z_{i, j}^t \right)}{z_{i, j}^t - x^*}} \\
          &\qquad\qquad+ \frac{20 L^2 K}{n K} \sum_{i = 1}^n \left( \sum_{\ell = 0}^{K - 1} \left( \eta_{i, \ell}^t \right)^2 \right) \sum_{\ell = 0}^{K - 1} \E{\ps{\nabla f \left( z_{i, \ell}^t \right)}{z_{i, \ell}^t - x^*}} \\
          &\qquad\qquad+ \frac{20 \sigma^2 L}{n} \sum_{i = 1}^n \sum_{\ell = 0}^{K - 1} \left( \eta_{i, \ell}^t \right)^2 \\
        \oversetrel{rel:59146939-c370-40fb-a065-9b79e2a7ef3b}&{=} \frac{4}{\eta_g n K} \left( \E{\sqnorm{x^t - x^*}} - \E{\sqnorm{x^{t + 1} - x^*}} \right) + 8 \eta_g \sigma^2 + \frac{20 \sigma^2 L}{n} \sum_{i = 1}^n \sum_{\ell = 0}^{K - 1} \left( \eta_{i, \ell}^t \right)^2 \\
          &\qquad\qquad- \frac{8}{n K} \sum_{i = 1}^n \left( \frac{1}{2} - \eta_g n L K - \frac{5}{2} \left( \sum_{\ell = 0}^{K - 1} \left( \eta_{i, \ell}^t \right)^2 \right) L^2 K \right) \sum_{j = 0}^{K - 1} \E{\ps{\nabla f \left( z_{i, j}^t \right)}{z_{i, j}^t - x^*}}.
      \end{aligned}
    \end{alignat*}
  where in~\relref{rel:59146939-c370-40fb-a065-9b79e2a7ef3b} we merged the two sums with the inner products. It remains to sum this inequality over $t \in \Int{0}{R - 1}$ for a fixed integer $R \ge 1$, this gives
    \begin{alignat*}{2}
      &\frac{1}{R} \sum_{t = 0}^{R - 1} \left( \E{f\left( x^t \right)} - f^{\inf} \right) \\[-10pt]
      &\qquad\quad\begin{aligned}[b]
        &\le \frac{4}{\eta_g n K R} \left( \sqnorm{x^0 - x^*} - \E{\sqnorm{x^R - x^*}} \right) + 8 \eta_g \sigma^2 L + \frac{20 \sigma^2 L}{n R} \sum_{t = 0}^{R - 1} \sum_{i = 1}^n\sum_{\ell = 0}^{K - 1} \left( \eta_{i, \ell}^t \right)^2 \\
          &\qquad- \frac{8}{n K R} \sum_{t = 0}^{R - 1} \sum_{i = 1}^n \left( \frac{1}{2} - \eta_g n L K - \frac{5}{2} \left( \sum_{\ell = 0}^{K - 1} \left( \eta_{i, \ell}^t \right)^2 \right) L^2 K \right) \sum_{j = 0}^{K - 1} \E{\ps{\nabla f \left( z_{i, j}^t \right)}{z_{i, j}^t - x^*}} \\
        \oversetref{Ass.}{\ref{ass:convex}}&{\le} \frac{4 B^2}{\eta_g n K R} + 8 \eta_g \sigma^2 L + \frac{20 \sigma^2 L}{n R} \sum_{t = 0}^{R - 1} \sum_{i = 1}^n\sum_{\ell = 0}^{K - 1} \left( \eta_{i, \ell}^t \right)^2 \\
          &\qquad- \frac{8}{n K R} \sum_{t = 0}^{R - 1} \sum_{i = 1}^n \left( \frac{1}{2} - \eta_g n L K - \frac{5}{2} \left( \sum_{\ell = 0}^{K - 1} \left( \eta_{i, \ell}^t \right)^2 \right) L^2 K \right) \sum_{j = 0}^{K - 1} \E{\ps{\nabla f \left( z_{i, j}^t \right)}{z_{i, j}^t - x^*}},
      \end{aligned}
    \end{alignat*}
  and this establishes the desired inequality.
\end{proof}

\section{Useful Results}

For any vectors $x, y \in \R^d$, we have
\begin{alignat*}{2}
    2 \ps{x}{y} & = \sqnorm{x} + \sqnorm{y} - \sqnorm{x - y}. \numberthis\label{eq:identity-1}
\end{alignat*}

\begin{lemma}[Variance Decomposition]\label{lem:variance-decomposition}
    For any random vector $X \in \R^d$ and any non-random vector $c \in \R^d$ we have
        $$\E{\sqnorm{X - c}} = \E{\sqnorm{X - \E{X}}} + \sqnorm{\E{X} - c}.$$
\end{lemma}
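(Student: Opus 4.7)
The plan is to execute the standard bias--variance decomposition by inserting $\pm \E{X}$ inside the squared norm and expanding. First I would write $X - c = (X - \E{X}) + (\E{X} - c)$ and apply the identity $\sqnorm{a+b} = \sqnorm{a} + 2\inp{a}{b} + \sqnorm{b}$ with $a = X - \E{X}$ (random, mean zero) and $b = \E{X} - c$ (non-random). Then I would take expectations on both sides and handle the three resulting terms.

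The first term is $\E{\sqnorm{X - \E{X}}}$, which is exactly what appears on the right-hand side of the claimed identity. The third term is $\sqnorm{\E{X} - c}$, and since this quantity is deterministic its expectation equals itself. The key step is the cross term: using linearity of the expectation together with the fact that $\E{X} - c$ is constant, I would pull it out of the inner product to obtain
\begin{equation*}
\E{\inp{X - \E{X}}{\E{X} - c}} = \inp{\E{X - \E{X}}}{\E{X} - c} = \inp{0}{\E{X} - c} = 0.
\end{equation*}
Summing the three contributions gives precisely the stated decomposition.

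There is no real obstacle here; the lemma is a textbook identity. The only point requiring mild care is making explicit that $c$ being non-random is what lets the cross term vanish and the squared norm $\sqnorm{\E{X} - c}$ survive unchanged through the expectation. Well-definedness of $\E{X}$ and $\E{\sqnorm{X - c}}$ is implicitly assumed, as is standard whenever such a decomposition is invoked.
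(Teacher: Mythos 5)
Your proof is correct. The paper states this lemma without proof in its ``Useful Results'' section, treating it as a standard fact; your argument is the canonical bias--variance decomposition (insert $\pm\E{X}$, expand, kill the cross term via $\E{X-\E{X}}=0$ and the constancy of $\E{X}-c$), and it is complete and sound.
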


\begin{lemma}[Tower Property of the Expectation]\label{lem:tower-property}
    For any random variables $X \in \R^d$ and $Y_1, \ldots, Y_n$ we have
        \[ \E{\ExpCond{X}{Y_1, \ldots, Y_n}} = \E{X}. \]
\end{lemma}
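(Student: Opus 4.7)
The plan is to derive the tower property from the defining characterization of conditional expectation. Taking $\mathcal{G} = \sigma(Y_1, \ldots, Y_n)$, the conditional expectation $\ExpCond{X}{Y_1, \ldots, Y_n}$ is (the essentially unique) $\mathcal{G}$-measurable random variable satisfying $\E{\mathbbm{1}_A \, \ExpCond{X}{Y_1, \ldots, Y_n}} = \E{\mathbbm{1}_A \, X}$ for every $A \in \mathcal{G}$. Because $\Omega$ always lies in $\mathcal{G}$, specialising this identity to $A = \Omega$ immediately gives $\E{\ExpCond{X}{Y_1, \ldots, Y_n}} = \E{X}$, which is the claim in the scalar case.

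For the vector-valued case $X \in \R^d$ stated in the lemma, I would apply this scalar identity coordinate-wise. Writing $X = (X_1, \ldots, X_d)^{\top}$ and using the fact that conditional expectation commutes with projection onto each coordinate (each $X_k$ is itself a real-valued measurable function), the $k^{\text{th}}$ component of $\ExpCond{X}{Y_1,\ldots,Y_n}$ equals $\ExpCond{X_k}{Y_1,\ldots,Y_n}$, and the $d$ scalar identities $\E{\ExpCond{X_k}{Y_1,\ldots,Y_n}} = \E{X_k}$ assemble into the desired vector equation.

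There is no real obstacle here beyond the standing integrability assumption $X \in L^1$ that is implicit whenever the paper writes $\E{\cdot}$. Since this is a classical textbook fact, in practice the cleanest write-up is either to invoke the defining property of conditional expectation as above, or to simply cite a standard measure-theoretic probability reference rather than reproduce the argument in full.
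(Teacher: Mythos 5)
Your proof is correct. The paper itself states Lemma~\ref{lem:tower-property} as a standard fact in its ``Useful Results'' section and gives no proof at all, which is precisely what you yourself anticipate at the end of your write-up: this is a textbook identity that one either cites or derives in one line from the definition. Your derivation---taking $A = \Omega$ in the defining relation $\E{\mathbbm{1}_A \, \ExpCond{X}{Y_1,\ldots,Y_n}} = \E{\mathbbm{1}_A X}$ for $A \in \sigma(Y_1,\ldots,Y_n)$, then assembling the vector case coordinate-wise---is the canonical argument and fills in exactly what the paper leaves implicit, with the integrability caveat correctly noted.
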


\begin{lemma}[Cauchy Schwarz's Inequality]\label{lem:cauchy-schwarz}
    For any vectors $a, b \in \R^d$ we have
        \[ \ps{a}{b} \le \abs{\ps{a}{b}} \le \norm{a} \norm{b}. \]
\end{lemma}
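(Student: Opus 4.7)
The plan is to prove the two inequalities separately since they are of quite different natures. The first bound $\ps{a}{b} \le \abs{\ps{a}{b}}$ is immediate because $\ps{a}{b}$ is a real number, and any $x \in \R$ satisfies $x \le |x|$. So all the real work concerns the second inequality $|\ps{a}{b}| \le \norm{a}\norm{b}$.

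For the second inequality I would use the standard ``non-negative quadratic'' argument. First dispose of the degenerate case $b = 0$, where both sides vanish and the bound is trivial. Otherwise, consider the function $\varphi \colon \R \to \R$ defined by
\[
\varphi(t) = \sqnorm{a - t b} = \sqnorm{a} - 2 t \ps{a}{b} + t^2 \sqnorm{b},
\]
which is $\ge 0$ for every $t \in \R$ by positivity of the norm. The key step is to substitute the minimizer $t^\star = \ps{a}{b} / \sqnorm{b}$ (legal because $b \ne 0$), which yields
\[
0 \le \varphi(t^\star) = \sqnorm{a} - \frac{\ps{a}{b}^2}{\sqnorm{b}}.
\]
Rearranging gives $\ps{a}{b}^2 \le \sqnorm{a}\sqnorm{b}$, and taking square roots delivers $|\ps{a}{b}| \le \norm{a}\norm{b}$, as desired.

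There is no genuine obstacle here: the only mildly delicate point is the division by $\sqnorm{b}$, which is why we split off the case $b = 0$ at the outset. If one wants to avoid the case split entirely, an alternative is to invoke Lagrange's identity
\[
\sqnorm{a}\sqnorm{b} - \ps{a}{b}^2 = \sum_{1 \le i < j \le d} (a_i b_j - a_j b_i)^2 \ge 0,
\]
which yields the squared form of the inequality directly and uniformly in $b$. Either route is essentially one paragraph of calculation.
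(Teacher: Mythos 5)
Your proof is correct. Note that the paper simply states Cauchy--Schwarz as a classical fact and provides no proof of its own, so there is nothing to compare against directly. Your argument is the standard ``non-negative quadratic'' proof: observe $\varphi(t) = \sqnorm{a - tb} \ge 0$, substitute the minimizer $t^\star = \ps{a}{b}/\sqnorm{b}$ (after splitting off $b = 0$), and rearrange. This is sound, the case split is handled correctly, and the alternative route via Lagrange's identity you mention is also valid and avoids the division. Either way the result follows.
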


\begin{lemma}[Young's Inequality (Norm Form)]\label{lem:youg-inequality}
    For any vectors $a, b \in \R^d$ and any scalar $\alpha > 0$ we have
        \[ \sqnorm{a + b} \le (1 + \alpha) \sqnorm{x} + \left( 1 + \frac{1}{\alpha} \right) \sqnorm{y}. \]
\end{lemma}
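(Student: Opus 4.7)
The plan is to prove this by expanding the squared norm, bounding the cross-term via Cauchy--Schwarz, and then applying the scalar AM-GM inequality with the weight $\alpha$.

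First I would write
\[
\sqnorm{a+b} = \sqnorm{a} + 2\inp{a}{b} + \sqnorm{b},
\]
which is just the definition of the Euclidean norm. The only nontrivial step is to control the cross-term $2\inp{a}{b}$.

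Next I would apply Cauchy--Schwarz (available as~\Cref{lem:cauchy-schwarz}) to get $\inp{a}{b} \le \norm{a}\,\norm{b}$, and then use the scalar weighted AM-GM inequality
\[
2 u v \le \alpha\, u^2 + \frac{1}{\alpha}\, v^2 \qquad \text{for all } u,v \in \R,\ \alpha > 0,
\]
with $u = \norm{a}$ and $v = \norm{b}$. This latter inequality is immediate from $\bigl(\sqrt{\alpha}\,u - v/\sqrt{\alpha}\bigr)^2 \ge 0$, which requires $\alpha > 0$ (exactly the hypothesis of the lemma). Chaining these two bounds gives
\[
2\inp{a}{b} \le \alpha\, \sqnorm{a} + \tfrac{1}{\alpha}\, \sqnorm{b}.
\]

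Finally I would substitute this back into the expansion to conclude
\[
\sqnorm{a+b} \le \sqnorm{a} + \alpha\, \sqnorm{a} + \tfrac{1}{\alpha}\, \sqnorm{b} + \sqnorm{b} = (1+\alpha)\, \sqnorm{a} + \left(1+\tfrac{1}{\alpha}\right)\sqnorm{b},
\]
which is exactly the claimed bound (the statement's $\sqnorm{x},\sqnorm{y}$ should read $\sqnorm{a},\sqnorm{b}$; this is just a typo since $x,y$ do not appear in the hypothesis). There is no real obstacle here: the result is a one-line consequence of Cauchy--Schwarz plus weighted AM-GM, and the strict positivity of $\alpha$ is the only hypothesis actually used.
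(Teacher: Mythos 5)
Your proof is correct. The paper actually states Lemma~\ref{lem:youg-inequality} without any proof; only the closely related inner-product form (Lemma~\ref{lem:youg-inequality-inner-product}) is proved, via the same scalar AM-GM inequality you invoke. Your argument --- expand $\sqnorm{a+b}$, bound the cross-term by Cauchy--Schwarz, then apply weighted AM-GM --- is exactly the natural route, and is in effect ``expand and then apply Lemma~\ref{lem:youg-inequality-inner-product}'' rewritten from scratch, so there is no divergence to flag. You also correctly note that the $\sqnorm{x},\sqnorm{y}$ in the lemma statement are typos for $\sqnorm{a},\sqnorm{b}$, and that $\alpha>0$ is the only hypothesis actually used.
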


\begin{lemma}[Young's Inequality (Inner Product Form)]\label{lem:youg-inequality-inner-product}
    For any vectors $a, b \in \R^d$ and any scalar $\alpha > 0$ we have
        \[ 2\ps{a}{b} \le 2 \abs{\ps{a}{b}} \le \alpha \sqnorm{x} + \frac{1}{\alpha} \sqnorm{y}. \numberthis\label{55c90e49-73cb-4448-8f5b-fb2c30a7c439} \]
\end{lemma}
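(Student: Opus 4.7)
The plan is to reduce the statement to two applications of the elementary squared-norm identity $\|\cdot\|^2 \geq 0$, handling the two inequalities in the chain separately. The left inequality $2\ps{a}{b} \le 2\abs{\ps{a}{b}}$ is immediate from the definition of the absolute value (it is just $x \le \abs{x}$ applied to the scalar $\ps{a}{b}$), so no work is required there. The interesting content is the right inequality, which is the classical Young-type bound with parameter $\alpha > 0$.

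For the right inequality, I would complete the square. Since $\alpha > 0,$ the vector $\sqrt{\alpha}\,a - \tfrac{1}{\sqrt{\alpha}}\,b$ is well defined in $\R^d,$ and expanding its squared norm gives
\begin{align*}
0 \le \sqnorm{\sqrt{\alpha}\,a - \tfrac{1}{\sqrt{\alpha}}\,b} = \alpha \sqnorm{a} - 2 \ps{a}{b} + \tfrac{1}{\alpha} \sqnorm{b},
\end{align*}
so that $2 \ps{a}{b} \le \alpha \sqnorm{a} + \tfrac{1}{\alpha} \sqnorm{b}.$ Applying the same argument to $\sqrt{\alpha}\,a + \tfrac{1}{\sqrt{\alpha}}\,b$ yields $-2 \ps{a}{b} \le \alpha \sqnorm{a} + \tfrac{1}{\alpha} \sqnorm{b}.$ Combining the two bounds gives $2 \abs{\ps{a}{b}} \le \alpha \sqnorm{a} + \tfrac{1}{\alpha} \sqnorm{b},$ which is the desired inequality (under the mild presumption that the right-hand side notations $\sqnorm{x}$ and $\sqnorm{y}$ in the statement are placeholders for $\sqnorm{a}$ and $\sqnorm{b}$).

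An equivalent route, should one prefer avoiding the absolute value by hand, is to invoke Cauchy--Schwarz (\Cref{lem:cauchy-schwarz}) to bound $\abs{\ps{a}{b}} \le \norm{a} \norm{b}$ and then apply the scalar AM--GM inequality in the form $2 uv \le \alpha u^2 + \tfrac{1}{\alpha} v^2$ with $u = \norm{a}$ and $v = \norm{b}.$ Either route is short, fully elementary, and free of obstacles; there is no hard step, and the overall proof fits in a couple of lines.
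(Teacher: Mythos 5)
Your proof is correct, but it takes a different route from the paper's. The paper's proof asserts that it is ``enough to prove the inequality when $d = 1$'' and then applies the two-variable AM--GM inequality to $\sqrt{\alpha}\,\abs{a}$ and $\abs{b}/\sqrt{\alpha}$; the reduction to the scalar case is not explained, and in fact it tacitly relies on Cauchy--Schwarz (one needs $\abs{\ps{a}{b}} \le \norm{a}\norm{b}$ before the one-dimensional bound can be lifted to $\R^d$). Your primary argument sidesteps that gap entirely by working directly in $\R^d$: expanding $0 \le \sqnorm{\sqrt{\alpha}\,a \mp \tfrac{1}{\sqrt{\alpha}}\,b}$ gives both signs of the bound at once, and the absolute value then follows with no appeal to Cauchy--Schwarz. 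This is arguably the cleaner proof. Your second ``alternative route'' (Cauchy--Schwarz followed by scalar AM--GM) is essentially the paper's argument, but with the missing Cauchy--Schwarz step made explicit, which is a worthwhile improvement over the paper's terse reduction. You are also right to flag that the $\sqnorm{x}$ and $\sqnorm{y}$ in the lemma statement are typos for $\sqnorm{a}$ and $\sqnorm{b}$; the paper's own proof confirms this.
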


\begin{proof}
  It's enough to prove inequality~\eqref{55c90e49-73cb-4448-8f5b-fb2c30a7c439} when $d = 1$. Hence, consider $a, b \in \R$, we have given $\alpha > 0$
    \[ 2ab \le 2\abs{ab} = 2\abs{a} \cdot \abs{b} = 2\abs{\sqrt{\alpha} \, a} \cdot \abs{\frac{b}{\sqrt{\alpha}}} \oversetrel{rel:c481fc03-dcbc-4a9b-b12e-1b8282d05b5d}{\le} \alpha \abs{a}^2 + \frac{1}{\alpha} \abs{b}^2 = \alpha \, a^2 + \frac{b^2}{\alpha}, \]
  where in~\relref{rel:c481fc03-dcbc-4a9b-b12e-1b8282d05b5d} we use the arithmetic-geometric inequality in $n = 2$ variables $\left( \sqrt{\alpha} \abs{a}, \frac{1}{\sqrt{\alpha}} \abs{b} \right)$.
\end{proof}


\begin{lemma}[Jensen's Inequality]\label{lem:jensen-inequality}
    Let $f \colon \R^d \to \R$ be a convex function then
    \begin{enumerate}
        \item (Probabilistic Form) for any random vector $X \in \R^d$ we have
            $$\E{f(X)} \ge f\left(\E{X}\right).$$

        \item (Deterministic Form) for any vectors $v_1, \ldots, v_n \in \R^d$ and scalars $\lbd_1, \ldots, \lbd_n \in \R_+$ we have
            $$\sum_{i = 1}^n \lbd_i f(v_i) \ge f\left( \sum_{i = 1}^n \lbd_i v_i \right),$$
        provided $\lbd_i \ge 0$ for all $i \in [n]$ and $\sum\limits_{i = 1}^n \lbd_i = 1$.
    \end{enumerate}
\end{lemma}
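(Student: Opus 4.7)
The plan is to prove the two parts together, deriving the deterministic form as an immediate corollary of the probabilistic one. The key tool is the supporting hyperplane characterization of convex functions: since $f \colon \R^d \to \R$ is convex and finite-valued on all of $\R^d$, at every point $x_0 \in \R^d$ there exists a subgradient $g \in \R^d$ such that
\[ f(x) \ge f(x_0) + \ps{g}{x - x_0} \qquad \text{for all } x \in \R^d. \]
I would quote this as a standard fact of convex analysis (equivalently, it follows from the existence of one-sided directional derivatives of convex functions on $\R^d$).

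For the probabilistic form, I would fix $x_0 = \E{X}$ (which requires integrability of $X$, a standard hypothesis we implicitly assume; otherwise the right-hand side is undefined) and choose any subgradient $g$ of $f$ at $x_0$. Plugging $x = X$ into the supporting-hyperplane inequality and taking expectations on both sides gives
\[ \E{f(X)} \ge f\left( \E{X} \right) + \ps{g}{\E{X} - \E{X}} = f\left( \E{X} \right), \]
where the inner-product term vanishes by linearity of the expectation. This is the desired probabilistic inequality.

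For the deterministic form, the plan is simply to specialize the probabilistic form to a discrete random variable. Let $X$ be the random vector taking value $v_i \in \R^d$ with probability $\lbd_i$ for each $i \in [n]$; the assumption that $\lbd_i \ge 0$ and $\sum_{i = 1}^n \lbd_i = 1$ makes this a valid probability distribution. Then $\E{X} = \sum_{i = 1}^n \lbd_i v_i$ and $\E{f(X)} = \sum_{i = 1}^n \lbd_i f(v_i)$, so the probabilistic form immediately yields $\sum_{i = 1}^n \lbd_i f(v_i) \ge f\left( \sum_{i = 1}^n \lbd_i v_i \right)$.

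The only real obstacle is invoking the existence of a subgradient, which is a classical result of convex analysis that I would cite rather than reprove. An alternative, entirely self-contained route would be to first prove the deterministic form by induction on $n$ (the base case $n = 2$ being the definition of convexity, and the inductive step following by peeling off $\lbd_n v_n$ and rescaling the remaining coefficients so they sum to one), then deduce the probabilistic form for discrete distributions, and finally extend to general integrable random variables by approximation; this avoids subgradient machinery but is longer.
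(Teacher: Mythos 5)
The paper states Jensen's Inequality in its ``Useful Results'' appendix as a classical fact and does not supply a proof for it, so there is nothing to compare against directly. Your proof is correct and standard: the supporting-hyperplane (subgradient) argument for the probabilistic form is the usual one, and the existence of a subgradient at every point is guaranteed precisely because $f$ is convex and finite on all of $\R^d$ (so $\R^d$ lies in the interior of $\dom f$), which you correctly invoke. Deriving the deterministic form by specializing to the discrete distribution with masses $\lbd_i$ at $v_i$ is also the cleanest route and immediately gives the result. Your side remark about integrability of $X$ is a reasonable caveat; it is implicitly assumed whenever $\E{X}$ appears. The alternative inductive route you sketch (base case $n=2$ from the definition of convexity, then peeling off one term and rescaling) is likewise sound and self-contained, though, as you say, the extension back to the probabilistic form then needs an approximation argument. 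Either version would be an acceptable fill for what the paper leaves as a black-box citation.
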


\begin{lemma}\label{lem:jensen-form-1}
    For any vectors $v_1, \ldots, v_n \in \R^d$ we have
        $$\sqnorm{\sum_{i = 1}^n v_i} \leq n \sum_{i = 1}^n \sqnorm{v_i}.$$
\end{lemma}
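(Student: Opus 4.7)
The plan is to derive this as a one-line consequence of Jensen's inequality (Lemma~\ref{lem:jensen-inequality}, deterministic form) applied to the convex map $f \colon \R^d \to \R$ defined by $f(x) = \sqnorm{x}$, with uniform weights $\lambda_i = 1/n$ for $i \in [n]$. Convexity of $f$ is standard (it is a positive semidefinite quadratic form, or equivalently the composition of the convex function $\norm{\cdot}$ with the convex nondecreasing map $t \mapsto t^2$ on $\R_+$), so no real work is needed there.

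With these choices, Jensen's inequality immediately gives $\sqnorm{\tfrac{1}{n}\sum_{i=1}^n v_i} \leq \tfrac{1}{n}\sum_{i=1}^n \sqnorm{v_i}$. Pulling the scalar $1/n$ out of the squared norm on the left produces a factor $1/n^2$, and multiplying both sides by $n^2$ yields $\sqnorm{\sum_{i=1}^n v_i} \leq n \sum_{i=1}^n \sqnorm{v_i}$, which is exactly the claim.

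There is no real obstacle: the entire argument is a two-line manipulation once Jensen is invoked. If one preferred to avoid citing Jensen, an equally short self-contained alternative would expand the squared norm as $\sqnorm{\sum_i v_i} = \sum_i \sqnorm{v_i} + 2\sum_{i < j} \ps{v_i}{v_j}$, bound each cross term via the inner-product form of Young's inequality (Lemma~\ref{lem:youg-inequality-inner-product}) with $\alpha = 1$ as $2\ps{v_i}{v_j} \leq \sqnorm{v_i} + \sqnorm{v_j}$, and observe that summing over the $\binom{n}{2}$ pairs contributes an additional $(n-1)\sum_i \sqnorm{v_i}$. Combining with the diagonal terms gives exactly $n\sum_i \sqnorm{v_i}$, matching the constant in the statement and confirming that the factor $n$ is sharp (attained, for instance, when all $v_i$ are equal).
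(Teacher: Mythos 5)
Your argument is correct and coincides with the paper's own proof: both invoke Jensen's inequality (deterministic form) for the convex map $x \mapsto \sqnorm{x}$ with uniform weights $\lambda_i = 1/n$, then clear the factor $1/n^2$ by multiplying through by $n^2$. The alternative you sketch via expansion of the squared norm and Young's inequality is a valid self-contained variant, but the primary route is the same as the paper's.
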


\begin{proof}
    The function $\sqnorm{\cdot} \colon \R^d \to \R$ is $\mu$-strongly convex with $\mu = 2$ so is convex thus applying Jensen's inequality~\ref{lem:jensen-inequality} with $\lbd_1 = \cdots = \lbd_n = \frac{1}{n}$ gives
        $$\sqnorm{\sum_{i = 1}^n \frac{v_i}{n}} \le \frac{1}{n} \sum_{i = 1}^n \sqnorm{v_i},$$
    and multiplying both sides by $n^2$ gives the desired inequality.
\end{proof}

\begin{lemma}\label{lem:jensen-form-2}
    For any vectors $v_1, \ldots, v_n \in \R^d$ and any scalars $\lbd_1, \ldots, \lbd_n \in \R$ we have
        \[ \sqnorm{\sum_{i = 1}^n \lbd_i v_i} \leq \left( \sum_{i = 1}^n \lbd_i^2 \right) \sum_{i = 1}^n \sqnorm{v_i}. \]
\end{lemma}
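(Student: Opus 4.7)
The plan is to derive the bound by combining the triangle inequality with Cauchy--Schwarz applied in $\R^n$ to the sequences $(|\lbd_i|)_{i \in [n]}$ and $(\|v_i\|)_{i \in [n]}$. First I would apply the triangle inequality for the Euclidean norm to write
\[
  \norm{\sum_{i=1}^n \lbd_i v_i} \;\le\; \sum_{i=1}^n \abs{\lbd_i}\, \norm{v_i}.
\]
Then I would invoke the Cauchy--Schwarz inequality (Lemma~\ref{lem:cauchy-schwarz}) in $\R^n$ applied to the two vectors $a = (|\lbd_1|, \ldots, |\lbd_n|) \in \R^n$ and $b = (\|v_1\|, \ldots, \|v_n\|) \in \R^n$, which yields
\[
  \sum_{i=1}^n \abs{\lbd_i}\, \norm{v_i} \;=\; \ps{a}{b} \;\le\; \norm{a}\, \norm{b} \;=\; \sqrt{\sum_{i=1}^n \lbd_i^2}\; \sqrt{\sum_{i=1}^n \norm{v_i}^2}.
\]
Squaring both sides (permissible since both are non-negative) gives exactly the claimed inequality
\[
  \sqnorm{\sum_{i=1}^n \lbd_i v_i} \;\le\; \left( \sum_{i=1}^n \lbd_i^2 \right) \sum_{i=1}^n \sqnorm{v_i}.
\]

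There is no substantial obstacle here: the result is a standard two-line consequence of the triangle inequality followed by Cauchy--Schwarz in $\R^n$. An alternative route would be to expand the squared norm as $\sum_{i,j} \lbd_i \lbd_j \ps{v_i}{v_j}$, bound each inner product by $\ps{v_i}{v_j} \le \tfrac{1}{2}(\|v_i\|^2 + \|v_j\|^2)$ or by $|\lbd_i \lbd_j| \cdot \|v_i\|\|v_j\|$, and then regroup, but this is more cumbersome than the direct Cauchy--Schwarz argument and I would avoid it. Note also that Lemma~\ref{lem:jensen-form-1} is the special case $\lbd_1 = \cdots = \lbd_n = 1$, so the present lemma strictly generalizes it and is proved by the same technique (Cauchy--Schwarz replacing the convexity of $\|\cdot\|^2$).
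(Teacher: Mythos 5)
Your proposal is correct and matches the paper's proof exactly: both apply the triangle inequality to get $\norm{\sum_i \lbd_i v_i} \le \sum_i \abs{\lbd_i}\norm{v_i}$ and then Cauchy--Schwarz in $\R^n$ on the vectors $(\abs{\lbd_i})_i$ and $(\norm{v_i})_i$ before squaring. No differences worth noting.
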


\begin{proof}
  Using the triangle inequality followed by the Cauchy-Schwarz inequality (\Cref{lem:cauchy-schwarz}) we have
    \begin{alignat*}{2}
      \sqnorm{\sum_{i = 1}^n \lbd_i v_i} & \le \left( \sum_{i = 1}^n \abs{\lbd_i} \norm{v_i} \right)^2 \\
      \oversetref{Lem.}{\ref{lem:cauchy-schwarz}}&{\le} \left( \sqrt{\sum_{i = 1}^n \lbd_i^2} \cdot \sqrt{\sum_{i = 1}^n \sqnorm{v_i}} \right)^2 \\
      & = \left( \sum_{i = 1}^n \lbd_i^2 \right) \sum_{i = 1}^n \sqnorm{v_i},
    \end{alignat*}
    as claimed.
\end{proof}

\begin{lemma}[Nonnegativity of the Bregman Divergence]\label{lem:non-nagativity-bregman-divergence}
  Let $f \colon \R^d \to \R$ be convex and continuously differentiable over $\R^d$ then, for any $x, y \in \R^d$
    \[ D_f(y, x) \eqdef f(y) - f(x) - \ps{\nabla f(x)}{y - x} \ge 0 \numberthis\label{3dee6a4f-c886-4a87-b588-6fda2ef3912d}\]
  
    In particular, under~\Cref{ass:convex}, applying~\eqref{3dee6a4f-c886-4a87-b588-6fda2ef3912d} for $y = x^* \in \argmin_{x \in \R^d} f(x)$ we obtain
      \[ D_f(x^*, x) = f^{\inf} - f(x) - \ps{\nabla f(x)}{x^* - x} \ge 0, \]
    or, said differently
      \[ - \left( f(x) - f^{\inf} \right) \ge -\ps{\nabla f(x)}{x - x^*}. \numberthis\label{0e1b1f7c-afe7-4d45-bd9d-c4fa799c8031} \]
\end{lemma}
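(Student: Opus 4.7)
The plan is to derive \eqref{3dee6a4f-c886-4a87-b588-6fda2ef3912d} as an immediate consequence of the first-order characterization of convexity for differentiable functions, and then obtain \eqref{0e1b1f7c-afe7-4d45-bd9d-c4fa799c8031} by a straightforward specialization. This is a textbook result, so the proof will be essentially a one-line invocation followed by rearrangement.

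First, I would invoke the standard first-order characterization of convexity: for any convex and continuously differentiable $f \colon \R^d \to \R$ and any $x, y \in \R^d$, one has
\[
f(y) \ge f(x) + \ps{\nabla f(x)}{y - x}.
\]
This is classical (see, e.g., Theorem 2.1.2 in \citet{nesterov2018lectures}, which is already cited in the paper). Rearranging this inequality is exactly the definition of $D_f(y, x)$ being nonnegative, which establishes \eqref{3dee6a4f-c886-4a87-b588-6fda2ef3912d}.

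Next, for the specialization \eqref{0e1b1f7c-afe7-4d45-bd9d-c4fa799c8031}, I would apply \eqref{3dee6a4f-c886-4a87-b588-6fda2ef3912d} with $y = x^* \in \argmin_{x \in \R^d} f(x)$, whose existence is guaranteed by Assumption~\ref{ass:convex}. Since $f(x^*) = f^{\inf}$, this yields
\[
D_f(x^*, x) = f^{\inf} - f(x) - \ps{\nabla f(x)}{x^* - x} \ge 0,
\]
and moving the inner product to the other side (and flipping $x^* - x$ to $x - x^*$, which changes the sign) gives $-(f(x) - f^{\inf}) \ge -\ps{\nabla f(x)}{x - x^*}$, as claimed.

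There is essentially no obstacle here: the entire argument reduces to citing the first-order convexity inequality and rearranging. The only care needed is to confirm that the hypotheses (convexity and continuous differentiability) suffice to invoke the characterization, and that $x^*$ exists so the specialization is well-defined. Both are handled by the statement itself and by Assumption~\ref{ass:convex}.
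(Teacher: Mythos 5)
Your proof is correct and is exactly the standard argument the paper implicitly relies on (the lemma is stated without proof as a textbook fact): the first-order convexity inequality $f(y) \ge f(x) + \ps{\nabla f(x)}{y - x}$ gives \eqref{3dee6a4f-c886-4a87-b588-6fda2ef3912d}, and the specialization $y = x^*$ with $f(x^*) = f^{\inf}$ plus a sign flip gives \eqref{0e1b1f7c-afe7-4d45-bd9d-c4fa799c8031}. No issues.
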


\begin{lemma}\label{lem:upper-bound-gradient-norm-by-function-value}
  Let $f \colon \R^d \to \R$ be a function satisfying~\Cref{ass:lipschitz_constant} then, for all $x \in \R^d$
    \[ \sqnorm{\nabla f(x)} \le 2 L (f(x) - f^{\inf}). \numberthis\label{0e559827-ec16-47c3-ab04-fcadce1aa5a0} \]
  
    In particular, if $f$ admits at least one global minimizer $x^* \in \argmin_{x \in \R^d} f(x)$ then the inequality~\eqref{0e559827-ec16-47c3-ab04-fcadce1aa5a0} holds for $f^{\inf} \eqdef \min_{x \in \R^d} f(x)$.
\end{lemma}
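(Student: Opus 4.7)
The plan is to give the classical one-line proof based on the descent lemma that follows from $L$-smoothness, specialised at a gradient step of size $1/L$.

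First I would recall that Assumption~\ref{ass:lipschitz_constant} implies the standard quadratic upper bound: for every pair $x, y \in \R^d$,
\[ f(y) \le f(x) + \inp{\nabla f(x)}{y - x} + \frac{L}{2} \sqnorm{y - x}. \]
This is a well-known consequence of $L$-smoothness and requires no convexity.

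Next I would plug in the particular choice $y \eqdef x - \frac{1}{L}\nabla f(x)$. The inner product becomes $-\frac{1}{L}\sqnorm{\nabla f(x)}$ and the quadratic term becomes $\frac{L}{2}\cdot\frac{1}{L^{2}}\sqnorm{\nabla f(x)} = \frac{1}{2L}\sqnorm{\nabla f(x)}$, so the descent inequality collapses to
\[ f(y) \le f(x) - \frac{1}{2L}\sqnorm{\nabla f(x)}. \]

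Finally I would use the trivial lower bound $f(y) \ge f^{\inf} \eqdef \inf_{z \in \R^d} f(z)$ (this holds regardless of whether the infimum is attained, which also takes care of the ``in particular'' remark about a global minimizer $x^{*}$). Combining the two inequalities and rearranging yields $\sqnorm{\nabla f(x)} \le 2L\bigl(f(x) - f^{\inf}\bigr)$, as desired. There is really no main obstacle here: the only subtlety worth flagging is that the argument uses $L$-smoothness alone (no convexity is invoked), so the lemma holds in the general nonconvex setting in which it is stated.
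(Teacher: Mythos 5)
Your proof is correct and is the standard argument one expects here; the paper in fact states this lemma without giving any proof at all, so there is nothing to diverge from. Your derivation via the quadratic upper bound from $L$-smoothness applied at $y = x - \tfrac{1}{L}\nabla f(x)$ is exactly the canonical route, and your remark that only $f(y) \ge f^{\inf}$ (not attainment of the infimum) is needed is a correct reading of the lemma's "in particular" clause.
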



\begin{remark}
  Let $f \colon \R^d \to \R$ be a continuously differentiable function defined everywhere on $\R^d$, then any global minimizer $x^* \in \R^d$ of $f$ on its domain (which is $\R^d$) satisfies the first-order optimality condition, that is $\nabla f(x^*) = 0$. Moreover, under~\Cref{ass:convex} for any $x \in \R^d$~\citep[Theorem~2.1.1 on p.~81]{nesterov2018lectures} the equivalence
    \[ x \in \argmin_{x \in \R^d} f(x) \,\, \text{ if, and only if } \,\, \nabla f(x) = 0, \numberthis\label{d572c28a-d5bd-40ad-8211-6b5f09279e09} \]
  holds.

\end{remark}

\begin{lemma}\label{lem:upper-bound-gradient-norm-by-inner-product}
  Let $f \colon \R^d \to \R$ be a function satisfying~\Cref{ass:lipschitz_constant,ass:convex} then, for all $x \in \R^d$ and all $y^{\star} \in \argmin_{x \in \R^d} f(x) \neq \varnothing$
    \[ \sqnorm{\nabla f(x)} \le L \ps{\nabla f(x)}{x - y^{\star}}. \numberthis\label{0f78ca72-7b41-4d99-87ac-fccf9b6e85fc} \]
\end{lemma}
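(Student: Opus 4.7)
The plan is to establish and apply the classical co-coercivity property of the gradient of an $L$-smooth convex function. First, I would fix an arbitrary $x \in \R^d$ and $y^{\star} \in \argmin_{z \in \R^d} f(z)$ (nonempty by Assumption~\ref{ass:convex}) and note that, because $f$ is continuously differentiable on all of $\R^d$ and convex, the first-order optimality condition recalled in the remark preceding the lemma gives $\nabla f(y^{\star}) = 0$.

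Next, I would prove the co-coercivity inequality
\[ \ps{\nabla f(u) - \nabla f(v)}{u - v} \ge \frac{1}{L}\sqnorm{\nabla f(u) - \nabla f(v)} \]
for all $u, v \in \R^d$. To that end, I would introduce the auxiliary function $\phi_v(z) \eqdef f(z) - \ps{\nabla f(v)}{z}$, which is still $L$-smooth (the added linear term has zero Hessian) and convex by Assumptions~\ref{ass:lipschitz_constant} and~\ref{ass:convex}, and for which $\nabla \phi_v(v) = 0$, so $v$ is a global minimizer of $\phi_v$. Evaluating $\phi_v$ at $u - \frac{1}{L}\nabla \phi_v(u)$, the standard descent lemma (a consequence of $L$-smoothness) yields
\[ \phi_v(v) \le \phi_v\!\left( u - \tfrac{1}{L}\nabla \phi_v(u) \right) \le \phi_v(u) - \frac{1}{2L}\sqnorm{\nabla \phi_v(u)}. \]
Unfolding the definition of $\phi_v$ and $\nabla \phi_v(u) = \nabla f(u) - \nabla f(v)$, then symmetrizing by swapping $u \leftrightarrow v$ and adding the two inequalities, produces exactly the co-coercivity bound above.

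Finally, applying co-coercivity with $u = x$ and $v = y^{\star}$ and using $\nabla f(y^{\star}) = 0$ gives $\ps{\nabla f(x)}{x - y^{\star}} \ge \frac{1}{L}\sqnorm{\nabla f(x)}$, which rearranges to the claimed bound $\sqnorm{\nabla f(x)} \le L \ps{\nabla f(x)}{x - y^{\star}}$.

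The main subtlety is obtaining the sharp constant $L$ instead of $2L$: the naive route, which combines the earlier Lemma~\ref{lem:upper-bound-gradient-norm-by-function-value} ($\sqnorm{\nabla f(x)} \le 2L(f(x) - f^{\inf})$) with the convexity bound $f(x) - f^{\inf} \le \ps{\nabla f(x)}{x - y^{\star}}$, loses a factor of $2$. Getting the tight constant forces the co-coercivity argument, whose key trick is the introduction of the shifted function $\phi_v$ so that one endpoint of the comparison is itself a minimizer and the descent lemma can be applied without slack.
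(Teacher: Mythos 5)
Your proof is correct and follows the same route as the paper: both observe that $\nabla f(y^{\star}) = 0$ and then invoke co-coercivity of the gradient of an $L$-smooth convex function to get $\langle \nabla f(x), x - y^{\star}\rangle \ge \tfrac{1}{L}\|\nabla f(x)\|^2$. The only difference is that the paper cites this co-coercivity inequality directly from Nesterov's textbook, whereas you derive it from scratch via the shifted function $\phi_v$ and the descent lemma; your remark on why the tight constant $L$ (rather than $2L$) requires the co-coercivity argument is accurate.
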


\begin{proof}
  Let $x \in \R^d$, since $y^{\star} \in \argmin_{x \in \R^d} f(x)$ then, using~\eqref{d572c28a-d5bd-40ad-8211-6b5f09279e09} and~\cite[Theorem~2.1.5 on p.~87]{nesterov2018lectures} we have
    \begin{alignat*}{2}
      \sqnorm{\nabla f(x) - \nabla f \left( y^{\star} \right)} \oversetlab{\eqref{d572c28a-d5bd-40ad-8211-6b5f09279e09}}&{=} \sqnorm{\nabla f(x)} \\
      & \le L \ps{\nabla f(x) - \nabla f \left( y^{\star} \right)}{x - y^{\star}} \\
      \oversetlab{\eqref{d572c28a-d5bd-40ad-8211-6b5f09279e09}}&{=} L \ps{\nabla f(x)}{x - y^{\star}},
    \end{alignat*}
  and the lemma follows.
\end{proof}

\begin{lemma}\label{lem:upper-bound-function-gap-by-norm-and-function-value}
  Let $f \colon \R^d \to \R$ be a function satisfying~\Cref{ass:lipschitz_constant,ass:convex} then, for all $x, y \in \R^d$, with $f^{\inf} \eqdef \min_{x \in \R^d} f(x)$ we have
    \[ f(x) - f(y) \le \frac{1}{2} \left( f(x) - f^{\inf} \right) + L \sqnorm{x - y}. \numberthis\label{93c9e50c-a50d-4484-b963-862736d36406} \]
\end{lemma}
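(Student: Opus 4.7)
\textbf{Proof Plan for Lemma~\ref{lem:upper-bound-function-gap-by-norm-and-function-value}.} The plan is to combine the smoothness descent inequality applied with the roles of $x$ and $y$ swapped, Young's inequality, and the gradient-norm bound from \Cref{lem:upper-bound-gradient-norm-by-function-value}, and then to do a short algebraic rearrangement using $f(x) - f(y) = (f(x) - f^{\inf}) - (f(y) - f^{\inf})$.

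First, I would apply $L$--smoothness from \Cref{ass:lipschitz_constant} via the standard quadratic upper bound at $y$ evaluated at $x$, namely $f(x) \le f(y) + \langle \nabla f(y), x - y \rangle + \tfrac{L}{2}\sqnorm{x-y}$. Next, I would bound the inner product using Young's inequality (\Cref{lem:youg-inequality-inner-product}) with the specific choice $\alpha = 1/L$ to obtain $\langle \nabla f(y), x-y\rangle \le \tfrac{1}{2L}\sqnorm{\nabla f(y)} + \tfrac{L}{2}\sqnorm{x-y}$. Then I would invoke \Cref{lem:upper-bound-gradient-norm-by-function-value} (which needs only~\Cref{ass:lipschitz_constant}) to replace $\sqnorm{\nabla f(y)}$ by $2L(f(y) - f^{\inf})$, so that $\tfrac{1}{2L}\sqnorm{\nabla f(y)} \le f(y) - f^{\inf}$. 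Chaining these three inequalities yields the intermediate bound
\begin{equation*}
    f(x) - f(y) \le \left(f(y) - f^{\inf}\right) + L\sqnorm{x-y}.
\end{equation*}

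The final step is purely algebraic. Writing $A \eqdef f(x) - f^{\inf}$ and $B \eqdef f(y) - f^{\inf}$, the intermediate bound reads $A - B \le B + L\sqnorm{x-y}$, which is equivalent to $\tfrac{A}{2} \le B + \tfrac{L}{2}\sqnorm{x-y}$, hence $-B \le -\tfrac{A}{2} + \tfrac{L}{2}\sqnorm{x-y}$. Adding $A$ to both sides gives
\begin{equation*}
    f(x) - f(y) = A - B \le \tfrac{A}{2} + \tfrac{L}{2}\sqnorm{x-y} \le \tfrac{1}{2}\left(f(x) - f^{\inf}\right) + L\sqnorm{x-y},
\end{equation*}
which is exactly~\eqref{93c9e50c-a50d-4484-b963-862736d36406} (in fact with $L/2$ in the squared distance term, so the lemma statement is a slight relaxation).

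The proof is essentially routine: none of the steps should be an obstacle since each one is a direct application of already-stated tools. The only mild subtlety worth flagging is that~\Cref{ass:convex} is not actually used in my argument; convexity enters only through the fact that~\Cref{lem:upper-bound-gradient-norm-by-function-value} is most naturally stated under it, but the proof goes through with smoothness alone. A self-consistency check: swapping the roles of $x$ and $y$ in the conclusion gives a symmetric statement, as expected since the right-hand side depends asymmetrically on $x$ through the $f(x) - f^{\inf}$ term, which is precisely what allows absorbing the residual $-(f(y)-f^{\inf})$ term.
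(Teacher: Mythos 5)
Your proof is correct, and it takes a genuinely different route from the paper's. The paper starts from the convexity inequality $f(x) - f(y) \le \langle \nabla f(x), x - y\rangle$ (via nonnegativity of the Bregman divergence, \Cref{lem:non-nagativity-bregman-divergence}), so that the gradient appearing in Young's inequality is $\nabla f(x)$; applying \Cref{lem:upper-bound-gradient-norm-by-function-value} at $x$ then directly produces the $\tfrac{1}{2}(f(x) - f^{\inf})$ term and the proof ends immediately. You instead start from the smoothness upper bound at $y$, which places $\nabla f(y)$ in the inner product; this gives an intermediate bound with $f(y) - f^{\inf}$ on the right-hand side, and you need the extra algebraic step to swap it for $\tfrac{1}{2}(f(x) - f^{\inf})$. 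That last step is correct as written. Two things your route buys that the paper's does not: (i) you never use~\Cref{ass:convex} (the paper's first step genuinely needs convexity; yours needs only smoothness and a finite infimum), and (ii) you get the slightly sharper constant $\tfrac{L}{2}\sqnorm{x-y}$ in place of $L\sqnorm{x-y}$, a strengthening you correctly flag. The trade-off is a longer and less transparent argument for a lemma that, in this paper, is only ever invoked under both assumptions anyway.
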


\begin{proof}
  Let $x, y \in \R^d$ then, using~\Cref{lem:non-nagativity-bregman-divergence} since $f$ is continuously differentiable and convex on $\R^d$ we have
    \[ D_f(y, x) \eqdef f(y) - f(x) - \ps{\nabla f(x)}{y - x} \ge 0, \]
  hence, reshuffling the above inequality yields
    \[ \ps{\nabla f(x)}{x - y} \ge f(x) - f(y). \numberthis\label{f09f569c-25d4-41ac-907c-abd5c8d62161} \]
  
    Next, from inequality~\eqref{f09f569c-25d4-41ac-907c-abd5c8d62161} we apply Young's inequality (\Cref{lem:youg-inequality-inner-product}) with parameter $\alpha = \sfrac{1}{2 L}$ and then we use~\Cref{lem:upper-bound-gradient-norm-by-function-value} using the fact that $f$ is lower bounded by $f^{\inf}$. Hence, we obtain
      \begin{alignat*}{2}
        f(x) - f(y) \oversetlab{\eqref{f09f569c-25d4-41ac-907c-abd5c8d62161}}&{\le} \ps{\nabla f(x)}{x - y} \\
        \oversetref{Lem.}{\ref{lem:youg-inequality-inner-product}}&{\le} \frac{\alpha}{2} \, \sqnorm{\nabla f(x)} + \frac{1}{2 \alpha} \sqnorm{x - y} \\
        \oversetrel{rel:1cdbaf8e-489f-4af3-937c-a5402b6ab5d3}&{=} \frac{1}{4 L} \, \sqnorm{\nabla f(x)} + L \sqnorm{x - y} \\
        \oversetref{Lem.}{\ref{lem:upper-bound-gradient-norm-by-function-value}}&{\le} \frac{1}{2} \left( f(x) - f^{\inf} \right) + L \sqnorm{x - y},
      \end{alignat*}
    where in~\relref{rel:1cdbaf8e-489f-4af3-937c-a5402b6ab5d3} we use the $\alpha = \frac{1}{2 L}$. This achieves the proof of the lemma.
\end{proof}

\newpage
\section{An Improved \algname{Birch SGD} Theory}
\label{sec:birch_sgd_full}

\subsection{Preliminaries}
\begin{figure}[t]
  \centering
  \begin{algorithm}[H]
  \caption{\algname{Birch SGD} framework}
  \label{alg:framework}
  \begin{algorithmic}
      \STATE \textbf{Input:} starting point $w^{0} \in \R^d$, step size $\gamma \geq 0$
      \STATE Initialize the set of computed points: $V = \{w^{0}\}$
      \STATE (and the set of directed edges $E = \emptyset$)
      \FOR{$k = 0, 1, 2, \dots$}
          \STATE Choose any point $w_{\textnormal{base}} \in V$ from which to compute a new point
          \STATE Choose any point $w_{\textnormal{grad}} \in V$ at which to compute a stochastic gradient
          \STATE {\color{mydarkgreen} Choose any step size $\gamma > 0$}
          \STATE Compute the new point: $w^{k + 1} = w_{\textnormal{base}} - \gamma \nabla f(w_{\textnormal{grad}}; \zeta), \zeta \sim \mathcal{D},$ \\ where $\zeta$ might be reused (not necessarily a i.i.d. sequence is generated)
          \STATE Add $w^{k+1}$ to the set of computed points $V$
          \STATE (and add the edge with weight $(w_{\textnormal{base}}, w^{k + 1}, \gamma \nabla f(w_{\textnormal{grad}}; \zeta))$ to the set of edges $E$)
      \ENDFOR
  \end{algorithmic}
\end{algorithm}
\end{figure}

Let us briefly recall the \algname{Birch SGD} framework introduced by \citet{tyurin2025birchsgdtreegraph}. The core idea is that a broad class of \algname{SGD} methods, including \algname{Vanilla SGD}, \algname{Asynchronous SGD}, \algname{Local SGD}, can all be described using a unified graph-based view.

More precisely, any \algname{SGD} method can be constructed as in Algorithm~\ref{alg:framework}. The procedure begins at an initial point $w^{0} \in \R^d$ and computes a sequence of iterates by selecting, at each step, a \emph{base point} $w_{\textnormal{base}}$ and a (possibly different) point $w_{\textnormal{grad}}$ at which to evaluate the stochastic gradient. The next iterate is then computed as
\[
w^{k + 1} 
=
w_{\textnormal{base}} 
- \gamma \nabla f\bigl(w_{\textnormal{grad}}; \zeta\bigr),
\quad 
\zeta \sim \mathcal{D}.
\]
The step sizes $\gamma$ are also selected in every iteration. This new point $w^{k+1}$ is added to the set of computed points $V$, and the directed edge 
\[
\bigl(w_{\textnormal{base}}, \, w^{k + 1}, \, \gamma \nabla f(w_{\textnormal{grad}}; \zeta)\bigr)
\]
is added to the set of edges $E$. After $k$ steps, the entire process can be represented as a weighted directed tree $G = (V, E),$
called a \emph{computation tree}. 

Initially, the method starts from $w^0$ and computes a stochastic gradient there, generating $w^1 = w^0 - \gamma_0 \nabla f(w^0; \cdot)$. In subsequent steps, there are several choices for how to form $w^2$:
\[
w^2 = w^{i} - \gamma_1 \nabla f(w^{j}; \cdot),
\]
for any $i,j \in \{0,1\}.$ In general, the number of possible ways to construct future iterates grows exponentially, leading to different computation trees (see Figure~\ref{fig:threeimages}).

\begin{figure}[t]
  \centering
  \hspace*{\fill}%
  \begin{subfigure}[c]{0.12\textwidth}
    \centering
    \includegraphics[page=4,width=\textwidth]{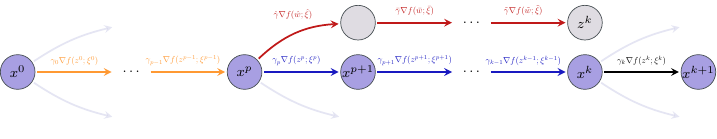}
  \end{subfigure}%
  \hspace*{\fill}%
  \begin{subfigure}[c]{0.12\textwidth}
    \centering
    \includegraphics[page=5,width=\textwidth]{graphs_new.pdf}
  \end{subfigure}%
  \hspace*{\fill}%
  \begin{subfigure}[c]{0.16\textwidth}
    \centering
    \includegraphics[page=6,width=\textwidth]{graphs_new.pdf}
  \end{subfigure}%
  \hspace*{\fill}%
  \begin{subfigure}[c]{0.16\textwidth}
    \centering
    \includegraphics[page=7,width=\textwidth]{graphs_new.pdf}
  \end{subfigure}%
  \hspace*{\fill}%
  \begin{subfigure}[c]{0.05\textwidth}
    \centering
    \Large $\ldots$
  \end{subfigure}%
  \hspace*{\fill}%
  \begin{subfigure}[c]{0.3\textwidth}
    \centering
    \includegraphics[page=8,width=\textwidth]{graphs_new.pdf}
  \end{subfigure}%
  \hspace*{\fill}
  \caption{A possible computation tree $G$ for an \algname{SGD} method after four steps and beyond.}
  \label{fig:threeimages}
\end{figure}

We have to define a \emph{main branch} and its associated \emph{auxiliary sequence}.
\begin{definition}[\emph{Main Branch} and \emph{Auxiliary Sequence}]
\label{def:branch}
For a given computation tree $G$, we call a sequence $\{x^k\}_{k \geq 0}$ a \emph{main branch} if it forms a path in $G$ starting at the initial node $w^0 \equiv x^0$. That is, for each $k \geq 0$, the node $x^{k+1}$ is a direct successor of $x^k$ in $G$. By the construction of tree $G,$ if $\{x^k\}_{k \geq 0}$ is a \emph{main branch}, then for each $k \geq 0$ there exists a unique triple $(\gamma_k, z^k, \xi^k),$ where $\gamma_k > 0,$ $z^k \in V$ and $\xi^k \sim \mathcal{D},$ such that $x^{k+1} = x^k - \gamma_k \nabla f(z^k; \xi^k).$ The sequence $\{(\gamma_k, z^k, \xi^k)\}_{k \geq 0}$, which generates the main branch $\{x^k\}_{k \geq 0}$, is called an \emph{auxiliary sequence}.
\end{definition}

\begin{figure}[t]
\centering
\includegraphics[page=9,width=0.7\linewidth]{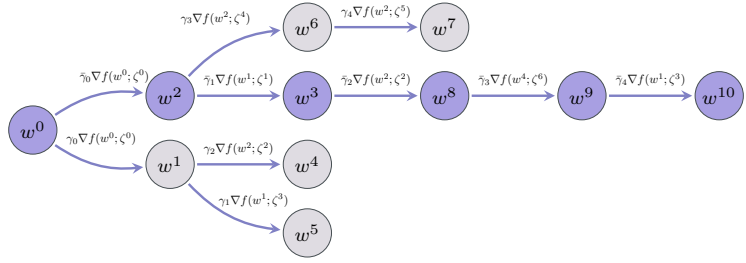}
\captionof{figure}{A natural choice of a main branch from the example in Figure~\ref{fig:threeimages}.}
\label{fig:main_branch}
\end{figure}

Although multiple main branches may exist in principle, for all practical \algname{SGD} methods, there is usually a unique and natural choice. In Figure~\ref{fig:main_branch}, we can take a main branch $\{x^k\}_{k \geq 0}$ as follows:
$x^0 = w^0, x^1 = w^2, x^2 = w^3, x^3 = w^8, x^4 = w^9, x^5 = w^{10}.$ The auxiliary sequence is accordingly defined by $(\gamma_0, z^0,\xi^0) = (\bar{\gamma}_0, w^0,\zeta^0),$ $(\gamma_1, z^1,\xi^1) = (\bar{\gamma}_1, w^1,\zeta^1),$ $(\gamma_2, z^2,\xi^2) = (\bar{\gamma}_2, w^2,\zeta^2),$ $(\gamma_3, z^3,\xi^3) = (\bar{\gamma}_3, w^4,\zeta^6),$ $(\gamma_4, z^4,\xi^4) = (\bar{\gamma}_4, w^1,\zeta^3).$

Next, we have to define the \emph{tree distance} between two points in $V$:
\begin{definition}
For all $y, z \in V$, the tree distance $\textnormal{dist}(y, z)$ is the maximal number of edges separating $y$ and $z$ from their closest common ancestor in $G$.
\end{definition}
For example, in Figure~\ref{fig:main_branch}, the distance $\textnormal{dist}(w^9, w^4) = \max\{4,2\} = 4$ because the closest common ancestor is $w^0$, and the respective depths of $w^9$ and $w^4$ from $w^0$ are $4$ and $2$. We generalize this definition to the distance between a node and a main branch:

\begin{definition}
\label{def:dist_main_branch}
For all $y \in V$ and a main branch $\{x^k\}_{k \geq 0}$, we define the distance from the node $y$ to the main branch as $$\textnormal{dist}(y, \{x^k\}) = \min\limits_{k \geq 0} \textnormal{dist}(y, x^k).$$
\end{definition}
For instance, $\textnormal{dist}(w^7, \{x^k\}) = 2$ in Figure~\ref{fig:main_branch}, where $\{x^1 \equiv w^0, x^2 \equiv w^2, x^3 \equiv w^3, x^4 \equiv w^8, x^5 \equiv w^9, x^6 \equiv w^{10}\}$ is the main branch.

We also define the \emph{representation} of a point to capture which stochastic gradients have been used to generate it.
\begin{definition}
For all $y \in V,$ the representation $\textnormal{repr}(y)$ is the multiset of stochastic gradients applied to $w^0$ to get $y.$ In other words, there exist $\{(\gamma_1, m^{1}, \kappa^{1}), \dots, (\gamma_p, m^{p}, \kappa^{p})\}$ for some $p \geq 0$ such that $y = w^0 - \sum_{j=1}^{p} \gamma_j \nabla f(m^{j}, \kappa^{j}).$ Then, we define $\textnormal{repr}(y) \eqdef \{(m^{1}, \kappa^{1}), \dots, (m^{p}, \kappa^{p})\}$ (ignoring the step sizes).
\end{definition}

We define the representation of points to understand how all points are related. An important relation that we need is that $\textnormal{repr}(x) \subseteq \textnormal{repr}(y),$ which essentially means that all stochastic gradients used to compute $x$ are also used to compute $y.$ For instance, in Figure~\ref{fig:main_branch}, we have:
\[
\textnormal{repr}(w^9)
=
\bigl\{
(w^0, \zeta^0), (w^1, \zeta^1), (w^2, \zeta^2), (w^4, \zeta^6)
\bigr\}
\]
and
\[
\textnormal{repr}(w^4)
=
\bigl\{
(w^0, \zeta^0), (w^2, \zeta^2)
\bigr\}.
\]
Thus, $\textnormal{repr}(w^4) \subseteq \textnormal{repr}(w^9).$

\subsection{Main result}
The only difference between our framework and the framework by \citet{tyurin2025birchsgdtreegraph} is that we allow different step sizes in Algorithm~\ref{alg:framework}. We are ready to state our main result:
\begin{restatable}[Main Theorem]{theorem}{MAINTHEOREM}
  \label{thm:main}
  Let Assumptions~\ref{ass:lipschitz_constant} and \ref{ass:stochastic_variance_bounded} hold. Consider any \algname{SGD} method represented by \emph{computation tree} $G = (V, E)$. Let $\{x^k\}_{k \geq 0}$ be a \emph{main branch} of $G$ and $\{(\gamma_k, z^k, \xi^k)\}_{k \geq 0}$ be the corresponding \emph{auxiliary sequence} (see Def.~\ref{def:branch}) that satisfy the following conditions: \\
  \textnormal{\bf Condition 1:} For all $k \geq 0,$ $\xi^k$ is statistically independent of 
  $\{(x^{i+1}, z^{i+1}, \xi^i)\}_{i=0}^{k-1}.$ \\
  \textnormal{\bf Condition 2:} The representation of $z^k$ is contained within that of $x^k$, i.e.,
      $
      \textstyle \textnormal{repr}(z^k) \subseteq \textnormal{repr}(x^k)
      $
      for all $k \geq 0.$ Equivalently, all stochastic gradients used in the computation of $z^k$ are also utilized in calculating $x^k$. \\
  \textnormal{\bf Condition 3:} There exists a constant $R \in [0, \infty]$ such that
      $
      \textstyle \textnormal{dist}(x^k, z^k) \le R
      $
      for all $k \geq 0.$ \\
  \textnormal{\bf Condition 4:} The step sizes along the main branch satisfy $\gamma_k = \gamma_g \eqdef \min\{\frac{1}{2 L}, \frac{1}{4 R L}, \frac{\varepsilon}{8 \sigma^2 L}\}$ for all $k \geq 0.$ Any other step size $\gamma$ can be taken as large as\footnote{For $R = 0,$ we use the standard convention $\frac{0}{\log 0 + 1} = 0.$} $\sqrt{\frac{R}{(j + 1) (\log R + 1)}} \gamma_g,$ where $j = \textnormal{dist}(y, \{x^k\})$ (from Def.~\ref{def:dist_main_branch}) and $y$ is the node at which the step $y - \gamma \nabla f(\cdot;\cdot)$ with this step size $\gamma$ is applied to find a new node. \\
  Then $\frac{1}{K}\sum\limits_{k=0}^{K-1}\Exp{\|\nabla f(x^k)\|^2} \le \varepsilon$ for all $$K \geq \frac{8 (R + 1) L \Delta}{\varepsilon} + \frac{16 \sigma^2 L \Delta}{\varepsilon^2}.$$
\end{restatable}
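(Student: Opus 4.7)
The plan is to adapt the standard nonconvex \algname{SGD} analysis to the tree-based setup, using the four conditions as the key ingredients and generalizing the argument of \citet{tyurin2025birchsgdtreegraph} to the case of variable step sizes. First, I apply $L$-smoothness (Assumption~\ref{ass:lipschitz_constant}) to the main-branch update $x^{k+1} = x^k - \gamma_g \nabla f(z^k;\xi^k)$. Condition~1 says $\xi^k$ is fresh given the history, in particular given $(x^k, z^k)$, so Assumption~\ref{ass:stochastic_variance_bounded} gives $\ExpCond{\nabla f(z^k;\xi^k)}{x^k,z^k} = \nabla f(z^k)$ and $\ExpCond{\sqnorm{\nabla f(z^k;\xi^k)}}{x^k,z^k} \le \sqnorm{\nabla f(z^k)} + \sigma^2$. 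Combining this with the identity $2\inp{a}{b} = \sqnorm{a} + \sqnorm{b} - \sqnorm{a-b}$ and the $L$-smooth bound $\sqnorm{\nabla f(x^k) - \nabla f(z^k)} \le L^2 \sqnorm{x^k - z^k}$ yields, after taking full expectation,
\begin{align*}
\E{f(x^{k+1})} \le \E{f(x^k)} - \tfrac{\gamma_g}{2}\E{\sqnorm{\nabla f(x^k)}} - \tfrac{\gamma_g}{2}(1 - L\gamma_g)\E{\sqnorm{\nabla f(z^k)}} + \tfrac{\gamma_g L^2}{2}\E{\sqnorm{x^k - z^k}} + \tfrac{L \gamma_g^2 \sigma^2}{2}.
\end{align*}

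The key new ingredient is the drift bound on $\sqnorm{x^k - z^k}$, where the computation tree and the adaptive step sizes enter. By Condition~2, $\text{repr}(z^k) \subseteq \text{repr}(x^k)$, and by Condition~3 the tree distance is at most $R$, so $x^k - z^k$ can be written as a signed sum of stochastic gradient steps indexed by at most $2R$ edges of the tree. Walking along the path between the common ancestor and $x^k$ (resp.\ $z^k$), the base nodes of these edges have \emph{distinct} distances $j \in \{0, 1, \dots, R\}$ to the main branch, so the allowed step size on the $j$-th edge obeys $\gamma_j^2 \le \frac{R \gamma_g^2}{(j+1)(\log R + 1)}$. Applying Lemma~\ref{lem:jensen-form-2} to this sum together with the harmonic estimate $\sum_{j=0}^{R-1}\frac{1}{j+1} \le \log R + 1$ gives $\sum \gamma_j^2 \le R \gamma_g^2$ (up to a constant). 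Using Condition~1 a second time, exactly as in the residual-estimation calculation of Lemma~\ref{appdx:convergence-analysis-friendly-nonxonvex-residual-estimation}, the cross terms between distinct $\xi$'s vanish in expectation, and I obtain
\begin{align*}
\E{\sqnorm{x^k - z^k}} \le 2 R \gamma_g^2 \sigma^2 + 2 R \gamma_g^2 \sum_{(m,\kappa)} \E{\sqnorm{\nabla f(m)}},
\end{align*}
where the inner sum ranges over the at most $2R$ ancestor gradient points on the path.

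I then sum the one-step descent inequality from $k=0$ to $K-1$. The $\sum \E{\sqnorm{\nabla f(m)}}$ terms coming from the drift bound are absorbed into the negative $-\tfrac{\gamma_g}{2}(1-L\gamma_g)\sum_k \E{\sqnorm{\nabla f(z^k)}}$ terms from the descent lemma: this absorption is valid as long as $\gamma_g L^2 \cdot R \cdot R \gamma_g \cdot \text{const} \le \tfrac{1}{2}(1 - L\gamma_g)$, i.e.\ $\gamma_g \lesssim 1/(RL)$, which is exactly ensured by $\gamma_g \le 1/(4RL)$ in Condition~4. After telescoping and using $f(x^0) - \inf f \le \Delta$, the result is
\begin{align*}
\tfrac{1}{K}\sum_{k=0}^{K-1}\E{\sqnorm{\nabla f(x^k)}} \lesssim \tfrac{\Delta}{\gamma_g K} + \gamma_g L \sigma^2,
\end{align*}
and substituting $\gamma_g = \min\{\tfrac{1}{2L}, \tfrac{1}{4RL}, \tfrac{\varepsilon}{8 \sigma^2 L}\}$ and splitting on which branch of the min is active gives the stated iteration complexity $K \ge \tfrac{8(R+1)L\Delta}{\varepsilon} + \tfrac{16\sigma^2 L \Delta}{\varepsilon^2}$.

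The main obstacle is the drift bound: correctly parameterizing the tree path between $z^k$ and $x^k$ so that each distance value $j$ appears at most $O(1)$ times, cleanly separating the variance contribution (the $\sigma^2$ term) from the bias contribution (the sum of $\sqnorm{\nabla f(m)}$), and verifying that the latter can be absorbed after telescoping. A secondary subtlety is that the ancestor gradient terms are indexed by a $k$-dependent set of up to $2R$ nodes in $\text{repr}(x^k)$, so the absorption requires showing that each such ancestor is charged only $O(R)$ times across the sum over $k$, which follows from the fact that any single node appears on at most $R$ consecutive main-branch drift estimates.
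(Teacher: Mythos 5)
Your proposal is correct and follows essentially the same route as the paper's proof: a descent lemma from $L$-smoothness and Condition~1, a drift bound on $x^k - z^k$ via the closest common ancestor where the martingale cross-terms vanish (using Condition~1 together with the subset structure from Condition~2), the harmonic-sum estimate $\sum_j \frac{R}{(j+1)(\log R+1)} \le R$ from Condition~4, and a telescoping argument in which each ancestor gradient term is charged at most $R$ times and absorbed by $-\tfrac{\gamma_g}{4}\sum_k \E{\|\nabla f(z^k)\|^2}$ thanks to $\gamma_g \le \tfrac{1}{4RL}$. One minor imprecision: the ``distinct distances'' claim holds only for the edges on the $z^k$-side of the fork (the $x^k$-side edges lie on the main branch, all at distance $0$, and use $\gamma_g$ directly), but your final bound $\sum \gamma^2 \lesssim R\gamma_g^2$ is unaffected and matches the paper's.
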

\begin{figure}[t]
  \centering
  \includegraphics[page=1,width=\textwidth]{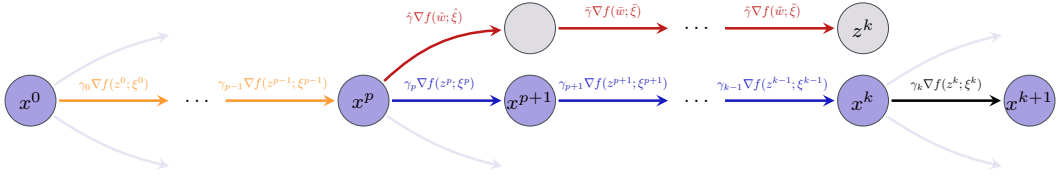}
  \caption{A general representation of the step $x^{k+1} = x^k - \gamma \nabla f(z^k; \xi^k)$ that shows how $x^k$ and $z^k$ are graph-geometrically related.}
  \label{fig:fork}
\end{figure}

Assumptions~\ref{ass:lipschitz_constant} and \ref{ass:stochastic_variance_bounded} are standard in the analysis of stochastic methods. Conditions 1, 2, and 3 are the same as in the original paper by \citep{tyurin2025birchsgdtreegraph}; we refer to Section~2.1 for a detailed explanation and intuition.

\subsection{On the new Condition 4}

Let us clarify the new Condition 4. As we explained previously, \citet{tyurin2025birchsgdtreegraph} assume that all step sizes are the same in computation graphs. Here, we relax this assumption in the following way. Along the chosen main branch of the computation graph, we assume that all step sizes are equal to $\gamma_g.$ Thus, in Figure~\ref{fig:fork}, $\gamma_0 = \gamma_1 = \dots = \gamma_k = \gamma_g.$ However, all other step sizes are allowed to be larger (up to a logarithmic factor). Consider Figure~\ref{fig:fork} and an arbitrary path. In Figure~\ref{fig:fork}, we take the path from $x^p$ to $z^k$ with the step sizes $\hat{\gamma}, \bar{\gamma}, \dots, \tilde{\gamma}.$ According to the rule from Condition 4, we are allowed to take any
  $\textstyle \hat{\gamma} \leq \sqrt{\frac{R}{(0 + 1) (\log R + 1)}} \gamma_g$
because $j = \textnormal{dist}(x^p, \{x^k\}) = 0$ (from Def.~\ref{def:dist_main_branch}), and $x^p$ is the node at which the step $x^p - \hat{\gamma} \nabla f(\hat{x};\hat{\xi})$ is applied with the step size $\hat{\gamma}.$ Similarly, $\textstyle \bar{\gamma} \leq \sqrt{\frac{R}{(1 + 1) (\log R + 1)}} \gamma_g$
because $j = \textnormal{dist}(y, \{x^k\}) = 1$, where $y$ is the next point generated by the step $x^p - \bar{\gamma} \nabla f(\bar{x};\bar{\xi})$, and so on.

\subsection{On the larger step sizes}
The larger the distance between a point and the main branch, the smaller we should take the step. However, up to the logarithmic factor, it will never be smaller than in \citep{tyurin2025birchsgdtreegraph}:
\begin{align*}
  \textstyle \sqrt{\frac{R}{(j + 1) (\log R + 1)}} \gamma_g \geq \sqrt{\frac{1}{\log R + 1}} \gamma_g
\end{align*}
because $j \leq R - 1$ due to Condition 3. Moreover, it can be arbitrarily larger: if we take $j = 0$, then $\sqrt{\frac{R}{(j + 1) (\log R + 1)}} \gamma_g = \sqrt{\frac{R}{\log R + 1}} \gamma_g.$ In virtually all optimal algorithms, $R = \nicefrac{\sigma^2}{\varepsilon}$ (e.g., Section~\ref{sec:adaptive_new} or \citep{tyurin2025birchsgdtreegraph}); thus,
  $\textstyle \sqrt{\frac{R}{(j + 1) (\log R + 1)}} \gamma_g = \tilde{\Theta}\left(\sqrt{\frac{\sigma^2}{\varepsilon}}\gamma_g\right)$
and the increase can be $\tilde{\Theta}\left(\sqrt{\nicefrac{\sigma^2}{\varepsilon}}\right)$ times.

\subsection{Examples of algorithms}
\label{sec:examples}
Since we do not change Conditions 1, 2, and 3, all the results, theorems, and proofs from \citet{tyurin2025birchsgdtreegraph} hold (up to universal constants), with the only difference being that we have to use the new step size rule from Condition 4, which does not interfere with the previous derivations. Let us consider some examples.
\subsubsection{\algname{Vanilla SGD}}
\begin{figure}[t]
  \centering
  \includegraphics[page=10,width=0.5\textwidth]{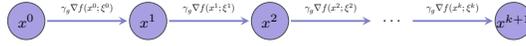}
  \caption{The computation tree of \algname{Vanilla SGD}}
  \label{fig:sgd_graph}
\end{figure}
The classical stochastic gradient descent (\algname{Vanilla SGD}) method is $w^{k+1} = w^{k} - \gamma_g \nabla f(w^k;\zeta^k),$
where $w^0$ is a starting point and $\{\zeta^k\}$ are i.i.d. random variables. Taking $x^k = z^k = w^k$ and $\xi^k = \zeta^k$ for all $k \geq 0,$ we get a main branch. All conditions of Theorem~\ref{thm:main} hold: $\xi^k$ is independent of $\{(x^{i+1}, z^{i+1}, \xi^i)\}_{i=0}^{k-1},$ $\textnormal{repr}(x^k) = \textnormal{repr}(z^k)$ for all $k \geq 0,$ and $R = 0.$ We get the classical \emph{iteration complexity} $\cO\left(\nicefrac{L \Delta}{\varepsilon} + \nicefrac{\sigma^2 L \Delta}{\varepsilon^2}\right)$ \citep{lan2020first,arjevani2022lower}. The corresponding tree is in Figure~\ref{fig:sgd_graph}.

\subsubsection{\algname{Decaying Local SGD} (asynchronous version)}
\label{sec:adaptive_new}
\begin{figure}[t]
  \centering
  \includegraphics[page=13,width=0.8\textwidth]{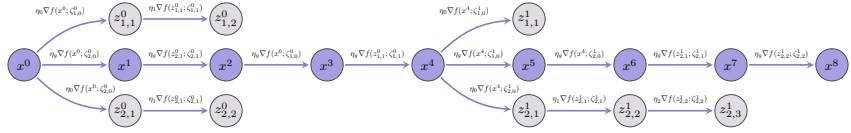}
  \caption{An example of a \algname{Decaying Local SGD} (asynchronous version) computation tree with $b = 4$ and 2 workers, each performing local steps over 2 global steps. While in the first round, they perform the same number of local steps, they have different numbers in the second round. Each stochastic gradient is used $2$ times in the tree of this method.}
  \label{fig:local_sgd_graph}
\end{figure}
Let us consider a generalization of Algorithm~\ref{alg:local_adaptive} from Section~\ref{sec:better_local_step_sizes}. Consider Algorithm~\ref{alg:local_sgd_async}. The only difference between Algorithm~\ref{alg:local_adaptive} and Algorithm~\ref{alg:local_sgd_async} is that, in the latter, we allow the workers to run different numbers of local steps (e.g., due to random delays in computations or heterogeneous hardware). 

At the same time, Algorithm~\ref{alg:local_sgd_async} is the same method as Algorithm 5 from \citep{tyurin2025birchsgdtreegraph}, with the only difference that the local step sizes are larger in Algorithm~\ref{alg:local_sgd_worker_function}. From this point, the convergence result of the method is a simple corollary of Theorem~\ref{thm:main}. The proof is exactly the same as in \citet{tyurin2025birchsgdtreegraph}, which we include here for clarity.

Notice that we can take the following main branch:
\begin{eqnarray}
\begin{aligned}
  x^0 &= w^0 \\
  x^{1}   &= x^{0} - \eta_g \nabla f(z^{0}_{1,0}; \zeta^{0}_{1,0}), \\
  &\vdots\\
  x^{M_1} &= x^{M_1 - 1} - \eta_g \nabla f(z^{0}_{1,M_1 - 1}; \zeta^{0}_{1,M_1 - 1}), \\
  x^{M_1+1} &= x^{M_1} - \eta_g \nabla f(z^{0}_{2,0}; \zeta^{0}_{2,0}), \\
  &\vdots\\
  x^{M_1 + M_2} &= x^{M_1 + M_2 - 1} - \eta_g \nabla f(z^{0}_{2,M_2 - 1}; \zeta^{0}_{2,M_2 - 1}), \\
  &\vdots\\
  x^{\sum_{i=1}^{n} M_i} &= x^{\sum_{i=1}^{n} M_i - 1} - \eta_g \nabla f(z^{0}_{n,M_n - 1}; \zeta^{0}_{n,M_n - 1}).
  \label{eq:yapgNwHcHepGaY}
\end{aligned}
\end{eqnarray}
Then, repeat the process for subsequent rounds. Notice that $x^0 = w^0, x^{\sum_{i=1}^{n} M_i} = w^1,$ and so on.

\begin{theorem}
  Let Assumptions~\ref{ass:lipschitz_constant} and \ref{ass:stochastic_variance_bounded} hold. Consider the computation tree of \algname{Decaying Local SGD} (Algorithm~\ref{alg:local_sgd_async}), then $\{x^k\}_{k \geq 0},$ from \eqref{eq:yapgNwHcHepGaY} is a main branch
  and $\frac{1}{K}\sum_{k=0}^{K-1}\Exp{\|\nabla f(x^k)\|^2} \le \varepsilon$ for all 
  $$
  K \geq \frac{8 b L \Delta}{\varepsilon} + \frac{16 \sigma^2 L \Delta}{\varepsilon^2}.
  $$
  with step size $\eta_g = \min\{\frac{1}{4 b L}, \frac{\varepsilon}{8 \sigma^2 L}\}.$
  \label{thm:local_sgd}
\end{theorem}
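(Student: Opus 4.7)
The plan is to invoke \Cref{thm:main} directly on the computation tree induced by the asynchronous \algname{Decaying Local SGD} method, so the entire proof reduces to verifying the four conditions of that theorem for the main branch $\{x^k\}_{k \geq 0}$ defined in \eqref{eq:yapgNwHcHepGaY}. The auxiliary sequence is read off naturally: the $k$-th triple $(\gamma_k, z^k, \xi^k)$ is, in the ordering induced by \eqref{eq:yapgNwHcHepGaY}, equal to $(\eta_g, z^{t}_{i,j}, \zeta^{t}_{i,j})$, where $t, i, j$ are the round, worker, and local step producing the $k$-th edge along the branch.

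First, I would dispatch Condition~1: since each $\zeta^{t}_{i,j}$ is drawn independently from $\mathcal{D}$ at the moment the worker runs its $j$-th local step, $\xi^k$ is independent of all previously generated iterates and random samples, in particular of $\{(x^{i+1},z^{i+1},\xi^i)\}_{i < k}$. Condition~2 is the content-inclusion check. The point $z^k = z^{t}_{i,j}$ is produced from $x^t$ by applying the first $j$ local gradients of worker $i$, so $\textnormal{repr}(z^k)$ equals $\textnormal{repr}(x^t)$ together with those $j$ gradients; by the ordering in \eqref{eq:yapgNwHcHepGaY}, $x^k$ has already absorbed $x^t$'s representation, all local gradients from workers $1,\ldots,i-1$ in round $t$, and the first $j$ local gradients of worker $i$ in round $t$, so $\textnormal{repr}(z^k) \subseteq \textnormal{repr}(x^k)$.

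For Condition~3 I would locate the nearest common ancestor of $x^k$ and $z^k$ in the tree, which is $x^t$ itself (appearing on the main branch at the step just before the local branch of worker $i$ in round $t$ starts to be unrolled). The tree distance on the branch side is at most the number of local steps unrolled within round $t$ up to index $(i,j)$, and on the $z^k$ side is exactly $j$; both are bounded by the uniform cap $b$ on $M_i^t$, giving $R = b$. Condition~4 is then a matching exercise: the global step size $\eta_g = \min\{\tfrac{1}{4bL}, \tfrac{\varepsilon}{8 L \sigma^2}\}$ coincides with $\gamma_g = \min\{\tfrac{1}{2L}, \tfrac{1}{4RL}, \tfrac{\varepsilon}{8 L \sigma^2}\}$ from \Cref{thm:main} once we observe that $\tfrac{1}{2L}$ is dominated by $\tfrac{1}{4bL}$ for $b \geq 1$, and each local step size inside the algorithm is $\eta_j = \sqrt{b/((j+1)(\log b + 1))}\,\eta_g$, where $j$ is precisely the tree distance of the node at which the step is taken to the main branch (because worker $i$'s local branch hangs off the branch at $x^t$ and $j$ counts local indices). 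This is exactly the budget $\sqrt{R/((j+1)(\log R + 1))}\,\gamma_g$ allowed by Condition~4.

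With all four conditions verified for $R = b$, the conclusion of \Cref{thm:main} yields $\frac{1}{K}\sum_{k=0}^{K-1}\Exp{\|\nabla f(x^k)\|^2} \leq \varepsilon$ whenever $K \geq 8(R+1) L \Delta/\varepsilon + 16 \sigma^2 L \Delta/\varepsilon^2$, and absorbing $R+1$ into $b$ (using $b \geq 1$) recovers the stated bound. The only genuine piece of care, rather than a true obstacle, is the bookkeeping in Condition~3/4: one must match the tree distance $j$ seen by the Birch framework to the local iteration index appearing in the algorithm's decaying rule, and pin down that the closest common ancestor on the branch really is $x^t$ so that $R = b$ is tight; everything else reduces to citing \Cref{thm:main}.
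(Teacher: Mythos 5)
Your reduction to Theorem~\ref{thm:main} and the verification of Conditions 1 and 2 track the paper's proof, and you correctly identify $w^t$ as the closest common ancestor of $x^k$ and $z^k$. However, your Condition~3 bound $R = b$ is off by one: the paper uses $R = b-1$, and this is the correct value. In round $t$, the server stops workers once $\sum_i M_i = b$, so the main branch absorbs $b$ gradients and the indices $k$ within a round run from the node $w^t$ (local offset $0$) to the node just before $w^{t+1}$ (local offset $b-1$). The distance $\textnormal{dist}(x^k, z^k)$ equals the local offset of $x^k$, and the local offset of $z^k$ from $w^t$ is at most that of $x^k$, so $\sup_k \textnormal{dist}(x^k, z^k) \le b-1$, not $b$. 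This is not a cosmetic quibble: the off-by-one is precisely what makes the constants in the theorem statement close. With $R = b-1$, Theorem~\ref{thm:main} gives $K \ge \tfrac{8(R+1)L\Delta}{\eps} + \tfrac{16\sigma^2 L\Delta}{\eps^2} = \tfrac{8bL\Delta}{\eps} + \tfrac{16\sigma^2 L\Delta}{\eps^2}$, exactly as claimed; with $R = b$ you get $\tfrac{8(b+1)L\Delta}{\eps} + \tfrac{16\sigma^2 L\Delta}{\eps^2}$, and ``absorbing $R+1$ into $b$'' goes the wrong way — it weakens, not recovers, the stated bound.

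The same off-by-one also breaks your verification of Condition~4. You quote the local step-size rule as $\sqrt{b/((j+1)(\log b + 1))}\,\eta_g$, but that is the rule from the synchronous Algorithm~\ref{alg:local_adaptive}; the asynchronous worker routine Algorithm~\ref{alg:local_sgd_worker_function}, which is what Algorithm~\ref{alg:local_sgd_async} actually calls, uses $\eta_{M_i} = \sqrt{(b-1)/((M_i+1)(\log(b-1)+1))}\,\eta_g$. That rule matches the budget $\sqrt{R/((j+1)(\log R + 1))}\,\gamma_g$ allowed by Condition~4 with $R = b-1$, and $M_i$ is the tree distance from the point where the step is taken to the main branch, as the paper notes. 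With $R = b$ the allowed budget would be $\sqrt{b/((j+1)(\log b + 1))}\,\gamma_g$, which the algorithm does not use, so Condition~4 would not actually be satisfied as you state it. Once you replace $R=b$ by $R=b-1$ throughout (and quote the correct local step size), your argument coincides with the paper's.
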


\begin{proof}
  The corresponding auxiliary sequence can be inferred from \eqref{eq:yapgNwHcHepGaY}: $(z^0, \xi^0) = (z^{0}_{1,0}, \zeta^{0}_{1,0}), \dots, (z^{M_1}, \xi^{M_1}) = (z^{0}_{1,M_1}, \zeta^{0}_{1,M_1}),$ and etc. Condition 1 is satisfied because $\{\zeta^{k}_{i,j}\}$ are i.i.d., and by the construction \eqref{eq:yapgNwHcHepGaY}. Condition 2 of Theorem~\ref{thm:main} holds because the same stochastic gradients used for computing $z^{k}$ are also used for $x^{k}$ (see Figure~\ref{fig:local_sgd_graph}). Condition 3: notice that
  \begin{align*}
    \sup_{k \geq 0} \textnormal{dist}(x^k, z^k) \leq b - 1
  \end{align*}
  because the maximum number of edges to the common closest ancestor can not exit $b - 1$ (see Figure~\ref{fig:local_sgd_graph}). Thus, $R = b - 1$ in Theorem~\ref{thm:main}. Condition 4 holds due to the construction of the algorithm: $M_i$ is exactly the distance between the current point and the main branch.
\end{proof}

Notice that $b,$ the total number of local steps, is a parameter. The question is how to choose it. Following the main part of the paper, we choose it to get the optimal time complexity (up to a logarithmic factor):

\begin{algorithm}[t]
  \caption{\algname{Decaying Local SGD} (asynchronous version)}
  \label{alg:local_sgd_async}
  \begin{algorithmic}[1]
  \REQUIRE Initial model $w^0$, step size $\eta_g$, parameter $b$
  \FOR{$k = 0, 1, 2, \dots$}
      \STATE Broadcast $w^k$ to all workers
      \FOR{each worker $i \in [n]$ \textbf{in parallel}}
          \STATE Worker $i$ starts \texttt{LocalSGDWorker}($w^k, \eta_g$, $b$) from Algorithm~\ref{alg:local_sgd_worker_function}
      \ENDFOR
      \STATE Wait for the moment when $\sum_{i=1}^{n} M_i = b$ \hfill ($\{M_i\}$ from \texttt{LocalSGDWorker}($w^k, \eta_g$, $b$))
      \STATE Ask workers to stop running \texttt{LocalSGDWorker}($w^k, \eta_g$, $b$)
      \STATE Aggregate $\eta_g \sum_{i=1}^{n} \sum_{j=0}^{M_i - 1} \nabla f(z^{k}_{i,j}; \zeta^{k}_{i,j})$ from the workers  (e.g, via \texttt{AllReduce})
      \STATE Update $w^{k+1} = w^{k} - \eta_g \sum_{i=1}^{n} \sum_{j=0}^{M_i - 1} \nabla f(z^{k}_{i,j}; \zeta^{k}_{i,j})$
  \ENDFOR
  \end{algorithmic}
\end{algorithm}

\begin{algorithm}[t]
  \caption{\texttt{LocalSGDWorker}($w, \eta_g, b$) in worker $i$ at round $k$}
  \label{alg:local_sgd_worker_function}
  \begin{algorithmic}[1]
    \STATE $z^{k}_{i,0} = w$
    \STATE $M_i \gets 0$
    \WHILE{True}
        \STATE {\color{mydarkgreen} Calculate step size $\eta_{M_i} = \sqrt{\frac{b - 1}{(M_i + 1) (\log (b - 1) + 1)}} \eta_g$}
        \STATE $z^{k}_{i,M_i + 1} = z^{k}_{i,M_i} - \eta_{M_i} \nabla f(z^{k}_{i,M_i}; \zeta^{k}_{i,M_i}), \quad \zeta^{k}_{i,M_i} \sim \mathcal{D}$
        \STATE $M_i = M_i + 1$
    \ENDWHILE
  \end{algorithmic}
\end{algorithm}

\begin{theorem}[Proof in \citep{tyurin2025birchsgdtreegraph}]
  Consider Theorem~\ref{thm:local_sgd} and its conditions. Under Assumption~\ref{ass:time}, the total time complexity of \algname{Decaying Local SGD} (Alg.~\ref{alg:local_sgd_async}) is
  \begin{align*} 
    \cO\left(\tau \frac{L \Delta}{\varepsilon} + h \left(\frac{L \Delta}{\varepsilon} + \frac{L \sigma^2 \Delta}{n \varepsilon^2}\right)\right) 
  \end{align*}
  with $b = \max\left\{\left\lceil\frac{\sigma^2}{\varepsilon}\right\rceil, 1\right\}.$
  \label{thm:local_sgd_comnunication}
\end{theorem}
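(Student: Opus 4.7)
The plan is to chain together the main-branch iteration complexity from Theorem~\ref{thm:local_sgd} with a per-round time bound derived directly from Assumption~\ref{ass:time} and the asynchronous waiting mechanism in Algorithm~\ref{alg:local_sgd_async}. First, I would invoke Theorem~\ref{thm:local_sgd} with the chosen $b = \max\{\lceil \sigma^2/\varepsilon\rceil,1\}$ to guarantee that an $\varepsilon$--stationary point is reached along the main branch after at most $K = \mathcal{O}(bL\Delta/\varepsilon + \sigma^2 L\Delta/\varepsilon^2)$ steps. Because the main branch \eqref{eq:yapgNwHcHepGaY} advances by exactly $\sum_{i=1}^n M_i = b$ indices during every outer iteration of Algorithm~\ref{alg:local_sgd_async} (line~6 waits for this exact equality), the number of communication rounds is $R_{\textnormal{comm}} = \lceil K/b\rceil$.

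Substituting the choice of $b$ into $K/b$ collapses the two terms in $K$ to a common $\mathcal{O}(L\Delta/\varepsilon)$ bound. Indeed, $\frac{bL\Delta}{b\varepsilon} = \frac{L\Delta}{\varepsilon}$ and $\frac{\sigma^2 L\Delta}{b \varepsilon^2} \le \frac{L\Delta}{\varepsilon}$ in both regimes $\sigma^2 \ge \varepsilon$ (so $b \simeq \sigma^2/\varepsilon$) and $\sigma^2 < \varepsilon$ (so $b = 1$ but the $\sigma^2/\varepsilon^2$ factor itself is $\le 1/\varepsilon$). Hence $R_{\textnormal{comm}} = \mathcal{O}(L\Delta/\varepsilon)$.

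Next I would bound the wall-clock time of a single outer iteration. Communication (broadcast and aggregation) costs at most $\tau$ seconds by Assumption~\ref{ass:time}. For the local computation, note that the $n$ workers evaluate stochastic gradients independently, each taking at most $h$ seconds per gradient, so after $t$ seconds the total number of completed local gradients is at least $n\lfloor t/h\rfloor$. Therefore the waiting condition $\sum_i M_i \ge b$ is met after at most $h\lceil b/n\rceil$ seconds. A single round thus costs $\tau + h\lceil b/n\rceil$ seconds, with no worst-case straggler blow-up because the asynchronous design lets faster workers absorb the workload of slower ones.

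Multiplying the per-round cost by $R_{\textnormal{comm}}$ produces
\begin{align*}
R_{\textnormal{comm}}\bigl(\tau + h\lceil b/n\rceil\bigr)
= \mathcal{O}\!\left(\tau\frac{L\Delta}{\varepsilon} + h\frac{L\Delta}{\varepsilon} + h\frac{bL\Delta}{n\varepsilon}\right),
\end{align*}
and substituting $b \simeq \max\{\sigma^2/\varepsilon, 1\}$ turns the last term into $h(L\Delta/(n\varepsilon) + L\sigma^2\Delta/(n\varepsilon^2))$, which is dominated (up to constants) by $h(L\Delta/\varepsilon + L\sigma^2\Delta/(n\varepsilon^2))$. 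This yields exactly the claimed bound. The only mildly delicate step is the parallel-computation argument in the per-round bound: it is essential that the asynchronous wait condition is stated as a sum over workers rather than a per-worker quota, so that the $h\lceil b/n\rceil$ factor can be obtained without any maximum over workers; otherwise one would incur an extra factor of $n$ in the computation term.
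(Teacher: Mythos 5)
Your proof is correct and is essentially the argument the paper delegates to the cited reference. You correctly identify the two ingredients: (i) Theorem~\ref{thm:local_sgd} gives a main-branch budget of $K = \mathcal{O}(bL\Delta/\varepsilon + \sigma^2 L\Delta/\varepsilon^2)$ steps, and since line~6 of Algorithm~\ref{alg:local_sgd_async} advances the main branch by exactly $\sum_i M_i = b$ nodes per round, the round count is $K/b$, which the choice $b = \max\{\lceil\sigma^2/\varepsilon\rceil,1\}$ collapses to $\mathcal{O}(L\Delta/\varepsilon)$; and (ii) under Assumption~\ref{ass:time} a single round costs at most $\tau + h\lceil b/n\rceil$ seconds, since the $n$ workers jointly produce at least $n\lfloor t/h\rfloor$ gradients by time $t$ and the wait condition is on the \emph{sum}, not a per-worker quota. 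Multiplying and substituting $b$ gives exactly the claimed $\mathcal{O}\bigl(\tau L\Delta/\varepsilon + h(L\Delta/\varepsilon + L\sigma^2\Delta/(n\varepsilon^2))\bigr)$. The observation you flag as delicate — that the sum-based stopping rule, not a per-worker cap, is what yields $h\lceil b/n\rceil$ rather than $hb$ — is indeed the crux of why the asynchronous formulation achieves the linear-in-$n$ speedup in the variance term, and your accounting there is sound.
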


The choice of $R = \Theta\left(\frac{\sigma^2}{\varepsilon}\right)$ in Theorem~\ref{thm:main} seems to be a universal rule in all asynchronous and parallel algorithms for achieving optimal time complexities \citep{tyurin2025birchsgdtreegraph}.

\subsection{Other local and asynchronous algorithms}
\citet{tyurin2025birchsgdtreegraph} provide many other algorithms with different computation and communication properties (see their Table 1). All these algorithms can be improved in the aspect we have previously discussed. Their local steps, not related to the main branches, can be increased according to the rule described in Condition 4. Then, nothing else needs to be changed, and the derived results still hold.

\newpage
\subsection{Proof of Theorem~\ref{thm:main}}
\MAINTHEOREM*
The following proof closely follows the proof of Theorem~2.4 from \citep{tyurin2025birchsgdtreegraph}, with the difference that we have to work with different step sizes and some essential changes that we will highlight.
\begin{proof}
  Using Assumption~\ref{ass:lipschitz_constant} and Condition 4, we have $\gamma_k = \gamma_g$ and
  \begin{align*}
    f(x^{k+1}) \leq f(x^k) - \gamma_g \inp{\nabla f(x^k)}{\nabla f(z^k;\xi^k)} + \frac{L \gamma_g^2}{2} \norm{\nabla f(z^k;\xi^k)}^2
  \end{align*}
  for $x^{k+1} = x^{k} - \gamma_k \nabla f(z^k;\xi^k) = x^{k} - \gamma_g \nabla f(z^k;\xi^k).$ Due to Condition 1 and the variance decomposition equality, $\xi^k$ is statistically independent of $(x^k,z^k)$ and 
  \begin{align*}
    &\ExpSub{k}{f(x^{k+1})} 
    \leq f(x^k) - \gamma_g \inp{\nabla f(x^k)}{\nabla f(z^k)} + \frac{L \gamma_g^2}{2} \ExpSub{k}{\norm{\nabla f(z^k;\xi^k)}^2} \\
    &\quad= f(x^k) - \gamma_g \inp{\nabla f(x^k)}{\nabla f(z^k)} + \frac{L \gamma_g^2}{2} \norm{\nabla f(z^k)}^2 + \frac{L \gamma_g^2}{2} \ExpSub{k}{\norm{\nabla f(z^k;\xi^k) - \nabla f(z^k)}^2} \\
    &\quad\leq f(x^k) - \gamma_g \inp{\nabla f(x^k)}{\nabla f(z^k)} + \frac{L \gamma_g^2}{2} \norm{\nabla f(z^k)}^2 + \frac{L \gamma_g^2 \sigma^2}{2},
  \end{align*}
  where $\ExpSub{k}{\cdot}$ is the expectation conditioned on $(x^k, z^k).$
  In the last inequality, we use Assumption~\ref{ass:stochastic_variance_bounded}. Rewriting the dot product and using $\gamma_g \leq \frac{1}{2 L}$, we obtain
  \begin{align}
    &\ExpSub{k}{f(x^{k+1})} \nonumber \\
    &\leq f(x^k) - \frac{\gamma_g}{2} \left(\norm{\nabla f(x^k)}^2 + \norm{\nabla f(z^k)}^2 - \norm{\nabla f(x^k) - \nabla f(z^k)}^2\right) + \frac{L \gamma_g^2}{2} \norm{\nabla f(z^k)}^2 + \frac{L \gamma_g^2 \sigma^2}{2} \nonumber \\
    &\leq f(x^k) - \frac{\gamma_g}{2}\norm{\nabla f(x^k)}^2 - \frac{\gamma_g}{4} \norm{\nabla f(z^k)}^2 + \frac{\gamma_g}{2} \norm{\nabla f(x^k) - \nabla f(z^k)}^2 + \frac{L \gamma_g^2 \sigma^2}{2}. \label{eq:KexjcUoRoQTYKqIDO}
  \end{align}
  We now focus on $\norm{\nabla f(x^k) - \nabla f(z^k)}^2.$ Using Assumption~\ref{ass:lipschitz_constant}, we obtain
  \begin{align}
    \label{eq:tyfEHNd}
    &\norm{\nabla f(x^k) - \nabla f(z^k)}^2 \leq L^2 \norm{x^k - z^k}^2.
  \end{align}
  Notice that there exist $p \in \{0, \dots, k\}$ and the closest common ancestor $x^{p}$ to $x^k$ and $z^k$ such that 
  \begin{align*}
    x^k = x^{p} - \gamma_g \sum_{i=p}^{k - 1} \nabla f(z^i;\xi^i)
  \end{align*}
  and 
  \begin{align*}
    z^k = x^{p} - \sum_{(\gamma,w,\xi) \in S^k} \gamma \nabla f(w; \xi),
  \end{align*}
  where $S^k$ is the set of points and random variables used to compute $z^k$ starting from $x^{p}$ (see Figure~\ref{fig:fork}). Moreover, due to Condition 3, we have $\textnormal{dist}(x^{k}, z^{k}) \leq \max\{k - p, \abs{S^k}\} \leq R,$ meaning $p \geq k - R$ and $\abs{S^k} \leq R.$ In total,
  \begin{align}
    \label{eq:VIHiAoGKxHvVihzNf}
    k \geq p \geq k - R
  \end{align}
  and 
  \begin{align}
    \label{eq:VIHiAoGKxHvVihzNff}
    \abs{S^k} \leq R,
  \end{align}
  which we use later. Condition 2 assumes  
  \begin{align*}
    &\textnormal{repr}(z^k) \eqdef \underbrace{\{(z^i; \xi^i)\}_{i = 0}^{p - 1}}_{A} \uplus \underbrace{\{(w; \xi)\}_{(\gamma,w,\xi) \in S^k}}_{C} \\
    &\subseteq \textnormal{repr}(x^k) \eqdef \underbrace{\{(z^i; \xi^i)\}_{i = 0}^{p - 1}}_{A} \uplus \underbrace{\{(z^i; \xi^i)\}_{i = p}^{k-1}}_{B},
  \end{align*}
  where $\uplus$ is the multiset union operation. Thus 
  \begin{align}
    \label{eq:SfrkklQ}
    \underbrace{\{(w; \xi)\}_{(\gamma,w,\xi) \in S^k}}_{C} \subseteq \underbrace{\{(z^i; \xi^i)\}_{i = p}^{k-1}}_{B}.
  \end{align}
  (Starting from this point, our proof and the proof by \citep{tyurin2025birchsgdtreegraph} diverge). Using Jensen's inequality and \eqref{eq:tyfEHNd},
  \begin{align*}
    \norm{\nabla f(x^k) - \nabla f(z^k)}^2 
    &\leq L^2 \norm{\gamma_g \sum_{i=p}^{k - 1} \nabla f(z^i;\xi^i)  - \sum_{(\gamma,w,\xi) \in S^k} \gamma \nabla f(w; \xi)}^2 \\
    &\leq 4 L^2 \norm{\gamma_g \sum_{i=p}^{k - 1} (\nabla f(z^i;\xi^i) - \nabla f(z^i))}^2 + 4 L^2 \norm{\sum_{(\gamma,w,\xi) \in S^k} \gamma (\nabla f(w; \xi) - \nabla f(w))}^2 \\
    &\quad + 4 L^2 \norm{\gamma_g \sum_{i=p}^{k - 1} \nabla f(z^i)}^2 + 4 L^2 \norm{\sum_{(\gamma,w,\xi) \in S^k} \gamma \nabla f(w)}^2.
  \end{align*}
  Using Assumption~\ref{ass:stochastic_variance_bounded} and since $\xi^k$ is statistically independent of 
  $\{(z^{i+1}, \xi^i)\}_{i=0}^{k-1}$ for all $k \geq 0$ (Condition 1), we have 
  $$\Exp{\norm{\gamma_g \sum_{i=p}^{k - 1} (\nabla f(z^i;\xi^i) - \nabla f(z^i))}^2} \leq \gamma_g^2 (k - p) \sigma^2.$$ 
  Moreover, due to \eqref{eq:SfrkklQ}, $\sum_{(\gamma,w,\xi) \in S^k} \gamma (\nabla f(w; \xi) - \nabla f(w))$ is a subtotal of $\sum_{i=p}^{k - 1} (\nabla f(z^i;\xi^i) - \nabla f(z^i))$ and we can use the same reasoning as in the previous inequality:
  $$\Exp{\norm{\sum_{(\gamma,w,\xi) \in S^k} \gamma (\nabla f(w; \xi) - \nabla f(w))}^2} \leq \sigma^2 \sum_{(\gamma,w,\xi) \in S^k} \gamma^2.$$
  In total,
  \begin{align*}
    \Exp{\norm{\nabla f(x^k) - \nabla f(z^k)}^2} 
    &\leq 4 L^2 \gamma_g^2 \sigma^2 (k - p) + 4 L^2 \sigma^2 \sum_{(\gamma,w,\xi) \in S^k} \gamma^2 \\
    &\quad + 4 L^2 \Exp{\norm{\gamma_g \sum_{i=p}^{k - 1} \nabla f(z^i)}^2} + 4 L^2 \Exp{\norm{\sum_{(\gamma,w,\xi) \in S^k} \gamma \nabla f(w)}^2}.
  \end{align*}
  Using Lemma~\ref{lem:jensen-form-2},
  \begin{eqnarray}
  \label{eq:DghVD}
  \begin{aligned}
    \Exp{\norm{\nabla f(x^k) - \nabla f(z^k)}^2} 
    &\leq 4 L^2 \gamma_g^2 \sigma^2 (k - p) + 4 L^2 \sigma^2 \sum_{(\gamma,w,\xi) \in S^k} \gamma^2 \\
    &\quad + 4 L^2 \gamma_g^2 (k - p) \sum_{i=p}^{k - 1} \Exp{\norm{\nabla f(z^i)}^2} + 4 L^2 \left(\sum_{(\gamma,w,\xi) \in S^k} \gamma^2\right) \sum_{(\gamma,w,\xi) \in S^k} \Exp{\norm{\nabla f(w)}^2}.
  \end{aligned}
  \end{eqnarray}
  We now bound the sum $\sum_{(\gamma,w,\xi) \in S^k} \gamma^2.$ Notice that 
  \begin{align*}
    \sum_{(\gamma,w,\xi) \in S^k} \gamma^2 \leq \sum_{j=0}^{\abs{S^k} - 1}\frac{R}{(j + 1) (\log R + 1)} \gamma_g^2
  \end{align*}
  due to Condition 4. See also Figure~\ref{fig:fork}, which visualizes the set $\{\gamma\}_{(\gamma,w,\xi) \in S^k} = \{\hat{\gamma}, \bar{\gamma}, \dots, \tilde{\gamma}\},$ where $j = \textnormal{dist}(x^p, \{x^k\}) = 0$ corresponds to $\hat{\gamma},$ $j = 1$ corresponds to $\bar{\gamma},$ \dots, $j = \abs{S^k} - 1$ corresponds to $\tilde{\gamma}.$
  Thus, 
  \begin{align*}
    \sum_{(\gamma,w,\xi) \in S^k} \gamma^2 \leq \frac{\gamma_g^2 R}{(\log R + 1)}\sum_{j=1}^{\abs{S^k}}\frac{1}{j} \leq \frac{\gamma_g^2 R (\log \abs{S^k} + 1)}{(\log R + 1)} \overset{\eqref{eq:VIHiAoGKxHvVihzNff}}{\leq} \gamma_g^2 R,
  \end{align*}
  where the second inequality due to the standard inequality $\sum_{j=1}^{m}\frac{1}{j} \leq \log m + 1$ for all $m \geq 1.$ For the corner case $R = 0,$ the inequalities also hold under the standard convention $\frac{0}{\log 0 + 1} = 0.$ Due to the last bound and \eqref{eq:VIHiAoGKxHvVihzNf}, \eqref{eq:DghVD} yields
  \begin{align*}
    \Exp{\norm{\nabla f(x^k) - \nabla f(z^k)}^2} 
    &\leq 4 L^2 \gamma_g^2 \sigma^2 R + 4 L^2 \sigma^2 \gamma_g^2 R \\
    &\quad + 4 L^2 \gamma_g^2 R \sum_{i=p}^{k - 1} \Exp{\norm{\nabla f(z^i)}^2} + 4 L^2 \gamma_g^2 R \sum_{(\gamma,w,\xi) \in S^k} \Exp{\norm{\nabla f(w)}^2} \\
    &= 8 L^2 \gamma_g^2 \sigma^2 R \\
    &\quad + 4 L^2 \gamma_g^2 R \sum_{i=p}^{k - 1} \Exp{\norm{\nabla f(z^i)}^2} + 4 L^2 \gamma_g^2 R \sum_{(\gamma,w,\xi) \in S^k} \Exp{\norm{\nabla f(w)}^2}.
  \end{align*}
  Since \eqref{eq:SfrkklQ}, $\sum_{(\gamma,w,\xi) \in S^k} \Exp{\norm{\nabla f(w)}^2} \leq \sum_{i=p}^{k - 1} \Exp{\norm{\nabla f(z^i)}^2}$ and 
  \begin{align*}
    \Exp{\norm{\nabla f(x^k) - \nabla f(z^k)}^2} 
    &\leq 8 L^2 \gamma_g^2 \sigma^2 R + 8 L^2 \gamma_g^2 R \sum_{i=p}^{k - 1} \Exp{\norm{\nabla f(z^i)}^2} \\
    &\leq 8 L^2 \gamma_g^2 \sigma^2 R + 8 L^2 \gamma_g^2 R \sum_{i=k - R}^{k - 1} \Exp{\norm{\nabla f(z^i)}^2},
  \end{align*}
  where the last inequality due to \eqref{eq:VIHiAoGKxHvVihzNf}. 
  Substituting this inequality to \eqref{eq:KexjcUoRoQTYKqIDO} and taking the full expectation, we obtain
  \begin{align}
    \Exp{f(x^{k+1})}
    &\leq \Exp{f(x^k)} - \frac{\gamma_g}{2}\Exp{\norm{\nabla f(x^k)}^2} - \frac{\gamma_g}{4} \Exp{\norm{\nabla f(z^k)}^2} + \frac{L \gamma_g^2 \sigma^2}{2} \nonumber \\
    &\quad + \frac{\gamma_g}{2} \left(8 L^2 \gamma_g^2 R \sum_{j=k - R}^{k - 1} \Exp{\norm{\nabla f(z^j)}^2} + 8 L^2 \gamma_g^2 R \sigma^2\right) \nonumber \\
    &\leq \Exp{f(x^k)} - \frac{\gamma_g}{2}\Exp{\norm{\nabla f(x^k)}^2} - \frac{\gamma_g}{4} \Exp{\norm{\nabla f(z^k)}^2} + 2 L \gamma_g^2 \sigma^2 \nonumber \\
    &\quad + 4 L^2 \gamma_g^3 R \sum_{j=k - R}^{k - 1} \Exp{\norm{\nabla f(z^j)}^2} \label{eq:icrcqHMvMJ}
  \end{align}
  because $\gamma_g \leq \frac{1}{4 R L}.$ Note that $\sum_{k = 0}^{K - 1} \sum_{j=k - R}^{k - 1} \Exp{\norm{\nabla f(z^j)}^2} \leq R \sum_{k = 0}^{K - 1} \Exp{\norm{\nabla f(z^k)}^2}.$ Thus, summing \eqref{eq:icrcqHMvMJ} for $k = 0, \dots, K - 1$ and substituting $f^*,$
  \begin{align*}
    \Exp{f(x^{K}) - f^*}
    &\leq f(x^0) - f^* - \frac{\gamma_g}{2} \sum_{k = 0}^{K - 1}\Exp{\norm{\nabla f(x^k)}^2} - \frac{\gamma_g}{4} \sum_{k = 0}^{K - 1} \Exp{\norm{\nabla f(z^k)}^2} + 2 K L \gamma_g^2 \sigma^2 \nonumber \\
    &\quad + 4 L^2 \gamma_g^3 R^2 \sum_{k = 0}^{K - 1} \Exp{\norm{\nabla f(z^k)}^2} \\
    &\leq f(x^0) - f^* - \frac{\gamma_g}{2} \sum_{k = 0}^{K - 1}\Exp{\norm{\nabla f(x^k)}^2} + 2 K L \gamma_g^2 \sigma^2
  \end{align*}
  because $\gamma_g \leq \frac{1}{4 L R}.$ Finally, since $\Exp{f(x^{K}) - f^*} \geq 0,$
  \begin{align*}
    \frac{1}{K}\sum_{k = 0}^{K - 1}\Exp{\norm{\nabla f(x^k)}^2} \leq \frac{2 \Delta}{K \gamma_g} + 4 L \gamma_g \sigma^2.
  \end{align*}
  It is left to use that $\gamma_g = \min\{\frac{1}{2 L}, \frac{1}{4 R L}, \frac{\varepsilon}{8 \sigma^2 L}\}$ and the bound on $K$ from the theorem statement.
\end{proof}

\end{document}